\numberwithin{equation}{section}
\newtheorem{theorem}{Theorem}[section]
\newtheorem*{theorem*}{Theorem}
\newtheorem{proposition}[theorem]{Proposition}
\newtheorem{lemma}[theorem]{Lemma}
\newtheorem{corollary}[theorem]{Corollary}
\theoremstyle{definition}
\newtheorem{defn}{Definition}[section]
\newtheorem*{defn*}{Definition}
\newtheorem{example}[theorem]{Example}
\theoremstyle{remark}
\newtheorem{remark}[theorem]{Remark}
\newcommand{\CC}{\ensuremath{\mathbb{C}}}
\newcommand{\ghilb}{\ensuremath{G}\operatorname{-Hilb}}
\newcommand{\SL}{\operatorname{SL}} 
\newcommand{\one}{\ensuremath{(\mathrm{i})}}
\newcommand{\two}{\ensuremath{(\mathrm{ii})}}
\newcommand{\three}{\ensuremath{(\mathrm{iii})}}
\title{Derived Reid's recipe \\for abelian subgroups of $\gsl_3(\mathbb{C})$}
 \author{Sabin Cautis, Alastair Craw and Timothy Logvinenko} 
 \address{Department of Mathematics \\ University of British Columbia \\ Vancouver, Canada}
 \email{cautis@math.ubc.ca}
\address{Department of Mathematical Sciences\\ University of Bath\\ Bath BA2 7AY, UK}
 \email{A.Craw@bath.ac.uk} 
 \address{School of Mathematics \\ Cardiff University \\ Senghennydd
Road \\  Cardiff, CF24 4AG \\  UK}
 \email{LogvinenkoT@cardiff.ac.uk}
\DeclareMathOperator{\krn}{Ker}
\DeclareMathOperator{\img}{Im}
\DeclareMathOperator{\homm}{Hom}
\DeclareMathOperator{\shhomm}{{\it\mathcal{H}om\rm}}
\DeclareMathOperator{\gl}{GL}
\DeclareMathOperator{\gsl}{SL}
\DeclareMathOperator{\spec}{Spec}
\DeclareMathOperator{\hilb}{Hilb}
\DeclareMathOperator{\irr}{Irr}
\DeclareMathOperator{\supp}{Supp}
\DeclareMathOperator{\cohcat}{Coh}
\DeclareMathOperator{\rder}{\bf R}
\DeclareMathOperator{\id}{Id}
\DeclareMathOperator{\hex}{Hex}
\DeclareMathOperator{\Except}{Exc}
\DeclareMathOperator{\sinksource}{SS}
\DeclareMathOperator{\zerofibre}{ZF}
\DeclareMathOperator{\givrep}{V}
\DeclareMathOperator{\regrep}{V_{\text{reg}}}
\DeclareMathOperator{\regring}{R}
\DeclareMathOperator{\convhull}{Conv}
\def\twalg{{\regring \rtimes G}}
\def\mckquiv{\mbox{Q}(G)}
\def\gnat{gnat}
\def\boldomega{{\boldsymbol\omega}}
\def\L{{\mathcal{L}}}
\def\O{{\mathcal{O}}}
\def\gcd{\mbox{gcd}}
\def\lcm{\mbox{lcm}}
\def\tD{{\tilde{D}}}
\begin{document}

\begin{abstract}
For any finite subgroup $G \subset \gsl_3(\mathbb{C})$, 
work of Bridgeland-King-Reid constructs an equivalence
between the $G$-equivariant derived category of $\CC^3$ 
and the derived category of the crepant resolution
$Y = \ghilb \CC^3$ of $\CC^3/G$. When $G$ is abelian 
we show that this equivalence gives a natural correspondence 
between irreducible representations of $G$ and certain sheaves
on exceptional subvarieties of $Y$, thereby extending the McKay 
correspondence from two to three dimensions. This categorifies Reid's 
recipe and extends earlier work from \cite{CautisLogvinenko} and 
\cite{Logvinenko-ReidsRecipeAndDerivedCategories} which dealt only 
with the case when $\CC^3/G$ has one isolated singularity. 
\end{abstract}
    
\maketitle

\tableofcontents

\section{Introduction}

Originating in observations by John McKay
\cite{McKay-GraphsSingularitiesAndFiniteGroups}, 
the classical McKay correspondence for a finite subgroup $G \subset \SL_2(\CC)$ 
is a bijection between the nontrivial irreducible representations of $G$ and 
the irreducible exceptional divisors on the minimal 
resolution $Y$ of $\CC^2/G$. 

The representation ring of $G$ is naturally isomorphic to
$K^{G}(\CC^2)$, the Grothendieck group of $G$-equivariant coherent
sheaves on $\CC^2$. Subsequently, the McKay correspondence was
realized geometrically in 
\cite{GsV-ConstructionGeometriqueDeLaCorrespondanceDeMcKay} 
as an isomorphism $K^G(\CC^2) \xrightarrow{\sim} K(Y)$. 
In \cite{KapranovVasserot-KleinianSingularitiesDerivedCategoriesAndHallAlgebras} 
this isomorphism was lifted to an equivalence $D^G(\CC^2)
\xrightarrow{\sim} D(Y)$ of derived categories of coherent sheaves.
For each non-trivial irreducible $G$-representation $\rho$
it sends the sheaf $\O_0 \otimes \rho$ to the structure sheaf 
of the corresponding exceptional divisor (twisted by $\mathcal{O}(-1)$).

In dimension three, for a finite subgroup $G \subset
\SL_3(\CC)$, the $G$-Hilbert scheme $Y = \ghilb \CC^3$ is a
crepant resolution of $\CC^3/G$. This is a consequence of 
the derived equivalence $\Psi\colon D^G(\CC^3) \xrightarrow{\sim} D(Y)$ 
constructed by Bridgeland--King--Reid~\cite{BKR01}. 
Such an equivalence was conjectured by Reid~\cite{Kinosaki-97} 
while building on Nakamura's description of 
$\ghilb(\CC^3)$\cite{Nak00}. For
$G$ abelian, Reid also defined in \emph{loc.cit.} a basis of
$H^*(Y,\mathbb{Z})$ using an ad-hoc combinatorial construction that 
was dubbed ``Reid's recipe'' by
Craw~\cite{Craw-AnexplicitconstructionoftheMcKaycorrespondenceforAHilbC3}.

In \cite{CautisLogvinenko} the first and third authors
conjectured that $\Psi\colon D^G(\CC^3) \xrightarrow{\sim} D(Y)$ 
sends $\O_0 \otimes \rho$ to a sheaf supported on 
the exceptional subvariety of $Y$ which is either a single divisor, 
a single curve or a chain of divisors, depending on the role of 
$\rho$ in Reid's recipe. The conjectured correspondence between 
irreducible representations of $G$ and sheaves on exceptional 
subvarieties of $Y$ generalises naturally 
the classical McKay correspondence for $\gsl_2(\mathbb{C})$ and 
was subsequently called the \em derived Reid's recipe\rm.  

The fact that the image of each $\O_0 \otimes \rho$ is a single sheaf 
was proved in \cite{CautisLogvinenko} when $G$ is abelian and 
$\mathbb{C}^3/G$ has a single isolated singularity. 
Under these same conditions, the third author proved 
in \cite{Logvinenko-ReidsRecipeAndDerivedCategories} that 
the support of this sheaf is indeed determined by 
the role of $\rho$ in Reid's recipe.  

When the singularities of $\mathbb{C}^3/G$ are not isolated, the
geometry of the exceptional locus of $Y$ is more complicated and
the methods of \cite{CautisLogvinenko} and 
\cite{Logvinenko-ReidsRecipeAndDerivedCategories} do not apply. 
Moreover, these methods only compute the support of 
$\Psi(\O_0 \otimes \rho)$ rather than the sheaf itself. 
In dimension two this extra data encoded very little, being 
just the line bundle $\mathcal{O}(-1)$ for each exceptional $\mathbb{P}^1$.  
In dimension three, the situation is more subtle and 
the extra data provided by the sheaf forms a meaningful part 
of the correspondence. 

In this paper, we compute the whole of derived Reid's recipe 
for any finite abelian subgroup of $\gsl_3(\mathbb{C})$. We
can do this due to a new approach via \em CT-subdivisions \rm
(see Section \ref{section-CT-subdivisions}). 
This technique may make it possible to generalise 
the derived Reid's recipe to dimer models by extending the work 
of 
\cite{BocklandtCrawQuinteroVelez-GeometricReidsRecipeForDimerModels}.

\subsection{Summary of results}
Let $G \subset \SL_3(\CC)$ be a
finite subgroup. The \emph{$G$-Hilbert scheme} $Y=\ghilb \CC^3$ is
the fine moduli space parametrizing subschemes $Z\subset \CC^3$ for which
$H^0(\mathcal{O}_Z)$ is isomorphic to $\CC[G]$ as a $\CC[G]$-module.
Let $\mathcal{Z}\subset Y\times \CC^3$ denote the universal subscheme.
As a Fourier-Mukai kernel, $\mathcal{O}_{\mathcal{Z}}$ induces 
a functor $\Psi\colon D^G(\CC^3) \to D(Y)$ between the bounded 
derived categories of coherent sheaves on $Y$ and $G$-equivariant 
coherent sheaves on $\CC^3$. Bridgeland--King--Reid~\cite{BKR01} showed that
$\Psi$ is an exact equivalence of triangulated categories and 
that the Hilbert--Chow morphism 
$\pi\colon Y\to \CC^3/G$ is a projective, crepant resolution.
  
An irreducible representation $\rho$ of $G$ defines two natural
$G$-equivariant sheaves on $\CC^3$, namely $\O_{\CC^3} \otimes \rho$
and $\O_0 \otimes \rho$, where $\O_0$ is the structure sheaf of 
the origin in $\CC^3$. 

The image $\Psi(\mathcal{O}_{\CC^3} \otimes \rho)$ is isomorphic
to $\mathcal{L}^{\vee}_\rho$, where $\mathcal{L}_\rho$ 
is one of the \emph{tautological vector bundles}. 
These are vector bundles on $Y$ defined via 
$\pi_{Y * } \mathcal{O}_{\mathcal{Z}} = 
\bigoplus \mathcal{L}_\rho \otimes \rho$ 
where $\pi_Y\colon Y\times \CC^3\to Y$ is the natural projection
and the decomposition is with respect to the 
trivial $G$-action on $Y$. 

On the other hand, $\Psi(\mathcal{O}_{0} \otimes \rho)$ is more 
complicated to describe. We do this for abelian $G$, 
so the irreducible representations of $G$ are 
the characters $\chi \in G^\vee$. A priori, 
each $\Psi(\mathcal{O}_{0} \otimes \chi)$ is an abstract complex
in $D(Y)$, but our first main result is:

\begin{theorem}
\label{theorem-transform-is-a-sheaf-for-non-trivial-chi}
Let $G \subset \gsl_3(\mathbb{C})$ be a finite abelian subgroup
and let $\chi \in G^\vee$ be nontrivial. Then 
$\Psi(\mathcal{O}_0 \otimes \chi)$ is the pushforward of a (shift of a) 
coherent sheaf $\mathcal{F}_\chi$ from the exceptional subvariety 
$Z_\chi = \supp \Psi(\mathcal{O}_0 \otimes \chi)$ of $Y$. 
\end{theorem}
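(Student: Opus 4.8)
The plan is to prove the statement in two stages: first, establish that $\Psi(\mathcal{O}_0 \otimes \chi)$ has cohomology concentrated in a single degree (so it is a shift of a sheaf), and second, that this sheaf is scheme-theoretically supported on the exceptional locus and hence is pushed forward from a closed subvariety $Z_\chi \subset Y$. The second stage is essentially formal: since $\mathcal{O}_0 \otimes \chi$ is supported at the origin of $\mathbb{C}^3$, and $\Psi$ is a Fourier--Mukai transform with kernel $\mathcal{O}_{\mathcal{Z}}$ whose support maps to the origin of $\mathbb{C}^3/G$ only over the fibre $\pi^{-1}(0)$, any cohomology sheaf of $\Psi(\mathcal{O}_0 \otimes \chi)$ is set-theoretically supported on the exceptional fibre; once we know the complex is a single sheaf $\mathcal{F}_\chi$, we simply let $Z_\chi$ be its scheme-theoretic support inside $Y$ and observe $\mathcal{F}_\chi = (\iota_{Z_\chi})_* \mathcal{F}_\chi$ for the closed immersion $\iota_{Z_\chi}$, which is automatic. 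So the real content — and the main obstacle — is the single-degree statement.

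To prove concentration in a single degree I would argue locally on $Y$, covering $Y$ by the toric affine charts coming from the CT-subdivision of Section~\ref{section-CT-subdivisions}. On each such chart $U_\sigma = \spec R_\sigma$, the restriction $\Psi(\mathcal{O}_0 \otimes \chi)|_{U_\sigma}$ can be computed via the explicit $G$-Hilbert scheme data: the universal subscheme $\mathcal{Z}$ over $U_\sigma$ is given by an explicit ideal, and $\mathcal{O}_0 \otimes \chi$ is resolved by a Koszul-type complex (the $G$-equivariant Koszul resolution of $\mathcal{O}_0$ on $\mathbb{C}^3$, twisted by $\chi$). The Fourier--Mukai transform then becomes a finite complex of free $R_\sigma$-modules, whose terms are the $\chi$-weight pieces of tautological bundles tensored with exterior powers of the standard representation, and whose differentials are explicit monomial matrices. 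The task is to show this complex has homology in exactly one degree on each chart, and that the degree is the same on overlapping charts.

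The heart of the matter is therefore a local homological computation controlled by the combinatorics of the CT-subdivision. I expect the key mechanism to be that, chart by chart, the relevant Koszul-type complex can be broken into pieces according to the monomial degrees appearing in the tautological bundles, and that the CT-subdivision is precisely the device that makes each such piece either exact or a single module — this is the analogue of the "Reid's recipe marking" arguments in \cite{Logvinenko-ReidsRecipeAndDerivedCategories}, but now uniform across non-isolated singularities. Gluing is handled by noting that $\Psi(\mathcal{O}_0 \otimes \chi)$ is a globally defined object, so if its restriction to each chart is a sheaf in degree $d_\sigma$, the degrees must agree on intersections and hence $d_\sigma$ is globally constant; one then defines $\mathcal{F}_\chi$ as the unique nonzero cohomology sheaf.

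The step I expect to be the main obstacle is showing the local complex is concentrated in one degree \emph{uniformly}, i.e. checking that the CT-subdivision charts are fine enough that no chart sees cohomology in two degrees simultaneously. In the isolated-singularity case this was delicate already; here the non-isolated strata mean the exceptional locus contains compact divisors meeting along curves, so $Z_\chi$ itself may be a reducible chain of divisors, and one must verify that the local computations on the charts covering such a chain all return sheaves in the \emph{same} cohomological degree rather than, say, a divisor contribution in one degree and a curve contribution in another. Controlling this will require a careful case analysis organised by the role of $\chi$ in Reid's recipe (whether $\chi$ marks a vertex, an edge, or nothing in the toric picture), using the CT-subdivision to reduce each case to a manageable local model.
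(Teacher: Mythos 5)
Your setup is essentially the paper's: transforming the ($\chi$-twisted) Koszul resolution of $\mathcal{O}_0$ produces exactly the skew-commutative cube of tautological line bundles $\hex(\chi^{-1})_{\tilde{\mathcal{M}}}$ of \S\ref{section-psi-O-chi-and-skew-commutative-cubes-of-line-bundle}, and Lemma \ref{lemma-cube_cohomology} already gives closed formulas for its cohomology sheaves in terms of vanishing divisors (which, together with the non-triviality of $\chi$, kills $\mathcal{H}^{-2}$ and exhibits each surviving cohomology sheaf as a line bundle on an explicit subscheme, so the ``pushforward'' clause is indeed the easy part). The genuine gap is in your local-to-global step. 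You argue: if the restriction to each chart is a sheaf in a single degree $d_\sigma$, then the $d_\sigma$ agree on overlaps and are therefore globally constant. This fails precisely in the dangerous case, namely when $\mathcal{H}^0(T^\bullet)$ and $\mathcal{H}^{-1}(T^\bullet)$ are both non-zero but have \emph{disjoint} supports: every chart then sees at most one cohomology sheaf, no overlap ever sees both, and your argument concludes nothing. Since the exceptional locus here is a reducible union of divisors and curves spread across many charts, ruling out this scenario is the entire content of the theorem, and no chart-by-chart computation can do it.

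What the paper's proof actually supplies is a \emph{global} rigidity statement about the CT-subdivision. By Cor.~\ref{cor-D1-D2-D3-via-CT-subdivision}, $\mathcal{H}^0 \neq 0$ forces some basic triangle to touch all three $T$-areas; Prop.~\ref{prps-chi-in-the-socle} (the socle proposition, which is the new geometric input) upgrades this to the existence of a $\chi$-marked vertex or edge touching all three $T$-areas; and Cor.~\ref{cor-marked-by-chi-lies-on-wedge-or-a-tip} together with the classification of $\chi$-marked loci then pins down the shape of the \emph{entire} CT-subdivision (``meeting point'' type, or a single one-edge wedge tip or snapped leg), which by Remark~\ref{remark-Hminus1-wedge-or-snapped-leg} and Cors.~\ref{cor-gcd-D_2^3-D_3^2-etc-via-CT-subdivisions}--\ref{cor-Z_23-Z_13-Z_12-via-CT-subdivisions} leaves no room anywhere on $\Delta$ for the configurations supporting $\mathcal{H}^{-1}$. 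To repair your proposal you would need an analogue of this global dichotomy; the missing ingredient is precisely the statement that a basic triangle whose torus-fixed $G$-cluster has $\chi$ in its socle must contain a $\chi$-marked vertex or edge, which ties the local homological behaviour on one chart to the global combinatorics of Reid's recipe.
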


Our second result describes explicitly $\Psi(\mathcal{O}_0 \otimes \chi)$ 
in terms of Reid's recipe. The variety $Y$ is a toric variety
and its toric fan defines a triangulation $\Sigma$ of 
the junior simplex $\Delta$, cf. Section
\ref{section-ghilb-and-toric}. Reid's recipe marks each interior 
vertex and each interior edge of $\Sigma$ with one or two non-trivial 
characters of $G$, cf. Section \ref{subsection-reids-recipe}.  

Craw~\cite{Craw-AnexplicitconstructionoftheMcKaycorrespondenceforAHilbC3}
proves that every non-trivial $\chi$ marks in $\Sigma$ either 
a unique vertex, a unique edge, a single chain of edges or 
three chains of edges meeting in a vertex. We give a worked example
illustrating the results of this paper for $G = \frac{1}{15}(1,5,9)$ in Section
$\ref{section-worked-examples}$. In particular, there is a picture
of Reid's recipe in Fig.\ \ref{figure-34b}. 
There are further examples  
in \cite[\S 6]{Kinosaki-97}, \cite[\S 3]{Craw-AnexplicitconstructionoftheMcKaycorrespondenceforAHilbC3} and \cite[\S 1]{CautisLogvinenko}. 

The vertices and edges of $\Sigma$ correspond to the exceptional toric 
divisors and curves of $Y$. A chain of edges of $\Sigma$ corresponds 
to a chain of divisors on $Y$: the edges correspond to the curves in 
which the divisors intersect.  We say that $\chi$ marks 
a divisor, a curve or a chain of divisors, if it marks the corresponding 
vertex, edge or chain of edges. 

Finally, to deal with the trivial character $\chi_0$ which did not 
appear in Reid's recipe, we need to consider the fibre $\zerofibre$ 
of $Y$ over $0\in \CC^3/G$. In general, $\zerofibre$ is not
equidimensional and we split it up into $\zerofibre_1$ and 
$\zerofibre_2$, scheme-theoretic unions of its $1$- and $2$-dimensional 
components. 

\begin{theorem}[Derived Reid's recipe]
\label{theorem-derived-reids-recipe}
Let $G \subset \gsl_3(\mathbb{C})$ be a finite abelian subgroup
and let $\chi \in G^\vee$. Then $\mathcal{H}^i(\Psi(\mathcal{O}_0
\otimes \chi)) = 0$ unless $i = 0,-1,-2$. Moreover, one of the following
holds:
\begin{footnotesize} 
\begin{align*}
\begin{array}{|c|c|c|c|} 
\hline 
\text{Reid's recipe} 
&
\mathcal{H}^{-2}
&
\mathcal{H}^{-1}
&
\mathcal{H}^{0}
\\
\hline
\chi \text{ marks a single divisor } E 
&
0
&
0
&
\mathcal{L}^{-1}_\chi \otimes \mathcal{O}_E 
\\
\hline
\chi \text{ marks a single curve } C 
&
0
&
0
&
\mathcal{L}^{-1}_\chi \otimes \mathcal{O}_{C}
\\
\hline
\chi \text{ marks a chain of divisors } 
& &
\mathcal{L}^{-1}_\chi(- E - F) \otimes \mathcal{O}_Z, &
\\
\text{ which starts at a divisor } E 
&
0
& 
\text{ where $Z$ is the reduced union of }
&
0
\\
\text{ and terminates at a divisor } F
& 
& 
\text{ the internal divisors of the chain } 
&
\\
\hline
\chi \text{ marks three chains of divisors,} 
& 
&
&
\\
\text{which start at $E_x$, $E_y$ and $E_z$ } &
0
& 
\mathcal{L}^{-1}_\chi(- E_x - E_y -E_z) \otimes \mathcal{V}_Z^\ddagger
&
0
\\
\text{and meet at a divisor } P
& 
& 
&
\\
\hline
\chi \text{ marks nothing ($\chi = \chi_0$)}
&
\omega_{\zerofibre_2} ~^\dagger
& 
\omega_{\zerofibre_1}(\zerofibre_2) ^\dagger 
&
0
\\
\hline
\end{array}
\end{align*}

$\dagger$: Here $\omega_{\zerofibre_i}$ is the dualizing sheaf
of $\zerofibre_i$. In fact, $\Psi(\mathcal{O}_0 \otimes \chi_0)$ 
is the dualizing complex $\boldomega_{\zerofibre}$ of $\zerofibre$, 
cf. \S \ref{section-case-of-chi=chi_0}.

$\ddagger$: The support of $\mathcal{V}_Z$ is the reduced union $Z$
of the internal divisors of the chains. Away from where the three chains 
meet $\mathcal{V}_Z \simeq \mathcal{O}_Z$, but on that locus it 
is locally free of rank $2$. To be precise,
$\mathcal{V}_Z$ is the cokernel of $\mathcal{O}_{Y}
\xrightarrow{E_{x} \oplus E_{y} \oplus E_{z}}
\mathcal{O}_{Z_{x}}(E_{x}) 
\oplus
\mathcal{O}_{Z_{y}}(E_{y}) 
\oplus
\mathcal{O}_{Z_{z}}(E_{z})$
where $Z_x = E_y P \cup E_z P$ and similarly for $Z_y$ and $Z_z$, 
cf. \S\ref{section-case-of-a-meeting-of-champions}
\end{footnotesize}
\end{theorem}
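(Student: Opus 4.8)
The plan is to represent $\Psi(\mathcal{O}_0 \otimes \chi)$ by an explicit four-term complex of tautological bundles on $Y$ and then to compute its cohomology sheaves chart by chart, assembling the global answer from the toric combinatorics via the CT-subdivision of Section~\ref{section-CT-subdivisions}. Concretely, the skyscraper $\mathcal{O}_0$ on $\CC^3$ carries the $G$-equivariant Koszul resolution $\bigwedge^{\bullet} V^{\vee} \otimes \mathcal{O}_{\CC^3}$, where $V = \CC^3$ is the given representation; since $G \subset \gsl_3(\CC)$ we have $\bigwedge^3 V^{\vee} \cong \chi_0$ and $\bigwedge^2 V^{\vee} \cong V$. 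Writing $\chi_1,\chi_2,\chi_3$ for the characters on the three coordinate lines of $V$ (so $\chi_1\chi_2\chi_3 = \chi_0$), twisting the resolution by $\chi$, applying the exact functor $\Psi$, and using $\Psi(\mathcal{O}_{\CC^3} \otimes \rho) \cong \mathcal{L}_\rho^{\vee}$, one obtains a quasi-isomorphism between $\Psi(\mathcal{O}_0 \otimes \chi)$ and the complex
\[
\mathcal{L}_\chi^{\vee} \longrightarrow \bigoplus_{i=1}^{3} \mathcal{L}_{\chi\chi_i}^{\vee} \longrightarrow \bigoplus_{i=1}^{3} \mathcal{L}_{\chi\chi_i^{-1}}^{\vee} \longrightarrow \mathcal{L}_\chi^{\vee}
\]
in cohomological degrees $-3,-2,-1,0$, in which the differentials are the canonical maps of tautological bundles induced by multiplication by the coordinate functions on $\CC^3$; these are exactly the arrows of the McKay-quiver representation carried by $\pi_{Y*}\mathcal{O}_{\mathcal{Z}}$, which for abelian $G$ is known explicitly from the descriptions of $\ghilb(\CC^3)$ recalled in Section~\ref{section-ghilb-and-toric}.

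Because $\Psi(\mathcal{O}_0 \otimes \chi)$ is supported on the exceptional locus for $\chi \neq \chi_0$, this complex is exact on a dense open subset of $Y$; in particular its leftmost differential is generically injective, and since its kernel would be a torsion subsheaf of the torsion-free bundle $\mathcal{L}_\chi^{\vee}$ it vanishes, so $\mathcal{H}^{-3} = 0$ and all cohomology lies in degrees $-2,-1,0$. For $\chi \neq \chi_0$ Theorem~\ref{theorem-transform-is-a-sheaf-for-non-trivial-chi} moreover guarantees that this cohomology is concentrated in a single degree and is a pushforward from $Z_\chi = \supp \Psi(\mathcal{O}_0 \otimes \chi)$, so it remains to pin down that degree, the subvariety $Z_\chi$, and the sheaf on it. This is done locally: each maximal cone $\sigma$ of the fan gives an affine chart $U_\sigma \cong \CC^3$ of $Y$ on which every $\mathcal{L}_\rho$ trivializes with a canonical monomial generator attached to a lattice point, so the differentials above become explicit monomial matrices and the cohomology of the complex over $U_\sigma$ reduces to a Koszul-type homology calculation. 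The CT-subdivision is precisely the device that organises the toric data so that this local calculation is uniform on each piece: over charts disjoint from $Z_\chi$ the complex is exact, and otherwise its homology is concentrated either in degree $0$ on a coordinate subspace (the single-divisor and single-curve cases) or in degree $-1$ (the chain cases and the meeting-of-three-chains case, where the three incoming multiplication maps $E_x,E_y,E_z$ produce the cokernel presentation of $\mathcal{V}_Z$ recorded in the footnote).

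It then remains to glue the local answers and identify them with Reid's recipe. The support $Z_\chi$ is the union of the charts on which the complex fails to be exact, and matching this against Craw's classification of the vertex/edge configuration marked by $\chi$ (Section~\ref{subsection-reids-recipe}) identifies $Z_\chi$ with the marked divisor, curve, chain of divisors, or triple of chains. The twist by $\mathcal{L}_\chi^{-1}$, and the further twists by $-E-F$ and by $-E_x-E_y-E_z$, are recovered by tracking how the local monomial generators of the relevant tautological bundles transform on the overlaps $U_\sigma \cap U_{\sigma'}$, so the essential input here is the precise toric description of the $\mathcal{L}_\rho$. The case $\chi = \chi_0$ is treated separately: there the complex computes the $G$-invariants of $R\pi_{Y*}$ applied to the derived restriction of $\mathcal{O}_{\mathcal{Z}}$ to the fibre over $0 \in \CC^3/G$, a complex supported on $\zerofibre$, and one identifies the outcome with the dualizing complex $\boldomega_{\zerofibre}$ using Grothendieck--Serre duality for the proper morphism $\pi\colon Y \to \CC^3/G$ together with the triviality of $\omega_Y$ (crepancy); splitting $\boldomega_{\zerofibre}$ along the one- and two-dimensional components $\zerofibre_1$ and $\zerofibre_2$ then yields the first row of the table, cf. Section~\ref{section-case-of-chi=chi_0}.

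The main obstacle is this gluing step in the genuinely non-isolated configurations --- a chain of divisors, and three chains of divisors meeting at a divisor (Section~\ref{section-case-of-a-meeting-of-champions}). There the exceptional geometry of $Y$ is intricate, the tautological bundles vary along the chain, and one must show that the local Koszul homologies glue to a single sheaf in a single cohomological degree carrying exactly the predicted twist --- in particular that over the meeting locus the answer is the rank-two locally free sheaf $\mathcal{V}_Z$ rather than an object with cohomology in two degrees. Controlling this is what the CT-subdivision is designed for, and the real technical content of the proof is establishing that such a subdivision exists and is at once fine enough for every local computation to be uniform and coarse enough for the local answers to patch together into the asserted global sheaves.
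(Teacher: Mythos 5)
Your plan is essentially the paper's own proof: the four-term Koszul complex of duals of tautological bundles is exactly the total complex of the skew-commutative cube $\hex(\chi^{-1})_{\tilde{\mathcal{M}}}$ used in \S\ref{section-psi-O-chi-and-skew-commutative-cubes-of-line-bundle}, the reduction to a single cohomological degree is Theorem \ref{theorem-transform-is-a-sheaf-for-non-trivial-chi}, the case analysis against Reid's recipe is Proposition \ref{prps-four-configurations-for-chi-marked-edges} together with the CT-subdivision corollaries, and the $\chi=\chi_0$ case is handled by the same Grothendieck duality argument as \S\ref{section-case-of-chi=chi_0}. The only cosmetic difference is that the paper packages your chart-by-chart Koszul homology computation into the closed-form vanishing-divisor formulas of Lemma \ref{lemma-cube_cohomology}, resorting to genuinely local arguments only in the ``meeting of champions'' and ``long side'' cases.
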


Thus, roughly, the \em derived Reid's recipe \rm 
assigns to each non-trivial $\chi$ one of the following: 
\begin{itemize}
\item An exceptional divisor or an exceptional curve.
\item A chain of exceptional divisors with two marked curves on the two endpoint divisors. 
\item A tree of exceptional divisors with one fork and three branches, together with three marked curves on the three endpoint divisors.
\end{itemize}

We close with two remarks. First, the sheaf data 
is necessary. For instance, when $G = \frac{1}{6}(1,2,3)$
there exist two representations which correspond to the same 
divisor but with different marked curves.  Secondly, the representations 
that correspond to a single divisor or a curve are called 
\em essential\rm. They possess an independent characterisation in 
terms of moduli of quiver representations 
\cite{Takahashi-OnEssentialRepresentationsInSL3CMcKayCorrespondence}. 

\medskip
\noindent\textbf{Acknowledgements.}  
S.C. is supported by NSF grant DMS-1101439 and the Alfred P. Sloan
foundation, A.C. is supported by EPSRC grant EP/G004048/1 and T.L. 
is supported by EPSRC grant EP/H023267/1. We would like to thank Miles
Reid, Michael Wemyss, Yukari Ito, Keisuke Takahashi and Yuhi Sekiya
for useful discussions while writing this paper. 

\newpage
\section{Preliminaries}

\subsection{Action of $G$ on $\mathbb{C}^3$}
\label{section-prelims-action-of-G-on-C3}

Let $G$ be a finite subgroup of $\gsl_3(\mathbb{C})$. It 
acts naturally on $\mathbb{C}^3$ on the left. Let $\givrep$ 
be the corresponding representation. We identify 
$\givrep^\vee$ with the space of linear functions 
on $\mathbb{C}^3$.  Let $x_1$, $x_2$ and $x_3$ be the basis 
of $\givrep^\vee$ dual to the canonical basis of $\mathbb{C}^3$. 
Sometimes, we use $x$, $y$ and $z$ for $x_1$, $x_2$ and $x_3$. 
The (left) action of $G$ on $\givrep$ defines a \em right \rm action 
of $G$ on $\givrep^\vee$, which we make into a left action by
setting 
\begin{align}
\label{eqn-G-action-on-givrep-vee}
g\cdot m(v) = m(g^{-1} \cdot v) \quad \text{ for all }
   g \in G,\; m \in \givrep^\vee \text{ and } v \in \givrep.
\end{align}
This action extends naturally to an action on 
the symmetric algebra $R = S(\givrep^\vee) = \mathbb{C}[x, y, z]$. 
We give $\mathbb{C}^3$ the structure of a $G$-scheme
by identifying it with the closed points of $\spec R$. 

Denote by $G^{\vee}$ the character group 
$\homm_\mathbb{Z}(G,\mathbb{C}^*)$ of $G$.
A rational function $f \in K(\mathbb{C}^{3})$ is said to be 
a \em semi-invariant of $G$ \rm, or a \em $G$-eigenvector, \rm 
if there is $\chi \in G^\vee$ such that $g \cdot f = \chi(g) f$. 
We denote the weight $\chi$ of $f$ by $\kappa(f)$. 
Suppose $G$ is abelian, then $\mathbb{C}^3$ has a basis
of $G$-eigenvectors. Replacing $G$ by a conjugate subgroup 
of $\gsl_3(\mathbb{C})$ if necessary, we assume
that the canonical basis of $\mathbb{C}^3$ is one such, 
i.e. every Laurent monomial is a $G$-eigenvector. 

\subsection{$G$-$\hilb \mathbb{C}^3$ and its toric geometry} 
\label{section-ghilb-and-toric}

A \em $G$-cluster \rm is a $G$-invariant subscheme of $\mathbb{C}^3$
whose global sections are $G$-equivariantly isomorphic 
to the regular representation $\regrep$. 
The support of any $G$-cluster is a set-theoretic $G$-orbit, 
and we think of $G$-clusters as scheme-theoretic orbits of 
$G$ in $\mathbb{C}^3$. By \cite{BKR01} the fine moduli space
$G$-$\hilb \mathbb{C}^3$ of $G$-clusters together with its 
\em Hilbert-Chow map \rm 
$G\text{-}\hilb \mathbb{C}^3 \rightarrow \mathbb{C}^3/G$, 
which sends each $G$-cluster to the corresponding $G$-orbit, 
is a crepant resolution. For abelian $G$ this resolution 
is well understood in terms of toric geometry. Below
we summarize the main points, 
for more detail cf. \cite{Nak00}, \cite{CrawReid} 
or
\cite[\S3.1]{Logvinenko-Families-of-G-constellations-over-resolutions-of-quotient-singularities}. 

Let $\mathbb{Z}^3$ be the lattice of Laurent monomials, 
where we identify $m = (m_1, m_2, m_3)$ with the monomial 
$x_1^{m_1} x_2^{m_2} x_3^{m_3}$. Let $M \subset \mathbb{Z}^3$ 
be the sublattice of $G$-invariant monomials. 
The dual lattice of $G$-weights $L = \homm_\mathbb{Z}(M,\mathbb{Z})$
contains $\homm_\mathbb{Z}(\mathbb{Z}^3, \mathbb{Z})$ as a sublattice.
As $G$ is finite we have $L \subset \mathbb{Q}^3$ and 
write each $l \in L$ as a triplet $(l_1, l_2, l_3) \in \mathbb{Q}^3$. 
Let $\sigma_+$ denote the cone $\{(e_1, e_2, e_3)  \;|\; e_i \geq 0\}$ 
in $\mathbb{R}^3 \simeq L \otimes \mathbb{R}$. 
Then:
\begin{itemize}
\item $\mathbb{C}^3$ is defined as a toric variety by
the lattice $\mathbb{Z}^3$ and the cone $\sigma_+$.
\item $\mathbb{C}^3/G$ is defined by the lattice $L$ and the cone $\sigma_+$. 
\item A toric resolution of $\mathbb{C}^3/G$ is defined 
by the lattice $L$ and a fan that subdivides the cone $\sigma_+$ 
into regular subcones. 
\end{itemize}

Let the \em junior simplex \rm $\Delta$ be the triangle 
formed by cutting $\sigma_+$ by the hyperplane $\sum e_i = 1$. 
A fan $\mathfrak{F}$ which subdivides $\sigma_+$ 
into regular subcones is determined by how it 
triangulates $\Delta$. Let $\mathfrak{E}$ be 
the set of the generators of the rays of $\mathfrak{F}$. 
For any crepant resolution,  
$\mathfrak{E} = L \cap \Delta$ and moreover:
\begin{itemize}
\item The vertices of $\Sigma$ are the elements of $\mathfrak{E}$. 
For any vertex $e$ of $\Sigma$ we write $E_{e}$ for 
the toric divisor which corresponds to the ray $\left<e\right>$. 
\item The edges of $\Sigma$ correspond to the two-dimensional 
cones of $\mathfrak{F}$. Two vertices $e,f \in \mathfrak{E}$ are joined 
by an edge if and only if divisors $E_e$ and $E_f$ intersect. 
The intersection is then the torus-invariant curve that corresponds to
the cone $\langle e, f \rangle$, and this curve is isomorphic to
$\mathbb{P}^1$ or $\mathbb{A}^1$ according to whether or not 
the edge $\left(e, f\right)$ lies in the interior of $\Delta$.
\item The triangles of $\Sigma$ correspond to three-dimensional 
cones of $\mathfrak{F}$. A triangle $(e,f,g)$ corresponds 
to $E_e$, $E_f$ and $E_g$ intersecting at the torus fixed point 
$\left<e,f,g\right>$.
\item Each three-dimensional cone $\sigma$ in $\mathfrak{F}$ 
defines a toric affine chart $A_\sigma$ isomorphic to
$\mathbb{C}^3$. In terms of toric geometry, $A_\sigma$ is 
the union of the torus orbits defined by subcones of $\sigma$. 
\end{itemize} 

\cite{CrawReid} describes how to construct the triangulation $\Sigma$ 
of $\Delta$ corresponding to $Y = G$-$\hilb \mathbb{C}^3$. 
The toric divisors of $Y$ defined by vertices of $\Sigma$
divide into three groups:
\begin{itemize}
\item The corner vertices of $\Delta$ give the strict 
transforms of coordinate hyperplanes of $\mathbb{C}^3/G$. 
\item The vertices on the sides of $\Delta$ give 
the non-compact exceptional divisors. 
Each of these is a $\mathbb{P}^1$-fibration over one of 
the coordinate lines of $\mathbb{C}^3/G$, possibly degenerating 
over the origin to a pair of $\mathbb{P}^1$s intersecting transversally.
\item The vertices in the interior of $\Delta$
give the compact exceptional divisors. These lie over the origin 
of $\mathbb{C}^3/G$ and are either a $\mathbb{P}^2$, a rational scroll 
blown up in $0$, $1$ or $2$ points, or a del Pezzo surface of degree
six \cite[Cor.\ 1.4]{CrawReid}.  
\end{itemize}

Let $\zerofibre$ denote the fiber of $Y$ over the origin 
of $\mathbb{C}^3/G$. In general, $\zerofibre$ is just a section of the
exceptional set $\Except(Y)$. It is a reducible variety 
with a two-dimensional and a one-dimensional stratum 
which we denote by $\zerofibre_2$ and $\zerofibre_1$. 
The irreducible components of $\zerofibre_2$ are 
the compact exceptional divisors of $Y$
defined by the internal vertices of $\Delta$.  
The non-compact exceptional divisors of $Y$ 
each meet $\zerofibre$ in a $\mathbb{P}^1$ or a pair 
of $\mathbb{P}^1$s. The irreducible components of $\zerofibre_1$ 
are those of these $\mathbb{P}^1$s which 
do not lie on any of the divisors in $\zerofibre_{2}$. 
These are the curves defined by the edges of 
$\Sigma$ which cross the interior of $\Delta$ but 
whose endpoints lie on the sides of $\Delta$. 

\subsection{Reid's recipe}
\label{subsection-reids-recipe}

Reid's recipe \cite{Kinosaki-97},
\cite{Craw-AnexplicitconstructionoftheMcKaycorrespondenceforAHilbC3}
is an algorithm to construct the cohomological version of 
the McKay correspondence for abelian $G \subset \gsl_3(\mathbb{C})$.
It begins with a toric geometry computation which marks 
internal edges and vertices of the $G$-$\hilb$ triangulation $\Sigma$ 
with characters of $G$. This marking is a key to stating our main 
result, Theorem \ref{theorem-derived-reids-recipe}, so below we give 
a brief summary of its construction. See Section 
\ref{section-example-the-group-and-the-resolution} and 
Figure \ref{figure-34} for a worked example. 

First we mark each edge $(e,f)$ in $\Sigma$ 
according to the following rule. The one-dimensional 
ray in $M$ perpendicular to the hyperplane $\left< e,f\right>$ in $L$ has two
primitive generators: $\frac{m_1}{m_2}$ and $\frac{m_2}{m_1}$, where
$m_1, m_2$ are co-prime regular monomials in $\regring$. 
As $M$ is the lattice of $G$-invariant Laurent monomials, $m_1$ and $m_2$ have
to be of the same character $\chi$ for some $\chi \in G^\vee$. We say
that $(e,f)$ is \it carved out \rm by the ratio $m_1 : m_2$
(or $m_2 : m_1$) and \it mark it \rm by $\chi$.  

Then we mark each internal vertex $e$ of $\Sigma$ according to whether
the corresponding exceptional divisor is a $\mathbb{P}^2$, 
a rational scroll blown-up in $0$, $1$ or $2$ points or a del Pezzo 
surface $dP_6$:  
\begin{enumerate}
\item If $E_e$ is a $\mathbb{P}^2$, then there are three
edges incident to $e$ in $\Sigma$. By
\cite[Lemma 3.1]{Craw-AnexplicitconstructionoftheMcKaycorrespondenceforAHilbC3} 
they lie on three lines joining $e$ to the three corner vertices 
of $\Delta$ and are marked with same character $\chi \in G^\vee$. 
Reid's recipe prescribes for $E_e$ to be marked with 
character $\chi \cdot \chi$.  

\item If $E_e$ is a rational scroll blown-up in $0$, $1$ or $2$
points then there are $4$, $5$ or $6$ edges incident to $e$ in
$\Sigma$, respectively. By 
\cite[Lemma 3.2-3.3]{Craw-AnexplicitconstructionoftheMcKaycorrespondenceforAHilbC3} 
two of these lie on a straight line joining $e$ to a corner vertex 
of $\Delta$ and are marked by same character $\chi \in G^\vee$. 
Of the remaining ones exactly two are marked by the same character.
Denote this character by $\chi' \in G^\vee$, then Reid's recipe 
prescribes for $E_e$ to be marked with character $\chi \cdot \chi'$. 

\item  If $E_e$ is a del Pezzo surface $dP_6$, then there are $6$ 
edges incident to $e$ in $\Sigma$. These lie on three straight lines
which intersect at $e$. By 
\cite[Lemma 3.4]{Craw-AnexplicitconstructionoftheMcKaycorrespondenceforAHilbC3} 
the two toric projections $E_e \rightarrow \mathbb{P}^2$ are given 
by monomial maps $(m_0 : m_1 : m_2)$ and $(m'_0 : m'_1 : m'_2)$
with $m_i \in R$ being all of same character $\chi$
and $m'_i \in R$ being all of same character $\chi'$. 
Reid's recipe prescribes $E_e$ to be marked with two characters:
$\chi$ and $\chi'$. 
\end{enumerate}
 
\subsection{Universal family $\mathcal{M}$ and its dual $\widetilde{\mathcal{M}}$}
\label{section-universal-family-and-its-dual}
The fine moduli space $Y = G$-$\hilb \mathbb{C}^3$ 
comes equipped with the universal family 
$Z \subset Y \times \mathbb{C}^3$ of $G$-clusters. 
Let $G$ act trivially on $Y$, then we can speak 
of $G$-equivariant sheaves on $Y \times \mathbb{C}^3$. Let 
$\mathcal{M} \in \cohcat^G ( Y \times \mathbb{C}^n)$ denote
the structure sheaf of $Z$. It is a coherent $G$-sheaf flat
over $Y$ whose fiber over any point of $Y$ is a finite-length
$G$-sheaf on $\mathbb{C}^3$. Therefore 
$\pi_{Y *} \mathcal{M} \in \cohcat^G(Y)$ is a locally free 
$G$-sheaf of finite rank on $Y$. Since $G$ acts
trivially on $Y$, we can decompose $\pi_{Y *} \mathcal{M}$ 
into $G$-eigensheaves 
as $\bigoplus_{\rho \in \irr G} \mathcal{L}_\rho \otimes \rho$ 
where $\mathcal{L}_\rho$ are locally-free sheaves
of rank $\dim(\rho)$. In the literature $\mathcal{L}_\rho$ are 
known as \em tautological \rm or \em Gonzalez-Sprinberg 
and Verdier (GSp-V) \rm sheaves. 

The equivalence $\Phi\colon D(Y) \rightarrow D^G(\mathbb{C}^3)$ 
of \cite{BKR01} is a Fourier-Mukai transform whose Fourier-Mukai
kernel is $\mathcal{M}$. To compute derived Reid's recipe we 
need its inverse $\Psi\colon D^G(\mathbb{C}^3) \rightarrow D(Y)$.
\cite{CautisLogvinenko} showed that $\Psi$ is also a
Fourier-Mukai transform and that its Fourier-Mukai kernel 
$\widetilde{\mathcal{M}}$ is also a flat family of finite-length
sheaves on $\mathbb{C}^3$ which can be understood as follows.

A \em $G$-constellation \rm is a coherent $G$-sheaf on $\mathbb{C}^3$
whose space of global sections is isomorphic to $\regrep$
as a representation of $G$. We view $G$-clusters as a subclass 
of $G$-constellations by looking at their structure sheaves.
From the point of view of the McKay correspondence we are only 
interested in $G$-constellations which still make sense as generalized
$G$-orbits, i.e. whose support in $\mathbb{C}^3$ is a single $G$-orbit. 
Flat families of such $G$-constellations over $Y = G\text{-}\hilb
\mathbb{C}^3$ whose Hilbert-Chow map $Y \rightarrow \mathbb{C}^3/G$ 
coincides with the resolution morphism are called \em $\gnat$-families\rm. 
Tautologically, the universal family $\mathcal{M}$ of $G$-clusters 
is a $\gnat$-family. Given a $\gnat$-family $\mathcal{F}$ over $Y$  
we denote by $\mathcal{F}_p$ the $G$-constellation parametrised 
in $\mathcal{F}$ by a point $p \in Y$. 

Since $\mathbb{C}^3$ is affine, any $G$-constellation
$\mathcal{W}$ is determined by its space
of global sections $\Gamma(\mathcal{W})$ with the natural  
action of $\twalg$ on it. Since $\regrep^* \simeq \regrep$, 
the complex vector space dual $\Gamma(\mathcal{W})^*$ with 
the dual $\twalg$-action defines a new $G$-constellation
$\widetilde{\mathcal{W}}$ called \em the dual of \rm $\mathcal{F}$. 
Given a $\gnat$-family $\mathcal{F}$ we define similarly 
\em the dual $\gnat$-family \rm $\widetilde{\mathcal{F}}$ which is 
a fibre-wise dual of $\mathcal{F}$. 
In \cite[\S2]{CautisLogvinenko} we prove an alternative
description of dualizing a $G$-constellation:
$\widetilde{\mathcal{W}} = \mathcal{W}^\vee[n]$, 
where $\mathcal{W}^\vee$ is the derived dual
$\rder \shhomm_Y(\mathcal{W}, \mathcal{O}_Y)$. 
In other words, $\widetilde{\mathcal{W}}$ is the unique
non-zero cohomology sheaf of $\mathcal{W}^\vee$, located
in degree $n$. Similarly, $\widetilde{\mathcal{F}} = \mathcal{F}^\vee[n]$ 
\cite[Prop.\ 2.1]{CautisLogvinenko}. 
It follows naturally
\cite[Lemma 4]{Logvinenko-DerivedMcKayCorrespondenceViaPureSheafTransforms}
that on $Y = G\text{-}\hilb \mathbb{C}^3$ the inverse $\Psi$ of
the equivalence $\Phi$ of \cite{BKR01} is the Fourier-Mukai transform  
defined by the dual family $\widetilde{\mathcal{M}}$. 

\subsection{The McKay quiver of $G$ and its representations}
\label{section-mckay-quiver-of-G}

To any finite subgroup $G$ of $\gl_n(\mathbb{C})$ we associate 
a quiver $\mckquiv$ called \it the McKay quiver of $G$\rm. For 
an abelian $G \subset \gsl_3(\mathbb{C})$ quiver $Q(G)$ has 
as its vertices the characters $\chi \in G^\vee$ of $G$ and 
from every vertex $\chi$ there are three arrows going to 
vertices $\kappa(x_i) \chi$ for $i = 1,2,3$.
For each arrow $(\chi, x_i) \colon \chi \xrightarrow{x_{i}} \kappa(x_{i})\chi$
we fix a choice of basis for (one-dimensional) space 
$G$-$\homm_{\mathbb{C}}(\chi \otimes \mathbb{C} x_{i}, \chi \kappa(x_{i}))$.
This can be thought of as fixing a choice of all the 
Schur's lemma isomorphisms we need. 

There is a standard planar embedding of $Q(G)$ into a real two
dimensional torus $T_G$. It was first constructed 
by Craw and Ishii in \cite[\S10.2]{Craw-Ishii-02}. This embedding 
tesselates $T_G$ into $2|G|$ regular triangles. Near each vertex 
$\chi$ of $Q(G)$ this embedding looks as on Figure \ref{figure-15}. 
Note that since $\mckquiv$ is embedded in a real two-torus, 
vertices that appear distinct in Figure \ref{figure-15} 
need not be; the extreme case is that of $G$ being the trivial group, 
in such case all seven vertices of Figure \ref{figure-15} coincide.

We denote by $\hex(\chi)$ the subquiver of $\mckquiv$ which consists 
of the six triangles which meet at $\chi$. 
\begin{figure}[h] \begin{center}
\includegraphics[scale=0.17]{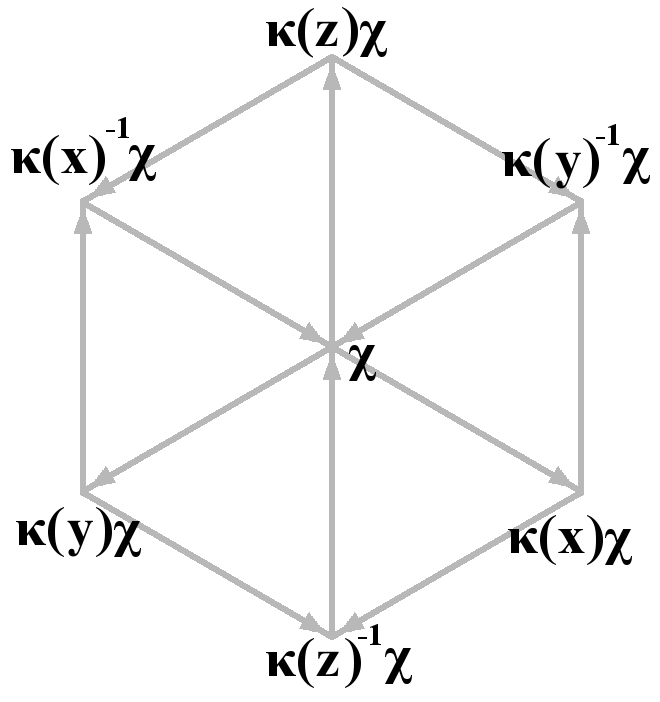} \end{center}
\caption{\label{figure-15} Subquiver $\hex(\chi)$ of $Q(G)$ 
(embedded in a real $2$-torus)}
\end{figure}

A representation of $\mckquiv$ consists of a vector space $V_\chi$
for each vertex $\chi$ and a linear map 
$\alpha_{\chi,i}: V_{\chi} \rightarrow V_{\chi \kappa(x_i)}$
for each arrow $(\chi,i)$. The category of 
the representations of $Q(G)$ in which  
the linear maps representing each square 
\begin{minipage}[c]{2in}
\xymatrix @R=0.25cm {
& 
\underset{\chi \kappa(x_i)}{\bullet}
\ar@<1ex>[dr]^{x_j}
& 
\\
\underset{\chi}{\bullet}
\ar@<1ex>[ur]^{x_i}
\ar@<1ex>[dr]_{x_j}
&
&  
\underset{\chi \kappa(x_i x_j)}{\bullet}
\\
&  
\underset{\chi\kappa(x_j)}{\bullet}
\ar@<1ex>[ur]_{x_i}
& 
} 
\end{minipage}
commute is equivalent to the category of quasi-coherent $G$-sheaves on 
$\mathbb{C}^n$. Given any quasi-coherent $G$-sheaf 
$\mathcal{M}$ on $\mathbb{C}^n$
we define its \em associated representation $\mckquiv_\mathcal{M}$ \rm 
as follows. The space of global sections $\Gamma(\mathcal{M})$ of
$\mathcal{M}$ is a $\twalg$-module and the evaluation map 
\begin{align}
\label{eqn-canonical-G-eigenspace-decomposition}
\bigoplus_{\chi \in G^\vee}
G\text{-}\homm_\mathbb{C}\left(\chi,\Gamma(\mathcal{M})\right) \otimes \chi 
\;\overset{\sim}{\longrightarrow}\;
\Gamma(\mathcal{M}) 
\end{align}
is an isomorphism which decomposes
$\Gamma(\mathcal{M})$ into $G$-eigenspaces. We define 
$\mckquiv_\mathcal{M}$ by setting 
$V_\chi = G\text{-}\homm_\mathbb{C}(\chi,\Gamma(\mathcal{M}))$, 
then under \eqref{eqn-canonical-G-eigenspace-decomposition}
the actions of $x_i$ on $\Gamma(\mathcal{M})$ become maps  
$$ \left( \bigoplus_{\chi \in G^\vee}
V_\chi \otimes \chi \right) \otimes \mathbb{C} x_i 
\; \xrightarrow{\text{action of } x_i}\;
\bigoplus_{\chi \in G^\vee} V_\chi \otimes \chi $$
and the choice of Schur's lemma isomorphisms we've fixed above
makes these into the requisite linear maps 
$\alpha_{\chi, x_i}\colon V_\chi \rightarrow V_{\chi \kappa(x_i)}$.  

Similarly, given a $\gnat$-family $\mathcal{F} \in \cohcat^G(Y \times
\mathbb{C}^n)$ the direct image $\pi_{Y *} \mathcal{F}$ is a sheaf
of $\mathcal{O}_Y\otimes_{\mathbb{C}}\left(\twalg\right)$-modules 
on $Y$ which is
locally-free as an $\mathcal{O}_Y$-module. Since the action of $G$ on 
$Y$ is trivial, we can decompose $\pi_{Y *} \mathcal{F}$ into
$G$-eigensheaves as $\bigoplus \mathcal{L}_\chi \otimes \chi$, 
where each $\mathcal{L}_\chi$ is a line bundle on $Y$. We then
define the \em associated representation $\mckquiv_{\mathcal{F}}$ \rm  
by representing each vertex $\chi$ of $\mckquiv$
by $\mathcal{L}_\chi$ and representing each arrow
$(\chi, x_i)$ of $\mckquiv$ by the map 
$\alpha_{\chi,i} \colon \mathcal{L}_{\chi} \rightarrow
\mathcal{L}_{\chi \kappa(x_i)}$, which is obtained as above:
taking the action of $x_i$ on $\pi_{Y *} \mathcal{F}$ and
restricting it to $\mathcal{L}_\chi$ via our choice of 
Schur's lemma isomorphisms.

We say that in $\mathcal{F}$ an arrow $(\chi,x_i)$ 
vanishes at a point $p \in Y$ \rm if so does
map $\alpha_{\chi,x_i}$ of $\mckquiv_{\mathcal{F}}$. 
The locus of all such points is a divisor called the 
\em vanishing divisor $B_{\chi,x_i}$ of $(\chi, x_i)$ 
in $\mathcal{F}$\rm. Note that divisors $B_{\chi,x_i}$ are 
independent of our choice of 
Schur's lemma isomorphisms, since a different choice would 
amount to multiplying each $\alpha_{\chi, x_i}$ by a non-zero scalar. 

For any abelian $G \subset \gsl_3(\mathbb{C})$ any
$\gnat$-family $\mathcal{F}$ on $Y$ can be written down
numerically using the classification of $\gnat$-families 
introduced in \cite{Logvinenko-Natural-G-Constellation-Families}. 
The vanishing divisors $B_{\chi,x_i}$ of $\mckquiv_{\mathcal{F}}$ can then 
be explicitly computed in terms of the numerical data defining $\mathcal{F}$
\cite[\S4.6]{Logvinenko-DerivedMcKayCorrespondenceViaPureSheafTransforms}. 
A detailed example of such computation for the dual family $\widetilde{M}$ 
of the universal family of $G$-clusters is given
in \cite[\S6]{CautisLogvinenko}.

\subsection{Transforms $\Psi(\mathcal{O}_0 \otimes \chi)$ and
skew-commutative cubes of lines bundles}
\label{section-psi-O-chi-and-skew-commutative-cubes-of-line-bundle}

Derived Reid's recipe assigns to each character $\chi \in G^\vee$ 
the object $\Psi(\mathcal{O}_0 \times \chi)$ of $D(Y)$. 
It was shown in \cite[\S2]{CautisLogvinenko}
that $\Psi(\mathcal{O}_0 \otimes \chi)$ 
is isomorphic to the total complex of the skew-commutative cube 
of line bundles obtained
from the subrepresentation $\hex(\chi^{-1})_{\widetilde{\mathcal{M}}}$ of 
$\mckquiv_{\widetilde{\mathcal{M}}}$ in the following way.

The generalities on cubes of line bundles are laid out in 
\cite{CautisLogvinenko}, \S3. For our purposes, a cube of 
line bundles on $Y$ is the following 
collection of line bundles and maps between them:
\begin{align}
\label{eqn-abstract-cube-of-line-bundles}
\xymatrix{ & \mathcal{L}_{23} \ar"2,5"^<<{\alpha^3_{2}}
\ar"3,5"_<<{\alpha^2_{3}} & & & \mathcal{L}_1 \ar"2,6"^{\alpha^1} & \\
\mathcal{L}_{123} \ar"1,2"^{\alpha^1_{23}} \ar"2,2"^{\alpha^2_{13}}
\ar"3,2"_{\alpha^3_{12}} & \mathcal{L}_{13}
\ar"1,5"^>>>>{\alpha^3_{1}} \ar"3,5"_>>>>>{\alpha^1_{3}} & & &
\mathcal{L}_2 \ar"2,6"^{\alpha^2} & \mathcal{L} \\ & \mathcal{L}_{12}
\ar"1,5"^<<{\alpha^2_{1}} \ar"2,5"_<<{\alpha^1_{2}} & & &
\mathcal{L}_3 \ar"2,6"_{\alpha^3} & } 
\end{align}
If the cube in \eqref{eqn-abstract-cube-of-line-bundles} is
skew-commutative, we define its \em total complex $T^\bullet$ \rm to be
$$ 0 
   \;\rightarrow\; 
   \mathcal{L}_{123} 
   \;\rightarrow\; 
   \mathcal{L}_{23} \oplus \mathcal{L}_{13} \oplus \mathcal{L}_{12}
   \;\rightarrow\; 
   \mathcal{L}_{1} \oplus \mathcal{L}_{2} \oplus \mathcal{L}_{3}
   \;\rightarrow\; 
   \underline{\mathcal{L}}
   \;\rightarrow\; 
   0 
$$
The differential maps of $T^\bullet$ are defined by summing up 
all the appropriate maps of the cube. 

On Fig.\ \ref{figure-13a} we draw the subquiver $\hex(\chi^{-1})$ of 
the McKay quiver $\mckquiv$. We arrange the subrepresentation 
$\hex(\chi^{-1})_{\widetilde{\mathcal{M}}}$ into a commutative cube 
of line bundles as depicted on Fig.\ \ref{figure-13b}. 
\begin{figure}[!htb] \centering 
\subfigure[The subquiver $\hex(\chi^{-1})$] { \label{figure-13a}
\includegraphics[scale=0.29]{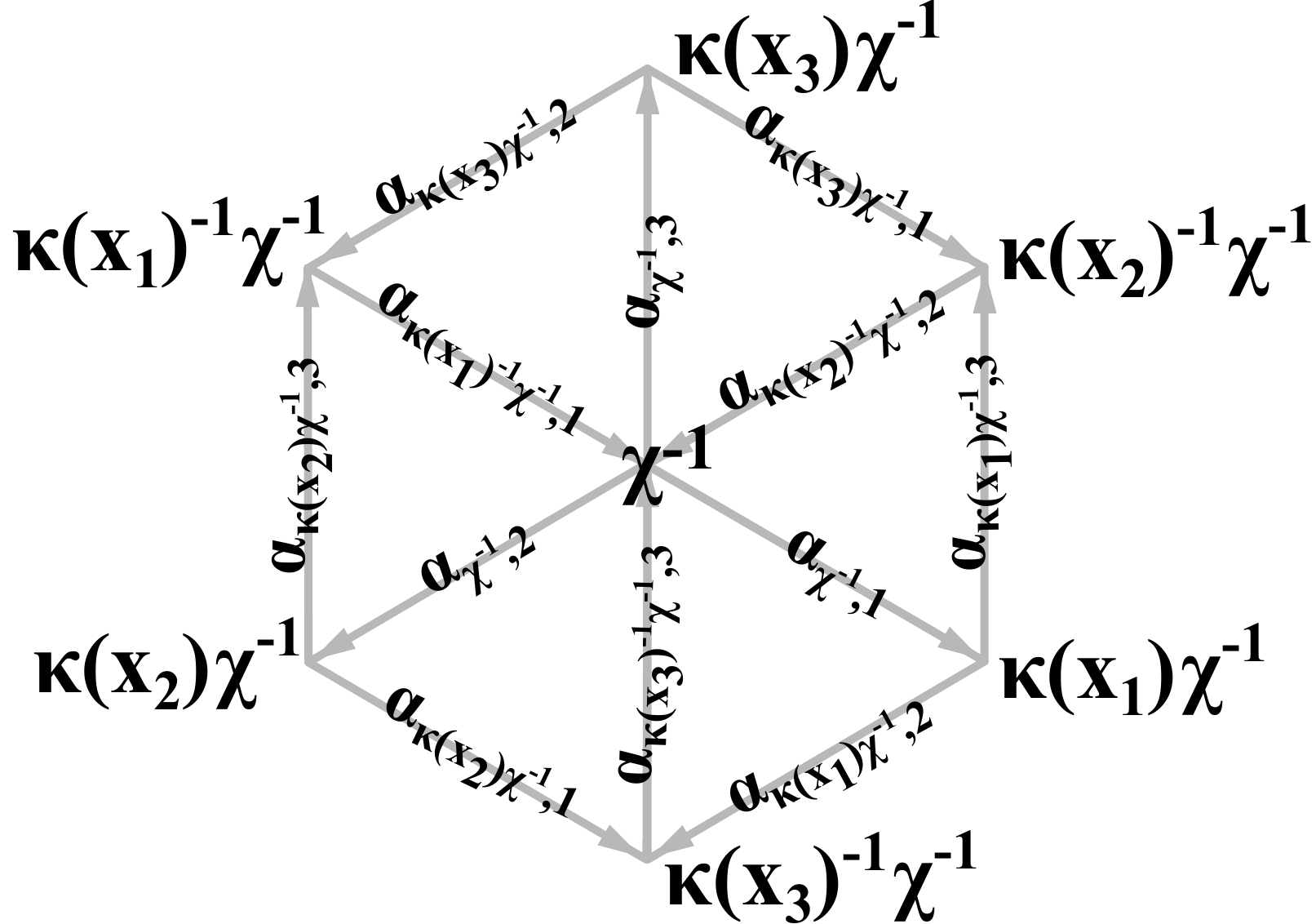}} 
\subfigure[The corresponding cube] { \label{figure-13b}
\includegraphics[scale=0.30]{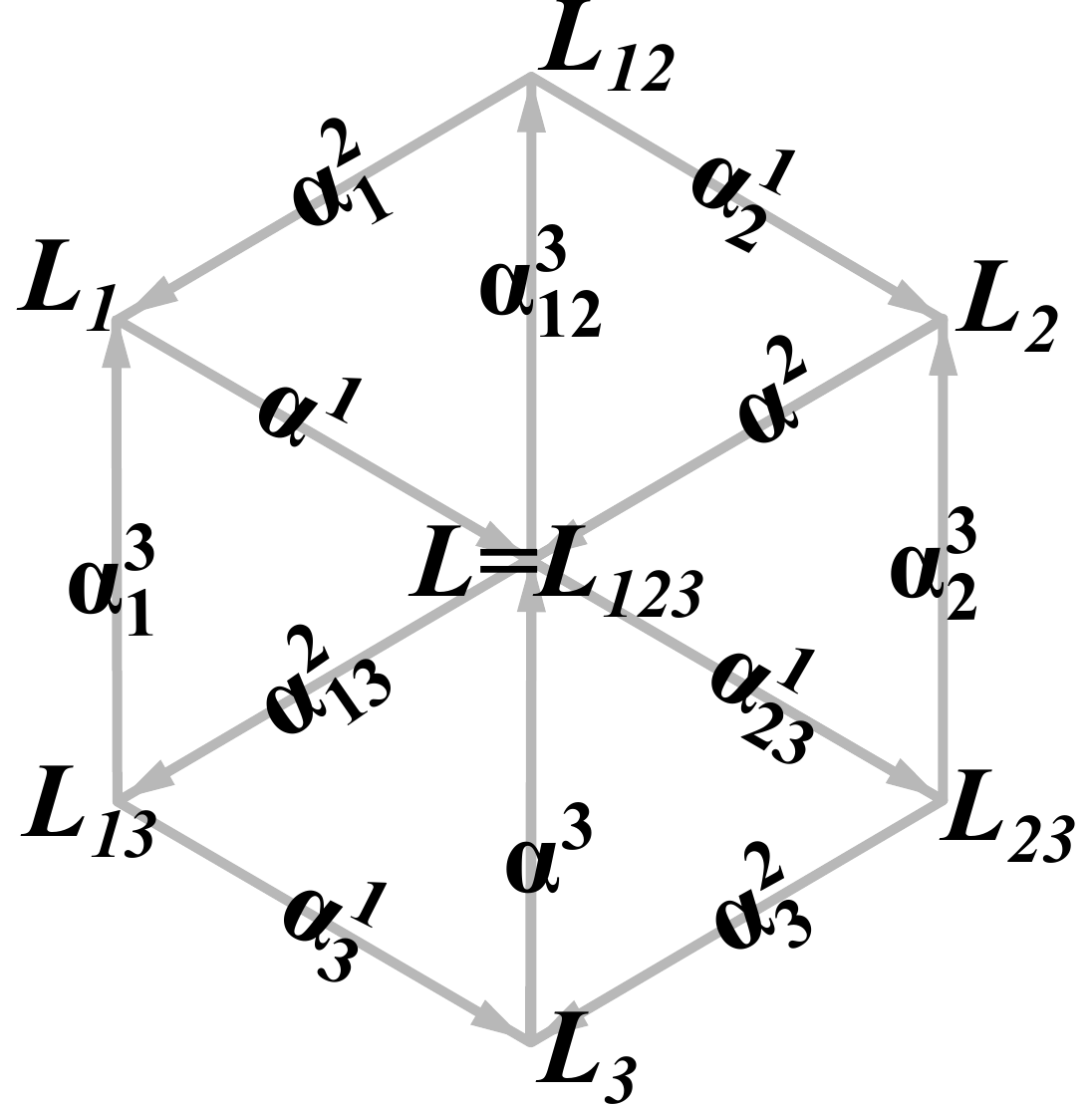}}
\caption{The cube of line bundles defined by 
$\hex(\chi^{-1})_{\widetilde{\mathcal{M}}}$}
\label{figure-13}
\end{figure}
We make this commutative cube skew-commutative by setting 
$\alpha^{i}_{(\dots)} = (-1)^{i} \alpha^{i}_{(\dots)}$.
The total complex $T^\bullet$ of the resulting cube is isomorphic to 
$\Psi(\mathcal{O}_0 \times \chi)$ in $D(Y)$
\cite[Prop.\ 4.6]{CautisLogvinenko}. 

Write $D^1_{23}$ for the vanishing divisor of map $\alpha^1_{23}$ of 
the cube in \eqref{eqn-abstract-cube-of-line-bundles} and
similarly for its other maps. The cohomology sheaves of 
$T^\bullet$ can be computed in terms of these vanishing divisors: 
\begin{lemma}
\label{lemma-cube_cohomology} Let $T^\bullet$ be the total complex of 
the skew-commutative cube in \eqref{eqn-abstract-cube-of-line-bundles}. 
Then: 
\begin{enumerate} 
\item 
\label{item-degree-zero-cohomology}
$H^0(T^\bullet) \cong \L \otimes \O_Z$ 
where $Z$ is the scheme theoretic intersection 
$D^1 \cap D^2 \cap D^3$. 

\item \label{item-degree-minus-one-cohomology}
For any permutation $\{I, J, K\}$ of $\{ 12, 13, 23 \}$ there
is a three-step filtration of $H^{-1}(T^\bullet)$ with successive 
quotients $\mathcal{F}''_I$, $\mathcal{F}'_J$
and $\mathcal{F}_K$. Here:
\begin{itemize} 
\item 
$\mathcal{F}_{12} = \O_Z \otimes \L_{12}(\gcd(D_1^2,D_2^1))$ 
where $Z$ is the scheme theoretic intersection of 
$\gcd(D_1^2,D_2^1)$ and the effective part of 
$D^3 + \lcm(D_3^1,D_3^2) - \tD_1^2 - D^1$ 
\item 
$\mathcal{F}_{13} = \O_Z \otimes \L_{13}(\gcd(D_1^3,D_3^1))$ 
where $Z$ is the scheme theoretic intersection 
of $\gcd(D_1^3,D_3^1)$ and the effective part of $D^2 +
\lcm(D_2^1,D_2^3) - \tD_3^1 - D^3$ 
\item 
$\mathcal{F}_{23} = \O_Z \otimes \L_{23}(\gcd(D_2^3,D_3^2))$ 
where $Z$ is the scheme theoretic intersection of 
$\gcd(D_2^3,D_3^2)$ and the effective part of 
$D^1 + \lcm(D_1^2,D_1^3) - \tD_2^3 - D^2$.
\end{itemize} 
and $\tD^i_j =  D^i_j - \gcd(D^i_j,D^j_i)$. 
To define $\mathcal{F}'_{J}$ 
we replace $\lcm(D^i_k, D^j_k)$ in its definition 
with $\lcm(D_k^i,\tD_k^j)$ where $j$ is chosen so that $K=\{jk\}$. 
To define $\mathcal{F}''_{I}$ we replace $\lcm(D^i_k, D^j_k)$ 
with $\lcm(\tD_k^i,\tD_k^j)$. 

\item 
\label{item-degree-minus-two-cohomology}
$H^{-2}(T^\bullet) \cong \L_{123}(D) \otimes \O_D $
where 
$D = \gcd(D_{23}^1, D_{13}^2, D_{12}^3)$ 
\item 
\label{item-all-cohomologies-vahish-except-for-degree-zero-to-minus-two}
$H^{-i}(T^\bullet) \cong 0$ for all $i \neq 0, \text{-}1$ and $\text{-}2$.  
\end{enumerate} 
\end{lemma}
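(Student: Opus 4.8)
The plan is to reduce the whole statement to a local computation on $Y$. Since $Y$ is smooth, over a suitable neighbourhood of any point all the line bundles of the cube trivialise and each of its maps becomes multiplication by a regular function whose zero divisor is the corresponding vanishing divisor; moreover the local rings of $Y$ are unique factorisation domains, so $\gcd(A,B)$, $\lcm(A,B)$ and the $\tilde{D}$'s become honest gcd's, lcm's and quotients of local functions. As every sheaf in the conclusion is described purely in terms of divisors on $Y$, once the cohomology sheaves of $T^\bullet$ are identified over each such chart the identifications glue by naturality, so it suffices to compute locally. We may also assume every map of the cube is nonzero, which is the only situation arising in the application (and is needed for the vanishing divisors to be defined).

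The two extreme degrees are immediate. The degree $-3$ differential $\mathcal{L}_{123}\to\mathcal{L}_{23}\oplus\mathcal{L}_{13}\oplus\mathcal{L}_{12}$ is a nonzero map from a line bundle into a torsion-free sheaf on the integral variety $Y$, hence injective; thus $H^{-i}(T^\bullet)=0$ for all $i\geq 3$, which combined with the computations below is the vanishing assertion. In degree $0$, $H^0(T^\bullet)=\cok\bigl((\alpha^1,\alpha^2,\alpha^3)\colon\mathcal{L}_1\oplus\mathcal{L}_2\oplus\mathcal{L}_3\to\mathcal{L}\bigr)$, which locally is $\mathcal{O}/(f_1,f_2,f_3)$ with $\prdiv f_i=D^i$, i.e. $\mathcal{O}_Z$ for $Z=D^1\cap D^2\cap D^3$ the scheme-theoretic intersection; twisting by $\mathcal{L}$ gives the stated $H^0$.

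For $H^{-2}(T^\bullet)$ I would factor the degree $-3$ differential as $\mathcal{L}_{123}\xrightarrow{\,\cdot s\,}\mathcal{L}_{123}(D)\xrightarrow{\,\iota\,}\mathcal{L}_{23}\oplus\mathcal{L}_{13}\oplus\mathcal{L}_{12}$, where $s$ cuts out $D=\gcd(D^1_{23},D^2_{13},D^3_{12})$ and the three components of $\iota$ have no common factor. Since $T^\bullet$ is a complex the degree $-3$ and degree $-2$ differentials compose to zero, so after cancelling the nonzero $s$ (the target being torsion-free) the image of $\iota$ lies in the kernel of the degree $-2$ differential. The crux is the reverse inclusion: every element of that kernel has its three components divisible, respectively, by the three components of $\iota$. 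This is a syzygy computation — write out the three scalar relations the degree $-2$ differential imposes, substitute the face-commutativity identities of the cube (each $2$-dimensional face gives a relation of the form $D^i_k+D^j_{ik}=D^j_k+D^i_{jk}$ among vanishing divisors), and peel off common prime factors one at a time using that the local ring is a UFD. Granting this, $\iota$ identifies $\mathcal{L}_{123}(D)$ with the kernel, whence $H^{-2}(T^\bullet)\cong\mathcal{L}_{123}(D)/s\cdot\mathcal{L}_{123}\cong\mathcal{L}_{123}(D)\otimes\mathcal{O}_D$.

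The computation of $H^{-1}(T^\bullet)$ is the heart of the matter, and I would attack it by slicing. For a fixed direction $k$, grouping the line bundles according to whether their index contains $k$ exhibits $T^\bullet$ as $\operatorname{Cone}(\tot S_{\mathrm{w}}\to\tot S_{\mathrm{w/o}})$, where $S_{\mathrm{w}}$, $S_{\mathrm{w/o}}$ are skew-commutative squares of line bundles (indices containing, resp. omitting, $k$) and the map comes from the ``forget the index $k$'' maps. First I would settle the square case (a $2$-dimensional cube) by the same gcd/syzygy argument: for a skew-commutative square $\mathcal{P}\to\mathcal{B}\oplus\mathcal{C}\to\mathcal{Q}$ with vanishing divisors $E_B,E_C$ out of $\mathcal{P}$ and $F_B,F_C$ into $\mathcal{Q}$ (so $E_B+F_B=E_C+F_C$), one obtains $H^{-2}=0$, $H^0=\mathcal{Q}\otimes\mathcal{O}_{F_B\cap F_C}$, and — using that commutativity forces $E_B-F_C+\gcd(F_B,F_C)=\gcd(E_B,E_C)$ — $H^{-1}\cong\mathcal{P}(\gcd(E_B,E_C))\otimes\mathcal{O}_{\gcd(E_B,E_C)}$. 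The long exact sequence of the cone then gives
\[
0\to\cok\bigl(H^{-1}(S_{\mathrm{w}})\to H^{-1}(S_{\mathrm{w/o}})\bigr)\to H^{-1}(T^\bullet)\to\krn\bigl(H^0(S_{\mathrm{w}})\to H^0(S_{\mathrm{w/o}})\bigr)\to 0.
\]
The cokernel term is a quotient of the $\mathcal{L}_K$-twisted sheaf $H^{-1}(S_{\mathrm{w/o}})$ (with $K$ the two-element index omitting $k$), and identifying the map realises it as $\mathcal{F}_K$; the further divisor it cuts out is where the $\lcm$'s appear, via $\lcm(A,B)=A+B-\gcd(A,B)$. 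The kernel term is a subsheaf of $H^0(S_{\mathrm{w}})$ and, after a colon-ideal computation in the UFD, is an extension yielding the remaining graded pieces $\mathcal{F}''_I,\mathcal{F}'_J$ (the two indices containing $k$), whose decorated $\lcm$'s record that direction $k$ is already consumed. Running this in each of the three directions, with the two inner pieces in either order, gives the six filtrations of the statement. I expect the main obstacle to be exactly this bookkeeping: normalising the divisor expressions dropping out of the cone long exact sequence and the colon-ideal computation into the stated $\gcd$/$\lcm$ form using the cube's commutativity relations, and checking that the three directional slicings are mutually compatible with the stated associated graded.
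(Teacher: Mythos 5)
Your overall architecture is sound and is in the same spirit as the proof this paper actually relies on: the paper's own ``proof'' is a one-line deferral to \cite[Lemma 3.1]{CautisLogvinenko}, which likewise trivialises the line bundles on charts, works in the UFD of local functions, and computes kernels and images of the differentials by divisor arithmetic (there the middle cohomology is analysed directly by filtering $\krn d^{-1}$ according to which of the three components $\L_{23},\L_{13},\L_{12}$ an element projects nontrivially to, rather than by slicing off a chosen direction $k$ as a cone; your slicing is a legitimate reorganisation and correctly explains why there are $3!$ admissible orderings of the graded pieces). Your degree $0$ and degree $-3$ computations are correct, and your $H^{-2}$ argument works; in fact the syzygy peeling can be avoided entirely: $\krn d^{-2}$ is a saturated rank-one subsheaf of a locally free sheaf which contains $\img d^{-3}$ and agrees with it generically (the complex is generically acyclic), hence equals the saturation of $\img d^{-3}$, which in a UFD is obtained by dividing the generator by the content $\gcd(D^1_{23},D^2_{13},D^3_{12})$.

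The gap is in part \eqref{item-degree-minus-one-cohomology}, which is where all the content of the lemma lives. Your cone long exact sequence and your computation of the cohomology of the two squares are both correct (and your identity $E_B - F_B + \gcd(F_B,F_C) = \gcd(E_B,E_C)$ does follow from commutativity). But the statement to be proved is not that $H^{-1}(T^\bullet)$ is an extension of a kernel by a cokernel --- it is the precise identification of the three graded pieces, with their effective parts, $\tD$'s and modified $\lcm$'s. You assert that the cokernel term ``is realised as $\mathcal{F}_K$'' and that the kernel term splits, ``after a colon-ideal computation,'' into $\mathcal{F}''_I$ and $\mathcal{F}'_J$, but you never perform either computation, and nothing in the proposal explains why the answer takes the stated form (in particular why $\mathcal{F}'_J$ and $\mathcal{F}''_I$ differ from $\mathcal{F}_J$ and $\mathcal{F}_I$ only by replacing $\lcm(D^i_k,D^j_k)$ with $\lcm(D^i_k,\tD^j_k)$ and $\lcm(\tD^i_k,\tD^j_k)$ respectively). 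Since these formulas are exactly what gets used downstream (Corollaries \ref{cor-gcd-D_2^3-D_3^2-etc-via-CT-subdivisions} and \ref{cor-Z_23-Z_13-Z_12-via-CT-subdivisions} and the chain/long-side computations), deferring them to ``bookkeeping'' leaves the lemma unproved at its critical point; you would need to carry out the induced-map and colon-ideal calculations explicitly in the UFD and match the resulting divisors against the stated expressions.
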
  
\begin{proof}
See \cite[Theorem 1.1 and Remark 3.4]{CrawQuinteroVelez-CohomologyOfWheelsOnToricVarieties}.
\end{proof}
 
\section{Sink-source graphs and non-compact exceptional divisors}
\label{section-sink-source-graphs-and-non-compact-except-divisors}

Let $Y$ be $G$-$\hilb(\mathbb{C}^3)$, $\widetilde{\mathcal{M}}$ be dual to 
the universal family of $G$-clusters and $E$ be a toric divisor on $Y$. 
In \cite[Prop.\  4.7]{CautisLogvinenko} we've classified the vertices 
of $Q(G)$ according to which of the arrows in the subquiver $\hex(\chi)$ 
on Fig.\ \ref{figure-15} vanish along $E$ in $\widetilde{\mathcal{M}}$. 
On Fig.\ \ref{figure-17} - \ref{figure-18} we list all 
possible cases, drawing in black the arrows which vanish and in grey 
the arrows which don't. There are four vertex types:
the \it charges\rm, the \it sources\rm, the \it sinks \rm and the \it tiles\rm.
The charge vertices always occur in $\mckquiv$ in straight lines 
propagating from a source vertex to a sink vertex. 
An $x_i$-oriented charge propagates along $x_i$-arrows of 
$\mckquiv$. A type $(1,0)$-charge propagates in the direction of the arrows, 
while a type $(0,1)$-charge propagates against it. 
A type $(a,b)$-source (resp.  sink) emits (resp.  receives) 
$a$ charges of type $(1,0)$ and $b$ charges of type $(0,1)$. 
\begin{figure}[h]
\begin{center}
\includegraphics[scale=0.050]{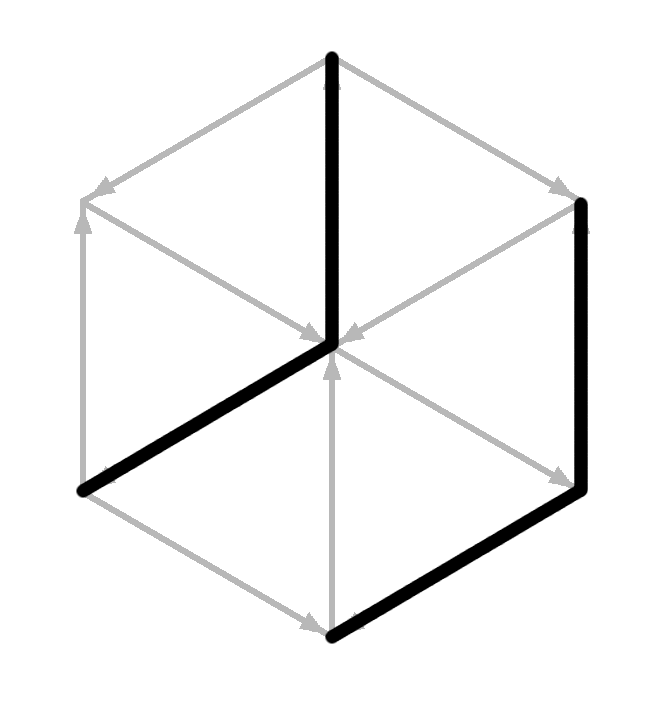}
\includegraphics[scale=0.050]{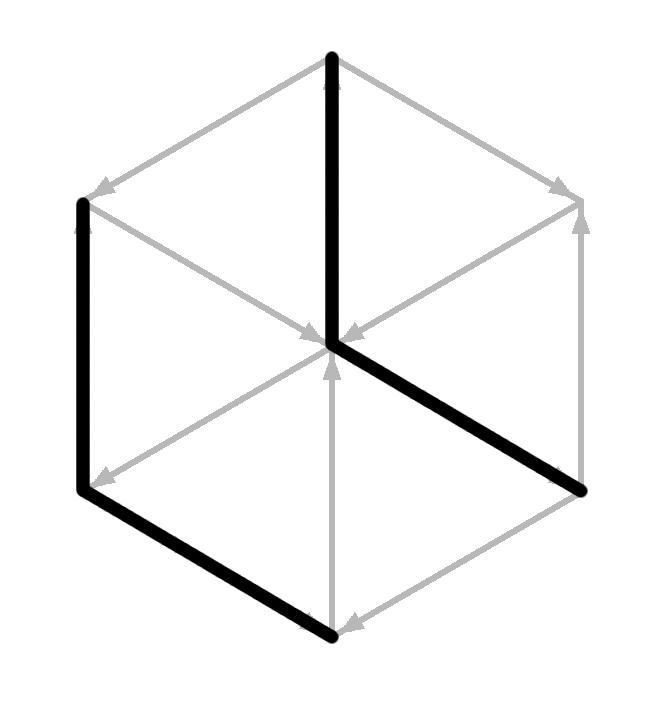}
\includegraphics[scale=0.050]{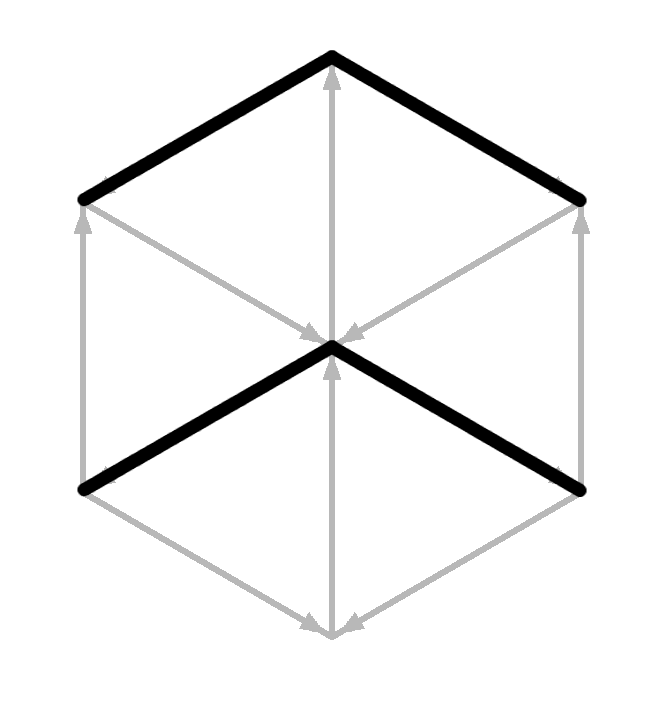}
\includegraphics[scale=0.050]{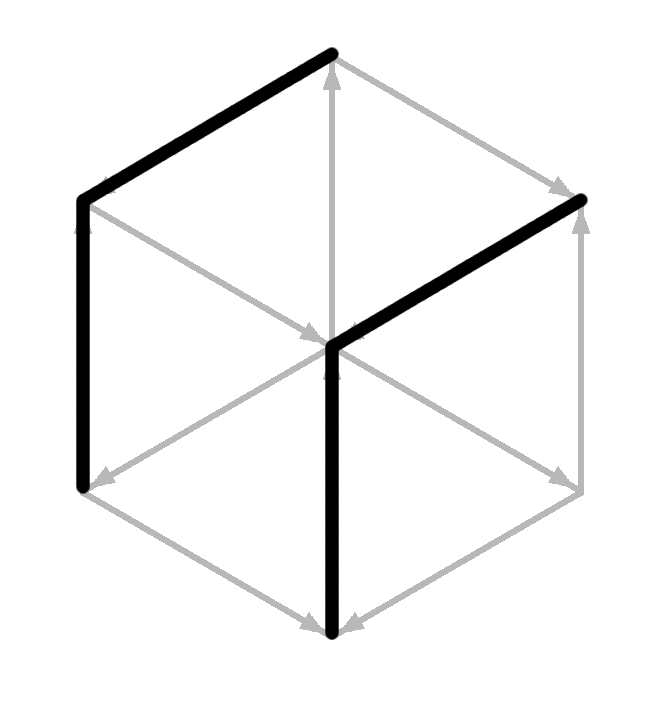}
\includegraphics[scale=0.050]{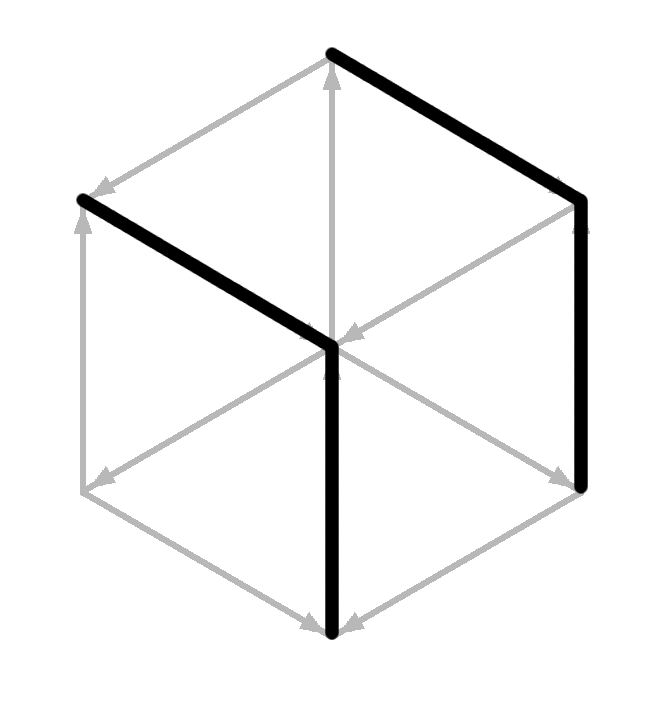}
\includegraphics[scale=0.050]{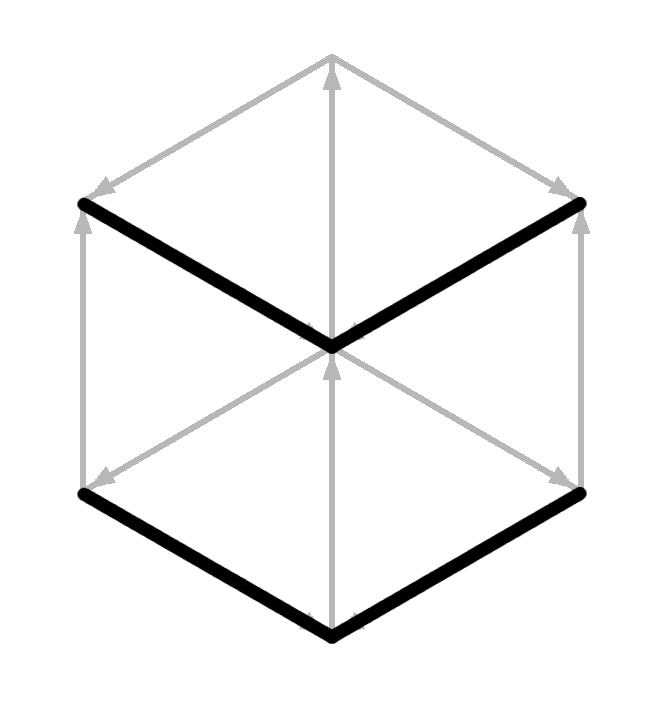}
\caption{\label{figure-17} The $x$-, $y$-, $z$-$(1,0)$-charges and
the $x$-, $y$-, $z$-$(0,1)$-charges}
\end{center} 
\begin{center}
\includegraphics[scale=0.050]{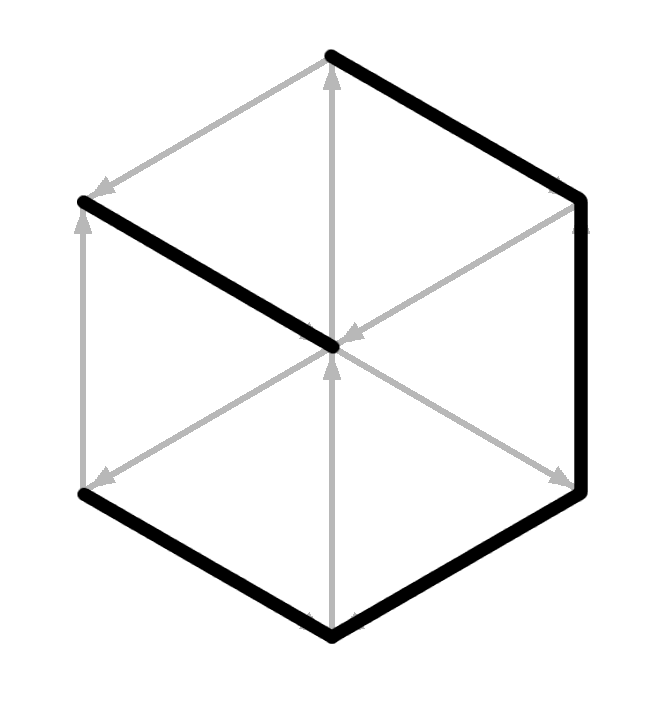}
\includegraphics[scale=0.050]{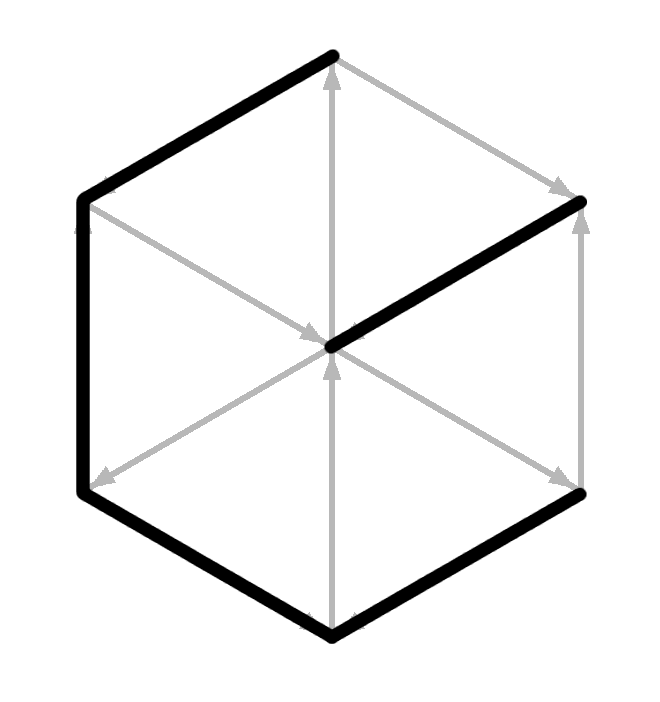}
\includegraphics[scale=0.050]{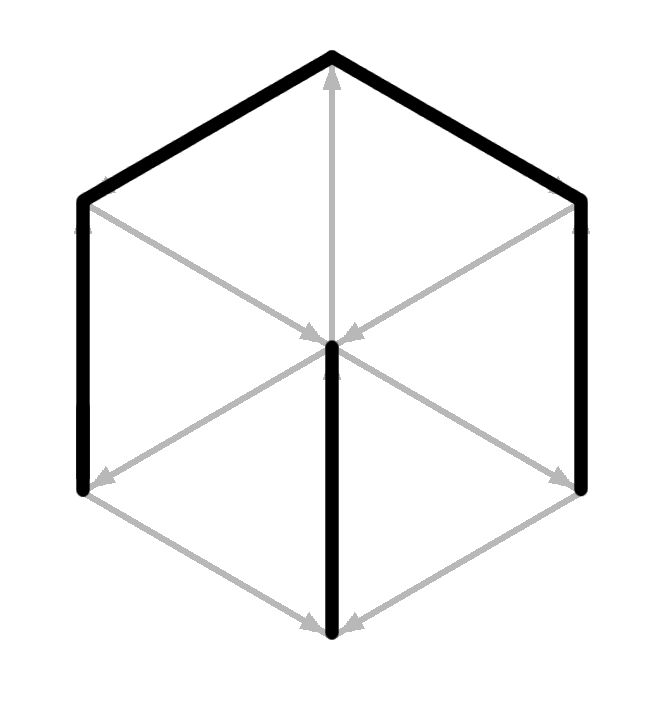}
\includegraphics[scale=0.050]{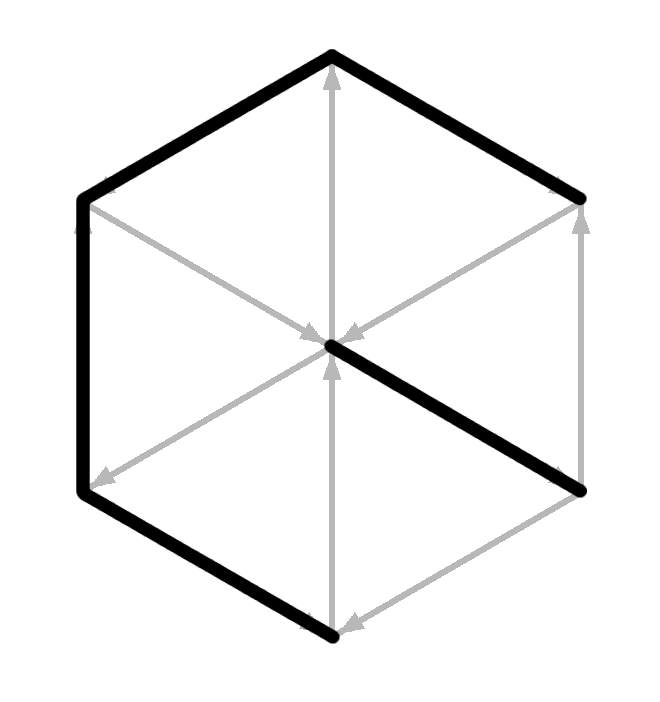}
\includegraphics[scale=0.050]{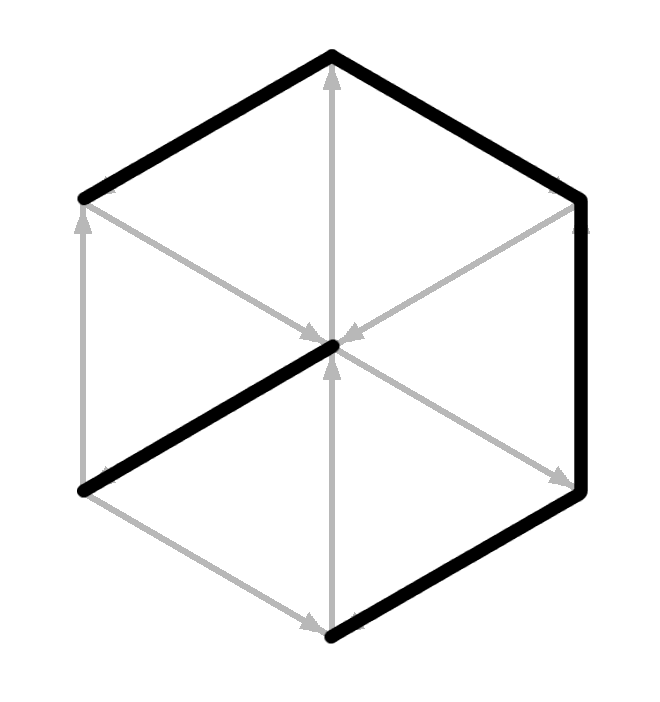}
\includegraphics[scale=0.050]{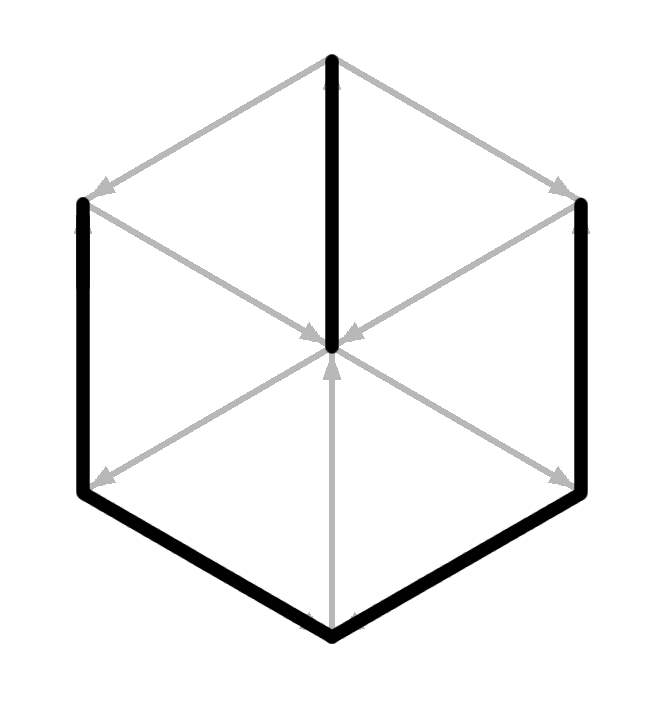}
\includegraphics[scale=0.050]{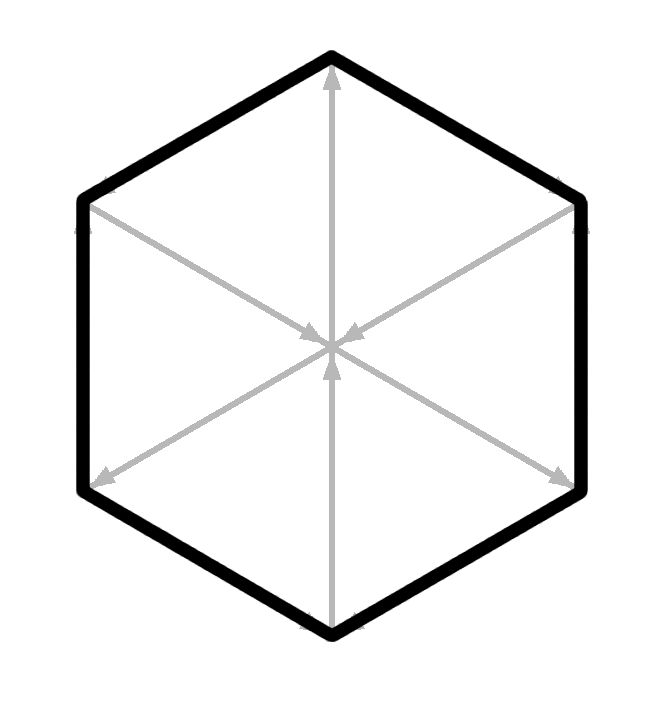}
\caption{\label{figure-19} The $x$-, $y$-, $z$-$(1,2)$-sources, 
the $x$-, $y$-, $z$-$(2,1)$-sources and the $(3,3)$-source.} \end{center} 
\begin{center} \includegraphics[scale=0.050]{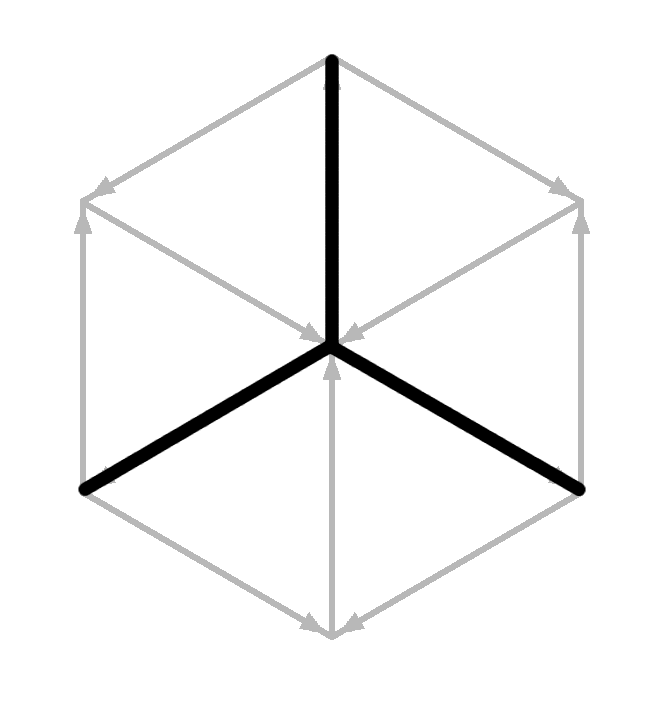}
\includegraphics[scale=0.050]{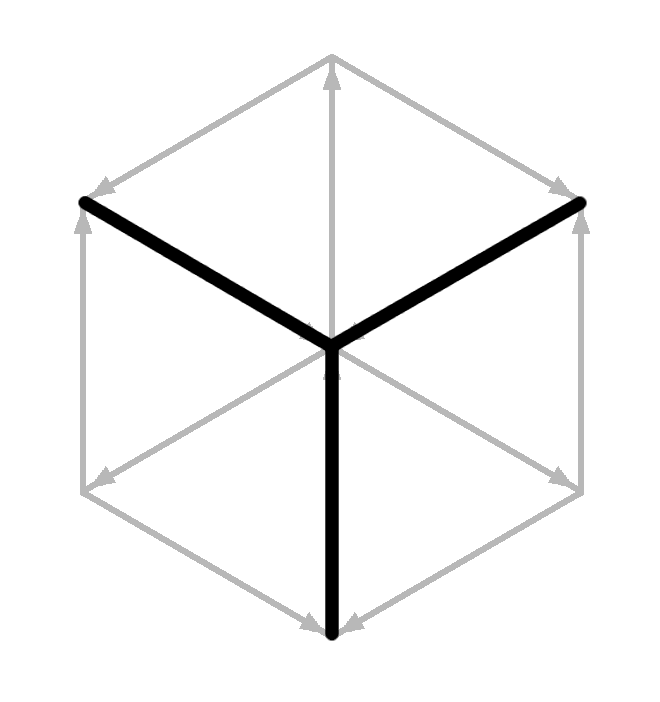}
\caption{\label{figure-16} The $(3,0)$-sink and the $(0,3)$-sink.}
\end{center} 
\begin{center}
\includegraphics[scale=0.050]{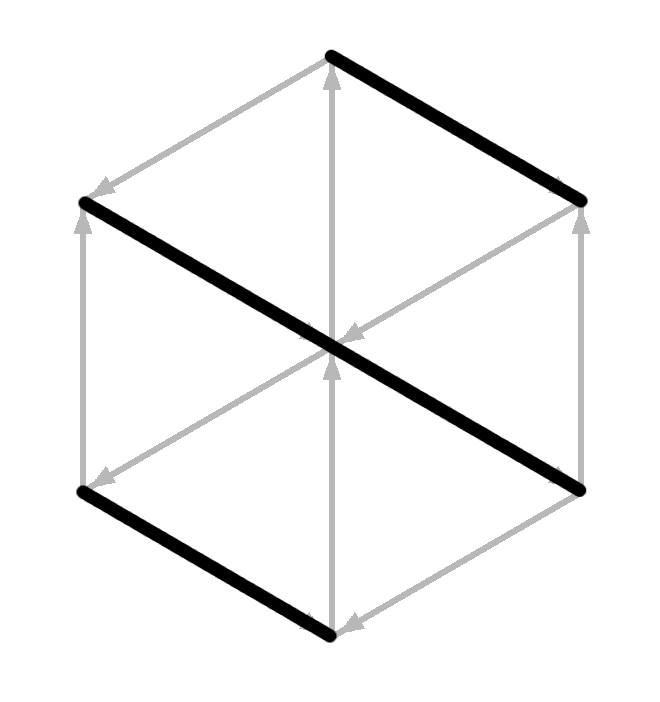}
\includegraphics[scale=0.050]{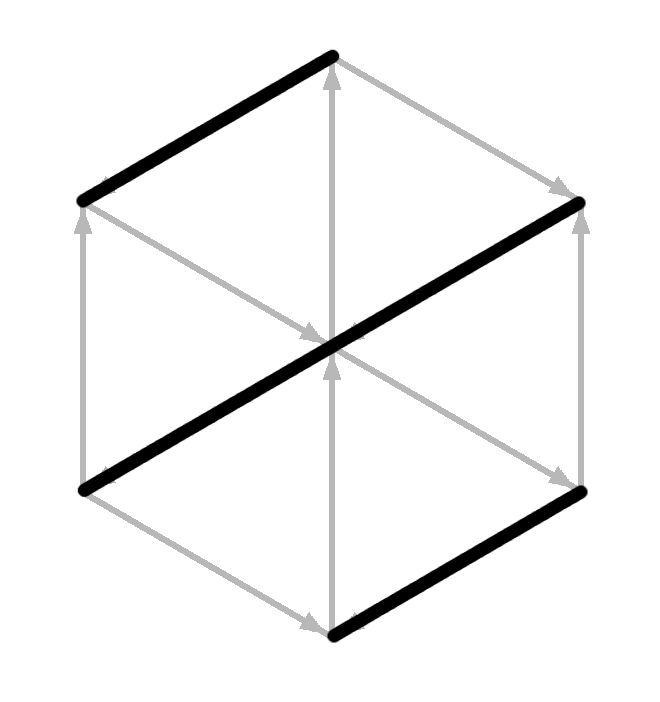}
\includegraphics[scale=0.050]{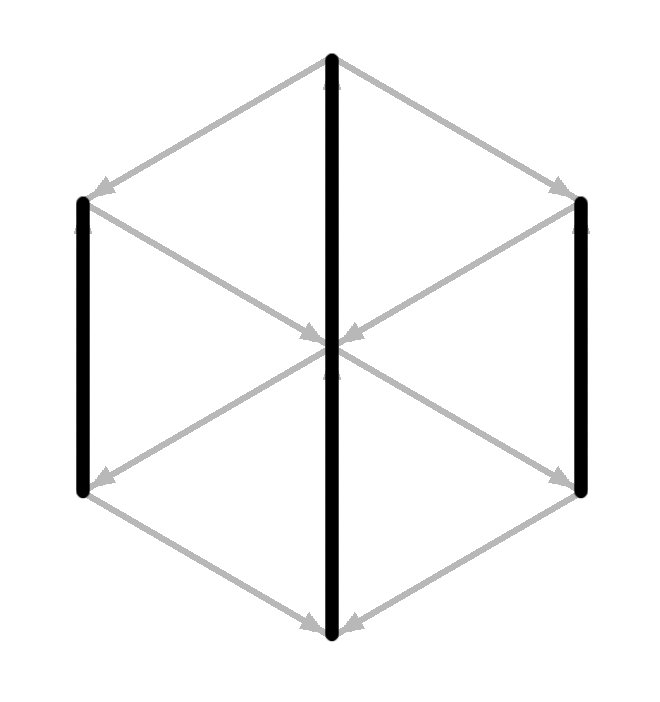}
\caption{\label{figure-18} The $x$-tile, the $y$-tile and the
$z$-tile.} \end{center} 
\end{figure} 

The \it sink-source graph \rm $SS_{\widetilde{\mathcal{M}}, E}$ is a graph 
drawn on top of $\mckquiv$ whose vertices are the sinks and the sources and 
whose edges are the charge lines. This graph subdivides the torus $T_H$ 
into several connected regions which are each $x$-, $y$- or $z$-tiled.
In particular, if $\sinksource_{\widetilde{\mathcal{M}},E}$ is empty 
then the whole of $T_H$ is either $x$-, $y$- or $z$-tiled.
Here we say that a region is, for example, $x$-tiled if all its 
internal vertices are $x$-tiles.  

The sink-source graph $SS_{\widetilde{\mathcal{M}}, E}$ 
completely determines the divisor $E$, because we can read 
off from $SS_{\widetilde{\mathcal{M}}, E}$ which arrows of $\mckquiv$ 
do and do not vanish along $E$, and then apply the following:
\begin{lemma}\label{lemma-number-of-diamonds}
Let $E$ be a torus-invariant divisor of $Y$. Suppose the total numbers 
of $x$-, $y$- and $z$-oriented arrows of $Q(G)$ which vanish along $E$ 
in $\widetilde{\mathcal{M}}$ are $a$, $b$ and $c$, respectively.
Then $E$ is the divisor defined by 
$e = \frac{1}{|G|}(a,b,c) \in \mathfrak{E} \subset \mathbb{Q}^3$.
\end{lemma}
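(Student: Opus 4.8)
The plan is to reduce the statement to a purely toric computation of the vanishing divisors $B_{\chi,x_i}$ of the arrows of $\mckquiv$ in the universal family $\tilde{\mathcal{M}}$, and then sum up intersection numbers. First I would fix a torus-invariant divisor $E = E_e$ corresponding to a vertex $e = \frac{1}{|G|}(e_1,e_2,e_3) \in \mathfrak{E}$, so that $e$ is the primitive generator of a ray of the fan $\mathfrak{F}$, with $e_1+e_2+e_3 = |G|$ (since $e$ lies on the junior simplex). The key input is the explicit description, recalled in \S\ref{section-mckay-quiver-of-G} and taken from \cite[\S4.6]{Logvinenko-DerivedMcKayCorrespondenceViaPureSheafTransforms}, of the tautological line bundles $\mathcal{L}_\chi$ and of the arrow maps $\alpha_{\chi,x_i}\colon \mathcal{L}_\chi \to \mathcal{L}_{\chi\kappa(x_i)}$ for the dual family $\tilde{\mathcal{M}}$: each $\alpha_{\chi,x_i}$ is, in suitable toric local coordinates, multiplication by a monomial, and its vanishing divisor $B_{\chi,x_i}$ is an effective torus-invariant divisor whose coefficient along $E$ is a nonnegative integer $\operatorname{ord}_E(B_{\chi,x_i})$ that can be read off from the numerical data. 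The whole point is that each arrow of $\mckquiv$ either vanishes along $E$ (coefficient $\ge 1$) or does not (coefficient $0$): this dichotomy is exactly what the sink-source classification of \cite[Prop.~4.7]{CautisLogvinenko} encodes, so $a$, $b$, $c$ in the statement are precisely $\#\{\chi : \operatorname{ord}_E(B_{\chi,x_1}) \ge 1\}$, and likewise for $x_2, x_3$.

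Next I would compute $\operatorname{ord}_E$ of the sum of all $x_1$-arrows. The natural vehicle is the observation that multiplying all $\alpha_{\chi,x_1}$ around the McKay quiver reconstructs the action of the invariant monomial $x_1^{|G|/\gcd}\cdots$ — more precisely, composing the $x_1$-arrows along an $x_1$-orbit of length equal to the order of $\kappa(x_1)$ in $G^\vee$ gives multiplication by a power of $x_1$ that is $G$-invariant, hence a regular function on $\mathbb{C}^3/G$ whose order along $E_e$ is computed by pairing the corresponding lattice point of $M$ with $e$. Summing over all such orbits, $\sum_{\chi} \operatorname{ord}_E(B_{\chi,x_1})$ equals $\operatorname{ord}_E$ of (the pullback of) $x_1^{|G|}$ as a section over $\mathbb{C}^3$, which by the toric description of the Hilbert--Chow morphism is exactly $e_1$. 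The same argument in the $x_2$- and $x_3$-directions gives $\sum_\chi \operatorname{ord}_E(B_{\chi,x_2}) = e_2$ and $\sum_\chi \operatorname{ord}_E(B_{\chi,x_3}) = e_3$.

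Finally I would upgrade ``the sum of the orders equals $e_i$'' to ``the number of arrows with positive order equals $e_i$'', i.e. show that every $x_i$-arrow that vanishes along $E$ vanishes to order exactly $1$. This is where the combinatorics of $\gnat$-families enters: in $\tilde{\mathcal{M}}$ the vanishing divisor of each individual arrow is reduced along any torus-invariant divisor, because the $G$-constellation parametrised at the generic point of $E_e$ degenerates by ``switching off'' a single generator at a time — equivalently, the numerical data of \cite{Logvinenko-Natural-G-Constellation-Families} for the universal family assigns to each arrow at most one unit of vanishing per divisor. Once this is established, $a = \#\{\chi : \operatorname{ord}_E(B_{\chi,x_1})\ge 1\} = \sum_\chi \operatorname{ord}_E(B_{\chi,x_1}) = e_1 = |G|\cdot(\text{first coordinate of }e)$, and symmetrically for $b,c$, which is exactly the claim $e = \frac{1}{|G|}(a,b,c)$.

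The main obstacle is the last step: proving that each arrow's vanishing along a given divisor is reduced (order $\le 1$) in the specific family $\tilde{\mathcal{M}}$. This is a genuinely family-specific fact — it fails for a general $\gnat$-family — so it must be extracted either from the explicit local models for $G$-$\hilb\mathbb{C}^3$ in each of Craw--Reid's divisor types (\(\mathbb{P}^2\), scrolls blown up in $\le 2$ points, $dP_6$), or from the characterisation of the universal family among $\gnat$-families. I expect the cleanest route is to verify it chart-by-chart using the toric affine charts $A_\sigma$ of \S\ref{section-ghilb-and-toric}: on each such chart the arrow maps are explicit monomials in the local coordinates, and one checks directly that no local coordinate (hence no divisor $E_e$) appears with exponent $>1$ in any single arrow map of $\tilde{\mathcal{M}}$.
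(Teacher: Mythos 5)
This lemma is imported into the paper as a citation (\cite{Logvinenko-ReidsRecipeAndDerivedCategories}, Lemma 2.5) with no proof given here, so there is no in-paper argument to compare against; judged on its own terms, your proposal is a correct and essentially complete proof. The two substantive steps both check out: composing the $x_1$-arrows around a $\kappa(x_1)$-cycle of length $d$ in $\mckquiv_{\tilde{\mathcal{M}}}$ gives the action of the invariant monomial $x_1^d$, which on a $\gnat$-family is multiplication by $\pi^*(x_1^d)$ (exactly the fact used in the proof of Lemma~\ref{lemma-divisors-to-sink-source-graphs}), and since $\ord_E$ is additive over the composition of line-bundle maps, summing over the $|G|/d$ cycles gives $\sum_\chi \ord_E(B_{\chi,x_1}) = |G|\langle x_1, e\rangle$. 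The one point you flag as the ``main obstacle'' --- that each $B_{\chi,x_i}$ is reduced, so that counting arrows with $\ord_E \geq 1$ is the same as summing the orders --- is not actually open: it is precisely the statement, invoked at the start of \S\ref{section-CT-subdivisions}, that each vanishing divisor of $\tilde{\mathcal{M}}$ is a \emph{reduced} union of irreducible toric divisors \cite[\S4.2]{CautisLogvinenko}. So you may simply cite that rather than re-verify it chart by chart, and your argument closes up. Two cosmetic remarks: the garbled exponent ``$x_1^{|G|/\gcd}\cdots$'' should just read ``$x_1^d$ where $d$ is the order of $\kappa(x_1)$ in $G^\vee$'', and you should be consistent about whether $e_1$ denotes the integer $|G|\cdot(\text{first coordinate of }e)$ or the coordinate itself, since you use both readings in adjacent sentences.
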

\begin{proof}
See \cite[Lemma 2.5]{Logvinenko-ReidsRecipeAndDerivedCategories}. 
Although \cite{Logvinenko-ReidsRecipeAndDerivedCategories} assumes
throughout that $G$ is such that $\mathbb{C}^3/G$ has a single
isolated singularity at the origin, 
the proof of its Lemma 2.5 doesn't actually use this assumption. 
\end{proof}

If $E$ is a compact exceptional divisor of $Y$, then it was shown in
\cite[Prop.\ 3.1-3.3]{Logvinenko-ReidsRecipeAndDerivedCategories}
that there are only three possible shapes that $SS_{\widetilde{\mathcal{M}}, E}$ 
can have and these correspond precisely to $E$ being a $\mathbb{P}^2$, 
a rational scroll blown-up in $0$, $1$ or $2$ points or a del Pezzo
surface $dP_6$. Moreover, as demonstrated on Figures
\ref{figure-20}-\ref{figure-22}, the precise dimensions 
of $SS_{\widetilde{\mathcal{M}}, E}$ determine completely 
the triangulation $\Sigma$ locally around the corresponding 
point $e \in \mathfrak{E}$. In particular, they determine
monomial ratios which carve out the edges incident to $e$ in $\Sigma$. 
This provides a crucial link with Reid's recipe marking described in 
\S \ref{subsection-reids-recipe}. E.g. the characters with which 
Reid's recipe prescribes to mark $E$ are precisely 
the $(0,3)$-sink vertices of $SS_{\widetilde{\mathcal{M}}, E}$.  

\begin{figure}[!htb] \centering 
\subfigure[The sink-source graph] { \label{figure-20a} 
\includegraphics[scale=0.060]{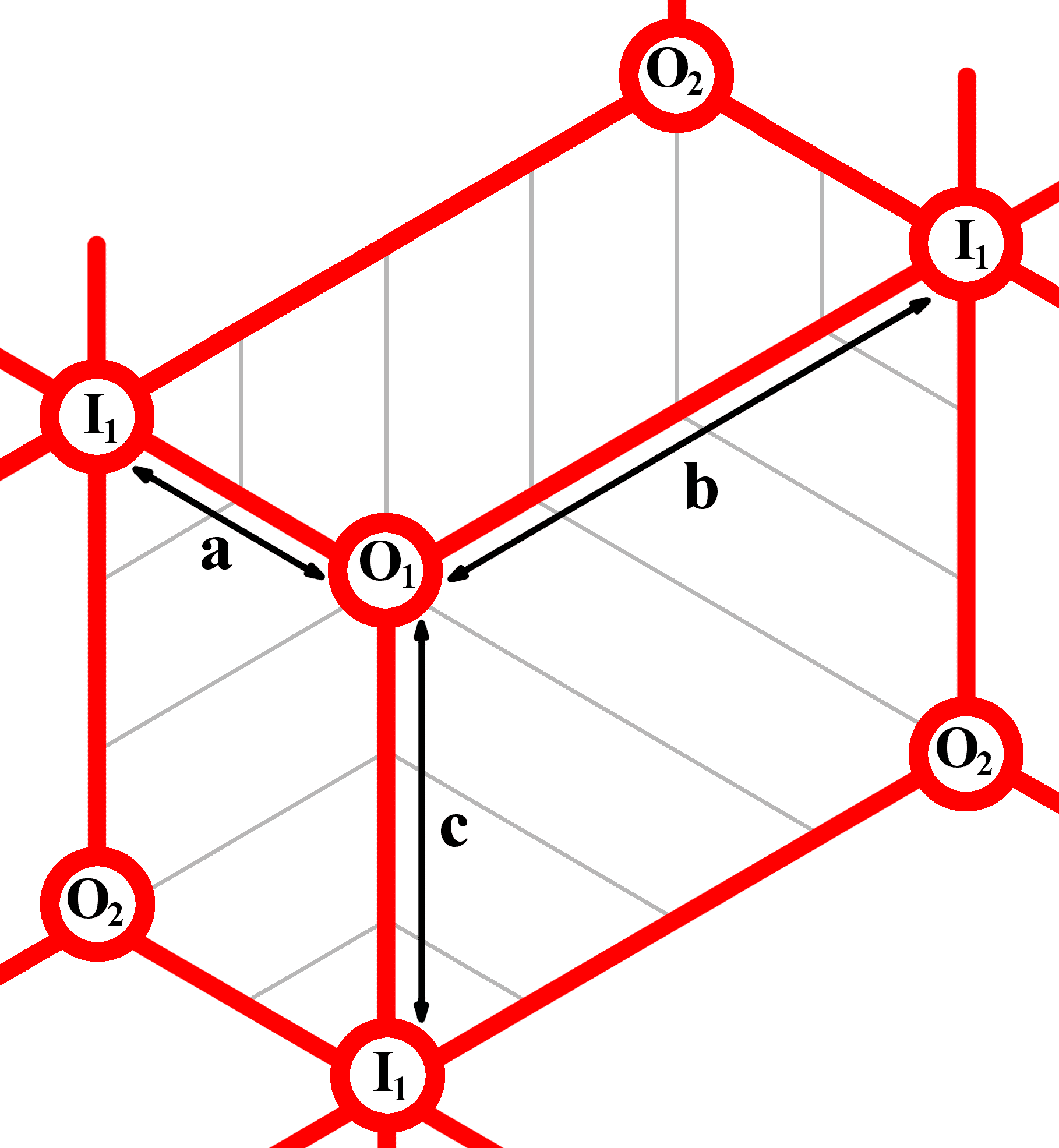}} \hspace{0.1cm}
\subfigure[Triangulation $\Sigma$ around $e$] { \label{figure-20b}
\includegraphics[scale=0.20]{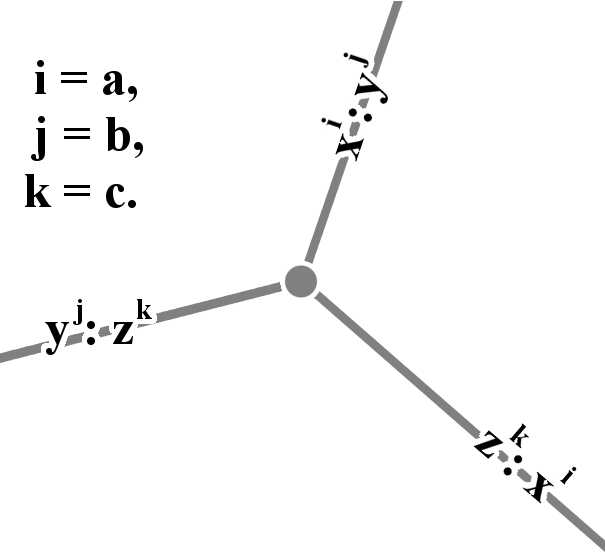}} 
\caption{One $(3,3)$-source} \label{figure-20}
\end{figure}
\begin{figure}[!htb] \centering 
\subfigure[The sink-source graph]{\label{figure-21a} 
\includegraphics[scale=0.07]{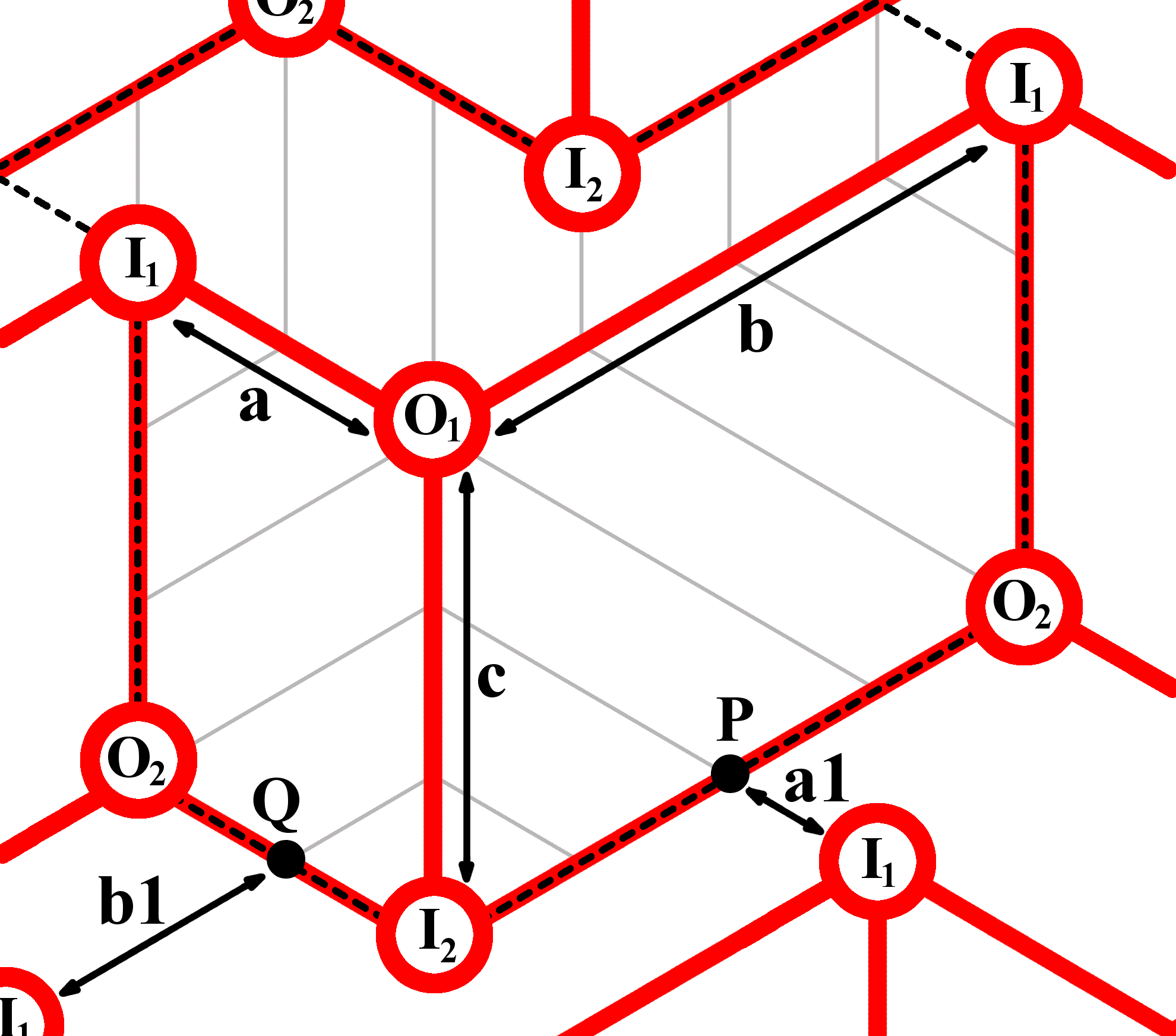}}  
\subfigure[Triangulation $\Sigma$ around $e$] { \label{figure-21b}
\includegraphics[scale=0.21]{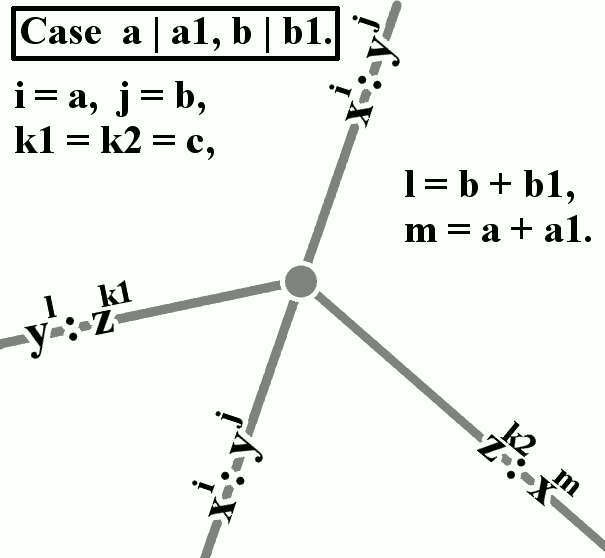}} 
\\
\subfigure[Triangulation $\Sigma$ around $e$]
{\label{figure-21c}
\includegraphics[scale=0.20]{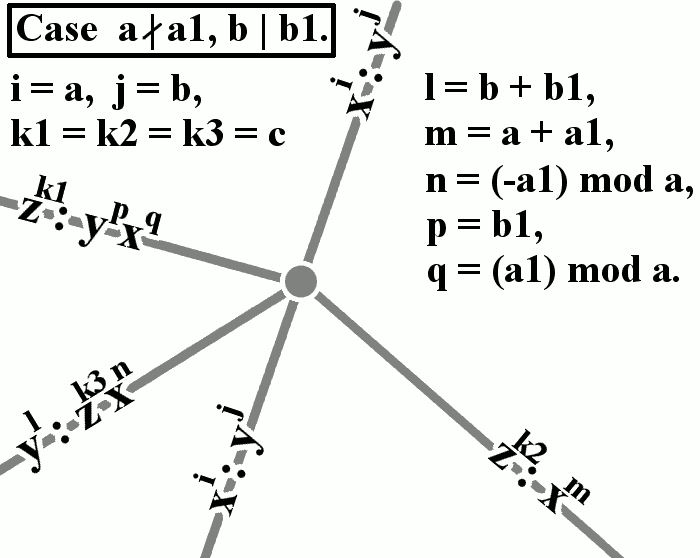}} 
\subfigure[Triangulation $\Sigma$ around $e$] { \label{figure-21d}
\includegraphics[scale=0.20]{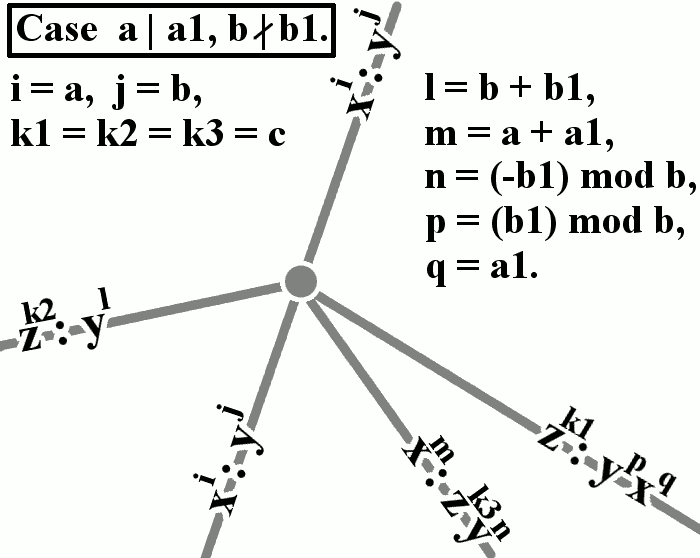}} 
\subfigure[Triangulation $\Sigma$ around $e$] { \label{figure-21e}
\includegraphics[scale=0.20]{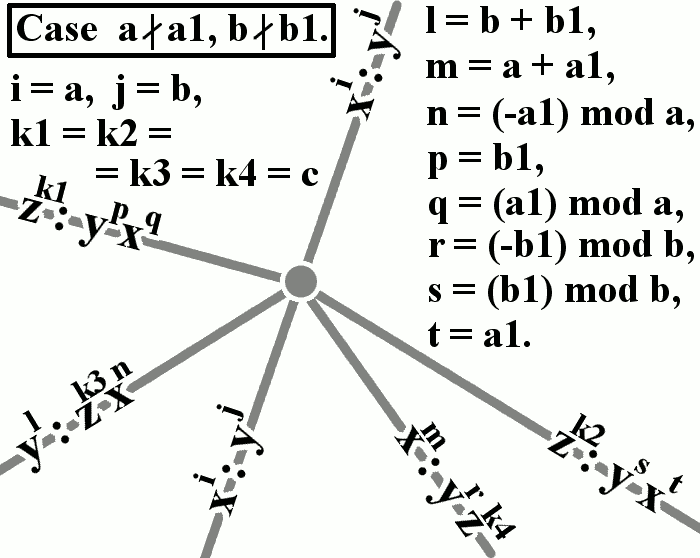}} 
\caption{One $(1,2)$-source and one $(2,1)$-source} \label{figure-21}
\end{figure}
\begin{figure}[!htb]
\centering 
\subfigure[Sink-source graph $\sinksource_{\widetilde{M},E}$] { \label{figure-22a} 
\includegraphics[scale=0.07]{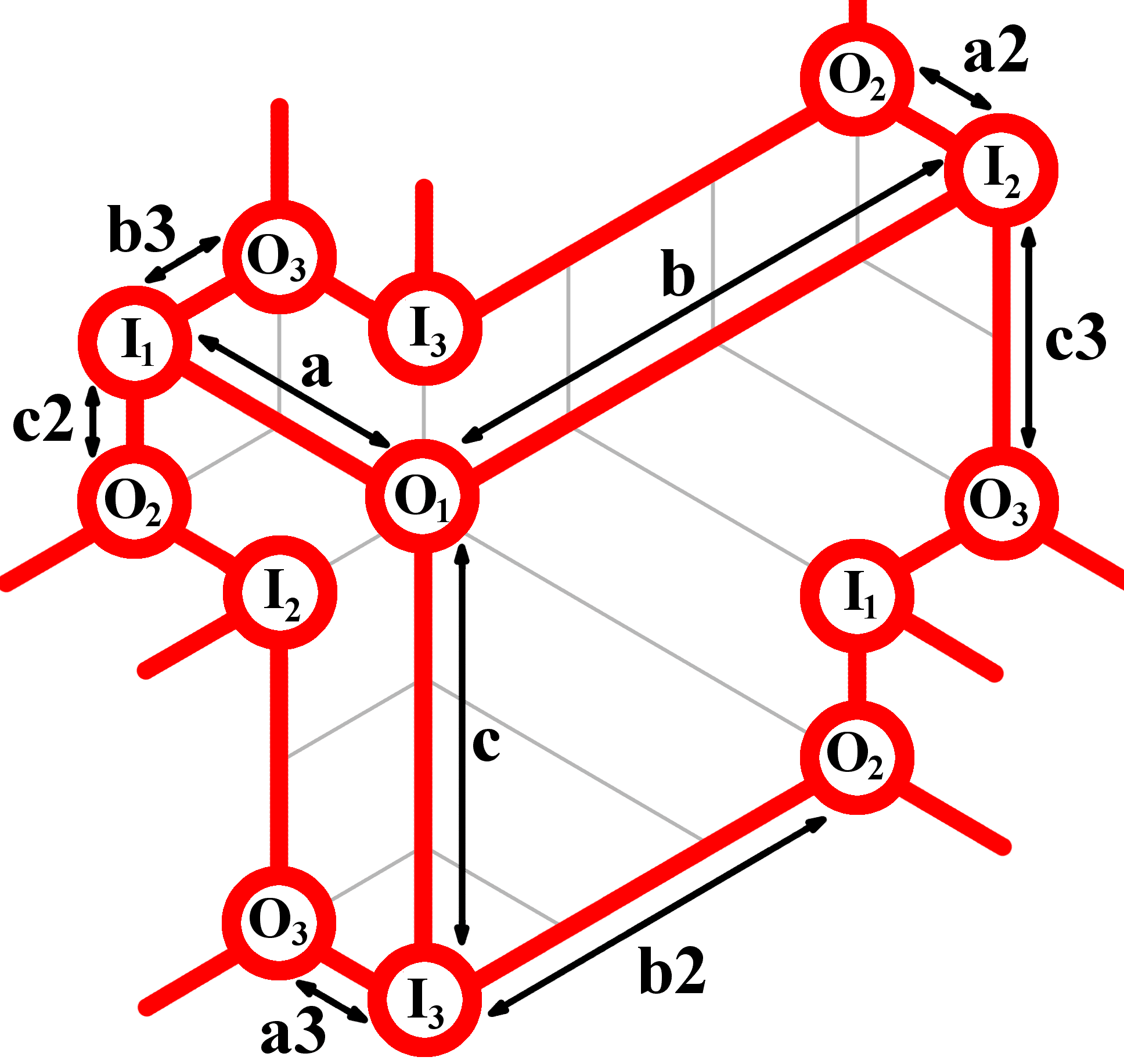}} \hspace{0.1cm}
\subfigure[Triangulation $\Sigma$ around $e$] 
{\label{figure-22b} 
\raisebox{0.5cm}{\includegraphics[scale=0.25]{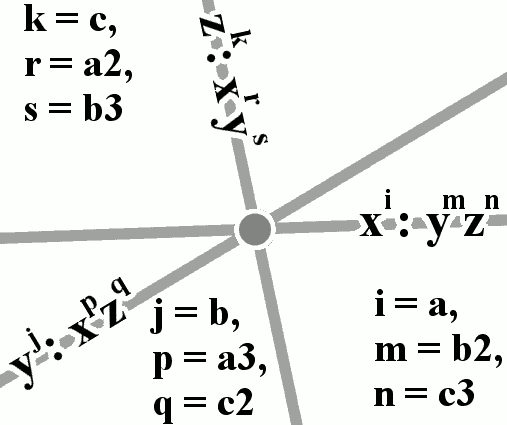}}}
\caption{Three $(2,1)$-sources} \label{figure-22}
\end{figure}

In this section, we extend the above results on sink-source
graphs to cover the non-compact exceptional divisors, which
appear when the singularities of $\mathbb{C}^3/G$ are not isolated: 

\begin{lemma}
\label{lemma-divisors-to-sink-source-graphs}
Let $E$ be an irreducible toric divisor on $Y$.
The sink-source graph $\sinksource_{\widetilde{\mathcal{M}},E}$ 
\begin{enumerate}
\item is empty if and only if $E$ is the strict
transform of one of the coordinate hyperplanes in $\mathbb{C}^3/G$. 
Moreover, $T_H$ is $x$-tiled (resp. $y$-tiled, $z$-tiled) if 
and only if $E$ is the strict transform of the $yz$-plane (resp. $xz$-plane, 
$xy$-plane). 

\item consists of one looping $(1,0)$-charge line (passing
through vertex $\chi_0$) and one looping $(0,1)$-charge line 
if $E$ is a non-compact exceptional divisor.

\item contains exactly one $(3,0)$-sink (given by vertex $\chi_0$)
if $E$ is a compact exceptional divisor.
\end{enumerate}
\end{lemma}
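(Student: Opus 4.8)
The plan is to reduce everything to a local toric computation on each affine chart $A_\sigma$ of $Y$ and to the explicit description of the dual family $\tilde{\mathcal{M}}$ in terms of vanishing divisors from \cite[\S4.6]{Logvinenko-DerivedMcKayCorrespondenceViaPureSheafTransforms} and \cite[Prop.~4.7]{CautisLogvinenko}. The key structural input is the classification of vertex types (charge, source, sink, tile) in $\hex(\chi)$ with respect to vanishing along $E$ from \cite[Prop.~4.7]{CautisLogvinenko}, together with the combinatorial constraint that charge lines always propagate in straight lines from a source to a sink and subdivide $T_H$ into tiled regions. I would treat the three cases in the order (1), then (3) (recalling what is already known), then (2), because the non-compact case in (2) is the genuinely new contribution and is cleanest to see once the two extreme cases are understood.

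For part (1): if $E$ is the strict transform of a coordinate hyperplane, then $E = E_e$ for a corner vertex $e$ of $\Delta$, and by Lemma \ref{lemma-number-of-diamonds} exactly one of $a,b,c$ is nonzero (in fact equals $|G|$) while the other two vanish. So all arrows of one orientation vanish along $E$ and no arrows of the other two orientations do; checking against Figures \ref{figure-17}--\ref{figure-18} this forces every vertex of $Q(G)$ to be a tile of the corresponding type, hence $\sinksource_{\tilde{\mathcal{M}},E}$ is empty and $T_H$ is uniformly $x$-, $y$- or $z$-tiled. Conversely, if the graph is empty then every vertex is a tile of a single fixed type, so only arrows of one orientation vanish, the other two of $a,b,c$ are $0$, and Lemma \ref{lemma-number-of-diamonds} places $e$ at a corner of $\Delta$; the matching of which coordinate plane goes with which tiling is read off directly from the vertex pictures. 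Part (3) is already established in \cite[Prop.~3.1--3.3]{Logvinenko-ReidsRecipeAndDerivedCategories}: a compact exceptional $E$ forces $e$ to lie in the interior of $\Delta$, hence all of $a,b,c$ are positive, and the only sink-source configurations compatible with the three possible divisor types each contain exactly one $(3,0)$-sink, which is the vertex $\chi_0$ because $\tilde{\mathcal{M}}$ at the torus-fixed point over $0 \in \CC^3/G$ is the skyscraper $\mathcal{O}_0 \otimes \CC[G]$, so all arrows out of $\chi_0$ vanish there.

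For part (2), the new case: a non-compact exceptional divisor $E = E_e$ has $e$ on a side of $\Delta$, say the side where the third coordinate of $e$ is zero. By Lemma \ref{lemma-number-of-diamonds} this means no $z$-oriented arrow vanishes along $E$, while $a, b > 0$. Feeding "no $z$-arrows vanish, some $x$-arrows vanish, some $y$-arrows vanish" into the vertex classification of Figures \ref{figure-17}--\ref{figure-18} rules out all sink types and all source types except the ones whose charge lines are $x$- or $y$-oriented: concretely the only admissible non-tile vertices are those lying on an $x$-oriented charge line or a $y$-oriented charge line, and the source/sink vertices at the ends of these lines are forbidden (they would require a $z$-arrow to vanish or a full triple to vanish). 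Since a charge line cannot terminate, on the compact torus $T_H$ it must close up into a loop. Thus $\sinksource_{\tilde{\mathcal{M}},E}$ consists of looping charge lines only. A parity/degree count — or equivalently the fact that $E$ is a $\mathbb{P}^1$-bundle over a coordinate line, so its generic $G$-cluster has exactly one "missing" arrow in each of the $x$- and $y$-directions along the fibre — shows there is exactly one $(1,0)$ loop and exactly one $(0,1)$ loop, and the degeneration of $E$ over the origin (where the fibre $\mathbb{P}^1$ may break into two) is precisely the statement that the $(1,0)$ loop passes through $\chi_0$. The last point can be pinned down by computing $\tilde{\mathcal{M}}$ on the chart containing the generic point of $E$ explicitly from the numerical $\gnat$-data.

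The main obstacle will be the bookkeeping in part (2): one must verify carefully, case by case against the vertex-type list, that no source or sink vertex can occur when exactly the $x$- and $y$-directions (but not $z$) contribute vanishing arrows, and then count the loops correctly — in particular identifying which loop passes through $\chi_0$ and relating this to the possible degeneration of the $\mathbb{P}^1$-fibration over $0$. This is where the explicit computation of $\tilde{\mathcal{M}}$ along non-compact divisors, rather than the purely combinatorial sink-source formalism, is unavoidable. Everything else is a direct translation through Lemma \ref{lemma-number-of-diamonds} and the figures.
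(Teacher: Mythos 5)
Your parts (1) and (3) are essentially sound. For (1) you route through Lemma \ref{lemma-number-of-diamonds} (a corner vertex of $\Delta$ forces $(a,b,c)=(|G|,0,0)$ up to permutation), whereas the paper identifies $\pi(E)$ as the unique closed torus-invariant subset of $\mathbb{C}^3/G$ on which $x^{|G|}$ vanishes while $y^{|G|}$ and $z^{|G|}$ do not; both work. For (3) the paper likewise just invokes earlier results (the argument of \cite[Prop.~4.14]{CautisLogvinenko}), though your stated reason for the sink sitting at $\chi_0$ is off: the fibres of $\tilde{\mathcal{M}}$ over a compact exceptional divisor are $G$-constellations supported at the origin, but they are not the skyscraper $\mathcal{O}_0\otimes\mathbb{C}[G]$ with trivial module structure.

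The genuine gap is in part (2), which is the new content of the lemma. You correctly deduce that no $z$-arrow vanishes along $E$, hence that every vertex is a tile or a charge and $\sinksource_{\tilde{\mathcal{M}},E}$ is a disjoint union of looping charge lines (in the paper's conventions these are $z$-oriented, propagating along the non-vanishing $z$-arrows, not ``$x$- or $y$-oriented'' as you write -- a labelling slip only). What you do not prove is that there is exactly \emph{one} loop of each type and that the $(1,0)$-loop passes through $\chi_0$. Your ``parity/degree count'' can at best show that the numbers of $(1,0)$- and $(0,1)$-loops are equal (this is the paper's argument: walking once around $T_H$ along $x$-arrows, each crossing of a $(1,0)$-line toggles the tile type one way and each crossing of a $(0,1)$-line toggles it back), and equality does not give uniqueness. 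Your claim that the loop containing $\chi_0$ ``is precisely'' the degeneration of the $\mathbb{P}^1$-fibre over the origin is not an argument and not even a correct dictionary: the $(1,0)$-loop contains $\chi_0$ for every non-compact exceptional divisor, whether or not the fibre degenerates. The missing ingredient is the connectivity statement of \cite[Prop.~4.12]{CautisLogvinenko}: from every vertex of $Q(G)$ there is a path to $\chi_0$ consisting of arrows that do not vanish along $E$. Since every arrow leaving a $(1,0)$-charge line vanishes along $E$, a non-vanishing path starting on such a line can never leave it, so every $(1,0)$-charge line must contain $\chi_0$; as distinct charge lines are disjoint, there is exactly one, and it passes through $\chi_0$. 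Deferring this to an ``explicit computation from the numerical $\gnat$-data'' does not close the gap.
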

\begin{proof}
By definition $\sinksource_{\mathcal{M}, E}$ is empty and $T_H$ is,
for example, $x$-tiled if and only if every $x$-oriented arrow 
vanishes along $E$ in the associated representation
$Q(G)_{\widetilde{\mathcal{M}}}$, while none of $y$- or $z$-oriented ones do. 
On the other hand, we know that $\pi(E)$ is a closed torus-invariant 
subset of $\mathbb{C}^3/G$. 
This makes it either the origin, one of the three coordinate
lines, one of the three coordinate hyperplanes or the whole of 
$\mathbb{C}^3/G$. Among these, the property that $x^{|G|}$ vanishes 
along the subset, while $y^{|G|}$ and $z^{|G|}$ do not, uniquely identifies 
the $yz$-plane. This settles the first assertion. 

Suppose now $E$ is a compact exceptional divisor. 
Then its image in $\mathbb{C}^3/G$ can only be 
the origin. Therefore the argument in the proof of Prop.\ 4.14 
of \cite{CautisLogvinenko} demonstrates that 
$\sinksource_{\widetilde{\mathcal{M}},E}$ has a single $(3,0)$-sink given 
by vertex $\chi_0$. This settles the third assertion. 

Finally, suppose $E$ is a non-compact exceptional divisor. 
Then its image in $\mathbb{C}^3/G$ must be an irreducible toric curve. 
There are three of them on $\mathbb{C}^3/G$, corresponding to the three
coordinate axes of $\mathbb{C}^3$. Assume without loss of generality
that it's the $z$-axis. Let $p$ be any point on $E$ whose image
$\pi(p)$ in $\mathbb{C}^3/G$ is a point on the $z$-axis away from the
origin, i.e. $z^{|G|}(\pi(p)) \neq 0$. Since $\widetilde{\mathcal{M}}$
is a $\gnat$-family any $m \in \regring^G$ acts on 
$\widetilde{\mathcal{M}}_p$ by multiplication by $m(\pi(p))$. 
In particular, this is true for $G$-invariant monomial $z^{|G|}$. 
Since $z^{|G|}(\pi(p)) \neq 0$ it follows that 
$z \cdot s \neq 0$ for any non-zero section $s$ of $\widetilde{\mathcal{M}}_p$, 
i.e. no $z$-arrow vanishes along the whole of $E$ 
in the associated representation $Q(G)_{\widetilde{\mathcal{M}}}$. So 
every vertex $\chi \in Q(G)$ is either an $x$-tile, a $y$-tile, a
$z$-$(1,0)$-charge or a $z$-$(0,1)$-charge and thus 
$\sinksource_{\widetilde{\mathcal{M}},E}$ can only consist of looping $z$-charge
lines which do not intersect each other.

Start at any vertex in $T_H$ and move in the direction of $x$-arrows 
until we come full circle. By inspection, crossing any $z$-$(1,0)$-charge 
line we move from $y$-tiled region to $z$-tiled region and vice versa 
for crossing any $z$-$(0,1)$-charge line. Since we end up in the same
region we've started, there are as many $z$-$(0,1)$-charge lines 
as there are $z$-$(1,0)$-charge lines. By the first assertion of 
this lemma $\sinksource_{\widetilde{\mathcal{M}},E}$ is non-empty, 
hence there exists at least one $z$-$(1,0)$-charge line.

On the other hand, observe that any arrow which leaves 
a $z$-$(1,0)$-charge line must vanish along $E$. Therefore, if
you start at a vertex on a $z$-$(1,0)$-charge line the only
vertices you can reach by following only non-vanishing arrows 
are the other vertices on that charge line. But by the 
argument in \cite{CautisLogvinenko}, Prop.\ 4.12 there
must exist a path from every vertex of $Q(G)$ to $\chi_0$ which 
consists entirely of non-vanishing arrows. Therefore any 
$z$-$(1,0)$-charge line must contain $\chi_0$. 
Since the charge lines may not intersect we conclude 
that there exists at most one $z$-$(1,0)$-charge line. 
This settles the second assertion.
\end{proof}

\begin{figure}[!htb] \centering 
\includegraphics[scale=0.11]{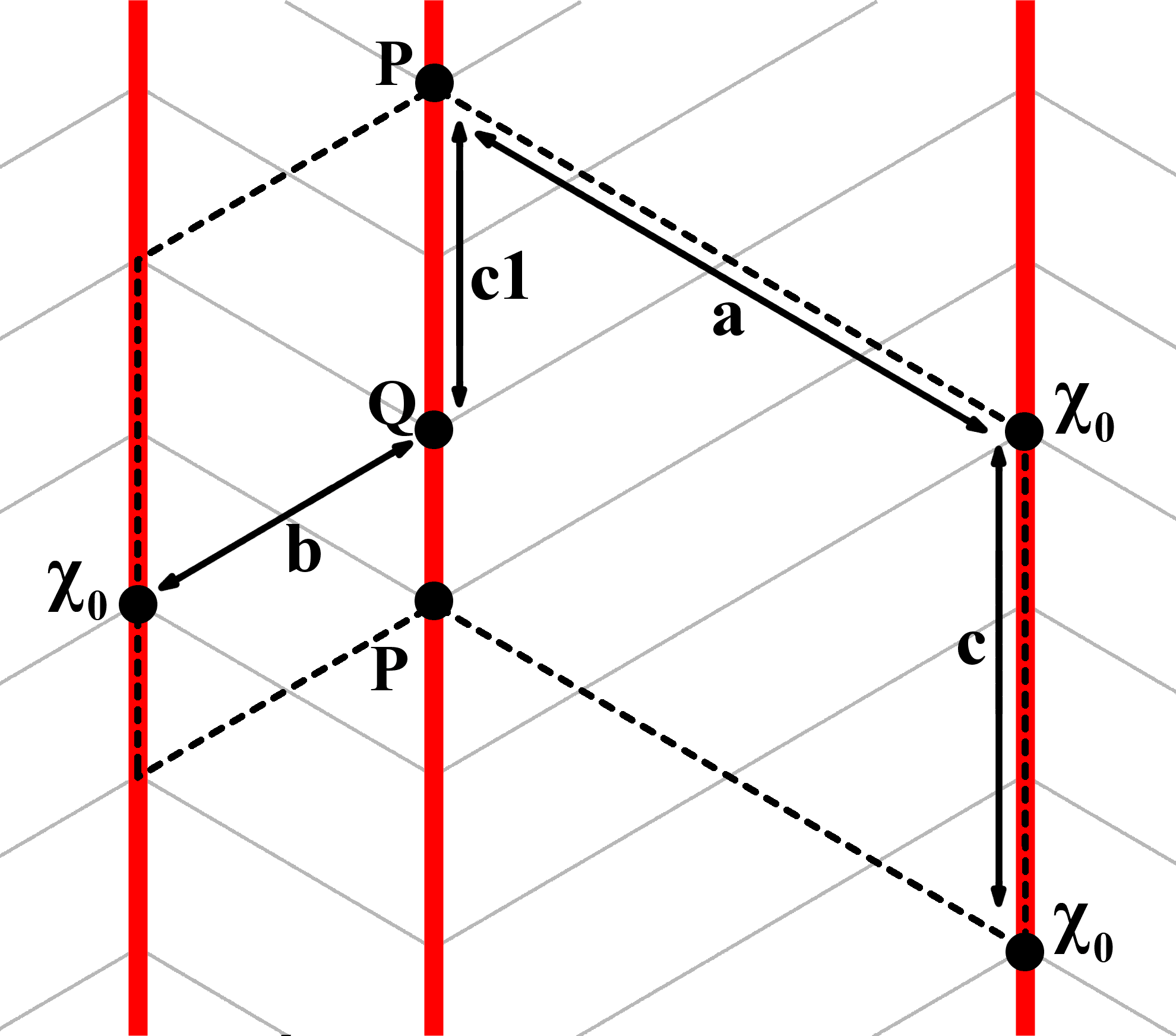}  
\caption{A single looping $(0,1)$-charge line sink-source graph}
\label{figure-1}
\end{figure}

\begin{figure}[!htb] \centering 
\subfigure[A valency 3 vertex] { \label{figure-2a}
\includegraphics[scale=0.29]{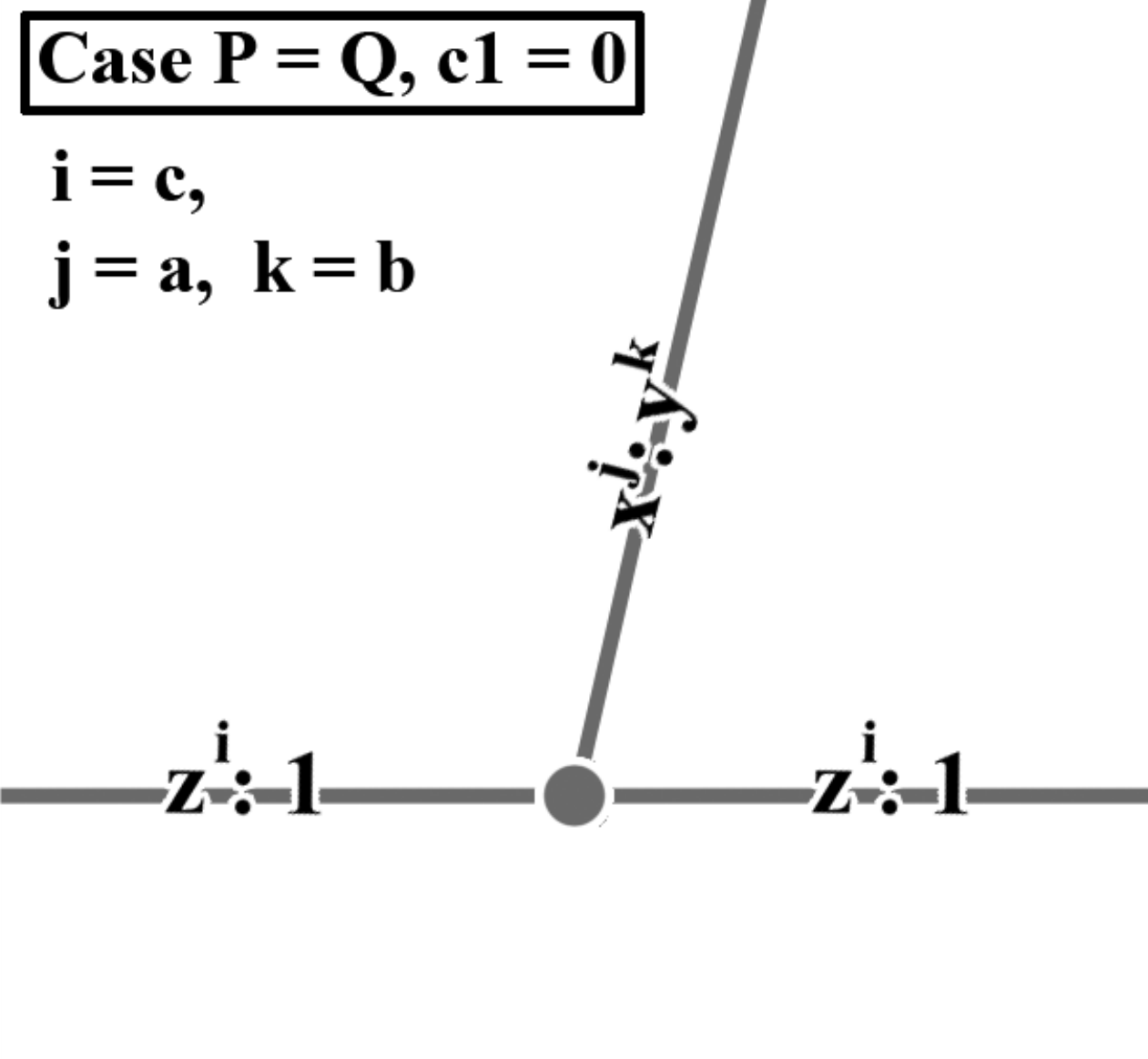}} 
\subfigure[A valency 4 vertex] { \label{figure-2b}
\includegraphics[scale=0.29]{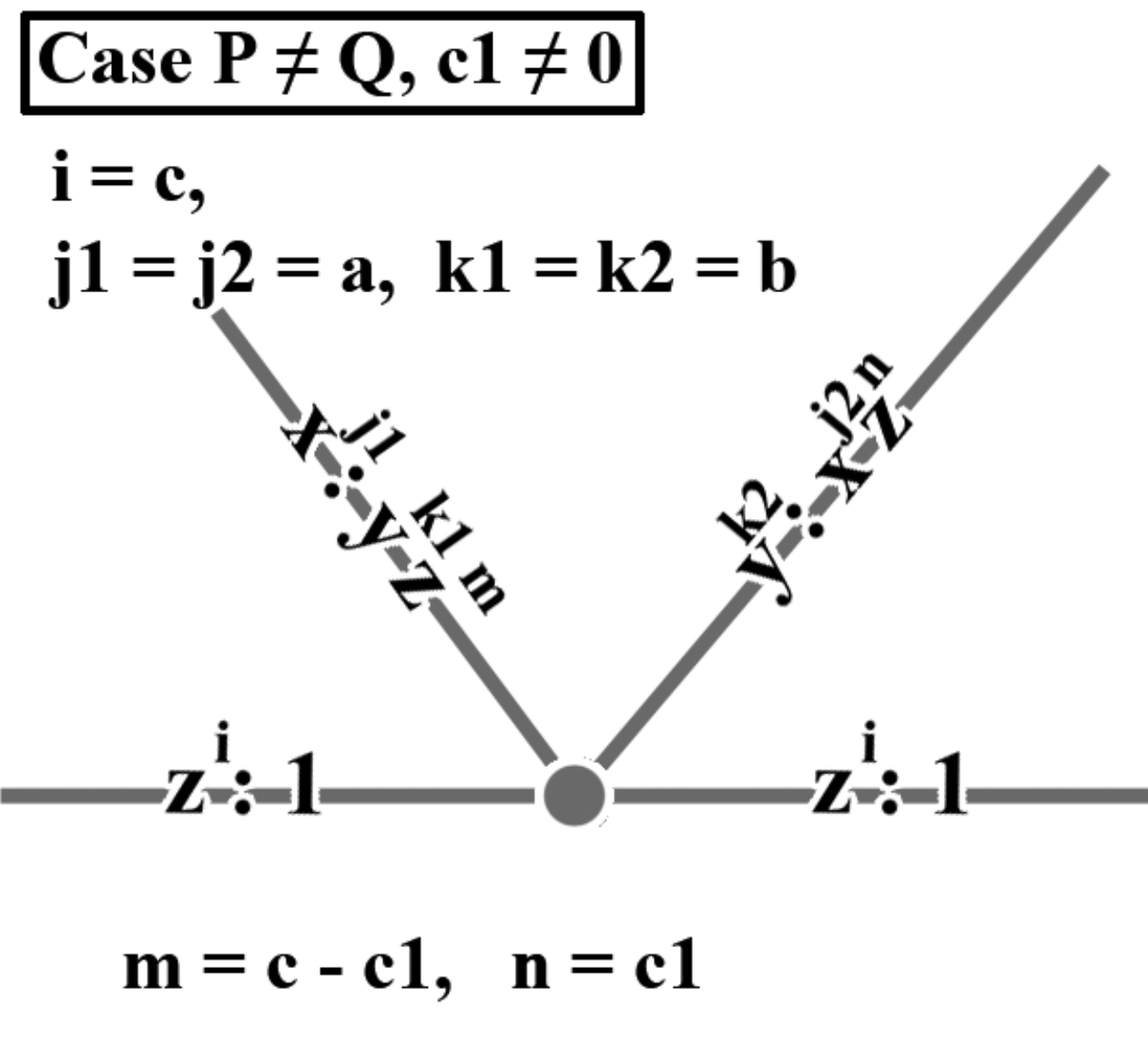}}
\caption{A side vertex of $\Delta$}
\label{figure-2}
\end{figure}

\begin{proposition}
\label{prps-one-looping-(0,1)-charge-line-to-non-compact-divisor}
Let $e \in \mathfrak{E}$ and let $E$ be the corresponding
toric divisor on $Y$. If the sink-source graph $SS_{\widetilde{\mathcal{M}}, E}$ 
is as depicted on Figure \ref{figure-1}, then the coordinates 
of $e$ in $L$ are $\frac{1}{|G|}(ac, bc, 0)$ and locally around 
$e$ triangulation $\Sigma$ looks as depicted on
Figure $\ref{figure-2}$. Moreover, the monomial ratios 
carving out the edges incident 
to $e$ can be computed in terms of the indicated lengths 
in $SS_{\widetilde{\mathcal{M}}, E}$ as shown on 
Figures $\ref{figure-2a}$-$\ref{figure-2b}$. 

If the shape of $SS_{\widetilde{\mathcal{M}}, E}$ is a rotation of Figure
\ref{figure-1} by $\frac{2\pi}{3}$ or $\frac{4\pi}{3}$ one permutes
$x$, $y$ and $z$ accordingly in all of the above.
\end{proposition}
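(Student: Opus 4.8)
The plan is to read off the coordinates of $e$ from Lemma \ref{lemma-number-of-diamonds}, to identify the local shape of $\Sigma$ from the classification of non-compact exceptional divisors, and then to compute the monomial ratios by the same local-chart analysis used for compact divisors in Figures \ref{figure-20}--\ref{figure-22}. Throughout I assume, as I may after permuting the coordinates, that $\pi(E)$ is the $z$-axis of $\mathbb{C}^3/G$; the rotations of Figure \ref{figure-1} by $2\pi/3$ and $4\pi/3$ correspond to $\pi(E)$ being the $x$- or the $y$-axis and are obtained by relabelling $x,y,z$ accordingly. To pin down $e$: by Lemma \ref{lemma-divisors-to-sink-source-graphs} the graph $SS_{\tilde{\mathcal{M}},E}$ of Figure \ref{figure-1} is one looping $z$-$(1,0)$-charge line through $\chi_0$ together with one looping $z$-$(0,1)$-charge line, and the proof of that lemma shows that no $z$-oriented arrow of $Q(G)$ vanishes along $E$, so the third entry produced by Lemma \ref{lemma-number-of-diamonds} is $0$. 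The two charge lines cut $T_G$ into one $x$-tiled and one $y$-tiled region, and both the number of vanishing $x$-arrows and the number of vanishing $y$-arrows are determined by the combinatorics of these regions and of the two charge lines. Carrying out this count in terms of the lengths $a,b,c$ marked in Figure \ref{figure-1} gives the asserted values $ac$ and $bc$, and Lemma \ref{lemma-number-of-diamonds} then yields $e=\tfrac{1}{|G|}(ac,bc,0)$.

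Since the last coordinate of $e$ vanishes, $e$ lies on the $xy$-side of $\Delta$. By the description of $G\text{-}\hilb(\mathbb{C}^3)$ recalled in \S\ref{section-ghilb-and-toric}, the divisor $E$ is then a $\mathbb{P}^1$-fibration over the $z$-line of $\mathbb{C}^3/G$ (reflecting the two-dimensional McKay correspondence for the stabiliser $\ker\kappa(z)\subset\gsl_2(\mathbb{C})$ of the generic point of that line), degenerating over the origin into at most a transverse pair of $\mathbb{P}^1$s. Hence $E$ has exactly three, or exactly four, torus-invariant curves, so $e$ has valency $3$ or $4$ in $\Sigma$; in both cases two of the incident edges run along the side of $\Delta$, joining $e$ to its two neighbours in $L\cap\partial\Delta$, and the remaining one or two edges cross the interior. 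Which case occurs is visible in $SS_{\tilde{\mathcal{M}},E}$: the fibre over the origin degenerates precisely when the two charge lines fail to run parallel past $\chi_0$, i.e. when the relevant marked length in Figure \ref{figure-1} is nonzero, and this is what separates Figure \ref{figure-2a} from Figure \ref{figure-2b}.

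An edge $(e,f)$ of $\Sigma$ is carved out by the ratio whose quotient generates the ray of $M$ normal to $\langle e,f\rangle$, so it suffices to locate each neighbour $f$ of $e$. The two side-edges are immediate: the plane they span is $\{z\text{-coordinate}=0\}$, whose normal ray in $M$ is generated by the least power $z^k$ of $z$ lying in $M$, so these edges are carved out by $z^k:1$. For the interior edge(s) I would proceed exactly as behind Figures \ref{figure-20}--\ref{figure-22}: the $G$-cluster parametrised by the generic point of an interior curve $\langle e,f\rangle$ has a $\twalg$-module structure dictated by precisely which arrows of $Q(G)$ vanish there; this vanishing pattern differs from the one along $E$ by a single ``step'' which is read off from $SS_{\tilde{\mathcal{M}},E}$; and imposing regularity of the three-dimensional cone of $\mathfrak{F}$ containing $\langle e,f\rangle$ then determines $f$, and with it the ratio, as an explicit expression in the lengths marked in Figures \ref{figure-2a}--\ref{figure-2b}. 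Doing this separately for the valency-$3$ and valency-$4$ pictures produces exactly the ratios displayed there.

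I expect the first two steps to be essentially formal, given Lemmas \ref{lemma-number-of-diamonds} and \ref{lemma-divisors-to-sink-source-graphs} and the structure theory of \S\ref{section-ghilb-and-toric}. The main obstacle is the third step: translating the marked lengths $a,b,c$ (and the auxiliary lengths in Figures \ref{figure-2a}--\ref{figure-2b}) into the exponent vectors of the monomials $m_1,m_2$ requires a careful, case-by-case analysis of the local toric charts around $e$, of the sort carried out for compact divisors in \cite{Logvinenko-ReidsRecipeAndDerivedCategories}; the new feature here is only that $\Sigma$ meets the boundary of $\Delta$, so one must also keep track of the two side-edges and of how the $x$- and $y$-tiled strips of $SS_{\tilde{\mathcal{M}},E}$ abut the charge line through $\chi_0$.
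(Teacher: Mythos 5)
Your first step (reading off $e=\tfrac{1}{|G|}(ac,bc,0)$ from Lemma \ref{lemma-number-of-diamonds} after noting that no $z$-arrow vanishes) and your identification of the side-edges as carved out by $z^c:1$ are sound and agree with the paper, which gets the coordinates the same way via the fundamental domain drawn on Figure \ref{figure-1}. Your route to the local picture is genuinely different in flavour: you import the classification of non-compact exceptional divisors from \S\ref{section-ghilb-and-toric} to fix the valency of $e$ at $3$ or $4$ first, whereas the paper never invokes that classification and instead runs an elimination argument over all possible forms of carving ratio.

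The genuine gap is that the heart of the proposition --- the explicit interior-edge ratios $x^{a}:y^{b}z^{(-c_1 \bmod c)}$ and $y^{b}:x^{a}z^{c_1}$, and the fact that the valency-$3$ versus valency-$4$ dichotomy is governed by whether $P=Q$ (i.e. $c_1=0$) --- is asserted but not derived; you yourself flag it as ``the main obstacle'' and defer it to an unspecified chart-by-chart computation. The paper's mechanism for closing exactly this gap is a non-vanishing-path argument: if an edge incident to $e$ is carved by $m_1:m_2$ of common character $\chi$, then every path from $\chi^{-1}$ to $\chi_0$ realising $m_1$ or $m_2$ must avoid arrows vanishing along $E$; inspecting Figure \ref{figure-1} then forces $\chi^{-1}$ to be $\chi_0$, $P$ or $Q$, and the exponents to be exactly $(c)$, $(a,b,-c_1\bmod c)$ or $(a,b,c_1)$ respectively. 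Without this (or an equivalent) argument, your claim that ``the fibre degenerates precisely when the relevant marked length is nonzero'' and your translation of the marked lengths into exponent vectors remain unproven, so the proposal as written is an outline rather than a proof; the same applies, more mildly, to the unexecuted count showing that the numbers of vanishing $x$- and $y$-arrows are $ac$ and $bc$.
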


To explain the notation of Figure \ref{figure-1}: let $P$
be the vertex where we first meet the looping $(1,0)$-charge line 
if we follow the line of $x$-arrows backwards from $\chi_0$.
Let $Q$ be same but for a line of $y$-arrows. Let $a$ and $b$
be the lengths (in arrows) of the resulting paths 
from $P$ and $Q$ to $\chi_0$, as indicated. Let $c$ be the number 
of distinct characters which occur on the $(0,1)$-charge line, i.e. 
the length of the path of $z$-arrows 
which starts at $\chi_0$ and ends when it first comes back 
to $\chi_0$.  Then the dotted line on Figure \ref{figure-1} 
gives a choice of a fundamental domain of $Q(G)$. Finally, 
$c_1$ is the length a path of $z$-arrows from $Q$ to $P$.
If $P$ and $Q$ coincide we set $c_1 = 0$. 

\begin{proof}
The proof proceeds very similarly to the proofs of Props. 3.1-3.3 of 
\cite{Logvinenko-ReidsRecipeAndDerivedCategories}, except we
work with $\sinksource_{\widetilde{\mathcal{M}}, E}$ instead of 
$\sinksource_{\mathcal{M}, E}$. Suppose there
is an edge incident to $e$ which is carved by a ratio of form 
$z^{k'} : x^{i'} y^{j'}$ for some $i', j', k' \geq 0$. Let $\chi$
denote the common character of $z^{k'}$ and $x^{i'} y^{j'}$. 
Any path which starts at $\chi^{-1}$ and consists of   
$k'$ $z$-arrows or of $i'$ $x$-arrows and $j'$
$y$-arrows terminates at $\chi_0$. By the dual of 
the argument which begins the proof of Prop.\ 3.1 of 
\cite{Logvinenko-ReidsRecipeAndDerivedCategories} any such 
path may not contain arrows that vanish along $E$. 
In particular, the $x$-arrow which leaves $\chi^{-1}$ cannot vanish 
along $E$ unless $i' = 0$ and the $y$-arrow which leaves $\chi^{-1}$ 
cannot vanish unless $j' = 0$. On the other hand, observe that
$\chi^{-1}$ must lie on the same looping $z$-$(1,0)$-charge line 
as $\chi_0$ since we can reach the latter by taking a path of 
$z$-arrows from the former. Therefore both the $x$-arrow and the $y$-arrow 
which leave $\chi^{-1}$ vanish along $E$. 
We conclude that $\chi = \chi_0$, $i' = j' = 0$ and $k' = c$. 
In other words, the ratio in question can only be $z^c \colon 1$. 

Suppose now there is an edge incident to $e$ which is carved by 
a ratio of the form $x^{i'} : y^{j'} z^{k'}$ for some $i', j', k' \geq 0$.
We may further assume that $i' \neq 0$ and $j' \neq 0$ as we've
already dealt with that case. Let $\chi$ denote the common 
character of $x^{i'}$ and $y^{j'} z^{k'}$. Then, arguing as above, 
no arrow in the path of $i'$ $x$-arrows from $\chi^{-1}$ to $\chi_0$
vanishes along $E$. Therefore $\chi^{-1}$ must lie somewhere on the 
path of $x$-arrows between $P$ and $\chi_0$ (see Figure
\ref{figure-1}). On the other hand, no arrow in any path of 
$j'$ $y$-arrows and $k'$ $z$-arrows vanishes along $E$. 
In particular, since $j' \neq 0$ the $y$-arrow which leaves $\chi^{-1}$ 
doesn't vanish. From Figure \ref{figure-1} we can see that 
the only possibilty is $\chi^{-1} = P$, $i' = a$, $j' = b$ and
$k' = (- c_1) \mod c$. So the ratio in question can only
be $x^a : y^b z^{(-c_1 \mod c)}$. A similar argument for a ratio 
of form $y^{j'} : x^{i'} z^{k'}$ shows that 
the only possibility is $\chi^{-1} = Q$ and the ratio $y^b : x^a z^{c_1}$. 

By Lemma \ref{lemma-number-of-diamonds} we have $e = \frac{1}{G}(a, b, c)$ 
where $a$, $b$ and $c$ are the numbers of $x$-, $y$- and $z$-oriented arrows 
which vanish along $E$ in $SS_{\widetilde{\mathcal{M}}, E}$. The choice
of a fundamental domain indicated by the dotted line    
on Figure \ref{figure-1} demonstrates that $e = \frac{1}{G}(ac,bc,0)$. 
 
Now suppose $P$ and $Q$ coincide, i.e. $c_1 = 0$. 
Then the above shows that the only ratios which can mark 
an edge incident to $e$ are
$z^c : 1$ and $x^a : y^b$, so the triangulation $\Sigma$
must look locally around $e$ as depicted on Figure $\ref{figure-2a}$. 
Suppose $P$ and $Q$ do not coincide, i.e. $c_1 \neq 0$. Then the only
ratios which can mark an edge incident to $e$ are
$z^c : 1$, $x^a : y^b z^{c - c_1}$ and $y^b : x^a z^{c_1}$ and  
so the triangulation $\Sigma$ must look locally around $e$ as 
depicted on Figure $\ref{figure-2b}$. 
\end{proof}

With Lemma \ref{lemma-divisors-to-sink-source-graphs} and 
Prop.\ \ref{prps-one-looping-(0,1)-charge-line-to-non-compact-divisor}
we obtain immediately a refined version of Theorem 3.1 of 
\cite{Logvinenko-ReidsRecipeAndDerivedCategories} which takes
into account every vertex in $\mathfrak{E}$ and not just those
which lie in the interior of $\Delta$ and correspond to compact 
exceptional divisors:
\begin{theorem}
\label{theorem-sink-source-graph-to-divisor-type-correspondence}
Let $e \in \mathfrak{E}$. Then one of the following must hold: 
\begin{footnotesize} 
\begin{align*}
\begin{array}{|c|c|c|c|c|} 
\hline
\sinksource_{\widetilde{\mathcal{M}}, E_e} & 
e &
\text{Triangulation } \Sigma &
\text{Reid's recipe} &
E_e \\
& &
\text{ locally around } e
&  
\text{(\cite{Craw-AnexplicitconstructionoftheMcKaycorrespondenceforAHilbC3}, \S3) } 
&
\\
\hline
\text{Empty} & 
\text{A corner } &
\text{A corner of $\Delta$} &
\text{ -- } &
\text{The strict transform of} 
\\
& \text{vertex of $\Delta$} 
& & &
\text{a coordinate hyperplane}
\\
\hline
\text{Fig.\ $\ref{figure-1}$} & 
\text{A side } & 
\text{Fig.\ $\ref{figure-2}(a)$-$(b)$} &
\text{ -- } &
\mathbb{P}^1 \times \mathbb{C}, \text{ blown-up}
\\
(\text{up to rotation}^{\dagger})
& 
\text{vertex of $\Delta$}
& 
(\text{up to rotation}^{\dagger})
& & 
\text{ in $0$ or $1$ points}
\\
\hline
\text{Fig.\ \ref{figure-20a}} & 
\text{An interior } & 
\text{Fig.\ \ref{figure-20b}} &
\text{Case $1$} & 
\mathbb{P}^2 \\ 
&
\text{vertex of $\Delta$}
& & &
\\
\hline
\text{Fig.\ \ref{figure-21a}} & 
\text{An interior } & 
\text{Fig.\ \ref{figure-21b} - \ref{figure-21e}} &
\text{Cases $2$-$3$} & 
\text{A surface scroll, blown-up} \\ 
(\text{up to rotation}^{\dagger})
& 
\text{vertex of $\Delta$}
& 
(\text{up to rotation}^{\dagger})
& & 
\text{in $0$, $1$ or $2$ points} 
\\
\hline
\text{Fig.\ \ref{figure-22a}} & 
\text{An interior } & 
\text{Fig.\ \ref{figure-22b}} &
\text{Case $4$} & 
\text{Del Pezzo surface $dP_6$} \\
& 
\text{vertex of $\Delta$}
& & & 
\\
\hline
\end{array} 
\end{align*} 
$\dagger$: The sink-source graph $SS_{\widetilde{\mathcal{M}}, E}$ may also be 
a rotation of the diagram by an angle of $\frac{2 \pi}{3}$ or $\frac{4 \pi}{3}$. 
In this case the corresponding diagram of triangulation $\Sigma$ 
locally around $e$ should be rotated by the same angle and 
$x$, $y$ and $z$ should be permuted.
\end{footnotesize} 
\end{theorem}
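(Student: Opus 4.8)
The plan is to assemble the table directly from Lemma~\ref{lemma-divisors-to-sink-source-graphs}, Proposition~\ref{prps-one-looping-(0,1)-charge-line-to-non-compact-divisor} and the classification of compact exceptional divisors in \cite{Logvinenko-ReidsRecipeAndDerivedCategories}. I would partition $\mathfrak{E} = L \cap \Delta$ according to whether $e$ is a corner vertex of $\Delta$, a side vertex, or an interior vertex; in each case the position of $e$ pins down the shape of $\sinksource_{\tilde{\mathcal{M}},E_e}$, and the remaining columns of the row are then read off from the corresponding figure.

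First, if $e$ is a corner vertex, Lemma~\ref{lemma-divisors-to-sink-source-graphs}\one\ gives that $\sinksource_{\tilde{\mathcal{M}},E_e}$ is empty and that $E_e$ is the strict transform of a coordinate hyperplane, the $x$-, $y$- or $z$-tiling of $T_H$ recording which of the three it is. A corner of $\Delta$ lies on no interior edge, so there is nothing for Reid's recipe to mark and $\Sigma$ looks locally like the corner itself. Next, if $e$ is a side vertex, Lemma~\ref{lemma-divisors-to-sink-source-graphs}\two\ identifies $E_e$ as a non-compact exceptional divisor and shows that $\sinksource_{\tilde{\mathcal{M}},E_e}$ consists of exactly one looping $(1,0)$-charge line through $\chi_0$ and one looping $(0,1)$-charge line; up to permuting $x,y,z$ this is Figure~\ref{figure-1}. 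Proposition~\ref{prps-one-looping-(0,1)-charge-line-to-non-compact-divisor} then supplies the coordinates of $e$, the local form of $\Sigma$ (Figure~\ref{figure-2}(a) when $P=Q$ and (b) otherwise) and the monomial ratios carving the incident edges, while the identification of $E_e$ as $\mathbb{P}^1 \times \mathbb{C}$ blown up in $0$ or $1$ points is precisely the description of non-compact exceptional divisors recalled in Section~\ref{section-ghilb-and-toric}, the blown-up point appearing exactly when the $\mathbb{P}^1$-fibration degenerates over the origin, i.e. when $P \neq Q$. A side vertex again lies on no interior edge, so there is no marking. This is the one \emph{new} row, and it is where the earlier isolated-singularity hypothesis had excluded non-compact divisors.

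Finally, if $e$ is an interior vertex, Lemma~\ref{lemma-divisors-to-sink-source-graphs}\three\ gives a $(3,0)$-sink at $\chi_0$, and the argument of Props.~3.1--3.3 of \cite{Logvinenko-ReidsRecipeAndDerivedCategories}, which I expect to go through verbatim with $\tilde{\mathcal{M}}$ in place of $\mathcal{M}$, shows that the only possibilities are Figures~\ref{figure-20a}, \ref{figure-21a} and \ref{figure-22a}: one $(3,3)$-source, one $(1,2)$-source together with one $(2,1)$-source, or three $(2,1)$-sources, corresponding to $E_e$ being $\mathbb{P}^2$, a surface scroll blown up in $0$, $1$ or $2$ points, or $dP_6$, hence to Cases~$1$, $2$-$3$ and $4$ of Reid's recipe, with the local triangulation and carving ratios as in Figures~\ref{figure-20b}--\ref{figure-22b}. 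The rotation footnote is then automatic: a cyclic permutation of $x,y,z$ permutes the coordinate axes and hyperplanes and rotates $T_H$ by $\frac{2\pi}{3}$ or $\frac{4\pi}{3}$, so every row is to be read up to this symmetry. I do not anticipate a genuine obstacle, since the mathematical content is already contained in the results cited above; the only real work is bookkeeping: matching the combinatorial data in $\sinksource_{\tilde{\mathcal{M}},E_e}$ to the edges of $\Sigma$ incident to $e$ and to the monomials carving them out, which Proposition~\ref{prps-one-looping-(0,1)-charge-line-to-non-compact-divisor} and the accompanying figures already carry out.
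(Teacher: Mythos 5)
Your proposal is correct and matches the paper's own route: the paper derives this theorem immediately by combining Lemma~\ref{lemma-divisors-to-sink-source-graphs} (which splits $\mathfrak{E}$ into corner, side and interior vertices via the three sink-source graph types), Proposition~\ref{prps-one-looping-(0,1)-charge-line-to-non-compact-divisor} for the new non-compact row, and Props.~3.1--3.3 of \cite{Logvinenko-ReidsRecipeAndDerivedCategories} for the compact rows, exactly as you describe.
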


\section{CT-subdivisions}
\label{section-CT-subdivisions} 

Fix a character $\chi \in G^\vee$. Section  
\ref{section-psi-O-chi-and-skew-commutative-cubes-of-line-bundle}
describes the transform $\Psi(\mathcal{O}_0 \otimes \chi)$ 
in terms of the vanishing divisors of the skew-commutative cube 
of line bundles $\hex(\chi^{-1})_{\widetilde{\mathcal{M}}}$.  
Each of these vanishing divisors is a reduced union 
of irreducible toric divisors on $Y$ \cite[\S4.2]{CautisLogvinenko}. 
For an irreducible toric divisor $E$, we may ask which maps in 
the cube corresponding to $\hex(\chi^{-1})_{\widetilde{\mathcal{M}}}$ 
vanish along $E$. The answer is encoded in the vertex type of $\chi^{-1}$ 
in $\sinksource_{\widetilde{\mathcal{M}}, E}$. 
The table that translates between the two is provided on 
Fig.\ \ref{figure-13}. Therefore, to compute all the vanishing divisors 
of $\hex(\chi^{-1})_{\widetilde{\mathcal{M}}}$ it suffices
to know the vertex type of $\chi^{-1}$ 
for each toric divisor on $Y$. This turns out 
to be directly linked to the following notion introduced in 
\cite[\S5]{Craw-AnexplicitconstructionoftheMcKaycorrespondenceforAHilbC3}:
\begin{defn}
Given a monomial $m \in \mathbb{C}[x,y,z]$ define $\convhull(m)$ to be 
the union of all the basic triangles in $\Sigma$ whose $G$-graph
contains $m$. 
\end{defn}

The union of the nonempty $\convhull(m)$ as $m$ ranges over all the
monomials of weight $\chi$ provides a subdivision of $\Delta$. 
We need the following coarsening of it. Let $Cz^\bullet$ be the union 
of the basic triangles in $\Sigma$ in whose $G$-graph $\chi$ is 
represented by $z^c$ for $c > 0$. Similarly for 
$Cx^\bullet$ and $Cy^\bullet$. Let $Tx^\bullet y^\bullet$ 
be the union of all the basic triangles in $\Sigma$ 
in whose $G$-graph $\chi$ is represented by $x^a y^b$ for $a,b > 0$.
Similarly for $Ty^\bullet z^\bullet$ and $Tx^\bullet z^\bullet$. 
Together these six give a subdivision of $\Delta$ which we call 
the \em CT-subdivision\rm.  

It was proved in 
\cite[Lemma 5.3]{Craw-AnexplicitconstructionoftheMcKaycorrespondenceforAHilbC3}
that when non-empty $\convhull(m)$ is a convex region of
$\Delta$ which contains $e_x$ (resp. $e_y$, $e_z$) if and only if 
$x$ (resp. $y$, $z$) doesn't divide $m$. So if
non-empty:
\begin{itemize}
\item $Cx^\bullet$ is a convex area containing 
side $e_y e_z$ of $\Delta$. 
\item $Cy^\bullet$ is a convex area containing 
side $e_x e_z$ of $\Delta$. 
\item $Cz^\bullet$ is a convex area containing 
side $e_x e_y$ of $\Delta$. 
\item $Ty^\bullet z^\bullet$ is a union of convex areas 
each containing corner vertex $e_x$ of $\Delta$. 
\item $Tx^\bullet z^\bullet$ is a union of convex areas
each containing corner vertex $e_y$ of $\Delta$. 
\item $Tx^\bullet y^\bullet$ is a union of convex areas
each containing corner vertex $e_z$ of $\Delta$. 
\end{itemize}

\begin{figure}[!htb] 
\begin{center}
\includegraphics[scale=0.11]{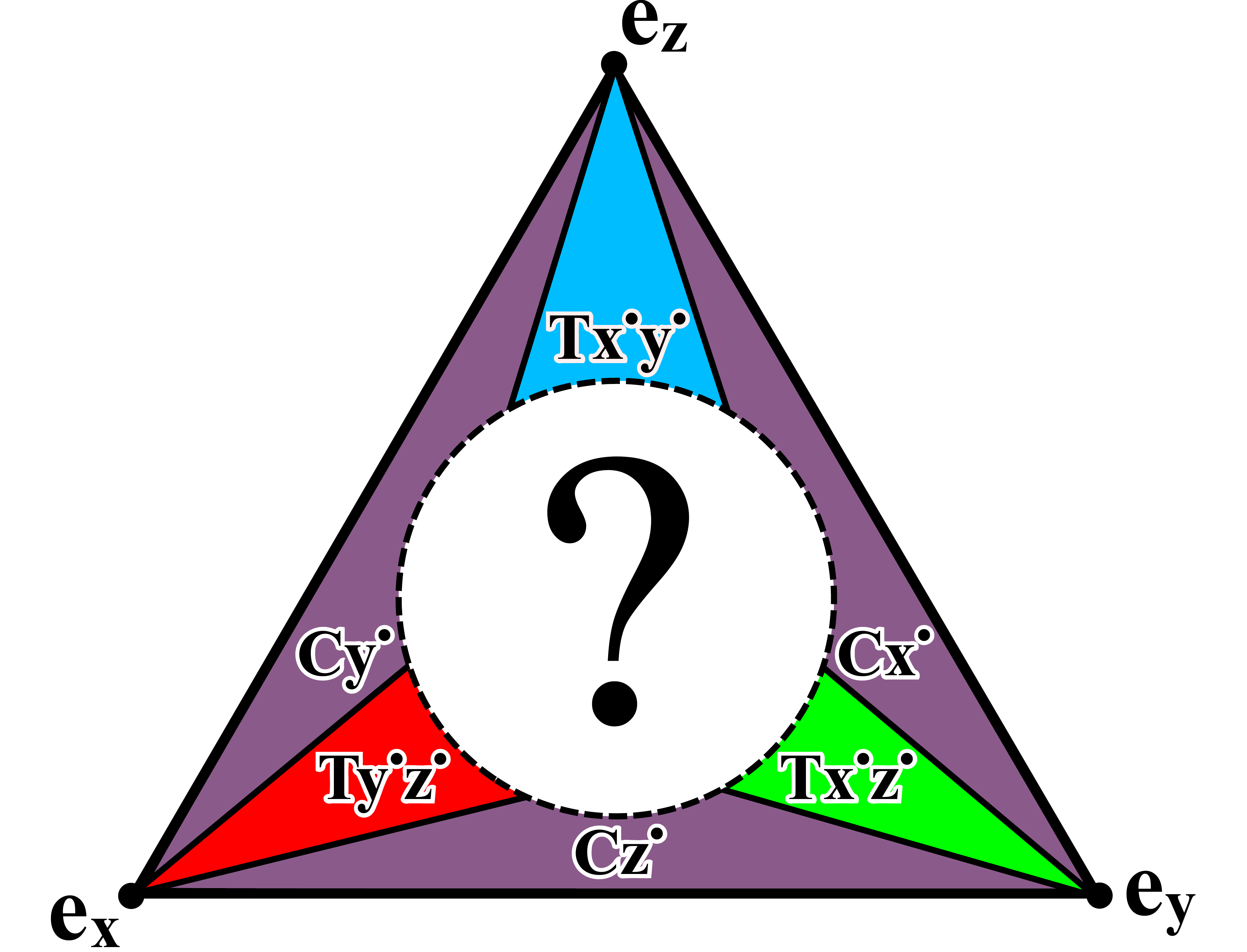} \end{center}
\caption{\label{figure-28} CT-subdivision of $\Delta$ 
 }
\end{figure}

Thus a CT-subdivision looks, in general, as depicted on 
Figure \ref{figure-28}. However any of the six areas can be empty. 
We want to think of such cases as degenerations of the general picture
above. When $Cx^\bullet$, $Cy^\bullet$ or $Cz^\bullet$ is empty 
we think of them as having degenerated away completely. But
for $Ty^\bullet z^\bullet$, $Tx^\bullet z^\bullet$ or 
$Ty^\bullet z^\bullet$ we distinguish two cases.  
When $Tx^\bullet y^\bullet$ is empty, the areas $Cx^\bullet$ 
and $Cy^\bullet$ share a boundary that is necessarily 
a line segment $l$ out of $e_z$. If $\chi$ doesn't mark $l$ we think 
of $Tx^\bullet y^\bullet$ as having degenerated into an 
infinitesimally thin strip along $l$. In other words, 
we do not consider the points of $l$ as lying on 
the boundary of $Cx^\bullet$ and $Cy^\bullet$ but rather as lying on 
the degeneration of $Tx^\bullet y^\bullet$ still wedged in 
between the two. If, on the other hand, $\chi$ does mark $l$ 
we think of $Tx^\bullet y^\bullet$ as having degenerated to 
just the vertex $e_z$, allowing $Cx^\bullet$ and $Cy^\bullet$ 
to share a boundary along $l$.

We formalise this as follows: 

\begin{defn}
\begin{enumerate}
\item Define $Cz^\bullet$ to be the union of all the basic triangles
in $\Sigma$ in whose $G$-graph $\chi$ is represented by 
$z^c$ for $c > 0$. Define $Cx^\bullet$ and $Cy^\bullet$ similarly. 
\item Define $Tx^\bullet y^\bullet$ as follows: 
\begin{itemize}
\item If non-empty, define $Tx^\bullet y^\bullet$ to be 
the union of the basic triangles in $\Sigma$ in whose 
$G$-graph $\chi$ is represented by $x^a y^b$ for $a,b > 0$. 
\item Otherwise, we consider the boundary $l$ between
$Cx^\bullet$ (or $e_z e_y$ if $Cx^\bullet$ empty) and $Cy^\bullet$
(or $e_z e_x$ if $Cy^\bullet$ empty). 
\begin{itemize}
\item If $\chi$ doesn't mark $l$, define $Tx^\bullet y^\bullet$
to be $l$. 
\item If $\chi$ does mark $l$, define $Tx^\bullet y^\bullet$ to be $e_z$. 
\end{itemize}
\end{itemize}
Define $Ty^\bullet z^\bullet$ and $Tx^\bullet z^\bullet$ similarly. 
\end{enumerate}
\end{defn}

Our reason for adopting these conventions becomes clear in the
course of proving the following:
\begin{lemma}
\label{lemma-non-vanishing-paths-to-CT-areas}
Let $e$ be a vertex of $\Sigma$ and $E$ be the corresponding
toric divisor on $Y$. Then an area of the CT-subdivision for $\chi$ contains
$e$ if and only if $\mckquiv$ has a path from $\chi^{-1}$ to $\chi_0$ 
which doesn't vanish along $E$ in $\mckquiv_{\widetilde{M}}$ and 
which represents monomial of the same type as this area. 
E.g. there is a non-vanishing $x^i y^j$ path of arrows with $i,j > 0$ 
if and only if $e$ belongs to $Tx^\bullet y^\bullet$. 
\end{lemma}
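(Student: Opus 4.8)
\emph{Strategy.}
The plan is to pass to the level of monomials via two translations, and then reduce the statement to a case check governed by the classification of sink--source graphs.
First, since $\tilde{\mathcal{M}}$ is the fibrewise dual of $\mathcal{M}$ (\cite[\S2]{CautisLogvinenko}), each arrow of $\mckquiv_{\tilde{\mathcal{M}}}$ is, up to a non-zero scalar, the transpose of an arrow of $\mckquiv_{\mathcal{M}}$, so the two have the same vanishing divisor; chaining this (and using that the commuting-square relations identify all monomial paths between two fixed vertices up to scalars) shows that a path from $\chi^{-1}$ to $\chi_0$ in $\mckquiv_{\tilde{\mathcal{M}}}$ representing a monomial $m$ does not vanish along $E$ if and only if the path from $\chi_0$ to $\chi$ in $\mckquiv_{\mathcal{M}}$ representing $m$ does not. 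So it suffices to prove the lemma with $\mckquiv_{\tilde{\mathcal{M}}}$, $\chi^{-1}$, $\chi_0$ replaced by $\mckquiv_{\mathcal{M}}$, $\chi_0$, $\chi$. Second, a basic triangle $\sigma$ of $\Sigma$ lies in a given two-dimensional region of the CT-subdivision exactly when the monomial representing $\chi$ in the $G$-graph of the $G$-cluster $Z_\sigma$ has the corresponding type, and $Z_\sigma$ is the fibre of $\mathcal{M}$ over the torus-fixed point $p_\sigma \in Y$; hence ``a two-dimensional region $A$ contains $e$'' means ``$\chi$ is represented by a monomial of type $A$ in $\mathcal{O}_{Z_\sigma}$ for some torus-fixed point $p_\sigma$ on $E_e$''.

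\emph{The easy implication.}
Suppose $A$ is two-dimensional and $e \in A$. Choose $\sigma \ni e$ with $p_\sigma \in E_e$ whose $G$-graph represents $\chi$ by a monomial $m$ of type $A$, so $m$ is a non-zero element of $\mathcal{O}_{Z_\sigma}$ and therefore so is every monomial dividing it. For any ordering of the arrows, the partial products along the monomial path $\chi_0 \to \chi$ through $m$ are monomials dividing $m$, each non-zero in $\mathcal{O}_{Z_\sigma}$, so each arrow of the path (restricted to $p_\sigma$ it is multiplication by a variable between two one-dimensional spaces, sending a non-zero vector to a non-zero vector) does not vanish at $p_\sigma$. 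Because each arrow of $\mckquiv_{\mathcal{M}}$ has a reduced torus-invariant vanishing divisor (\cite[\S4.2]{CautisLogvinenko}) and $p_\sigma$ lies on the torus-invariant divisor $E_e$, non-vanishing at $p_\sigma$ forces non-vanishing along $E_e$. Translating back via the first reduction gives the required path. (This argument fails for the degenerate regions, which are exactly the delicate cases, discussed below.)

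\emph{The converse.}
Suppose there is a path $\chi_0 \to \chi$ in $\mckquiv_{\mathcal{M}}$ of type $A$ not vanishing along $E_e$, represented by a monomial $m$. At a general closed point $p$ of $E_e$ the monomial $m$ is non-zero in $\mathcal{O}_{\mathcal{M}_p}$, hence spans the one-dimensional weight-$\chi$ component there, and we must exhibit a torus-fixed point of $E_e$ at which \emph{some} monomial of type $A$ of weight $\chi$ is non-zero. When $E_e$ is the strict transform of a coordinate hyperplane this follows from Lemma~\ref{lemma-divisors-to-sink-source-graphs}(1). In general, Lemma~\ref{lemma-divisors-to-sink-source-graphs} identifies whether $E_e$ is a non-compact or a compact exceptional divisor, and then Proposition~\ref{prps-one-looping-(0,1)-charge-line-to-non-compact-divisor} together with \cite[Prop.~3.1--3.3]{Logvinenko-ReidsRecipeAndDerivedCategories} (summarised in Theorem~\ref{theorem-sink-source-graph-to-divisor-type-correspondence}) pin down the triangulation $\Sigma$, hence all the $G$-graphs, in a neighbourhood of $e$ from the vertex type of $\chi^{-1}$ in $\sinksource_{\tilde{\mathcal{M}},E_e}$. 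One can then list, for each vertex type of $\chi^{-1}$, which monomial types represent $\chi$ in the $G$-graphs at $e$ and which types of non-vanishing path $\chi_0\to\chi$ occur, and check the two lists agree; the fact that every positive-degree $G$-invariant monomial vanishes along a compact exceptional divisor bounds the monomials that can occur and keeps this finite.

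\emph{The main obstacle.}
The genuinely subtle point — and the reason for the conventions in the definition of the CT-subdivision — is that a type-$A$ non-vanishing path $\chi_0 \to \chi$ may exist even though \emph{no} $G$-graph at $e$ represents $\chi$ by a type-$A$ monomial: this is precisely the configuration where, say, $Tx^\bullet y^\bullet$ is empty but $\chi$ does not mark the segment $l$ separating $Cx^\bullet$ from $Cy^\bullet$, and then the path monomial is non-zero at the general point of $E_e$ but vanishes at every torus-fixed point of $E_e$. On the sink--source side such $e$ are detected by $\chi^{-1}$ being a tile, or a charge of a suitable orientation, whose local picture along $E_e$ forces both an $x$-arrow and a $y$-arrow out of $\chi_0$ to survive; I expect essentially all the work to be in checking that in exactly these cases one can splice together the surviving $x$- and $y$-arrows into a non-vanishing path $\chi_0\to\chi$ using both, while in the non-degenerate cases (and when $\chi$ \emph{does} mark $l$, so $Tx^\bullet y^\bullet$ is taken to be the corner $e_z$) no such path exists. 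The remaining, non-degenerate, cases are routine once $\Sigma$ near $e$ is known.
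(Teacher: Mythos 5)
Your outline follows the same route as the paper's proof: dualize so that non-vanishing paths from $\chi^{-1}$ to $\chi_0$ in $\mckquiv_{\tilde{\mathcal{M}}}$ correspond to monomials representing $\chi$ in $G$-clusters; translate ``$e$ lies in a CT-area'' into ``a monomial of that type represents $\chi$ in a $G$-graph at a torus-fixed point of $E$''; run a case check over the sink-source graph shapes of Theorem \ref{theorem-sink-source-graph-to-divisor-type-correspondence}; and recognise the degenerate $T$-areas as the crux. Your forward implication for $C$-areas and non-degenerate $T$-areas is exactly the paper's argument, and you have correctly located the one delicate phenomenon: a non-vanishing path of mixed type whose monomial appears in no $G$-graph at $e$.

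The gap is that the decisive verification is announced rather than carried out, and it is needed in both directions. In the forward direction, when $e$ lies on the border $l$ of $Cx^\bullet$ and $Cy^\bullet$ carved out by $x^a:y^b$ and $\chi$ does not mark $l$, you must actually produce a non-vanishing path of type $x^\bullet y^\bullet$; the paper does so by observing that $\chi$ is represented by $x^{ka}$ and $y^{kb}$ on the two sides of $l$, that not marking $l$ forces $k>1$, and that then $x^{(k-1)a}y^{b}$ gives the required path. In the converse, your ``list the two sides and check they agree'' fails literally on the non-contractible tiled regions, which is precisely where the mismatch lives. The paper's actual argument there is a computation: all non-vanishing paths from $\chi^{-1}$ to $\chi_0$ within such a region carry monomials $x^{\alpha}y^{\bar{\beta}},\,x^{\alpha-a}y^{\bar{\beta}+b},\dots,x^{\bar{\alpha}}y^{\beta}$ differing by powers of $x^{a}/y^{b}$; only the two extreme ones occur in $G$-graphs around $e$ (because $y^{b}/x^{a}$ is regular on one side of the line $x^a:y^b$ and $x^{a}/y^{b}$ on the other), so the intermediate monomials are ``ghosts''; these ghosts are all of type $x^\bullet y^\bullet$, hence cause a genuine discrepancy only when both extremes are pure powers ($\bar{\alpha}=\bar{\beta}=0$), and in that case ghosts exist iff $k>1$ iff $\chi$ does not mark the line through $e$ --- which is exactly when the convention places $e$ in the degenerate strip $Tx^\bullet y^\bullet$. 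Without this computation (or an equivalent one) the lemma is not proved; with it, your outline becomes the paper's proof.
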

\begin{proof}
\em ``Only If'' direction: \rm 

Suppose $e$ belongs to a C-area or a non-degenerate T-area
of the CT-subdivision for $\chi$. Then $m$ represents $\chi$
in the $G$-graph of some basic triangle $\tau$ which contains $e$.
A monomial $m$ represents $\chi$ in the $G$-graph of $\tau$ if and 
only if $m . 1 \neq 0$ in the $G$-cluster parametrised by 
the toric fixed point $p_\tau$ of $\tau$. Equivalently, 
all the paths from $0$ to $\chi$ which correspond to $m$ 
must not vanishing at $p_\tau$ in the associated 
representation $\mckquiv_{M}$. Dually, 
all the paths from $\chi^{-1}$ to $0$ which correspond to $m$ 
must not vanish at $p_\tau$ in $\mckquiv_{\widetilde{M}}$. 
Finally, since $E$ contains $p_\tau$, if a path doesn't vanish at $p_\tau$ 
it certainly doesn't vanish along $E$. 

It remains to deal with degenerate $T$-areas.  
Suppose, without loss of generality, that $e$ belongs to 
a degenerate $Tx^\bullet y^\bullet$-area. Then either $e = e_z$, 
or one of $Cx^\bullet$ and $Cy^\bullet$ is empty and $e$
lies on the corresponding side of $\Delta$, or 
$e$ lies on the border of $Cx^\bullet$ and $Cy^\bullet$ and 
$\chi$ doesn't mark this border. If $e = e_z$, then every path 
of $x^\bullet y^\bullet$-type is non-vanishing in 
$\sinksource_{\widetilde{\mathcal{M}}, E}$. If  $e$ lies on a side 
of $\Delta$, the existence of the desired non-vanishing path
can be readily verified on (appropriately rotated) 
Fig.\ \ref{figure-1}-\ref{figure-2}. 
Finally, let $e$ lie on the border of $Cx^\bullet$ and $Cy^\bullet$. 
This border is a line $l$ carved out by $x^a : y^b$ and $\chi$ 
is represented by $x^{ka}$ and $y^{kb}$ in the $G$-graphs to $e_y$-side 
and to $e_x$-side of $l$. In particular, $x^{ka}$ and $y^{kb}$ define 
non-vanishing paths from $\chi^{-1}$ to $\chi_0$ in 
$\sinksource_{\widetilde{\mathcal{M}}, E}$. 
Since $\chi$ doesn't mark $l$ we have $k > 1$,
and any one of $x^{(k-1)a}y^b, \dots, x^a y^{(k-1)b}$
defines the desired non-vanishing path of $x^\bullet y^\bullet$-type. 

\em ``If'' direction: \rm 

A non-vanishing path from $\chi^{-1}$ to
$\chi_0$ must be contained completely within one (or more) of
the tiled regions $\sinksource_{\widetilde{\mathcal{M}}, E}$ subdivides
$T_H$ into. E.g. a non-vanishing $x^\bullet y^\bullet$ path has to be 
contained within the $z$-tiled region. 
The possible sink-source graph shapes listed in Theorem 
\ref{theorem-sink-source-graph-to-divisor-type-correspondence}
give us a total of twelve tiled regions to consider. 
Five of these are contractible and we deal with them first. 
Within a contractible region of $T_H$ any two paths 
between the same pair of vertices of $Q(G)$ lift to 
the same monomial modulo $xyz$. Therefore within 
a contractible tiled region of $T_H$ all 
non-vanishing paths from a given vertex to $\chi_0$ 
correspond to the same monomial. It is then easy 
to verify that this monomial does indeed occur 
in the $G$-graph of at least one of the basic triangles 
containing $e$.
 
For example, if $\sinksource_{\widetilde{\mathcal{M}}, E}$ is 
as depicted on Figure \ref{figure-22a} then the non-vanishing
paths from the vertices in the $z$-tiled region to $\chi_0$
yield the monomials 
\begin{align}
\label{eqn-non-vanishing-paths-in-z-tiled-region-of-dP6} 
\left\{x^i y^j \;|\; 0 \leq i \leq a, 0 \leq j \leq b_3 \text{ or }
0 \leq i \leq a_2, 0 \leq j \leq b \right\}. 
\end{align}
On the other hand, we see on Figure \ref{figure-22b} that one of the 
six triangles containing $e$ has $\frac{y^{b_2}z^{c_3}}{x^a}$, 
$\frac{x^{a_3}z^{c_2}}{y^b}$ and
$\frac{x^{a_2 + 1}y^{b_3 + 1}}{z^{c-1}}$ as the coordinates of 
its affine chart. Its $G$-graph is then readily seen
to contain all the monomials in 
\eqref{eqn-non-vanishing-paths-in-z-tiled-region-of-dP6}. 
The other contractible regions are dealt with similarly. 

We now proceed to deal with non-contractible tiled regions. 

Suppose, that $\sinksource_{\widetilde{\mathcal{M}}, E}$ is 
as depicted on Figure \ref{figure-21a} with no rotations
necessary. Then it has a single non-contractible region:
the $z$-tiled one. Suppose that $\chi^{-1}$ is
some vertex within this region or on its boundary. 
Let $x^\alpha y^{\bar{\beta}}$ be the monomial of the path 
which goes along $x$-arrows from $\chi^{-1}$ until 
it hits $y$-$(1,0)$-charge line $I_1 O_1$ 
and then follows the $y$-arrows of this charge line to $\chi_0$.
Similarly let $x^{\bar{\alpha}} y^{\beta}$ be the monomial of 
the path along the $y$-arrows and then $x$-$(1,0)$-charge line $I_1 O_1$. 
Note that $0 \leq \bar{\alpha} < a$ and $0 \leq \bar{\beta} < b$. 
Next, note that the monomials of 
any two paths from $\chi^{-1}$ to $\chi_0$ 
must differ by a power of $\frac{x^a}{y^b}$. It follows 
that $\alpha = a k + \bar{\alpha}$ and $\beta = b k + \bar{\beta}$
for some $k \geq 0$. Hence 
the non-vanishing paths from $\chi^{-1}$ to $\chi_0$ 
correspond to the following monomials:
\begin{align}
\label{eqn-monomials-corresponding-to-non-vanishin-paths-in-a-non-contractible-region} 
 x^{\alpha} y^{\bar{\beta}}, 
   x^{\alpha - a} y^{\bar{\beta} + b}, 
   \dots, 
   x^{\bar{\alpha} + a} y^{\beta - b}, 
   x^{\bar{\alpha}} y^\beta. 
\end{align}

Now note that in all the triangles to the $e_y$ side of 
line $x^a \colon y^b$ the ratio $\frac{y^b}{x^a}$ is 
a regular function. Every monomial in 
\eqref{eqn-monomials-corresponding-to-non-vanishin-paths-in-a-non-contractible-region}
is a product of $x^{\alpha} y^{\bar{\beta}}$ and a power of
$\frac{y^b}{x^a}$, so only $x^{\alpha} y^{\bar{\beta}}$ can 
appear in the $G$-graphs of the triangles to the $e_y$ side 
of line $x^a \colon y^b$. Similarly, only $x^{\bar{\alpha}} 
y^{\beta}$ can appear in the $G$-graphs of the triangles to the $e_x$ side
of line $x^a \colon y^b$. On the other hand, observe that one
of the triangles around $e$ has
$\frac{y^b}{x^a}$ and $\frac{z^c}{x^{a_1} y^{b_1 \mod b}}$
as two of its coordinates. 
Hence $x^{a_1}y^{b - 1}$ is in its $G$-graph. 
Since $0 \leq \alpha \leq a_1$ and $0 \leq \bar{\beta} \leq b - 1$
(see Figure \ref{figure-21a})
it follows that $x^{\alpha} y^{\bar{\beta}}$ must also be in its
$G$-graph. Similarly, the basic triangle with
$\frac{x^a}{y^b}$ and $\frac{z^c}{x^{a_1 \mod a} y^{b_1}}$ 
as coordinates has 
$x^{\alpha} y^{\bar{\beta}}$ in its $G$-graph. 
Thus, of the monomials in 
\eqref{eqn-monomials-corresponding-to-non-vanishin-paths-in-a-non-contractible-region}
only $x^{\alpha} y^{\bar{\beta}}$ and $x^{\bar{\alpha}}
y^{\beta}$ do appear the $G$-graphs around $e$.  
The remaining ones are the ``ghost'' monomials, 
the ones which appear as non-vanishing paths
in $\sinksource_{\widetilde{\mathcal{M}}, E}$, but not 
in the $G$-graphs around $e$. 

Observe, however, that all the ``ghost'' monomials in
\eqref{eqn-monomials-corresponding-to-non-vanishin-paths-in-a-non-contractible-region}
are of $x^\bullet y^\bullet$ type. So a problem 
arises only when neither 
$x^{\alpha} y^{\bar{\beta}}$ nor $x^{\bar{\alpha}} y^{\beta}$
are of $x^\bullet y^\bullet$ type, i.e. when $\bar{\alpha} = \bar{\beta} =
0$. It is here that our peculiar choice of conventions for
degeneration of $Ty^\bullet z^\bullet$, $Tx^\bullet z^\bullet$
and $Tx^\bullet y^\bullet$ comes into play. 
Suppose $\bar{\alpha} = \bar{\beta} = 0$. The monomials
of the non-vanishing paths from $\chi^{-1}$ to $\chi_0$ are then 
$$ x^{ka}, x^{(k-1)a} y, \dots, xy^{(k-1)b}, y^{kb} $$
for some $k \geq 1$.
By above, in the $G$-graphs on $e_x$ side of line $x^a \colon y^b$
character $\chi$ is represented by $y^{ka}$ and in the $G$-graphs on $e_y$ 
side -- by$x^{kb}$. So line $x^a \colon y^b$ is the border between 
$Cx^\bullet$ and $Cy^\bullet$ in the CT-subdivision for $\chi$, 
and $Tx^\bullet y^\bullet$, which is usually wedged in between the two, 
is empty. If $k > 1$ then $\chi$ doesn't actually mark line $x^a \colon y^b$. 
By our conventions this means that $Tx^\bullet y^\bullet$ 
degenerates to an infinitesimally thin strip still wedged in 
between $Cx^\bullet$ and $Cy^\bullet$. Vertex $e$ lies on 
$x^a \colon y^b$, and therefore belongs to this degeneraton  
of $Tx^\bullet y^\bullet$. This accounts for the ``ghost'' 
monomials $x^{(k-1)a}y, \dots, x y^{(k-1)b}$. 
On the other hand, when $k = 1$ the non-vanishing paths from
$\chi^{-1}$ are given by $x^a$ and $y^b$, both of which appear
in $G$-graphs around $e$, i.e. there are no ``ghost'' monomials.
This matches the fact that $\chi$ does mark the line $x^a \colon y^b$ 
through $e$, which by our conventions means that 
$Tx^\bullet y^\bullet$ degenerates away from $e$ to just 
$e_z$. 

The non-contractible regions which occur when
$\sinksource_{\widetilde{\mathcal{M}}, E}$ is 
as depicted on Fig.\ \ref{figure-20a}, as depicted on 
Fig.\ \ref{figure-1} or empty are dealt with similarly.
\end{proof}

\begin{proposition}
\label{prps-vertex-type-to-CT-subdivision-role-correspondence}
Let $\chi$ be a non-trivial character of $G$. Let $E$ be a toric divisor 
on $Y$ and $e$ be the corresponding vertex of $\Sigma$. 
The vertex type of $\chi^{-1}$ in $\sinksource_{\widetilde{\mathcal{M}}, E}$ and
the role of $e$ in the CT-subdivision for $\chi$ are related
in the following way:

%\begin{footnotesize} 
\begin{align}
\label{table-vertex-type-to-CT-subdivision-role-correspondence}
\begin{array}{|m{0.35in} m{1in}|m{3.5in}|} 
\hline
\multicolumn{2}{|c|}{\text{Vertex type of $\chi^{-1}$}} & 
\text{The role of $e$ in the CT-subdivision of $\Delta$:}
\\
\multicolumn{2}{|c|}{\text{in $\sinksource_{\widetilde{\mathcal{M}},E}$}} & 
\text{\begin{footnotesize}($e$ belongs to the following areas and none
other)\end{footnotesize}}
\\
\hline
\includegraphics[scale=0.04, trim=0 0 0 -10]{pics/figure-17e} & \text{$z$-$(1,0)$-charge} &
\text{ An internal vertex of $Cz^\bullet$ }
\\
\hline
\multirow{2}{*}{\includegraphics[scale=0.04, trim=0 0 0 -10]{pics/figure-18c}} &
\multirow{2}{*}{\text{$z$-tile}}
&
\text{ $\bullet$ An internal vertex of $Tx^\bullet y^\bullet$ }
\\
& & \text{ $\bullet$ A vertex on the border of 
$Tx^\bullet y^\bullet$ with $Cx^\bullet$ or $Cy^\bullet$ }
\\
\hline
\includegraphics[scale=0.04, trim=0 0 0 -10]{pics/figure-19f} &
\text{$z$-$(2,1)$-source} &
\text{ A vertex on the border of $Cx^\bullet$ and $Cy^\bullet$ but not $Cz^\bullet$ }
\\
\hline
\multirow{2}{*}{\includegraphics[scale=0.04, trim=0 0 0
-10]{pics/figure-19e}} 
& 
\multirow{2}{*}{\text{$z$-$(1,2)$-source}} &
\text{ A vertex on the border of $Cz^\bullet$ and $Tx^\bullet y^\bullet$ }
\\ 
& & 
\text{\begin{footnotesize} (plus, possibly, $Cx^\bullet$ or
$Cy^\bullet$ or both) \end{footnotesize} }
\\
\hline
\multirow{2}{*}{\includegraphics[scale=0.04, trim=0 0 0
-10]{pics/figure-17f}} 
& 
\multirow{2}{*}{\text{$z$-$(0,1)$-charge}} &
\text{ A vertex on the border of $Tx^\bullet z^\bullet$ and 
$Ty^\bullet z^\bullet$ }
\\
& & 
\text{\begin{footnotesize} (plus, possibly, one or two of $Cx^\bullet$,
$Cy^\bullet$ or $Cz^\bullet$) \end{footnotesize} }
\\
\hline
\multicolumn{3}{|c|}{\text{ (similarly for the $x$- and $y$-
vertex types)}}
\\ 
\hline
\multirow{2}{*}{\includegraphics[scale=0.04, trim=0 0 0
-10]{pics/figure-16b}} 
& 
\multirow{2}{*}{\text{$(0,3)$-sink}} &
\text{ A vertex on the border of $Ty^\bullet z^\bullet$, 
$Tx^\bullet z^\bullet$ and $Tx^\bullet z^\bullet$
}
\\ 
& & 
\text{\begin{footnotesize} (plus, possibly, one, two or three
of $Cx^\bullet$, $Cy^\bullet$ or $Cz^\bullet$) \end{footnotesize} }
\\
\hline
\includegraphics[scale=0.04, trim=0 0 0 -10]{pics/figure-19g} &
\text{$(3,3)$-source} &
\text{ A vertex on the border of $Cx^\bullet$, $Cy^\bullet$
and $Cz^\bullet$ }
\\
\hline
\end{array} 
\end{align} 
%\end{footnotesize} 
\end{proposition}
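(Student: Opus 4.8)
The plan is to reduce everything to Lemma~\ref{lemma-non-vanishing-paths-to-CT-areas}, which already expresses membership of $e$ in a given area of the CT-subdivision in terms of the existence of a non-vanishing path $\chi^{-1} \to \chi_0$ in $\mckquiv_{\tilde{\mathcal{M}}}$ of the matching monomial type. Since that lemma is itself an \emph{if and only if}, it suffices to determine, for each of the vertex types listed in Figures \ref{figure-17}--\ref{figure-18}, \emph{precisely} which monomial types of non-vanishing path from $\chi^{-1}$ to $\chi_0$ occur; the table entry for that vertex type is then read off, and the ``and none other'' clause becomes automatic.

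First I would record the two structural inputs. On the global side, Theorem~\ref{theorem-sink-source-graph-to-divisor-type-correspondence} (together with Lemma~\ref{lemma-divisors-to-sink-source-graphs} and Proposition~\ref{prps-one-looping-(0,1)-charge-line-to-non-compact-divisor}) lists the finitely many possible shapes of $\sinksource_{\tilde{\mathcal{M}},E}$, and hence fixes the tiled regions of $T_H$ and the location of $\chi_0$ within them; recall from the proof of Lemma~\ref{lemma-divisors-to-sink-source-graphs} that there is always a non-vanishing path from any vertex to $\chi_0$. On the local side, each vertex-type picture tells us exactly which of the six arrows of $\hex(\chi^{-1})$ vanish along $E$, hence which arrows may be used on a non-vanishing path out of (or through) $\chi^{-1}$, and within which tiled region(s) such a path must then remain.

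With these in hand I would run through the vertex types. The charge cases are immediate: if $\chi^{-1}$ is a $z$-$(1,0)$-charge, the only non-vanishing arrows through it are the two $z$-arrows of the looping $z$-$(1,0)$-charge line, which passes through $\chi_0$, so the only non-vanishing path $\chi^{-1}\to\chi_0$ is a pure power of $z$ and $e$ is an internal vertex of $Cz^\bullet$ and of nothing else. If $\chi^{-1}$ is a $z$-tile, the non-vanishing arrows are the $x$- and $y$-arrows, so a non-vanishing path to $\chi_0$ stays in the $z$-tiled region and its monomial is of $x^\bullet y^\bullet$ type in the interior of that region and of pure $x^\bullet$ or $y^\bullet$ type on its boundary, matching ``internal vertex of $Tx^\bullet y^\bullet$, or a vertex on the border of $Tx^\bullet y^\bullet$ with $Cx^\bullet$ or $Cy^\bullet$''. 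For the sources and sinks one combines the charge lines and tile regions meeting at $\chi^{-1}$: e.g.\ a $z$-$(2,1)$-source is the source of two $(1,0)$-charge lines running to $\chi_0$ whose monomials are a pure power of $x$ and a pure power of $y$, and of no $x^\bullet y^\bullet$ path, which places $e$ exactly on the border of $Cx^\bullet$ and $Cy^\bullet$; the $z$-$(1,2)$-source, the $(0,3)$-sink and the $(3,3)$-source are handled the same way, and the ``(plus, possibly, \ldots)'' clauses are precisely the freedom of $\chi^{-1}$ to sit on the boundary of a tiled region. The $x$- and $y$-vertex types follow by permuting $x,y,z$.

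The hard part will be the bookkeeping for the source and sink vertices, where $\chi^{-1}$ lies simultaneously on several charge lines and/or at the junction of several tiled regions, so one must enumerate \emph{all} non-vanishing path types at once and, crucially, distinguish ``pure $x^\bullet$ but not $x^\bullet y^\bullet$'' from ``$x^\bullet y^\bullet$''. Getting this to agree with the table on the nose — including the degenerate-$T$-area conventions and the ``ghost monomial'' phenomenon already isolated in the proof of Lemma~\ref{lemma-non-vanishing-paths-to-CT-areas} — requires appealing to the explicit local pictures of $\Sigma$ around $e$ in Figures \ref{figure-2}, \ref{figure-20}--\ref{figure-22} rather than to purely combinatorial generalities, and that is where the real work lies.
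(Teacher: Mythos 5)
Your proposal is correct and follows essentially the same route as the paper's own proof: both reduce the statement to Lemma \ref{lemma-non-vanishing-paths-to-CT-areas} and then, for each vertex type of $\chi^{-1}$, enumerate all monomial types of non-vanishing paths from $\chi^{-1}$ to $\chi_0$ by inspecting the finitely many sink-source graph shapes of Theorem \ref{theorem-sink-source-graph-to-divisor-type-correspondence}. Your case discussions for the charges, tiles, sources and sinks match the paper's, and you correctly identify where the remaining work lies (the exhaustive inspection of the local pictures for the source/sink cases).
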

\begin{proof}
The proof works by directly verifying the data of
Table \eqref{table-vertex-type-to-CT-subdivision-role-correspondence}
for every possible shape of the sink-source graph 
$\sinksource_{\widetilde{\mathcal{M}}, E}$ as listed in Theorem 
\ref{theorem-sink-source-graph-to-divisor-type-correspondence}. 
$\sinksource_{\widetilde{\mathcal{M}}, E}$ subdivides $T_H$ into
three tiled regions. Within each region we locate all 
non-vanishing paths from $\chi^{-1}$ to $\chi_0$. 
By Lemma \ref{lemma-non-vanishing-paths-to-CT-areas} 
these determine all the areas of the CT-subdivision for $\chi$ 
which $e$ belongs to. 

Suppose, for example, $\chi^{-1}$ is a $z$-$(1,0)$-charge.
Then it lies on the border of the $x$-tiled and 
the $y$-tiled regions. So we look in each region for 
non-vanishing paths from $\chi^{-1}$ to $\chi_0$. 
Inspecting the possible sink-source graph shapes listed in Theorem 
\ref{theorem-sink-source-graph-to-divisor-type-correspondence}
we see in each of the two regions a unique non-vanishing path:
the same path which follows the $z$-$(1,0)$-charge 
line from $\chi^{-1}$ to the $(3,0)$-sink at $\chi_0$. 
Hence both the regions contribute the same monomial of type 
$z^\bullet$ and so $e$ has to be an internal vertex of 
$Cz^\bullet$. Suppose, on the other hand, that
$\chi^{-1}$ is $z$-tile. Then it lies in the interior
of the $z$-tiled region. Inspecting all the possible 
sink-source graph shapes we see that from any $z$-tile 
there is always a non-vanishing path to $\chi_0$ of 
$x^\bullet y^\bullet$ type, and sometimes there is
also a path of $x^\bullet$-type, or of $y^\bullet$-type, or both. 
We conclude that $\chi^{-1}$ is either an internal vertex
of $Tx^\bullet y^\bullet$ or it lies on the border of 
$Tx^\bullet y^\bullet$ with $Cx^\bullet$, or $Cy^\bullet$
or both. Now suppose $\chi^{-1}$ is a $z$-$(0,1)$-charge. 
Then it lies on the border of $x$-tiled and $y$-tiled
regions. Inspecting the four possible sink-source graph shapes 
we see the $x$-tiled region will always contain a non-vanishing 
path of type $y^\bullet z^\bullet$: take the path which goes 
along $y$-arrows until it hits $z$-$(1,0)$-charge line, and 
then follows that charge line back to $\chi_0$, possibly 
doing a single loop along it if $\sinksource_{\widetilde{\mathcal{M}}, E}$ 
is as depicted on Fig.\ \ref{figure-1}. The $x$-tiled region 
can also contain non-vanishing paths of type $y^\bullet$ and 
$z^\bullet$, but we note that it can only contain both
if $\sinksource_{\widetilde{\mathcal{M}}, E}$ is as depicted on 
Fig.\ \ref{figure-1} or the rotation of Fig.\ \ref{figure-21}
by $\frac{2\pi}{3}$ clockwise. We further note that in both
these cases there doesn't also exist
a non-vanishing path of type $x^\bullet$ from 
$\chi^{-1}$ to $\chi_0$. Repeating the same argument for 
the $y$-tiled region, we conclude that $\chi^{-1}$ lies on the border 
of $Ty^\bullet z^\bullet$, $Tx^\bullet z^\bullet$ and, possibly,
one or two of the $C$-areas. The remaining vertex types are dealt 
with similarly. 
\end{proof}

We now prove several corollaries of 
Prop.\ \ref{prps-vertex-type-to-CT-subdivision-role-correspondence} 
which relate the geometry of $CT$-subdivisions
to the calculation of the cohomology sheaves
of the total complex $T^\bullet$ of 
the skew-commutative cube
of line bundles $\hex(\chi^{-1})_{\widetilde{\mathcal{M}}}$. 
Denote by $D^i$, $D^i_j$ and $D^i_{jk}$ the vanishing divisors
of the maps in the cube, as per
\S\ref{section-psi-O-chi-and-skew-commutative-cubes-of-line-bundle}. 
By Lemma \ref{lemma-cube_cohomology} we have
$\mathcal{H}^0(T^\bullet) = \mathcal{L}^{-1}_\chi
\otimes \mathcal{O}_{D^1 \cap D^2 \cap D^3} $
and correspondingly:
\begin{corollary}
\label{cor-D1-D2-D3-via-CT-subdivision}
Let $\chi \in G^\vee$ be non-trivial, let $E$ be a toric divisor
on $Y$ and let $e$ be the corresponding vertex of $\mathfrak{E}$. 
Then:
\begin{itemize}
\item  $E \in D^1$ if and only if $e \in Ty^\bullet z^\bullet$.
\item  $E \in D^2$ if and only if $e \in Tx^\bullet z^\bullet$.
\item  $E \in D^3$ if and only if $e \in Tx^\bullet y^\bullet$.
\end{itemize}
In particular, $\mathcal{H}^{0}(T^\bullet) \neq 0$ if and only 
if there exists a basic triangle in $\Sigma$ which touches 
each of $Ty^\bullet z^\bullet$, $Tx^\bullet z^\bullet$ and 
$Tx^\bullet y^\bullet$. 
\end{corollary}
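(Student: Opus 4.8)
The plan is to assemble the corollary from dictionaries that are already in place, so the proof is essentially a matter of composing them. On one side, part~(\ref{item-degree-zero-cohomology}) of Lemma~\ref{lemma-cube_cohomology} gives $\mathcal{H}^0(T^\bullet)\cong\mathcal{L}^{-1}_\chi\otimes\mathcal{O}_{D^1\cap D^2\cap D^3}$, and by \cite[\S4.2]{CautisLogvinenko} the vanishing divisors $D^1,D^2,D^3$ of the maps $\alpha^1,\alpha^2,\alpha^3$ of the cube $\hex(\chi^{-1})_{\tilde{\mathcal{M}}}$ are reduced unions of irreducible toric divisors; hence for a toric divisor $E$ one has $E\in D^i$ precisely when $\alpha^i$ vanishes along $E$. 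On the other side, Figure~\ref{figure-13} records which maps of the cube vanish along $E$ as a function of the vertex type of $\chi^{-1}$ in $\sinksource_{\tilde{\mathcal{M}},E}$, while Proposition~\ref{prps-vertex-type-to-CT-subdivision-role-correspondence} translates that vertex type into the role of $e$ in the CT-subdivision for $\chi$. Composing the two, the condition $E\in D^i$ becomes a condition on the role of $e$ alone.

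The first three assertions then follow by running through the vertex types enumerated in the table of Proposition~\ref{prps-vertex-type-to-CT-subdivision-role-correspondence}: for each one I read off from Figure~\ref{figure-13} which of $\alpha^1,\alpha^2,\alpha^3$ vanish along $E$, and check that ``$\alpha^1$ (resp.\ $\alpha^2$, $\alpha^3$) vanishes along $E$'' coincides with ``$e\in Ty^\bullet z^\bullet$ (resp.\ $e\in Tx^\bullet z^\bullet$, $e\in Tx^\bullet y^\bullet$)''. Equivalently, by Lemma~\ref{lemma-non-vanishing-paths-to-CT-areas} the right-hand side asserts the existence of a non-vanishing path of the appropriate monomial type from $\chi^{-1}$ to $\chi_0$ along $E$, and one verifies directly from the six triangles of $\hex(\chi^{-1})$ that this is equivalent to the vanishing of the relevant map of the cube. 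I expect the only delicate point to be the borderline configurations in which $e$ sits on a boundary along which one of $Tx^\bullet y^\bullet$, $Tx^\bullet z^\bullet$, $Ty^\bullet z^\bullet$ has degenerated; but these are exactly the cases for which the degeneration conventions in the definition of the CT-subdivision were introduced (and already exploited in the proof of Lemma~\ref{lemma-non-vanishing-paths-to-CT-areas}), so with those conventions in force the match is exact.

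For the final statement, $\mathcal{H}^0(T^\bullet)\neq 0$ if and only if $D^1\cap D^2\cap D^3\neq\emptyset$, again by Lemma~\ref{lemma-cube_cohomology}. Each $D^i$ is a union of irreducible toric divisors $E_e$, and an intersection $E_{e_1}\cap E_{e_2}\cap E_{e_3}$ is nonempty exactly when $e_1,e_2,e_3$ are among the vertices of a common basic triangle of $\Sigma$ (the cases where two or all three of the $e_i$ coincide being included in the obvious way). Combining this with the first part, $D^1\cap D^2\cap D^3\neq\emptyset$ holds if and only if some basic triangle of $\Sigma$ has a vertex in $Ty^\bullet z^\bullet$, a vertex in $Tx^\bullet z^\bullet$ and a vertex in $Tx^\bullet y^\bullet$; and since each of these three areas is a closed subcomplex of $\Sigma$, a basic triangle has a vertex in it precisely when it touches it. This is the criterion asserted, and the proof is complete.
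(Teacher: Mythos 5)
Your proposal is correct and follows essentially the same route as the paper, whose entire proof is ``By inspection of Table \eqref{table-vertex-type-to-CT-subdivision-role-correspondence} and Fig.~\ref{figure-13}'': you simply make that inspection explicit and add the (standard, correct) toric observation that three toric divisors have nonempty common intersection exactly when their vertices span a basic triangle of $\Sigma$. One minor caution: your parenthetical ``equivalently, by Lemma~\ref{lemma-non-vanishing-paths-to-CT-areas}\dots one verifies directly from the six triangles of $\hex(\chi^{-1})$'' overstates the locality of that alternative check, since the non-vanishing paths in question run all the way to $\chi_0$ and the equivalence really rests on the global classification of sink-source graphs already packaged into Proposition~\ref{prps-vertex-type-to-CT-subdivision-role-correspondence} — but as your primary argument does not rely on this aside, the proof stands.
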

\begin{proof}
By inspection of Table 
\eqref{table-vertex-type-to-CT-subdivision-role-correspondence}
and Fig.\ \ref{figure-28}. 
\end{proof}

On the other hand, by Lemma \ref{lemma-cube_cohomology} 
sheaf $\mathcal{H}^{-1}(T^\bullet)$ admits a three-step filtration
whose successive quotients are supported on:
\begin{itemize}
\item intersection of $\gcd(D_2^3,D_3^2)$ and the effective part of 
$D^1 + \lcm(D_1^2, D_1^3) - \tD_2^3 - D^2$.
\item intersection 
of $\gcd(D_1^3,D_3^1)$ and the effective part of $D^2 +
\lcm(D_2^1,D_2^3) - \tD_3^1 - D^3$ 
\item intersection of $\gcd(D_1^2,D_2^1)$ and the effective part of 
$D^3 + \lcm(D_3^1,D_3^2) - \tD_1^2 - D^1.$ 
\end{itemize}

Correspondingly:
\begin{corollary}
\label{cor-gcd-D_2^3-D_3^2-etc-via-CT-subdivisions}
Let $\chi \in G^\vee$ be non-trivial, let $E$ be a toric divisor
on $Y$ and let $e$ be the corresponding vertex of $\mathfrak{E}$. 
Then:
\begin{itemize}
\item  $E \in \gcd(D_2^3,D_3^2)$ if and only if $e \in 
Cx^\bullet \setminus \left(Tx^\bullet z^\bullet \cup Tx^\bullet y^\bullet \right)$.
\item  $E \in \gcd(D_1^3,D_3^1)$ if and only if $e \in Cy^\bullet 
\setminus \left(Ty^\bullet z^\bullet \cup Tx^\bullet y^\bullet \right)$.
\item  $E \in \gcd(D_1^2,D_2^1)$ if and only if $e \in Cz^\bullet
\setminus \left(Ty^\bullet z^\bullet \cup Tx^\bullet z^\bullet\right)$.
\end{itemize}
\end{corollary}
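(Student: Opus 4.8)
The plan is to follow the same route as Corollary~\ref{cor-D1-D2-D3-via-CT-subdivision}: reduce the divisorial assertion to a statement about which arrows of $\hex(\chi^{-1})$ vanish along $E$, and then read off the answer from the dictionary of Fig.~\ref{figure-13} together with Proposition~\ref{prps-vertex-type-to-CT-subdivision-role-correspondence}. First I would note that, since every vanishing divisor of the cube $\hex(\chi^{-1})_{\tilde{\mathcal{M}}}$ is a \emph{reduced} union of irreducible toric divisors \cite[\S4.2]{CautisLogvinenko}, the condition $E\in\gcd(D^3_2,D^2_3)$ holds if and only if \emph{both} cube maps $\alpha^3_2\colon\mathcal{L}_{23}\to\mathcal{L}_2$ and $\alpha^2_3\colon\mathcal{L}_{23}\to\mathcal{L}_3$ vanish identically along $E$. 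Under the identification of the twelve maps of the cube with the twelve arrows of $\hex(\chi^{-1})$ from \S\ref{section-psi-O-chi-and-skew-commutative-cubes-of-line-bundle} (using $\kappa(xyz)=1$), these two maps are precisely the $y$- and $z$-arrows emanating from the vertex $\chi^{-1}\kappa(x)$, i.e.\ from the target of the $x$-arrow out of $\chi^{-1}$. Hence which of these two arrows vanish along $E$, and so the membership $E\in\gcd(D^3_2,D^2_3)$, is determined solely by the vertex type of $\chi^{-1}$ in $\sinksource_{\tilde{\mathcal{M}},E}$.

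Next I would run through the list of vertex types in Figures~\ref{figure-17}--\ref{figure-18} and, reading off the pattern of black (vanishing) arrows in each, record exactly those types for which both of the two arrows identified above are black. Comparing this list with the second column of Table~\eqref{table-vertex-type-to-CT-subdivision-role-correspondence}, the first bullet of the corollary becomes the claim that these are precisely the vertex types for which Proposition~\ref{prps-vertex-type-to-CT-subdivision-role-correspondence} places $e$ in $Cx^\bullet$ but in neither $Tx^\bullet z^\bullet$ nor $Tx^\bullet y^\bullet$; the other two bullets then follow by permuting $x$, $y$, $z$. Alternatively one can bypass Fig.~\ref{figure-13} and argue directly through Lemma~\ref{lemma-non-vanishing-paths-to-CT-areas}: both arrows vanishing along $E$ is equivalent to the $x$-step out of $\chi^{-1}$ being the only possible continuation of a non-vanishing path to $\chi_0$, which is in turn equivalent to there existing a non-vanishing pure-$x^\bullet$ path $\chi^{-1}$ to $\chi_0$ but no non-vanishing $x^\bullet y^\bullet$- or $x^\bullet z^\bullet$-path.

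The main obstacle is not conceptual but the bookkeeping at the boundary and degenerate configurations. A vertex $e$ can lie simultaneously in $Cx^\bullet$ and in $Tx^\bullet z^\bullet$ or $Tx^\bullet y^\bullet$, or on a degenerated T-strip wedged between two C-areas, and one must check that in exactly those situations precisely one of $\alpha^3_2$, $\alpha^2_3$ fails to vanish, so that the set difference in the statement is reproduced with no case lost or double counted. This is exactly what the refined degeneracy conventions preceding Lemma~\ref{lemma-non-vanishing-paths-to-CT-areas} were set up for, and it is already handled uniformly by Proposition~\ref{prps-vertex-type-to-CT-subdivision-role-correspondence} over all the sink-source graph shapes of Theorem~\ref{theorem-sink-source-graph-to-divisor-type-correspondence}; once those are invoked, the verification, though somewhat lengthy, is routine.
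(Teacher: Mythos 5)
Your proposal is correct and is essentially the paper's own argument: the paper proves this corollary simply ``by inspection of Table \eqref{table-vertex-type-to-CT-subdivision-role-correspondence} and Fig.~\ref{figure-13}'', which is exactly the reduction you describe (reducedness of the vanishing divisors, identification of $\alpha^3_2,\alpha^2_3$ with the $z$- and $y$-arrows out of $\chi^{-1}\kappa(x)$ inside $\hex(\chi^{-1})$, then a case check over vertex types against Proposition~\ref{prps-vertex-type-to-CT-subdivision-role-correspondence}). Your spelled-out bookkeeping, including the treatment of degenerate $T$-areas, matches what that inspection requires.
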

\begin{proof}
By inspection of Table 
\eqref{table-vertex-type-to-CT-subdivision-role-correspondence}
and Fig.\ \ref{figure-28}.
\end{proof}

\begin{corollary}
\label{cor-Z_23-Z_13-Z_12-via-CT-subdivisions}
Let $\chi \in G^\vee$ be non-trivial, let $E$ be a toric divisor
on $Y$ and let $e$ be the corresponding vertex of $\mathfrak{E}$. 
Then  
\begin{itemize}
\item $E$ lies in the effective part of 
$D^1 + \lcm(D_1^2, D_1^3) - \tD_2^3 - D^2$
if and only if 
$$e \in \left(Cy^\bullet \cup Ty^\bullet z^\bullet \cup
Cz^\bullet\right) \setminus \left(Tx^\bullet z^\bullet \cup Tx^\bullet
y^\bullet\right).$$
\item $E$ lies in the effective part of 
$D^2 + \lcm(D_2^1,D_2^3) - \tD_3^1 - D^3$
if and only if 
$$e \in \left(Cx^\bullet \cup Tx^\bullet z^\bullet \cup
Cz^\bullet\right) \setminus \left(Ty^\bullet z^\bullet \cup Tx^\bullet
y^\bullet\right).$$
\item $E$ lies in the effective part of 
$D^3 + \lcm(D_3^1,D_3^2) - \tD_1^2 - D^1$ if and only 
if
$$e \in \left(Cx^\bullet \cup Tx^\bullet y^\bullet \cup
Cy^\bullet\right) \setminus \left(Ty^\bullet z^\bullet \cup Tx^\bullet
z^\bullet\right).$$
\end{itemize} 
\end{corollary}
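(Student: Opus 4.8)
The plan is to run the same finite case analysis as in the proofs of Corollaries~\ref{cor-D1-D2-D3-via-CT-subdivision} and~\ref{cor-gcd-D_2^3-D_3^2-etc-via-CT-subdivisions}, now for the full combinations $D^1 + \lcm(D_1^2,D_1^3) - \tD_2^3 - D^2$ and its two cyclic partners. The enabling observation is that, by \cite[\S4.2]{CautisLogvinenko}, each of $D^i$, $D^i_j$, $D^i_{jk}$ is a \emph{reduced} union of irreducible toric divisors, so the coefficient of our fixed divisor $E$ in any of them is $0$ or $1$; consequently $\gcd$ and $\lcm$ are just intersection and union of supports, $\tD^i_j = D^i_j - \gcd(D^i_j,D^j_i)$ remains reduced and effective, and passing to the effective part is a Boolean operation. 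Writing $a,b\in\{0,1\}$ for the coefficients of $E$ in $D^1$ and $D^2$, $p\in\{0,1\}$ for the coefficient of $E$ in $\lcm(D_1^2,D_1^3)$ (so $p=1$ iff $E\in D_1^2$ or $E\in D_1^3$), and $q\in\{0,1\}$ for the coefficient of $E$ in $\tD_2^3$ (so $q=1$ iff $E\in D_2^3$ and $E\notin D_3^2$), the coefficient of $E$ in $D^1 + \lcm(D_1^2,D_1^3) - \tD_2^3 - D^2$ is $a+p-q-b$, and ``$E$ lies in the effective part'' means exactly $a+p-q-b\ge 1$. By the evident symmetry under cyclically permuting the coordinates of $\mathbb{C}^3$ (which permutes the three bullets), it suffices to prove the first.

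Next I would read off $a,b,p,q$ as functions of the vertex type of $\chi^{-1}$ in $\sinksource_{\tilde{\mathcal{M}},E}$. This is legitimate because Fig.~\ref{figure-13} records exactly which maps of the cube $\hex(\chi^{-1})_{\tilde{\mathcal{M}}}$ vanish along $E$ purely in terms of that vertex type; so the truth value of $a+p-q-b\ge 1$ is itself a function of the vertex type, and I would tabulate it over the vertex types in the left column of Table~\eqref{table-vertex-type-to-CT-subdivision-role-correspondence} (the $x$-, $y$-, $z$-flavoured $(1,0)$- and $(0,1)$-charges and tiles, the $(1,2)$- and $(2,1)$-sources, the $(3,3)$-source, and the two triple sinks). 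Some of the data is already available: Corollary~\ref{cor-D1-D2-D3-via-CT-subdivision} gives $a=1$ precisely when $e\in Ty^\bullet z^\bullet$ and $b=1$ precisely when $e\in Tx^\bullet z^\bullet$, and Corollary~\ref{cor-gcd-D_2^3-D_3^2-etc-via-CT-subdivisions} computes $E\in\gcd(D_2^3,D_3^2)$; so only the $\lcm(D_1^2,D_1^3)$ term and the bare membership $E\in D_2^3$ require a fresh inspection of Fig.~\ref{figure-13}.

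The last step is to feed each vertex type through Proposition~\ref{prps-vertex-type-to-CT-subdivision-role-correspondence} to obtain the corresponding role of $e$ in the CT-subdivision for $\chi$, and to confirm that the union of those roles for which $a+p-q-b\ge 1$ is exactly $(Cy^\bullet\cup Ty^\bullet z^\bullet\cup Cz^\bullet)\setminus(Tx^\bullet z^\bullet\cup Tx^\bullet y^\bullet)$; the two cyclic images then yield the remaining bullets. I expect the main difficulty to be the bookkeeping at degenerate and boundary configurations: when $e$ lies on the common border of several CT-areas (the source and triple-sink rows, with their ``plus, possibly, $C$-areas'' caveats) or when a $T$-area has collapsed to a line segment or a single vertex, one must check both that $a+p-q-b$ is constant along each such CT-role class and that the effective-part truncation is correctly accounted for --- a negative $-q$ or $-b$ may cancel a positive term, so that $E$ is absent from the effective part even though it occurs in one of the summands. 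This is precisely the scenario the conventions on degenerate $T$-areas in Section~\ref{section-CT-subdivisions} were designed to handle, so once those conventions are invoked the verification is routine, but it is where all the care is concentrated.
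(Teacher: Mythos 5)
Your proposal is correct and follows essentially the same route as the paper, whose entire proof of this corollary is ``By inspection of Table \eqref{table-vertex-type-to-CT-subdivision-role-correspondence} and Fig.~\ref{figure-13}''; your Boolean bookkeeping with $a+p-q-b\ge 1$, the reduction to vertex types via Fig.~\ref{figure-13}, and the translation to CT-roles via Prop.~\ref{prps-vertex-type-to-CT-subdivision-role-correspondence} is exactly what that inspection amounts to, spelled out. The only thing you add beyond the paper is the explicit warning about the degenerate $T$-area conventions, which is a sensible place to concentrate the care.
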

\begin{proof}
By inspection of Table 
\eqref{table-vertex-type-to-CT-subdivision-role-correspondence}
and Fig.\ \ref{figure-28}.
\end{proof}
\begin{remark}
\label{remark-Hminus1-wedge-or-snapped-leg}
It follows that for $\mathcal{H}^{-1}(T^\bullet)$ 
to be non-zero a $C$-area must have a direct border with either 
the $T$-area opposite it or the two other $C$-areas. 
Moreover, this border must have internal vertices: 
those which don't also belong to one of the two remaining $T$-areas. 
In other words, one of $C$-areas must contain within its boundary 
a tip of a ``wedge'' and/or squashed legs of $T$
and these must be more than one edge long in total.
\end{remark}

Let now $T$ be the union of $Ty^\bullet z^\bullet$, $Tx^\bullet
z^\bullet$
and $Ty^\bullet z^\bullet$. We now analyse the geometry of $T$ in all
possible cases. We will see that in non-degenerate cases, $T$ is a
concave triangle (see Figure~\ref{figure-29}), while in degenerate cases we find
that $T$ is a union of points, lines and regions (see
Figures~\ref{figure-30} and \ref{figure-31}) and, in particular, $T$
need not be connected.

In general $T$ is a triangle with vertices $e_x$, $e_y$ and $e_z$ and
three sides which are the boundaries of the three $C$-areas. If a
$C$-area is empty we take the corresponding side of $\Delta$ as the
side of $T$. The three $T$-areas are each connected. Each of them
contains precisely one of the vertices of $T$ and is a union of convex
pieces all containing that vertex.

If all three $T$-areas are non-degenerate we have a dichotomy depicted on Figure \ref{figure-29}:
\begin{center}
\parbox{0.8\textwidth}{$\bullet$ \em ``Meeting point'': \rm 
The three $T$-areas meet in a unique point $P \in T$.}

%\vspace{0.2in}
\begin{center} or \end{center}

\vspace{0.1in}
\parbox{0.8\textwidth}{$\bullet$ \em ``Wedge'': \rm One of the 
$T$-areas wedges in between the other two and
disconnects them by touching the opposite side 
of $T$ in more than a point.}
\end{center}

\begin{figure}[!htb] \centering 
\subfigure[``Meeting point'' subdivision] { \label{figure-29a}
\includegraphics[scale=0.10]{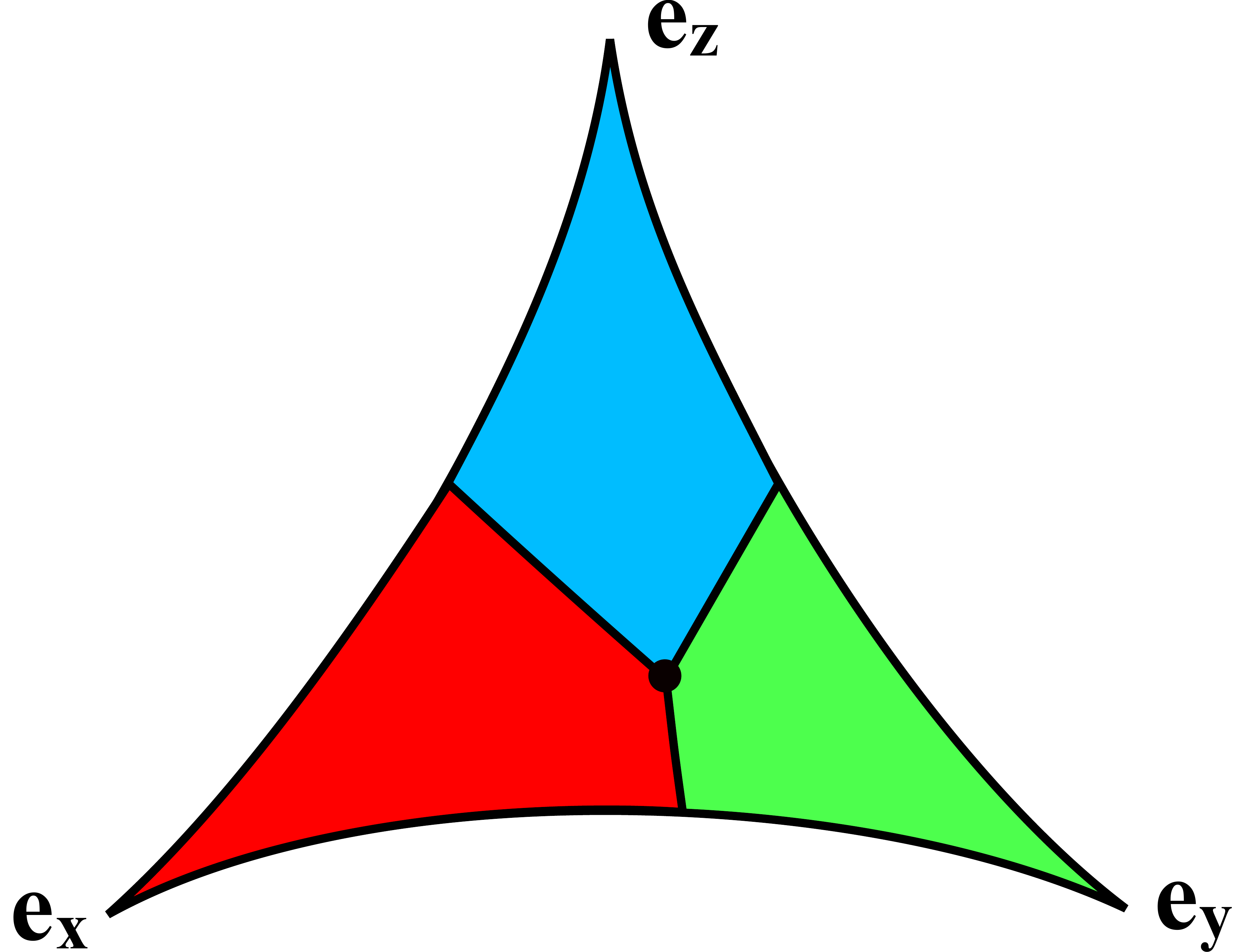}} 
\subfigure[``Wedge'' subdivision] { \label{figure-29b}
\includegraphics[scale=0.10]{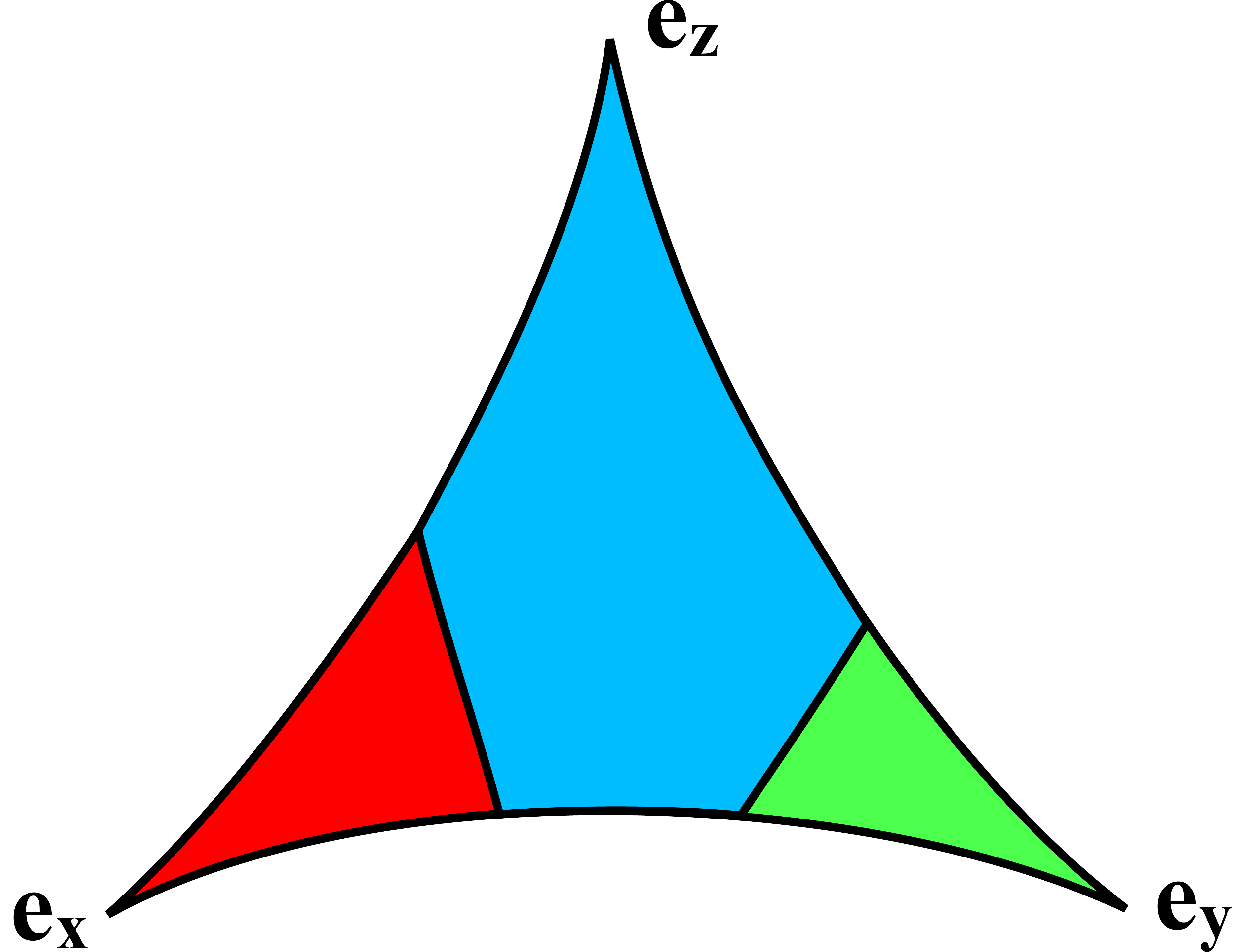}}
\caption{Concave triangle $T$ (non-degenerate case)}
\label{figure-29}
\end{figure}

When a $T$-area degenerates, the neighboring pair 
of $C$-areas squash the leg of $T$ 
between them into a straight line. The $T$-area then
either stretches out into an infinitesimally thin strip 
along the squashed leg or degenerates to just the vertex 
at the tip of the leg, snapped off from the 
rest of $T$.  

On Figures \ref{figure-30} and \ref{figure-31} we 
depict all the degenerations which are possible in view of
Prop.\ \ref{prps-vertex-type-to-CT-subdivision-role-correspondence}.  
All of them can be seen to actually occur in examples.
On Figure \ref{figure-30} we list those degenerations where 
there is a meeting point and on Figure \ref{figure-31} those 
where there isn't. Note that in all the latter cases either
one or both of the following must occur: 
\begin{itemize}
\item \em ``Wedge'': \rm one $T$-area touches the opposite side
of $T$ in more than a point. 
\item \em ``Snap'': \rm a pair of $C$-areas squash a leg of $T$ 
and snap a $T$-area off. 
\end{itemize}

\begin{figure}[!htb] \centering 
\subfigure[One line] { \label{figure-30a}
\includegraphics[scale=0.10]{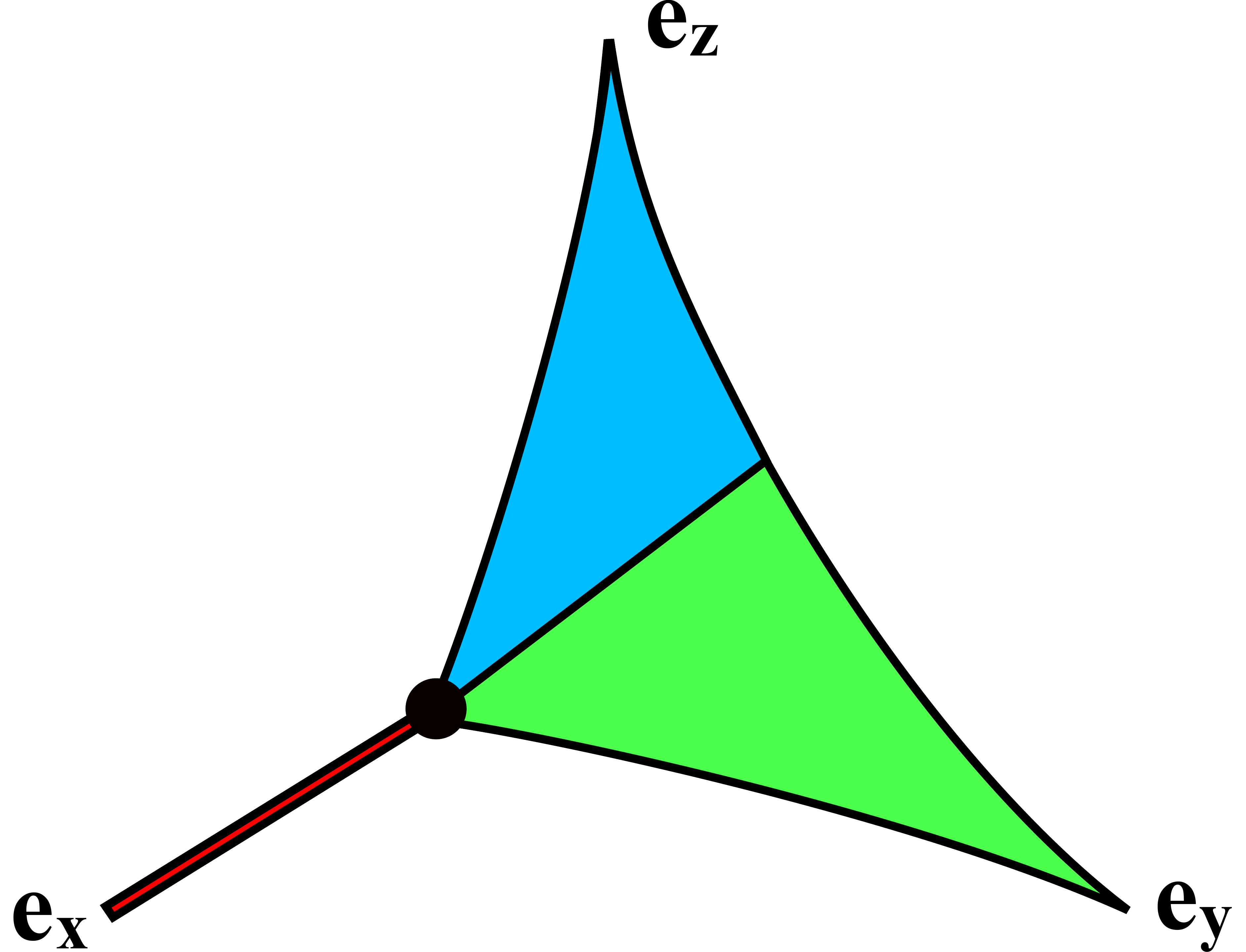}} 
\subfigure[Three lines] { \label{figure-30b}
\includegraphics[scale=0.10]{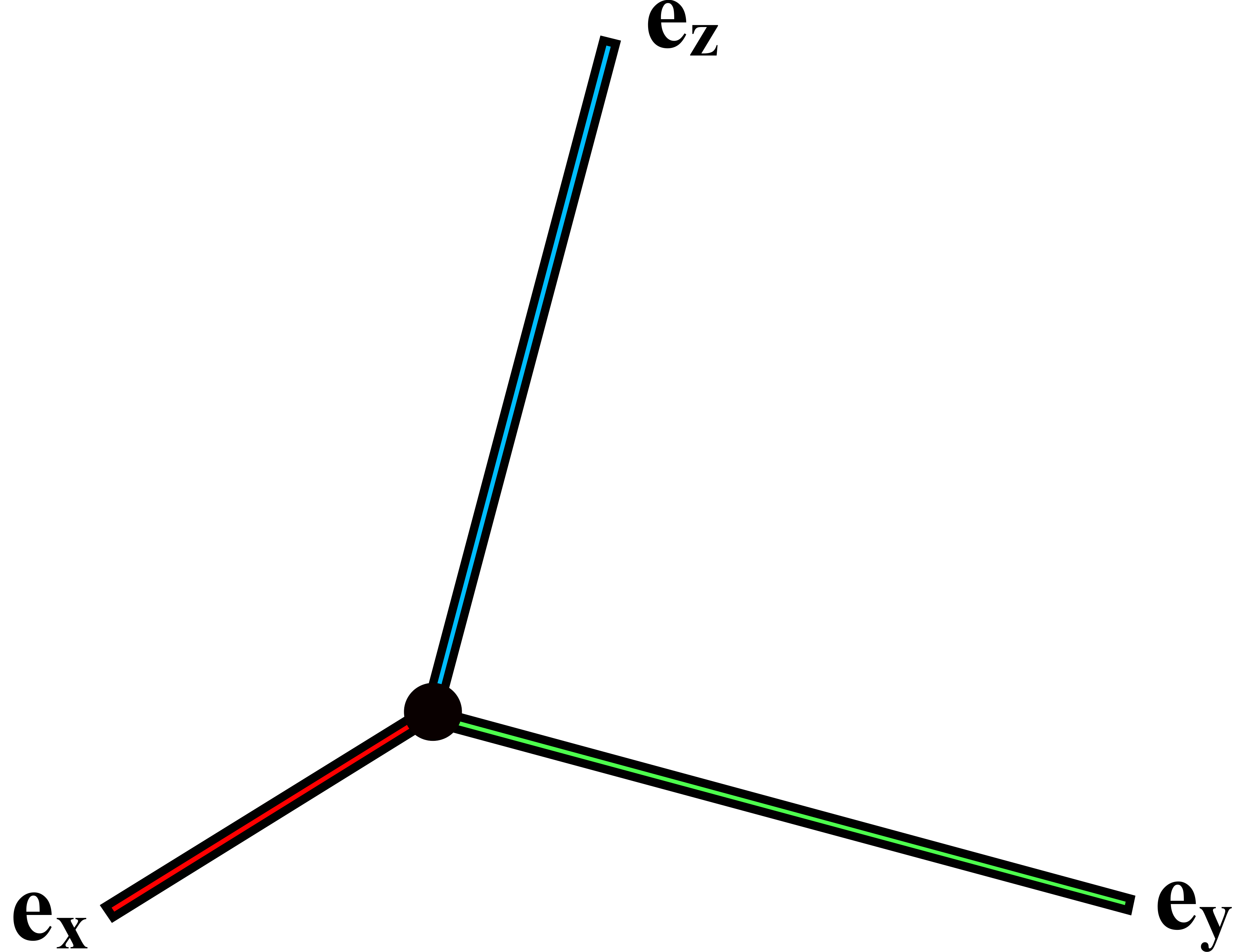}}
\caption{``Meeting point'' degenerations}
\label{figure-30}
\end{figure}

\begin{figure}[!htb] \centering 
\subfigure[One line] { \label{figure-31a}
\includegraphics[scale=0.07]{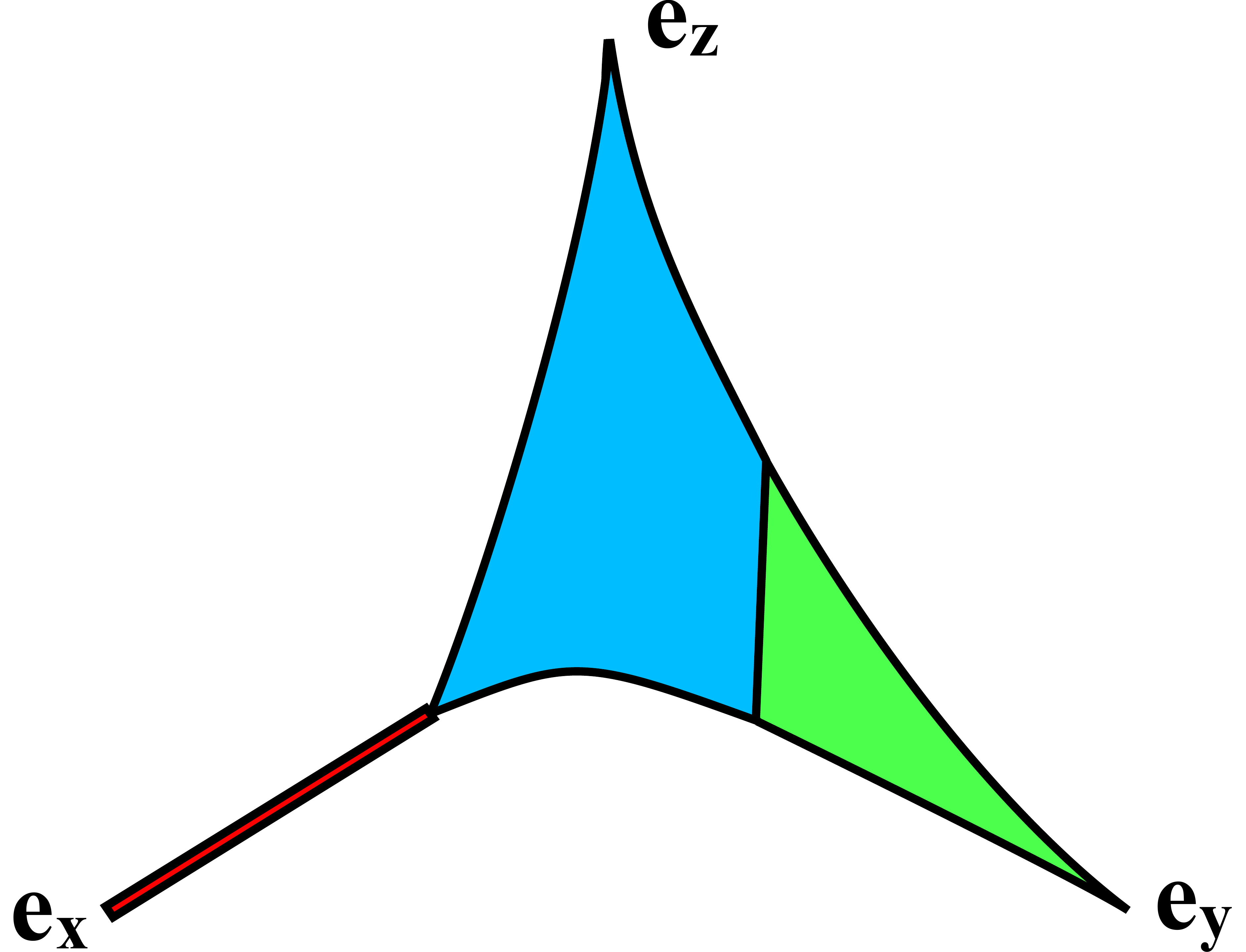}} 
\subfigure[One point] { \label{figure-31h}
\includegraphics[scale=0.07]{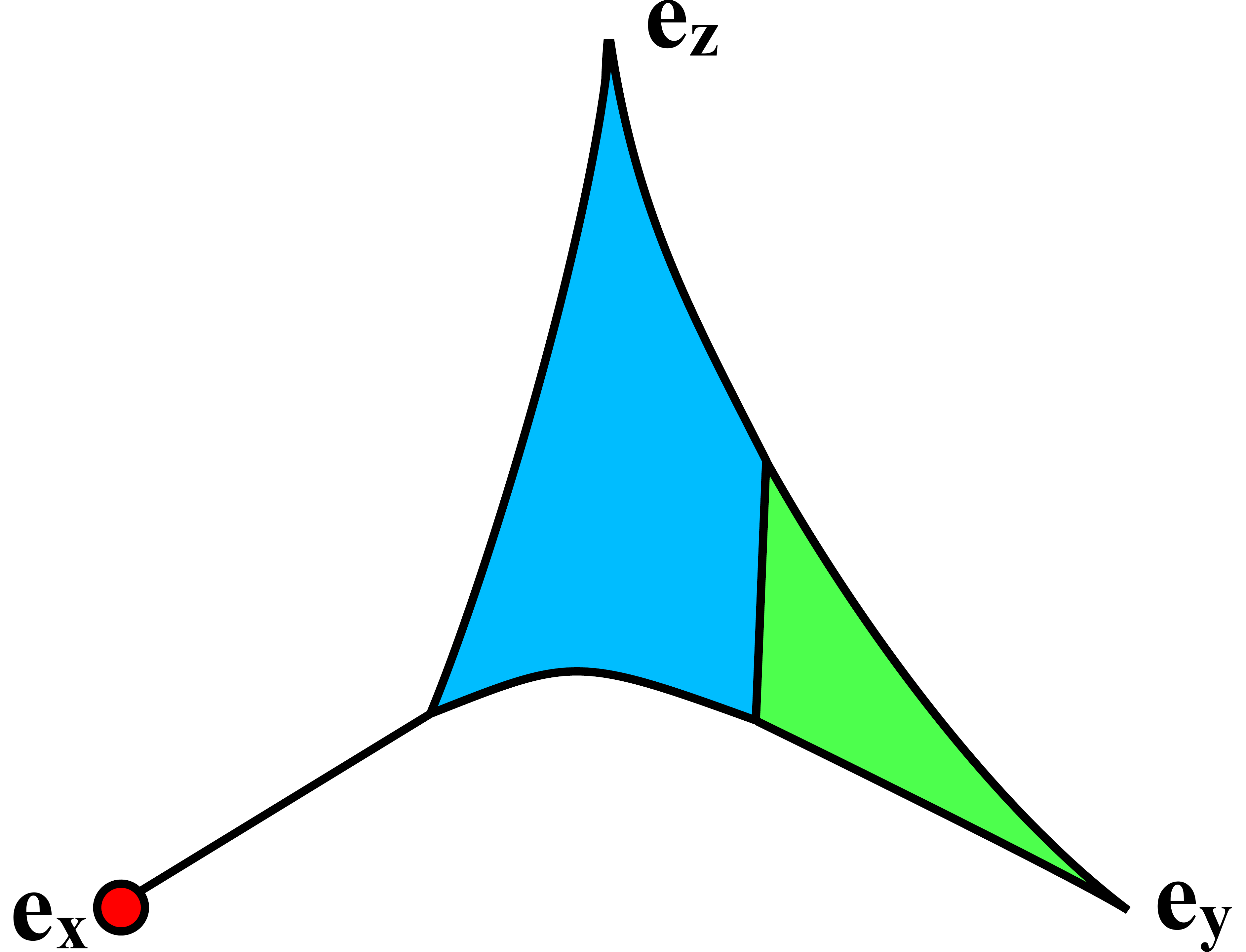}} 
\\
\subfigure[Two lines] { \label{figure-31b}
\includegraphics[scale=0.07]{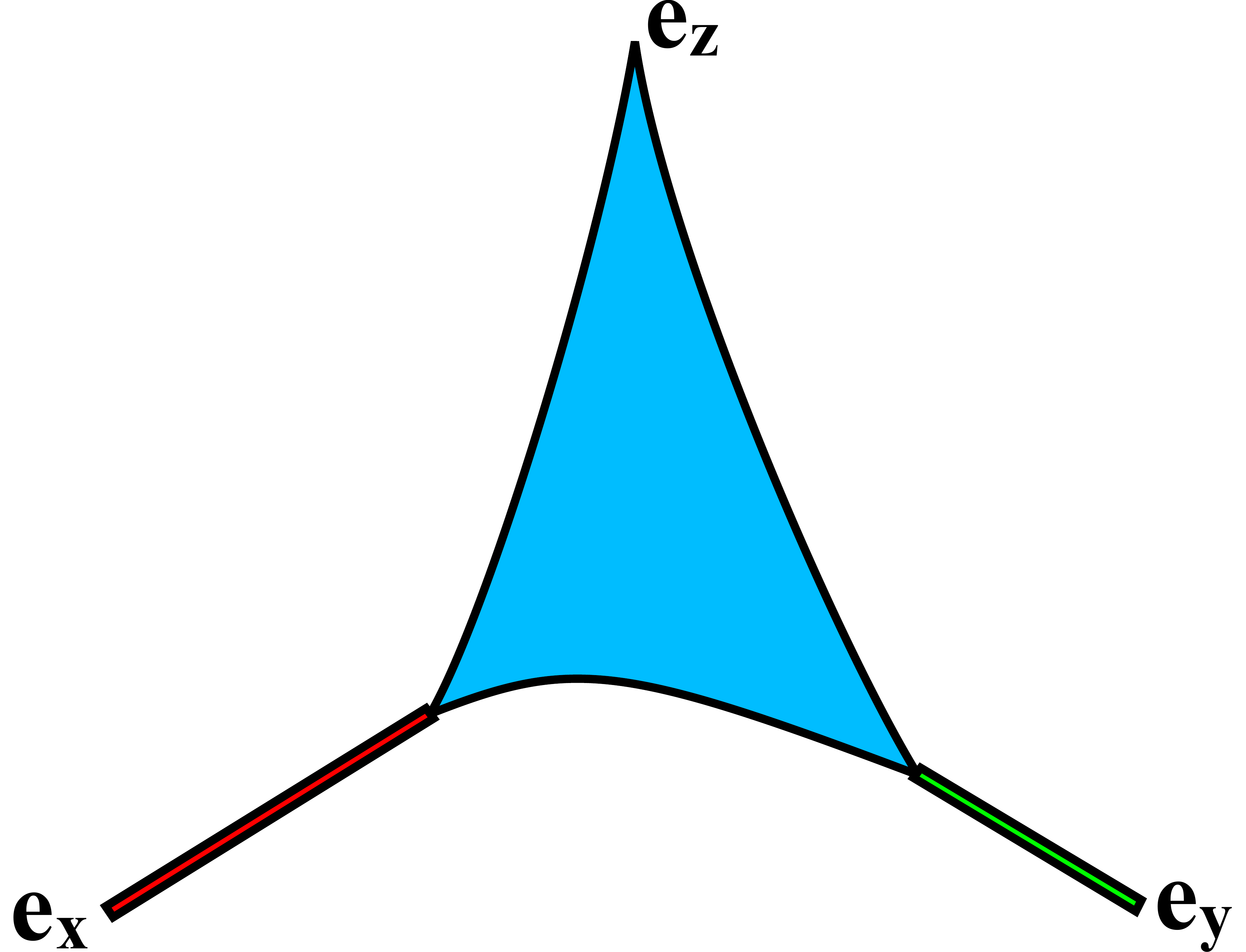}}
\subfigure[One line and one point] { \label{figure-31c}
\includegraphics[scale=0.07]{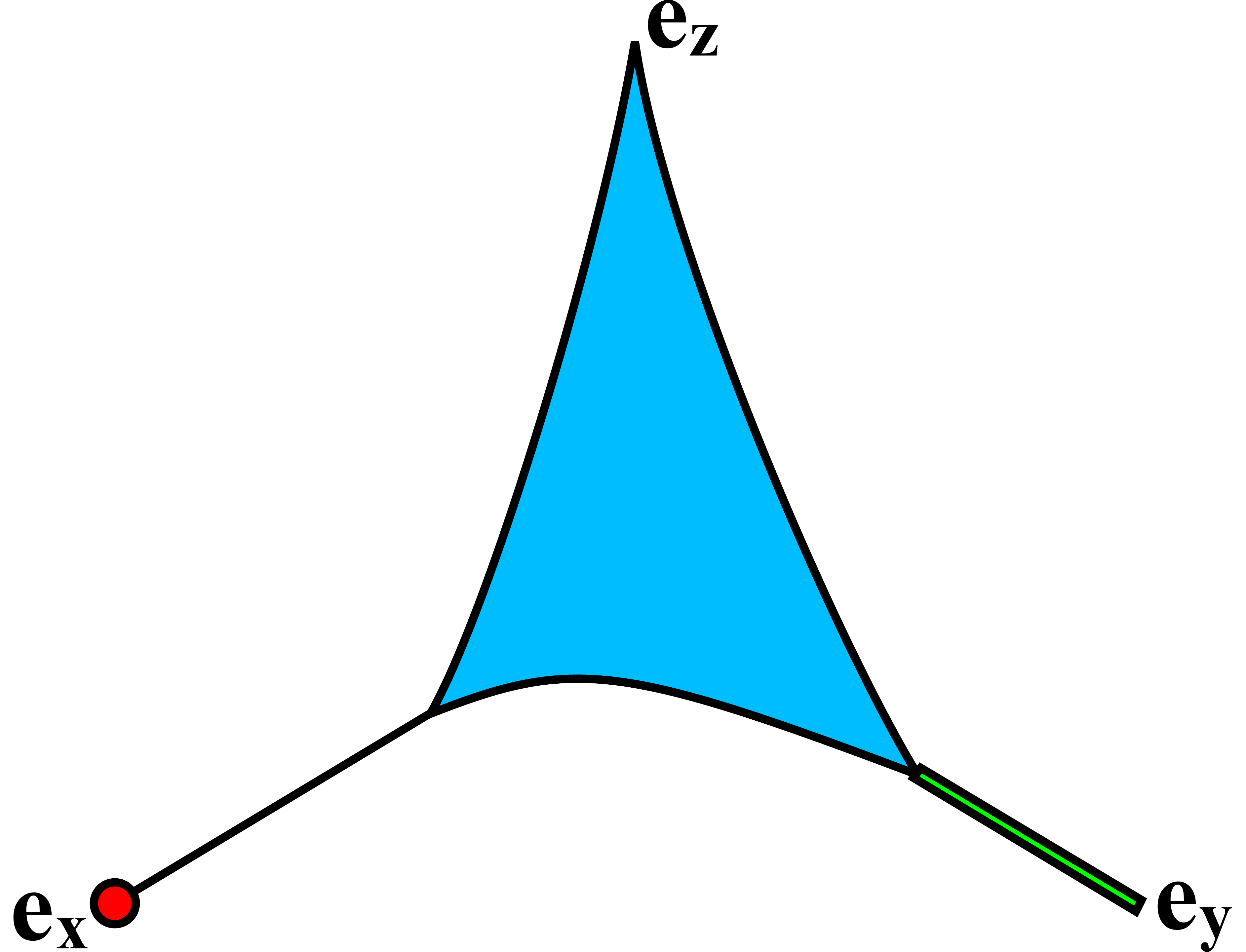}}
\subfigure[Two points] { \label{figure-31d}
\includegraphics[scale=0.07]{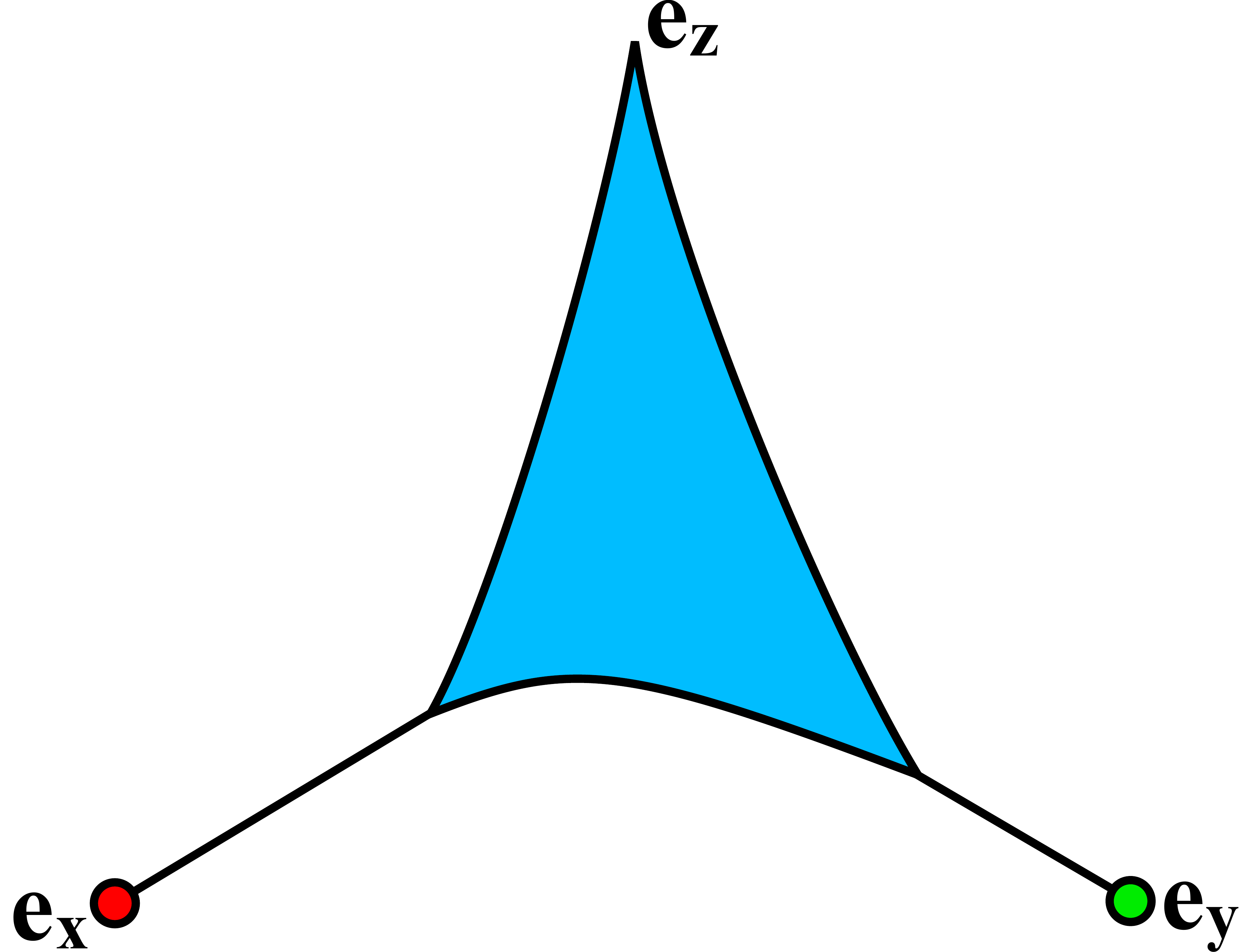}}
\\
\subfigure[Two lines and one point] { \label{figure-31e}
\includegraphics[scale=0.07]{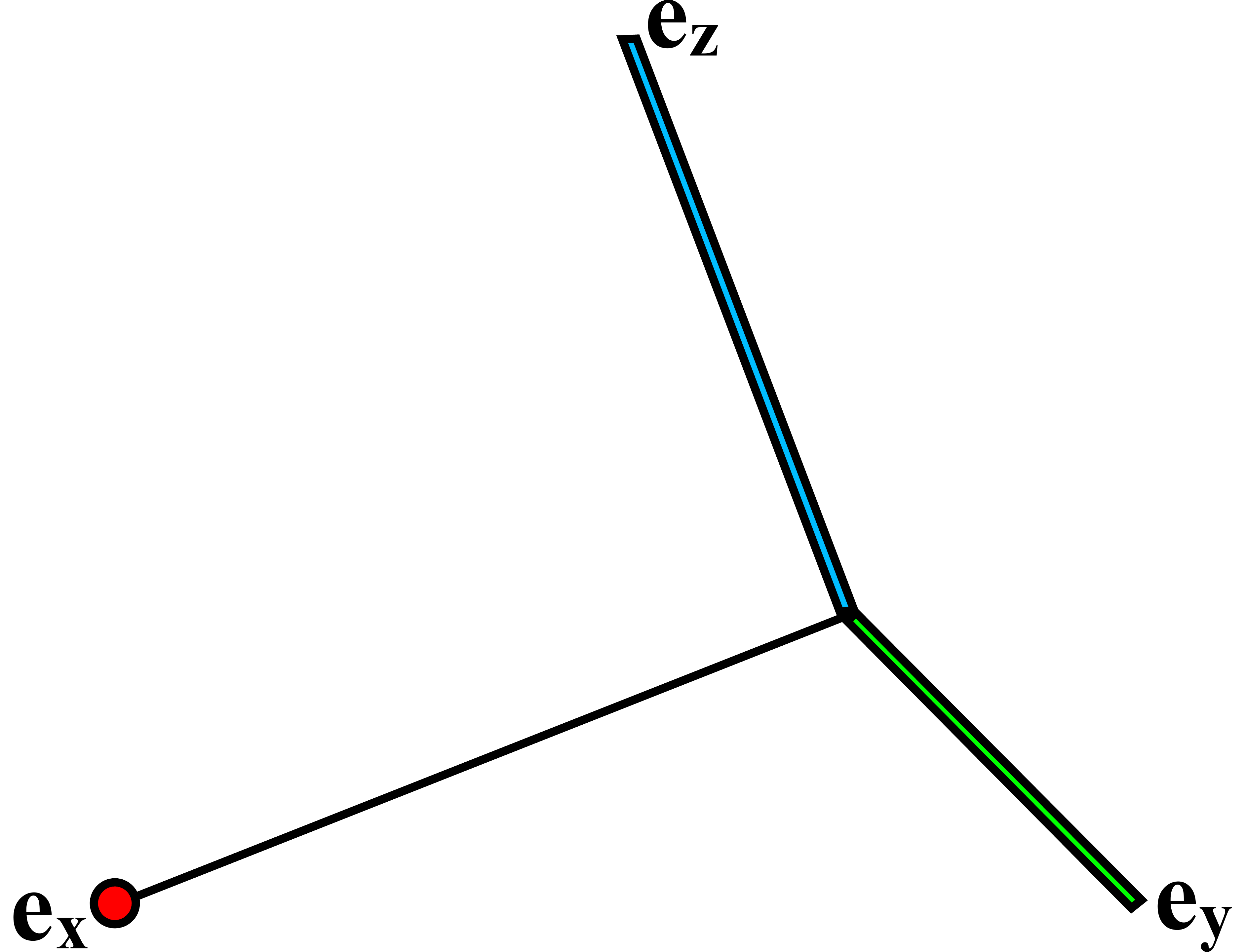}}
\subfigure[One line and two points] { \label{figure-31f}
\includegraphics[scale=0.07]{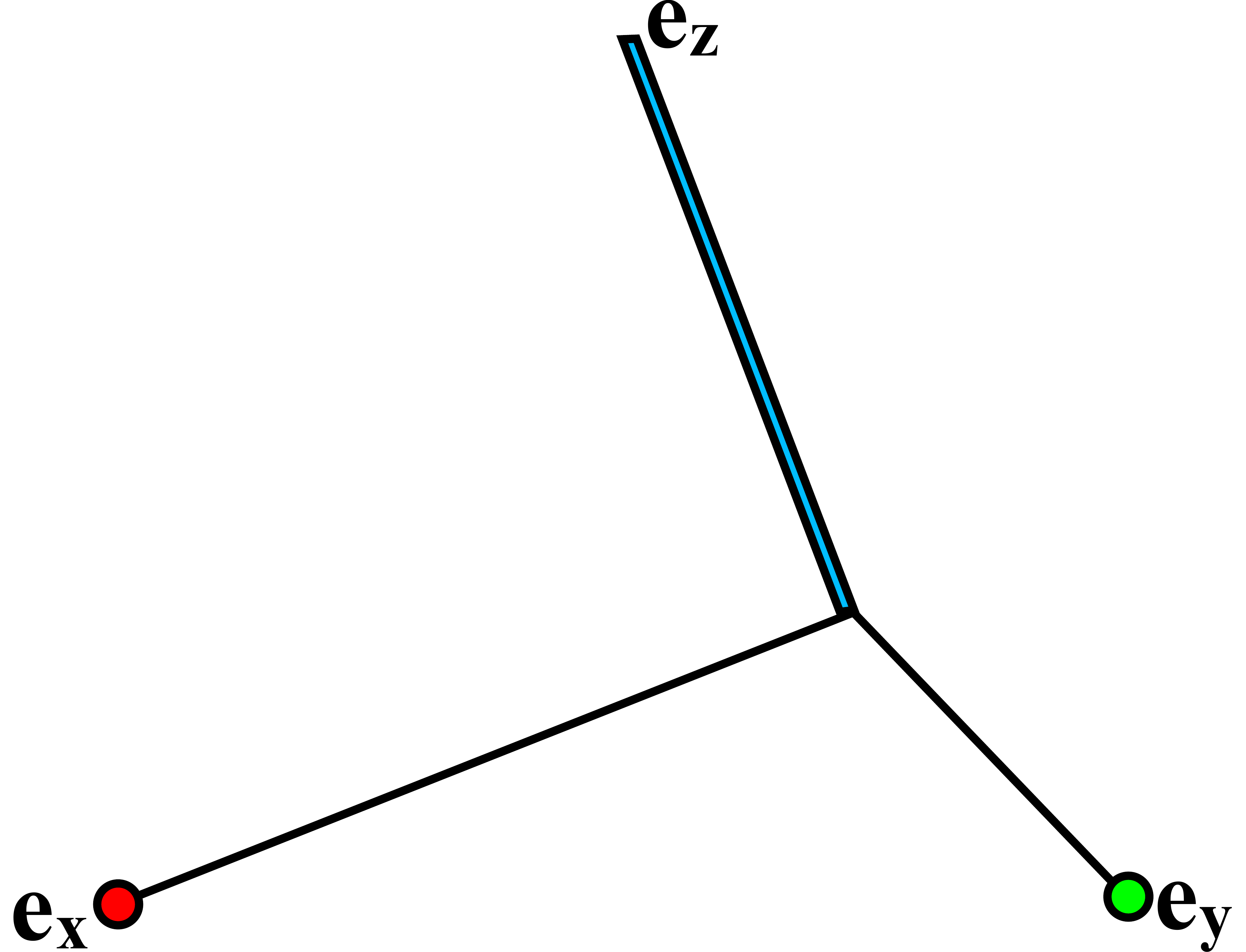}}
\subfigure[Three points] { \label{figure-31g}
\includegraphics[scale=0.07]{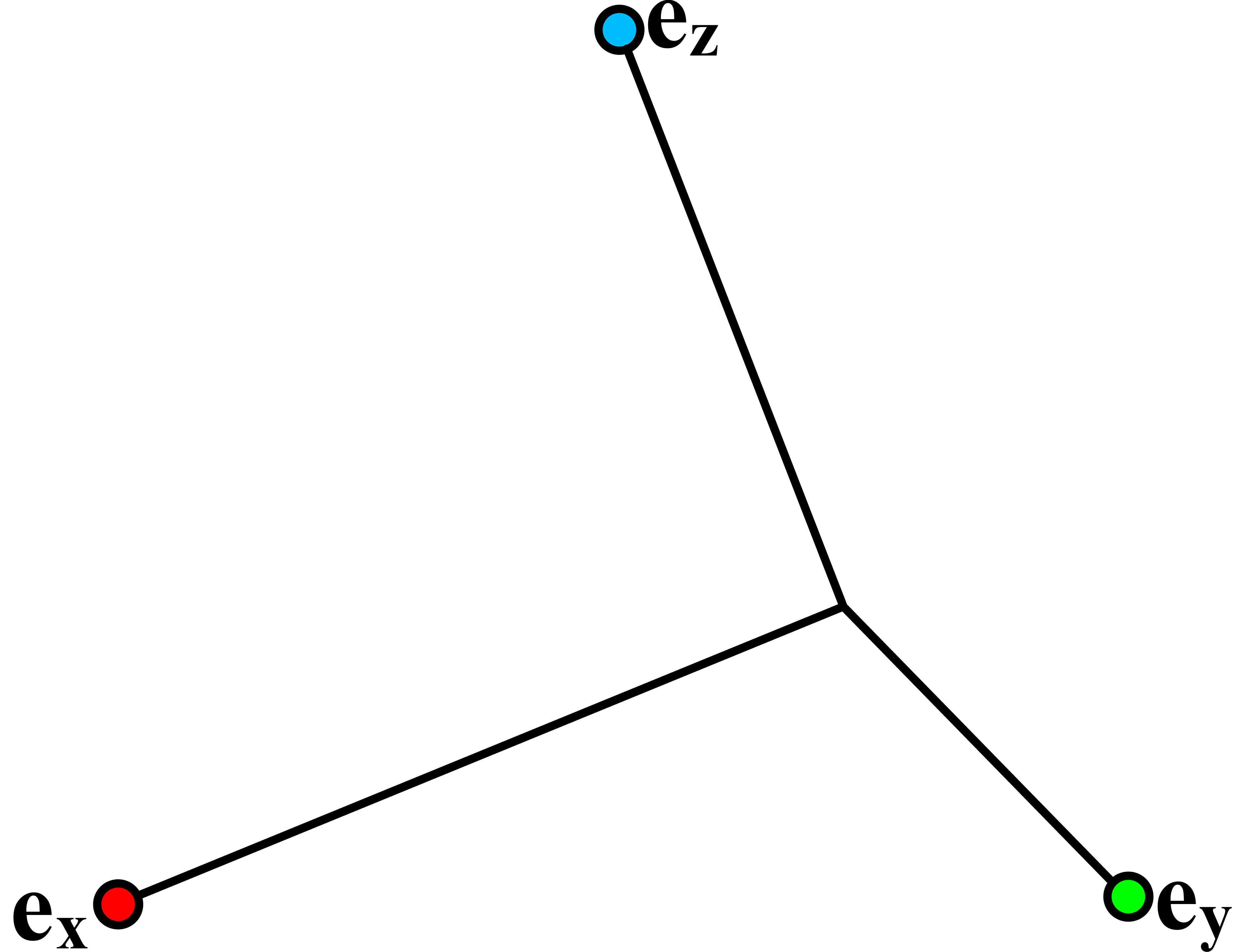}}
\caption{``Wedge'' and ``Snap'' degenerations}
\label{figure-31}
\end{figure}

We can now see the reason only one of 
$\mathcal{H}^0(T^\bullet)$ and  $\mathcal{H}^{-1}(T^\bullet)$ 
can be non-zero. In light of the dichotomy above, 
Cor.~\ref{cor-D1-D2-D3-via-CT-subdivision} implies 
that if $\mathcal{H}^{0}(T^\bullet) \neq 0$ then 
the CT-subdivision either contains a ``meeting point'' or at least 
the three $T$-areas are close enough for there to be a basic triangle 
which touches all three. While Remark 
\ref{remark-Hminus1-wedge-or-snapped-leg} implies that
$\mathcal{H}^{-1}(T^\bullet) \neq 0$ 
only when two $T$-areas become disconnected and
the gap is at least two edges along the side of $T$ they share.

To make the argument above into a proof of Theorem 
\ref{theorem-transform-is-a-sheaf-for-non-trivial-chi} it remains
to deal with the following issue. Suppose two $T$-areas
were disconnected by a ``wedge'' whose tip is more than two edges
long. Then a basic triangle certainly couldn't connect them 
along this tip. But, a priori, there could be a basic triangle 
somewhere else in the ``wedge'' which spans it border-to-border 
and thus touches all three $T$-areas. However, it turns out that 
any basic triangle which connects up the $T$-areas can only do so along 
the tip of the ``wedge''. To prove this assertion, we first observe 
the following:

\begin{corollary}
\label{cor-marked-by-chi-lies-on-wedge-or-a-tip}
Let $\chi \in G^\vee$ be non-trivial. An edge $ef$ of $\Sigma$
is marked by $\chi$ if and only if it belongs to 
either the tip of a ``wedge'' or to a squashed leg with 
a snapped-off $T$-area at the tip.
\end{corollary}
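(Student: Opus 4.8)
The plan is to fix an interior edge $ef$ of $\Sigma$ with endpoint divisors $E_e$, $E_f$, and to compare the vertex types of $\chi^{-1}$ in $\sinksource_{\tilde{\mathcal{M}},E_e}$ and $\sinksource_{\tilde{\mathcal{M}},E_f}$ simultaneously with the roles of $e$, $f$ in the CT-subdivision for $\chi$ --- via Proposition~\ref{prps-vertex-type-to-CT-subdivision-role-correspondence} --- and with the carving ratio of $ef$, via the explicit local pictures of $\Sigma$ in Figures~\ref{figure-2} and \ref{figure-20}--\ref{figure-22}. (An edge contained in a side of $\Delta$ has carving character $\chi_0$, so there both sides of the equivalence fail, and this case is discarded.)

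I would first recast ``$\chi$ marks $ef$''. Let $\tau_1,\tau_2$ be the basic triangles adjacent to $ef$ and let $m_1\in\Gamma_{\tau_1}$, $m_2\in\Gamma_{\tau_2}$ represent $\chi$. The quotient $m_1/m_2$ lies in the rank-one lattice $M\cap\langle e,f\rangle^\perp$, whose generator, written with coprime regular numerator and denominator, is the carving pair $r_1:r_2$ of $ef$. Thus $\chi$ marks $ef$ if and only if $m_1/m_2$ itself is that generator, equivalently $m_1,m_2$ are coprime and $m_1/m_2$ is primitive in $M$: indeed one implication is immediate, and for the other, if $\chi=\kappa(r_1)$, then (as $r_1\in\Gamma_{\tau_1}$ represents $\kappa(r_1)=\chi$) uniqueness of the representative of each character gives $m_1=r_1$, $m_2=r_2$. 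In particular, when $\chi$ marks $ef$ the monomials $m_1,m_2$ have complementary variable support, so up to permuting $x,y,z$ either $\{m_1,m_2\}=\{z^{c},x^{a}y^{b}\}$ with $a,b,c>0$, or $\{m_1,m_2\}=\{x^{a},y^{b}\}$ with $a,b>0$.

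It remains to identify these two configurations and to run the converse. In the first, $\tau_1\in Cz^\bullet$ and $\tau_2\in Tx^\bullet y^\bullet$, so $ef$ lies on the common border of $Cz^\bullet$ and $Tx^\bullet y^\bullet$; by Proposition~\ref{prps-vertex-type-to-CT-subdivision-role-correspondence} both $e$ and $f$ are $z$-$(1,2)$-source vertices, which makes $Tx^\bullet y^\bullet$ two-dimensional along $ef$ and separating $Ty^\bullet z^\bullet$ from $Tx^\bullet z^\bullet$ there, i.e.\ $ef$ is on the tip of a wedge. In the second, $\tau_1\in Cx^\bullet$ and $\tau_2\in Cy^\bullet$ with $Tx^\bullet y^\bullet$ collapsed onto the line $l=\partial Cx^\bullet\cap\partial Cy^\bullet$; by the previous paragraph $\chi$ marks $ef$ exactly when $m_1,m_2$ equal $x^{a},y^{b}$ themselves, not proper powers $x^{ka},y^{kb}$, and by the convention preceding Lemma~\ref{lemma-non-vanishing-paths-to-CT-areas} this says precisely that $\chi$ marks $l$, i.e.\ that $Tx^\bullet y^\bullet$ has snapped off to the vertex $e_z$ rather than degenerating into a strip; so $ef$ lies on a squashed leg with a snapped-off $T$-area at the tip. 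The converse reverses these steps: a snapped-off leg is by convention marked by $\chi$, hence so is every $\Sigma$-edge along it; and on a wedge tip Proposition~\ref{prps-vertex-type-to-CT-subdivision-role-correspondence} forces $\{m_1,m_2\}=\{z^{c},x^{a}y^{b}\}$, and the remaining ``$j\geq 2$'' possibility (that $m_1/m_2$ is a proper power, so $\chi$ only borders $Tx^\bullet y^\bullet$ without marking $ef$) is excluded because it would produce a second $(1,2)$-source vertex in $\sinksource_{\tilde{\mathcal{M}},E_e}$, whereas each sink-source shape listed in Theorem~\ref{theorem-sink-source-graph-to-divisor-type-correspondence} has at most one. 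This last exclusion --- certifying primitivity of the $C$-$T$ transition of $\chi$'s representative along a wedge tip --- is the one step I expect to require genuine care, and it is where the finiteness of the classification in Theorem~\ref{theorem-sink-source-graph-to-divisor-type-correspondence} does the essential work.
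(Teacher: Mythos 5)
Your proof is correct, and for the ``only if'' direction it rests on exactly the same pivot as the paper's: the fact that the coprime carving monomials $r_1,r_2$ of $ef$ are themselves the representatives of their common character in the $G$-graphs of the two adjacent basic triangles, after which the CT-borders and the degeneration conventions do the rest. The paper imports that fact by citing \cite[Cor.~2.3]{Logvinenko-ReidsRecipeAndDerivedCategories}; you assert it parenthetically (``$r_1\in\Gamma_{\tau_1}$ represents $\kappa(r_1)$''), and you should cite or prove it, since it is the only non-formal input to that direction and is not a triviality (it encodes how adjacent $G$-graphs are related by $G$-igsaw moves). Where you genuinely diverge is the converse. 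The paper's three-sentence proof does not explain why an edge lying on the tip of a wedge for $\chi$ must actually be \emph{marked} by $\chi$, i.e.\ why the ratio of the two representatives cannot be a proper power of the primitive carving ratio; you close this by noting that each sink-source graph in Theorem~\ref{theorem-sink-source-graph-to-divisor-type-correspondence} contains at most one $(1,2)$-source, so via Proposition~\ref{prps-vertex-type-to-CT-subdivision-role-correspondence} two distinct characters cannot both place $e$ on the border of $Cz^\bullet$ and a two-dimensional $Tx^\bullet y^\bullet$ (and the subcase $\kappa(r_1)=\chi$ with exponent $k\ge 2$ is killed by uniqueness of representatives, which you invoke earlier). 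This is a legitimate, self-contained certification of primitivity that makes explicit a step the paper leaves to the reader; the squashed-leg half of the converse is, as you observe, true by the very convention defining snapped-off $T$-areas, which matches the paper's treatment. The only other small omission is that you should note the case of a carving ratio $m:1$ (which occurs for side vertices) is excluded precisely because $\chi$ is assumed non-trivial.
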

\begin{proof}
Consider the monomial ratio $m : m'$ which carved out $ef$. 
By \cite[Cor.\ 2.3]{Logvinenko-ReidsRecipeAndDerivedCategories}
$m$ and $m'$ represent $\chi$ in the $G$-graphs of 
the two basic triangles which contain $ef$. Therefore edge $ef$ 
either lies on the border of 
$Cx^\bullet$ and $Ty^\bullet z^\bullet$, 
or on the border of
$Cy^\bullet$ and $Tx^\bullet z^\bullet$, 
or on the border of
$Cz^\bullet$ and $Tx^\bullet y^\bullet$,
which places it on the tip of a ``wedge''
or it lies on the border of 
$Cx^\bullet$ and $Cy^\bullet$, or on the border
of $Cy^\bullet$ and $Cz^\bullet$, 
or on the border of $Cz^\bullet$ and $Cx^\bullet$,
which places it along a squashed leg of $T$. And 
since $\chi$ marks $ef$ it is the type of degeneration
where a $T$-area is reduced to just 
the vertex at the tip of leg. 
\end{proof}

Recall that $D^1$, $D^2$ and $D^3$ are the vanishing divisors
of the maps which in the dual family $\widetilde{\mathcal{M}}$
represent the $x$-, $y$- and $z$-arrows entering $\chi^{-1}$. 
Hence they are also the vanishing divisors of the maps which 
in the $G$-cluster $\mathcal{M}$ represent
the $x$-, $y$- and $z$-arrows leaving $\chi$. 
So a point $p \in Y$ lies in $D^1 \cap D^2 \cap D^3$ 
if and only if the whole of $\mathbb{C}[x,y,z]$ acts by zero 
on the $\chi$-eigenspace of the corresponding $G$-cluster. 
It is then said that $\chi$ \em lies in the socle \rm of
$\mathcal{Z}_p$. It was shown in 
\cite[\S6]{Craw-AnexplicitconstructionoftheMcKaycorrespondenceforAHilbC3}
and \cite[Lemma 9.1]{Craw-Ishii-02} that $\chi$ marks a vertex 
of $\Sigma$ if and only if $\chi$ is in the socle of every $G$-cluster
in the corresponding divisor. We now improve upon that observation:

\begin{proposition}
\label{prps-chi-in-the-socle}
Let $\tau$ be a basic triangle of $\Sigma$. 
If $\chi$ lies in the socle of the torus-invariant $G$-cluster
defined by $\tau$, then $\chi$ marks either a vertex or an edge of
$\tau$. Moreover $\chi$ lies in the socle of every $G$-cluster 
in the corresponding divisor or curve of $Y$. 
\end{proposition}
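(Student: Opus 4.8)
The plan is to translate the hypothesis into the combinatorics of the CT-subdivision for $\chi$ and then read off both halves of the conclusion from the structure of that subdivision. First I would reformulate ``$\chi$ lies in a socle of the $G$-cluster at $p_\tau$'' as $p_\tau \in D^1 \cap D^2 \cap D^3$, using the identification recalled just before the statement. Writing $\tau = (e,f,g)$, the torus-fixed point $p_\tau$ lies on exactly the three toric divisors $E_e, E_f, E_g$, so $p_\tau$ belongs to a component of $D^i$ if and only if one of $E_e, E_f, E_g$ is that component. By Corollary~\ref{cor-D1-D2-D3-via-CT-subdivision} this is equivalent to saying that the vertex set $\{e,f,g\}$ meets each of the three $T$-areas $Ty^\bullet z^\bullet$, $Tx^\bullet z^\bullet$, $Tx^\bullet y^\bullet$.

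Next I would analyse how the three vertices of a single basic triangle can simultaneously meet all three $T$-areas, using the ``meeting point'' versus ``wedge'' dichotomy for the concave triangle $T = Ty^\bullet z^\bullet \cup Tx^\bullet z^\bullet \cup Tx^\bullet y^\bullet$ and its degenerations depicted on Figures~\ref{figure-29}--\ref{figure-31}, together with Proposition~\ref{prps-vertex-type-to-CT-subdivision-role-correspondence}. The key point is that each $T$-area is connected and any two of them meet only along their common boundary chain (possibly reduced to a single vertex or, after degeneration, to an infinitesimally thin strip). So if no single vertex of $\tau$ lies on the common boundary of all three $T$-areas, an edge of $\tau$ joining a vertex in one $T$-area to a vertex in another must itself be a boundary edge of the CT-subdivision; chasing these boundary edges around $\tau$ then forces two of its vertices to be the two ends of a ``wedge tip'' and hence adjacent, so the wedge tip consists of the single edge of $\tau$ joining them. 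The degenerate $T$-areas are handled exactly as in the proof of Lemma~\ref{lemma-non-vanishing-paths-to-CT-areas}, using the conventions introduced in that section. Thus $\tau$ is of one of two types: (a) one of its vertices, say $P$, lies on the border of all three $T$-areas, i.e.\ $\chi^{-1}$ is a $(0,3)$-sink in $\sinksource_{\tilde{\mathcal{M}}, E_P}$; or (b) one of its edges $ef$ is the single-edge tip of a wedge (or the squashed-leg analogue of it).

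Finally I would read off the two conclusions. In case~(a), $\chi^{-1}$ being a $(0,3)$-sink in $\sinksource_{\tilde{\mathcal{M}}, E_P}$ is precisely the statement that Reid's recipe marks the vertex $P$ (see \S\ref{section-sink-source-graphs-and-non-compact-except-divisors}), and $P \in Ty^\bullet z^\bullet \cap Tx^\bullet z^\bullet \cap Tx^\bullet y^\bullet$ gives, via Corollary~\ref{cor-D1-D2-D3-via-CT-subdivision}, that $E_P$ is a component of each of $D^1, D^2, D^3$; hence $E_P \subseteq D^1 \cap D^2 \cap D^3$ and $\chi$ lies in a socle of every $G$-cluster parametrised by $E_P$. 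In case~(b), Corollary~\ref{cor-marked-by-chi-lies-on-wedge-or-a-tip} shows that $\chi$ marks the edge $ef$; and by Proposition~\ref{prps-vertex-type-to-CT-subdivision-role-correspondence} the two endpoints of a wedge-tip edge are charge vertices lying on the borders $Ty^\bullet z^\bullet \cap Tx^\bullet z^\bullet$ and $Ty^\bullet z^\bullet \cap Tx^\bullet y^\bullet$ respectively (up to permuting $x,y,z$), so $\{e,f\}$ again meets all three $T$-areas; therefore $E_e$ or $E_f$ is a component of each $D^i$, and since the toric curve $C_{ef} = E_e \cap E_f$ is contained in a toric divisor $E_h$ only for $h \in \{e,f\}$, we get $C_{ef} \subseteq D^1 \cap D^2 \cap D^3$, so $\chi$ lies in a socle of every $G$-cluster parametrised by $C_{ef}$.

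The main obstacle is the combinatorial second step: showing, \emph{without appealing to the eventual main theorem}, that a basic triangle meeting all three $T$-areas cannot sit inside a multi-edge wedge tip or inside a ``three chains meeting at a vertex'' region, and verifying this uniformly through all the degenerate configurations of Figures~\ref{figure-30}--\ref{figure-31}, including the corner and side vertices of $\Delta$.
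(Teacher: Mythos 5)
Your reduction of the hypothesis to ``$p_\tau \in D^1 \cap D^2 \cap D^3$'', and hence to ``the vertex set of $\tau$ meets all three $T$-areas'', is fine, and your derivation of the two conclusions \emph{assuming} the dichotomy (a)/(b) is also essentially sound. But the central combinatorial step is a genuine gap, and it is exactly the step that constitutes the content of the proposition. You assert that if no vertex of $\tau$ touches all three $T$-areas, then ``an edge of $\tau$ joining a vertex in one $T$-area to a vertex in another must itself be a boundary edge of the CT-subdivision'', and that chasing such edges forces a single-edge wedge tip. The first claim is not true as stated: the triangle $\tau$ lies entirely inside one area, say the wedging area $Tx^\bullet y^\bullet$, and can a priori have one vertex on its border with $Ty^\bullet z^\bullet$ and another on its border with $Tx^\bullet z^\bullet$ while the edge joining them cuts across the interior of the wedge, far from its tip. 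The paper explicitly flags this configuration (``a priori, there could be a basic triangle somewhere else in the wedge which spans it border-to-border'') as the thing that must be ruled out, and introduces Proposition~\ref{prps-chi-in-the-socle} precisely as the independent input that rules it out. Deriving the proposition from the CT-subdivision dichotomy therefore puts the cart before the horse: Corollaries~\ref{cor-D1-D2-D3-via-CT-subdivision} and \ref{cor-marked-by-chi-lies-on-wedge-or-a-tip} and Proposition~\ref{prps-vertex-type-to-CT-subdivision-role-correspondence} are available, but they do not by themselves exclude the border-to-border triangle, and you offer no argument that does. You acknowledge this in your last paragraph, which is honest, but it means the proof is incomplete at its crux.

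The paper's actual proof is of a completely different nature and does not use CT-subdivisions at all. It places $\tau$ inside a regular corner triangle $\Lambda$ of side $r$ (or a meeting-of-champions triangle), writes the three edges of $\tau$ explicitly as carved out by $x^{d-i}:y^{b+i}z^i$, $y^{e-j}:x^{a+j}z^j$, $z^{f-k}:x^ky^{c+k}$ with $i+j+k=r-1$, and then computes a monomial basis of the socle of the torus-invariant $G$-cluster at $p_\tau$ case by case according to which of $i,j,k$ vanish, matching each socle character against the Reid's recipe markings of the vertices and edges of $\tau$ via \cite[Lemma~9.1]{Craw-Ishii-02} and the formulas of \cite{Craw-AnexplicitconstructionoftheMcKaycorrespondenceforAHilbC3}. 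The second statement is obtained from \cite[Lemma~9.1]{Craw-Ishii-02} when $\chi$ marks a vertex, and by an explicit $G$-igsaw transform across the marked edge when $\chi$ marks an edge. If you want to salvage your route, you would have to supply an independent combinatorial proof that a basic triangle meeting all three $T$-areas must do so at a marked vertex or along a marked edge; the explicit socle computation is, in effect, the paper's way of doing exactly that.
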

\begin{proof}
Assume that $\tau$ lies in a regular corner triangle $\Lambda$ of side $r$, 
the case of a meeting of champions triangle (cf.  \cite[\S3.1]{CrawReid})
is similar. Assume that $\tau$ is an `up' triangle, 
the `down' case is similar. Up to permutation of $x, y, z$, 
the edges of $\tau$ are cut out
by $x^{d-i} : y^{b+i}z^i$, $y^{e-j} : x^{a+j}z^j$ and
$z^{f-k} : x^ky^{c+k}$ for some $0\leq i,j,k\leq r-1$ with
$i+j+k=r-1$, where $d-a=e-b-c=f=r$.  The shape of the $G$-graph of 
$\tau$ and the position of $\tau$ within $\Lambda$ are 
determined by which of $i,j,k$ are zero, and we distinguish several cases:
\smallskip
 
 \textsc{Case 1}:  If $i,j,k>0$ then each vertex of $\tau$ lies in the
interior of $\Lambda$, the $G$-graph is shown in
\cite[Figure 9]{Craw-AnexplicitconstructionoftheMcKaycorrespondenceforAHilbC3}, and
the socle of the torus-invariant $G$-cluster has basis
\begin{equation}
 \label{eqn:socle}
x^{d-i-1}y^{c+k}, \quad x^{d-i-1}z^j, \quad  x^ky^{e-j-1}, \quad y^{e-j-1}z^i, \quad x^{a+j}z^{f-k-1}, \quad y^{b+i}z^{f-k-1}.
 \end{equation}
The proof of \cite[Lemma 9.1]{Craw-Ishii-02} shows that 
Reid's recipe labels the vertices 
of $\tau$ by the characters $\chi$ of these 6 monomials, so 
the first statement holds in Case 1. 

 \smallskip

 \textsc{Case 2}:  If precisely one of $i,j,k$ is zero, say $k=0$,
then the edge of $\tau$ cut out by $z^{f} : y^{c}$ lies on an edge of
$\Lambda$. Since $k=0$ we have $x^ky^{e-j-1} \vert
y^{e-j-1}z^i$ and $x^{d-i-1}z^j \vert x^{a+j}z^{f-1}$, so the second
and third monomials from \eqref{eqn:socle} are not in the socle. If
$c>0$ then the remaining four monomials from \eqref{eqn:socle} are 
a basis of the socle. Moreover, by
\cite[(4,6)]{Craw-AnexplicitconstructionoftheMcKaycorrespondenceforAHilbC3}
the characters of $x^{a+j}z^{f-1}$ and 
$y^{b+i}z^{f-1}$ mark the vertex of $\tau$ in the interior of $\Lambda$, 
while the characters of $x^{d-i-1}y^{c}$ and $y^{e-j-1}z^i$ 
mark the vertices of $\tau$ on the edge of $\Lambda$ as required. 
If $c=0$ then the edge of $\tau$ cut out by $z^{f} :
y^{c}$ lies in an edge of $\Delta$ and the socle has basis
$x^{a+j}z^{f-1}$ and $y^{b+i}z^{f-1}$ because $x^{d-i-1}y^{c}\vert
x^{a+j}z^{f-1}$ and $y^{e-j-1}z^i \vert y^{b+i}z^{f-1}$. The
corresponding pair of characters marks the vertex of $\tau$ in the
interior of $\Lambda$. Thus the first statement holds in Case 2.
 
 \smallskip

 \textsc{Case 3}: If precisely two of $i,j,k$ are zero, then
we distinguish three subcases:
 
 \smallskip
 \one\;\; If $j=k=0$, then the edges of $\tau$ cut out by $x^a : y^e$
and $z^{f} : y^{c}$ lie in edges of $\Lambda$. Since
$i=f-1$ we have $y^{b+i}z^{f-k-1} \vert y^{e-j-1}z^i$, so the sixth
monomial from \eqref{eqn:socle} does not lie in the socle, and neither
do the second and third monomials because $k=0$. If $a, c>0$ then the
first, fourth and fifth monomials from \eqref{eqn:socle} are a
basis of the socle and the corresponding characters mark the three 
vertices of $\tau$. If $a=0$ and $c\neq 0$ then the edge cut
out by $x^a : y^e$ lies in an edge of $\Delta$, the socle
has basis $y^{e-1}z^i$ and the corresponding character marks the
unique vertex of $\tau$ that lies inside $\Delta$. The case
$c=0$ and $a\neq 0$ is similar. If $a=c=0$, then the unique monomial
in the socle $y^{e-1}z^{f-1}$ is equal to $y^{b+r-1}z^{r-1}$, and the
corresponding character marks the unique edge of $\tau$ which lies
inside $\Delta$. 
 
 \smallskip
 \two\;\; If $i=k=0$, then the edges of $\tau$ cut out by $x^d : y^b$
and $z^{f} : y^{c}$ lie in edges of $\Lambda$. If $b, c>0$
then the socle has basis $x^{d-1}y^c$, $x^{d-1}z^{f-1}$, $y^{e-r}$ and
$y^{b}z^{f-1}$. The characters of three of these monomials mark the
vertices of $\tau$, while $x^{d-1}z^{f-1} = x^{a+j} z^j$ and 
so its character marks 
the unique edge of $\tau$ that does not lie in an edge of $\Lambda$.  
The degenerate cases where $b = 0$ or $c=0$, or both, 
are similar to those in Case \one\ above.
 
 \smallskip
 \three\;\; If $i=j=0$, then the edges of $\tau$ cut out by $x^d :
y^b$ and $x^{a} : y^{e}$ lie in edges of $\Lambda$ and one
vertex of $\tau$ is $e_z$. If $a, b >0$ then the socle has basis
$x^{r-1}y^{e-1}$ and $x^{d-1}y^{c+r-1}$ and the corresponding
characters mark the two vertices of $\tau$ that are not $e_z$.
The degenerate cases with $a = 0$ or $b=0$, or both, are
similar to those in Case \one\ above. When $a=b=0$
the character of the unique monomial 
$x^{d-1}y^{c+r-1}$ in the socle marks an edge of $\tau$. 
This completes the proof of the first statement in Case 3.

 \smallskip
  
 \textsc{Case 4}:  If $i=j=k=0$, then $\Lambda$ is 
a corner triangle with vertex $e_z$ and side $r=1$, i.e.
$\tau$ is the whole of $\Lambda$. Assume that $a, b>0$,
otherwise edges of $\tau$ lie in edges of $\Delta$ and the
calculation simplifies further. The socle has basis $x^ay^c$ and
$y^{b+c}$ and the corresponding monomials mark
the two of vertices of $\tau$ which are not $e_z$.  
This proves the first statement in Case 4.
 \smallskip
 
This completes the proof of the first statement. The second 
statement follows immediately from 
\cite[Lemma~9.1]{Craw-Ishii-02} whenever $\chi$ marks a vertex of 
$\tau$. It remains to prove it when $\chi$ marks 
an edge of $\tau$. The only nondegenerate case where this happens is Case
3\two\ in which the relevant edge of $\tau$ is cut out by
$y^{e-r+1}:x^{d-1}z^{f-1}$. The degenerate cases are similar. 
So let $\tau^\prime$ denote the basic triangle adjacent to $\tau$ 
that shares this edge. The $G$-graph of $\tau^\prime$ is obtained 
from that of $\tau$ by performing the $G$-igsaw transform in the sense of
\cite{Nak00} using $y^{e-r+1}/x^{d-1}z^{f-1}$. This removes 
the monomial $x^{d-1}z^{f-1}$ from the $G$-graph of $\tau$ and
replaces it by the monomial $y^{e-r+1}$. Since $y^{e-r}$ lies in the
socle of the $G$-graph of $\tau$, it is immediate that $y^{e-r+1}$
lies in the socle of the $G$-graph of $\tau^\prime$. Therefore 
$\chi$ lies in the socle of both torus-invariant $G$-clusters in
the curve in $Y$ defined by the edge it marks. 
This proves the second statement. 
\end{proof}
 
In the language of CT-subdivisions Prop.~\ref{prps-chi-in-the-socle} 
states that any basic triangle of $\Sigma$ which touches
all three $T$-areas must contain a vertex or an edge
marked by $\chi$ which also touches all three $T$-areas. In 
light of Cor.~\ref{cor-marked-by-chi-lies-on-wedge-or-a-tip}, it
proves our prior assertion that a basic triangle connecting up
two T-areas disconnected by a ``wedge'' must lie along the tip 
of the ``wedge''.  In fact, it proves:

\begin{proof}[Proof of Theorem
\ref{theorem-transform-is-a-sheaf-for-non-trivial-chi}]

The transform $\Psi(\mathcal{O}_0 \otimes \chi)$ is isomorphic
to the total complex $T^\bullet$ of the skew-commutative
cube of line bundles $\hex(\chi^{-1})_{\widetilde{\mathcal{M}}}$. 
By Lemma \ref{lemma-cube_cohomology}\eqref{item-all-cohomologies-vahish-except-for-degree-zero-to-minus-two}
we have $\mathcal{H}^i(T^\bullet) = 0$ unless $i = 0, -1, -2$.
Since by Lemma \ref{lemma-divisors-to-sink-source-graphs}
a non-trivial $\chi$ could never be a $(3,0)$-sink,
we also have $\mathcal{H}^{-2}(T^\bullet) = 0$
by Lemma \ref{lemma-cube_cohomology}\eqref{item-degree-minus-two-cohomology}. 
To prove the theorem it therefore suffices to prove that for
any non-trivial $\chi$ only one of 
$\mathcal{H}^0(T^\bullet)$ and $\mathcal{H}^{-1}(T^\bullet)$ is non-zero. 
The assertion about being a pushforward from its support  
follows from the formulas for $\mathcal{H}^0$
and $\mathcal{H}^{-1}$ in Lemma \ref{lemma-cube_cohomology}. 

Suppose that $\mathcal{H}^{0}(T^\bullet) \neq 0$. Then by 
Lemma \ref{lemma-cube_cohomology}\eqref{item-degree-zero-cohomology}
and Cor.\ \ref{cor-D1-D2-D3-via-CT-subdivision}
there exists a basic triangle in $\Sigma$ whose 
vertices touch all three $T$-areas. 
By Prop.\ \ref{prps-chi-in-the-socle} such triangle
must contain a vertex or an edge marked by $\chi$ 
which also touches all three $T$-areas. 

Suppose there is a vertex $e$ which touches 
all three $T$-areas. Then the CT-subdivision 
for $\chi$ is of the ``meeting point'' type 
and looks as depicted on Figure \ref{figure-29a} 
or Figure \ref{figure-30}. In particular, it is 
clear that there can be no ``wedges'' or squashed legs
of $T$ snapping off a $T$-area. We conclude that
$\mathcal{H}^{-1}(T^\bullet) = 0$ since by
Lemma \ref{lemma-cube_cohomology}\eqref{item-degree-minus-one-cohomology}, 
Cor.\ \ref{cor-gcd-D_2^3-D_3^2-etc-via-CT-subdivisions} and 
Cor.\ \ref{cor-Z_23-Z_13-Z_12-via-CT-subdivisions}
the support of $\mathcal{H}^{-1}(T^\bullet)$ consists
of all the vertices which are interior to
the tip of a ``wedge'', interior to a squashed leg 
snapping off a $T$-area, or are a meeting point of 
the two. 
 
Suppose there is a $\chi$-marked edge $ef$ which touches 
all three $T$-areas. Any edge marked by $\chi$ belongs
by Cor.\ \ref{cor-marked-by-chi-lies-on-wedge-or-a-tip}
to the tip of a ``wedge'' or a squashed leg of $T$
which snaps off a $T$-area.  So the CT-subdivision 
for $\chi$ looks as depicted on Figure \ref{figure-29b} or 
on Figure \ref{figure-31}. In particular, it is clear
that if a ``wedge'' contains $ef$ in its tip, 
then $ef$ must be the whole of the tip and there are 
no snapped off $T$-areas.  Similarly, if a $T$-area is snapped off 
by a squashed leg of $T$ which contains $ef$, then $ef$ is the whole of this
leg and there are no ``wedges'' or other snapped off $T$-areas. 
In either case we have  $\mathcal{H}^{-1}(T^\bullet) = 0$
for the same reason as above. 
\end{proof}

\section{Derived Reid's recipe}

In this section we compute the transform $\Psi(\mathcal{O}_0 \otimes \chi)$ 
for each $\chi \in G^\vee$ based on the role that $\chi$ plays
in Reid's recipe. Taken together, 
the results in this section give a proof of 
Theorem \ref{theorem-derived-reids-recipe}. Throughout we use the notation 
from \S\ref{section-psi-O-chi-and-skew-commutative-cubes-of-line-bundle}.
Moreover, to simplify our notation in this section, 
we frequently use same letters to denote both vertices of $\Sigma$ and 
the corresponding toric divisors on $Y$.

\subsection{Refined list of the roles $\chi$ can play in Reid's recipe}

First, we need a refinement of the list given in 
\cite[\S4]{Craw-AnexplicitconstructionoftheMcKaycorrespondenceforAHilbC3}
of the roles $\chi$ can play in Reid's recipe:
\begin{proposition}
\label{prps-four-configurations-for-chi-marked-edges} 
For any character $\chi \in G^\vee$ precisely one of the following holds:
\begin{enumerate}
\item \label{item-marks-a-divisor} 
$\chi$ marks a single vertex $e$ of $\Sigma$.  
\item \label{item-marks-a-single-chain} 
$\chi$ marks a single concave chain of edges of $\Sigma$ 
contained within the boundary of a unique $C$-area. 
If this area is, for example, $Cz^\bullet$,
then as illustrated on Fig.\ \ref{figure-23a} and \ref{figure-23b}
the chain consists of one or more straight-line segments 
$P_1 P_2$, \dots, $P_{m} P_{m+1}$ such that: 
\begin{itemize}
\item $e_x P_1$ and $P_{m+1} e_y$
form the boundary of $Cz^\bullet$ with $Ty^\bullet z^\bullet$
and $Tx^\bullet z^\bullet$, respectively. Either may be degenerate, 
cf.\ Fig.\ \ref{figure-23b}.
\item 
$P_2, \dots, P_{m}$ lie on a succession of lines out of $e_z$
\item 
each $P_{i} P_{i+1}$ is carved out by ratio $z^{c} \colon x^{a_{i}} y^{b_{i}}$ 
for some $a_{i}, b_{i}, c \in \mathbb{Z}$ with
\begin{align}
\label{eqn-ratio-coefficients-for-single-e_z-concave-chain}
0 \le a_{1} \le \dots \le a_{m} \quad\text{ and }\quad b_{1} \ge 
\dots \ge b_{m} \ge 0.
\end{align}
\end{itemize}

\begin{figure}[!htb] \centering 
\subfigure[General case] { \label{figure-23a}
\includegraphics[scale=0.11]{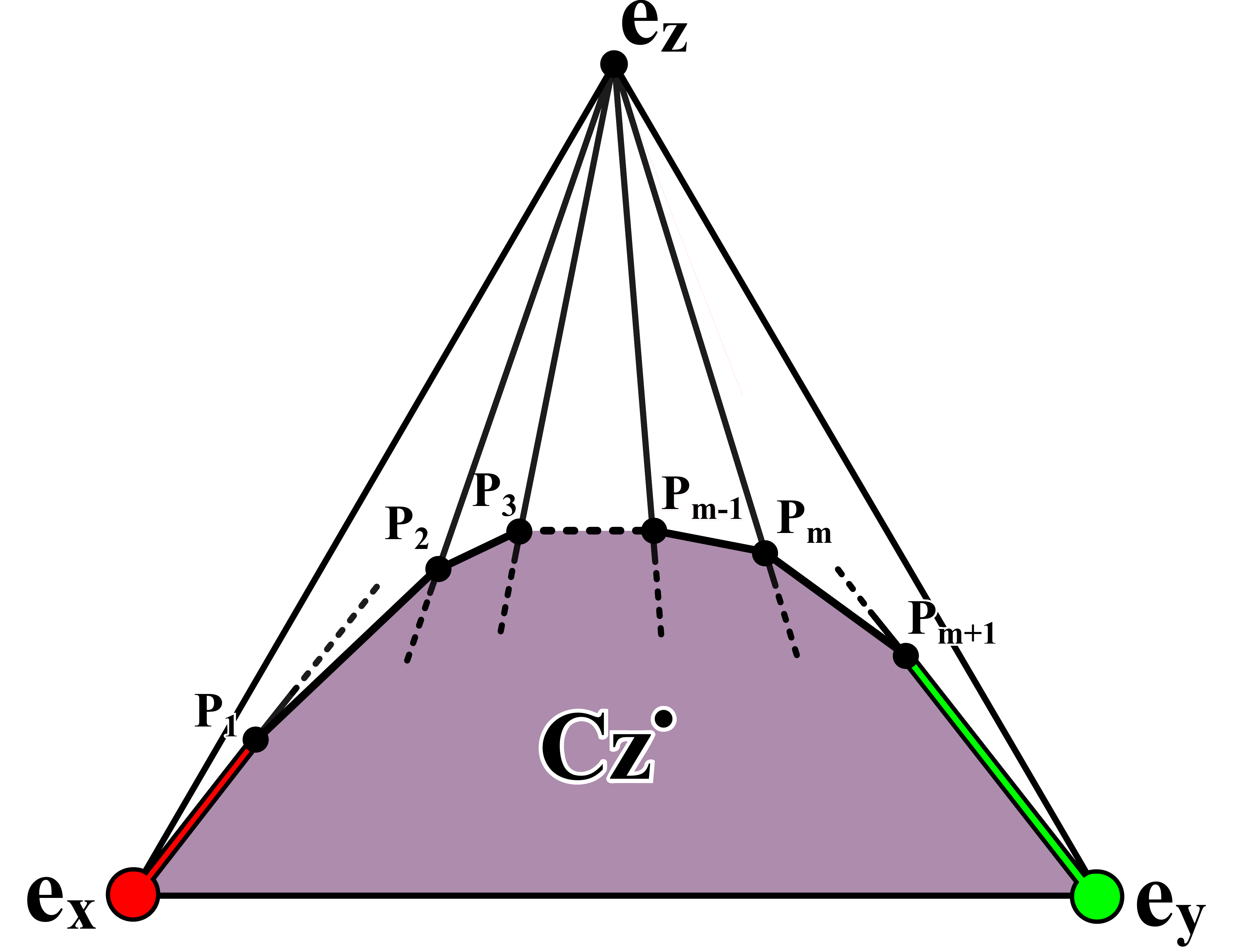}} 
\subfigure[Both $e_x P_1$ and $P_{m+1}e_y$ reduced to points]{ \label{figure-23b}
\includegraphics[scale=0.11]{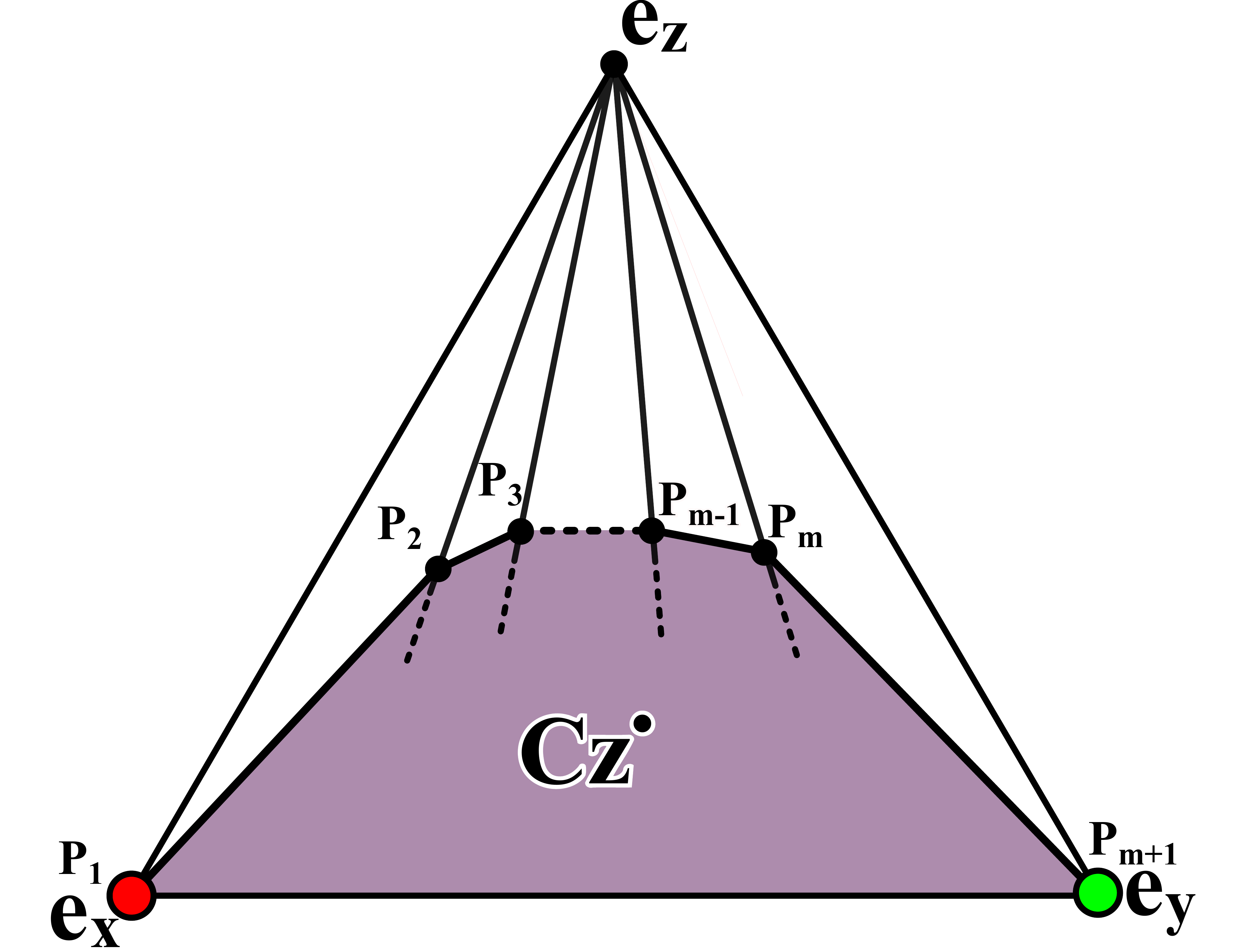}}
\caption{A single $Cz^\bullet$-concave chain}
\label{figure-23}
\end{figure}

\item \label{item-marks-a-meeting-of-champions}

\em ``Meeting of champions'': \rm 
$\chi$ marks three lines in $\Sigma$ which start at 
the three corners of $\Delta$ and meet at some internal vertex $P$. 
The monomial ratios which carve out these lines are 
$x^{a} \colon y^{b}$, $y^{b} \colon z^{c}$ and $z^{c} \colon x^{a}$ 
for some $a,b,c > 0$ and the CT-subdivision for $\chi$ is as 
depicted on Fig.\ \ref{figure-24a}.

\begin{figure}[!htb] \centering 
\subfigure[A ``meeting of champions'']
{ \label{figure-24a} 
\includegraphics[scale=0.11]{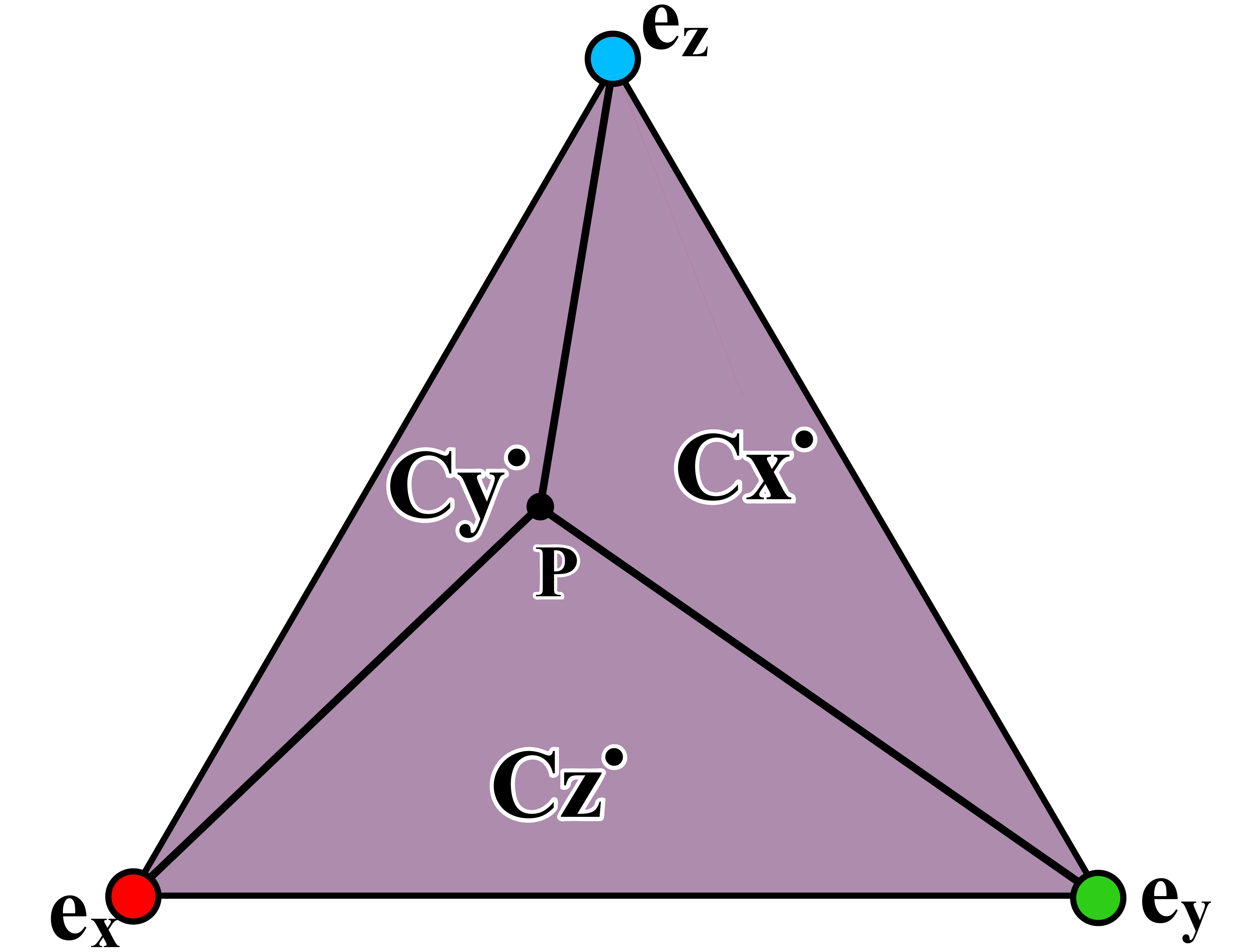}}
\subfigure[A ``long side'']
{\label{figure-24b} 
\includegraphics[scale=0.11]{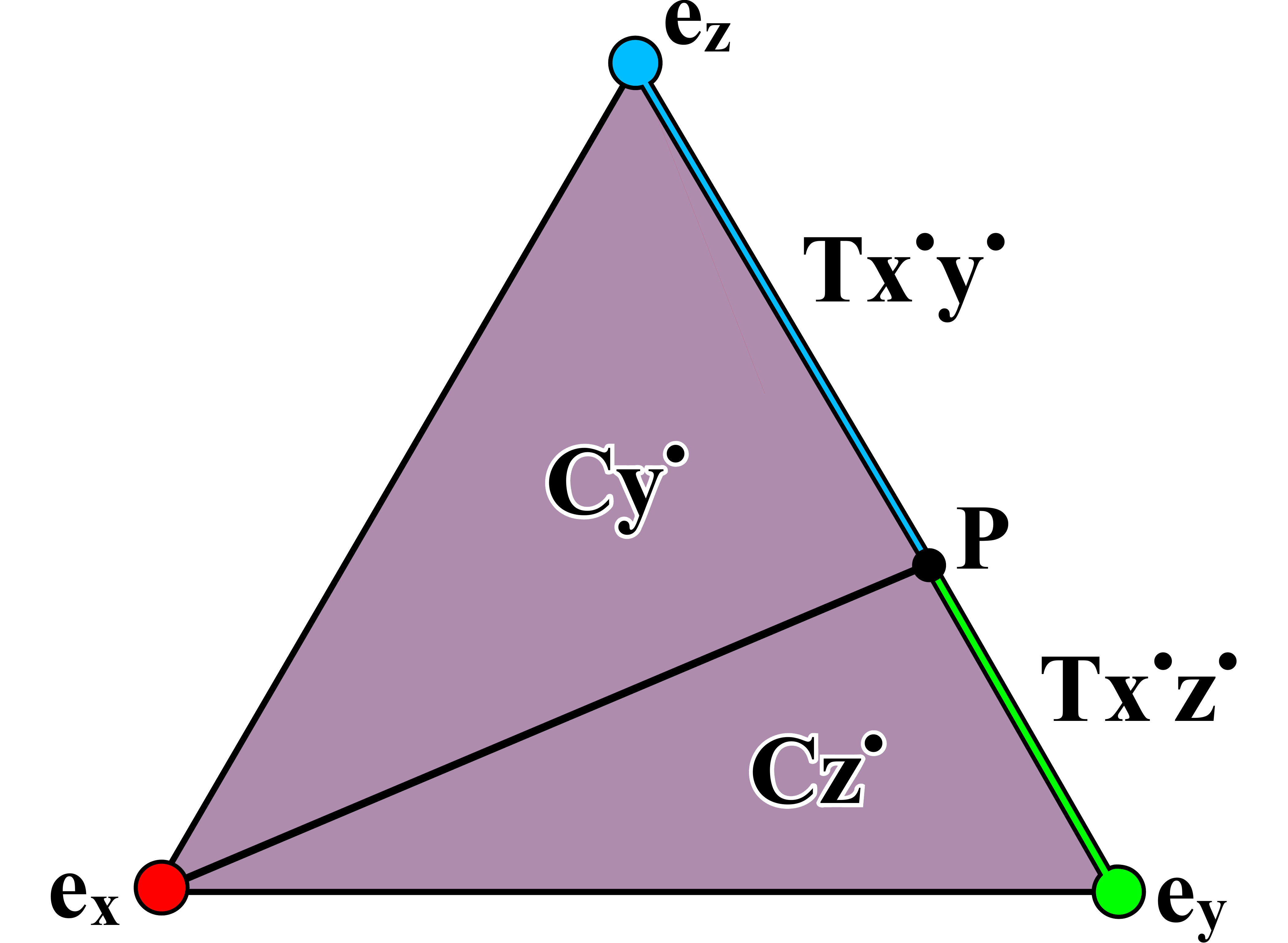}}
\caption{A ``meeting of champions'' and a ``long side''}
\label{figure-24}
\end{figure}

\item \label{item-marks-a-long-side}
\em ``Long side'': \rm $\chi$ 
marks a line out of $e_x$, $e_y$ or $e_z$ which runs 
all the way to a vertex $P$ on the opposite 
side of $\Delta$. The CT-subdivision for $\chi$ is 
as depicted on Figure \ref{figure-24b}. 

\item \label{item-marks-nothing} 
$\chi$ is the trivial character and marks nothing in Reid's recipe.
\end{enumerate}
\end{proposition}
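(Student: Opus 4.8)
The plan is to run through the finitely many combinatorial shapes of the CT-subdivision for $\chi$ and, for each, read off which vertices and edges of $\Sigma$ carry the mark $\chi$; this simultaneously re-derives and sharpens Craw's coarser list. The trivial character is disposed of at once: by the construction recalled in \S\ref{subsection-reids-recipe}, Reid's recipe only ever attaches non-trivial characters to vertices and edges, so $\chi_0$ marks nothing, which is case \eqref{item-marks-nothing}. Assume $\chi$ non-trivial for the remainder.

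First I would set up the dictionary between the marking and the geometry of the CT-subdivision. On the vertex side: $\chi$ marks $E_e$ precisely when $\chi$ lies in the socle of \emph{every} $G$-cluster on $E_e$ (the characterisation recalled before Prop.~\ref{prps-chi-in-the-socle}), which, since ``$\chi$ in the socle of $\mathcal{Z}_p$'' is the closed condition $p\in D^1\cap D^2\cap D^3$, is equivalent to $E_e\subseteq D^1\cap D^2\cap D^3$, hence by Cor.~\ref{cor-D1-D2-D3-via-CT-subdivision} to $e$ lying in all three of $Ty^\bullet z^\bullet$, $Tx^\bullet z^\bullet$ and $Tx^\bullet y^\bullet$. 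On the edge side: by Cor.~\ref{cor-marked-by-chi-lies-on-wedge-or-a-tip}, $\chi$ marks an edge $ef$ precisely when $ef$ lies on the tip of a ``wedge'' or on a squashed leg of the concave triangle $T$ that snaps a $T$-area off to a point. I would then invoke the complete list of CT-subdivision shapes already established via Prop.~\ref{prps-vertex-type-to-CT-subdivision-role-correspondence}: the ``meeting point'' shapes of Figs.~\ref{figure-29a} and \ref{figure-30}, and the ``wedge''/``snap'' shapes of Figs.~\ref{figure-29b} and \ref{figure-31}.

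For the non-degenerate ``meeting point'' (Fig.~\ref{figure-29a}) the meeting point $P$ is the unique vertex belonging to all three $T$-areas and there is no wedge tip or snapped leg, so $\chi$ marks exactly the vertex $P$: case~\eqref{item-marks-a-divisor}. For the fully degenerate ``meeting of champions'' shape (Fig.~\ref{figure-30b}) the three $C$-areas border one another pairwise along squashed legs meeting at a common internal vertex $P$; each such leg, being the common boundary of two convex regions $\convhull(m)$ (convexity by \cite[Lemma~5.3]{Craw-AnexplicitconstructionoftheMcKaycorrespondenceforAHilbC3}), is a single straight segment lying on a line through a corner of $\Delta$, the leg between $Cx^\bullet$ and $Cy^\bullet$ being cut out by $x^a\colon y^b$ with $x^a,y^b$ the least powers of weight $\chi$ and similarly for the other two; no vertex lies in all three $T$-areas (each is a single corner of $\Delta$, since $\convhull(y^b z^c)$ with $b,c>0$ misses $e_y$ and $e_z$), so $\chi$ marks exactly these three chains meeting at $P$: case~\eqref{item-marks-a-meeting-of-champions}, with the ratios as stated. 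In every remaining shape no vertex belongs to all three $T$-areas (the wedging area meets the opposite side only along its tip, whose extreme points touch only two of the $T$-areas), so $\chi$ marks no vertex and the marked locus is a union of wedge tips and snapped legs; that this union is a single chain is exactly the case analysis proving Theorem~\ref{theorem-transform-is-a-sheaf-for-non-trivial-chi}, and according to whether that chain is a wedge tip or a maximal squashed leg reaching a side of $\Delta$ we land in case~\eqref{item-marks-a-single-chain} or case~\eqref{item-marks-a-long-side}.

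It then remains to verify the fine structure asserted in cases~\eqref{item-marks-a-single-chain} and \eqref{item-marks-a-long-side}. For~\eqref{item-marks-a-single-chain}, say $Tx^\bullet y^\bullet$ is the wedging area, so the marked chain is $Cz^\bullet\cap Tx^\bullet y^\bullet$, running from the end $P_1$ of the $Cz^\bullet$--$Ty^\bullet z^\bullet$ border to the end $P_{m+1}$ of the $Cz^\bullet$--$Tx^\bullet z^\bullet$ border. Since $Cz^\bullet=\convhull(z^c)$ is convex and $c$ is constant on it (a $G$-graph, being downward closed under divisibility, cannot contain two distinct powers of $z$ both of weight $\chi$), the chain is a concave polygonal arc on $\partial Cz^\bullet$; each edge on it is cut out by $z^c\colon x^{a}y^{b}$ with $x^ay^b$ the $\chi$-representative in the adjoining $Tx^\bullet y^\bullet$-triangle, collinear runs with a common representative giving the segments $P_iP_{i+1}$ and the breakpoints $P_2,\dots,P_m$ lying on the lines out of $e_z$ separating the convex pieces of $Tx^\bullet y^\bullet$. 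The monotonicity~\eqref{eqn-ratio-coefficients-for-single-e_z-concave-chain} is then forced by concavity of the arc together with the ordering of the regions $\convhull(x^{a_i}y^{b_i})$: sweeping from the $e_x$-side to the $e_y$-side the lines $z^c\colon x^{a_i}y^{b_i}$ rotate monotonically, so $(a_i)$ is non-decreasing and $(b_i)$ non-increasing. Case~\eqref{item-marks-a-long-side} is handled similarly, reading the two extremal ratios off Prop.~\ref{prps-one-looping-(0,1)-charge-line-to-non-compact-divisor} (Figs.~\ref{figure-1}--\ref{figure-2}) at the endpoints of the leg. The main obstacle is exactly this last part --- controlling the decomposition of the wedge tip into straight segments and, above all, establishing~\eqref{eqn-ratio-coefficients-for-single-e_z-concave-chain} --- which demands careful bookkeeping with the convex regions $\convhull(m)$ and the shape of the $G$-graphs along $\partial Cz^\bullet$; one must also check the enumeration of CT-shapes is exhaustive and that each shape falls under exactly one of the five cases.
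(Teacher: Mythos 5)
There is a genuine gap, and it sits at the heart of your case analysis. You assign the ``meeting of champions'' to the CT-shape of Figure~\ref{figure-30b}, but that figure is a \emph{meeting-point} degeneration: by the paper's conventions a $T$-area degenerates to an infinitesimally thin strip along a squashed leg precisely when $\chi$ does \emph{not} mark that leg, so in Figure~\ref{figure-30b} the three lines are unmarked, the thin strips all still contain the meeting point, and $\chi$ marks exactly that single vertex --- this shape belongs to case~\eqref{item-marks-a-divisor}. The actual meeting of champions is the ``three points'' degeneration of Figure~\ref{figure-31g} (equivalently Figure~\ref{figure-24a}), where each $T$-area is snapped off to a corner of $\Delta$ and the three squashed legs are all marked. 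Because of this swap, your residual bucket (``every remaining shape \dots\ the marked locus is a single chain'') wrongly contains the genuine meeting-of-champions shape, where the marked locus is three chains meeting at a vertex rather than a single chain, and your trichotomy collapses.

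A second, related gap: you justify ``that this union is a single chain'' by appeal to ``exactly the case analysis proving Theorem~\ref{theorem-transform-is-a-sheaf-for-non-trivial-chi}.'' That proof does not establish connectivity or uniqueness of the marked locus; it only shows that \emph{if} there is a marked vertex, or a marked edge touching all three $T$-areas, then no other wedges or snaps coexist. It says nothing about shapes such as Figures~\ref{figure-31d}--\ref{figure-31g}, which have several marked squashed legs and no such edge, and where one must still argue that the marked legs assemble into a single chain in the boundary of one $C$-area (or into the three-chain configuration). The paper sidesteps exactly this point by importing Craw's prior classification (unique vertex, unique edge, single chain, or three chains meeting at a vertex) and reducing the proposition to the fine structure of a chain already known to be connected: it shows the chain lies on the piecewise-linear boundary of $Cz^\bullet$, that a valency-$3$ endpoint forces the meeting of champions, that a straight boundary gives the long side, and it extracts the ratios $z^c\colon x^{a_i}y^{b_i}$ and the monotonicity \eqref{eqn-ratio-coefficients-for-single-e_z-concave-chain} by inspecting the local pictures of $\Sigma$ where the chain crosses a line out of $e_z$. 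Your convexity argument for the monotonicity is in the right spirit, but without either Craw's classification or an honest exhaustiveness-and-connectivity argument for the CT-shapes, the proof as written does not close.
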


\begin{proof}
In light of
\cite{Craw-AnexplicitconstructionoftheMcKaycorrespondenceforAHilbC3},
we need only show that if $\chi$ marks an edge in $\Sigma$ then
precisely one of 
\eqref{item-marks-a-single-chain},
\eqref{item-marks-a-meeting-of-champions}
or
\eqref{item-marks-a-long-side} holds. Permuting $x, y, z$ if
necessary, the edge is cut out by $z^f : x^dy^e$ for $d\geq 0$ and $e,
f > 0$. Now,
\cite[Lemma~5.3]{Craw-AnexplicitconstructionoftheMcKaycorrespondenceforAHilbC3}
shows that the edge forms part of the piecewise-linear boundary of the
convex region $Cz^\bullet$. If $\chi$ marks an edge that touches a
vertex of valency 3, then the local picture of $\Sigma$ shown in
Figure~\ref{figure-20b} forces the CT-subdivision to be as in 
Figure~\ref{figure-24a} and 
we are in case~\eqref{item-marks-a-meeting-of-champions}.
Otherwise, $\chi$ marks a piecewise-linear curve in the boundary of
$Cz^\bullet$ where, following
\cite[Section~3]{Craw-AnexplicitconstructionoftheMcKaycorrespondenceforAHilbC3},
a chain of edges marked $\chi$ changes direction in $\Sigma$ only if
the chain crosses a line emanating from $e_z$.  Label by $P_1$ the end
of the chain which is closer to $e_x$, then label by $P_2,\dots , P_m$
all the points in succession where the chain crosses a line out of
$e_z$, and finally label the other end by $P_{m+1}$. We now analyze
the boundary of $Cz^\bullet$. If it is a straight line from $e_x$ or
$e_y$, then every edge in the boundary is cut out by $z^f : y^e$. 
Then by 
\cite[Lemma~5.3]{Craw-AnexplicitconstructionoftheMcKaycorrespondenceforAHilbC3}
the CT-subdivision comprises two convex regions
as shown in Figure~\ref{figure-24b} and we are in case~\eqref{item-marks-a-long-side}. Otherwise, if $P_1 \neq e_x$, then the boundary
of $Cz^\bullet$ that joins $e_x$ to $P_1$ is with $Ty^\bullet
z^\bullet$. Recall, that $Cz^\bullet$ is a convex region
and $Ty^\bullet z^\bullet$ is made up of convex regions
each of which contains$e_x$. Consider the convex region of
$Ty^\bullet z^\bullet$ which contains $P_1$. Since it 
also contains $e_x$, it must contains all of $e_xP_1$. 
Thus $e_xP_1$ is a boundary between two convex regions - 
and hence a straight line.

We argue similarly for $P_{m+1}$ and conclude that the boundary of $Cz^\bullet$
consists of two straight lines $e_xP_1$ and $P_{m+1}e_y$ (either may
have length zero) joined by the $\chi$-marked chain $P_1 \dots
P_{m+1}$.  Since the chain lies on the boundary of $Cz^\bullet$, each
edge in the chain is carved out by ratios of type $z^\bullet \colon
x^\bullet$, $z^\bullet \colon x^\bullet y^\bullet$,  $z^\bullet \colon
y^\bullet$. By inspecting every possible local picture of $\Sigma$
around a vertex, we see that if the chain crosses a line $l$ out of
$e_z$, then the edge adjacent to $l$ on the same side as $e_x$ is cut
out by $z^c \colon x^a y^b$ for some $b,c > 0$ and $a \geq 0$, while
on the other side of $l$ the adjacent edge is cut out by $z^c \colon
x^{a'} y^{b'}$ for some $a' > 0$, $b' \geq 0$. Moreover, we have $a
\le a'$ and $b \ge b'$, and both inequalities are strict if and only
if the chain changes direction after crossing $l$. Thus,  each segment
$P_i P_{i+1}$ is carved out by a ratio $z^c \colon x^{a_i} y^{b_i}$
with
$c, a_i, b_i \in \mathbb{Z}$ satisfying the inequalities 
\eqref{eqn-ratio-coefficients-for-single-e_z-concave-chain}
and we are in case~\eqref{item-marks-a-single-chain}.
\end{proof}

\begin{example}
Consider the worked example of Reid’s recipe from 
Section~\ref{section-worked-examples},
especially Figure~\ref{figure-34b}. 
Proposition~\ref{prps-four-configurations-for-chi-marked-edges}
classifies each $\chi\in G^\vee$ as one of five types, where
\begin{enumerate}
\item characters $\chi_1$, $\chi_2$, $\chi_4$, $\chi_7$, $\chi_8$ 
are type \eqref{item-marks-a-divisor}.
\item characters $\chi_3$, $\chi_5$, $\chi_6$, $\chi_9$, $\chi_{10}$, 
$\chi_{11}$, $\chi_{13}$, $\chi_{14}$ are type 
\ref{item-marks-a-single-chain}, cf.
the CT-subdivisions shown in
Figures~\ref{figure-36}-\ref{figure-38} for 
the relevant C-region in each case. 
\item there are no characters of type \eqref{item-marks-a-meeting-of-champions}
because the fan has no meeting of champions. 
\item character $\chi_{12}$ is of type \eqref{item-marks-a-long-side}, 
cf. Figure~\ref{figure-36m} for the relevant C-regions. 
\item the trivial character $\chi_0$ is of type \ref{item-marks-nothing}.
\end{enumerate}
\end{example}

\subsection{The case of $\chi$ marking a single vertex}
\label{section-case-of-chi-marking-a-single-vertex}
Let $\chi \in G^\vee$ marks a single vertex $e$ of $\Sigma$ and 
let $E$ be the corresponding toric divisor on $Y$. 
\begin{proposition}
\label{prps-case-of-chi-marking-a-single-vertex}
We have $\Psi(\mathcal{O}_0 \otimes \chi) = \mathcal{L}^{-1}_\chi \otimes \mathcal{O}_E$.
\end{proposition}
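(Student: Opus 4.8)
The plan is to compute all the cohomology sheaves of the total complex $T^\bullet$ of the skew-commutative cube $\hex(\chi^{-1})_{\tilde{\mathcal{M}}}$, which by \S\ref{section-psi-O-chi-and-skew-commutative-cubes-of-line-bundle} is isomorphic to $\Psi(\mathcal{O}_0 \otimes \chi)$ in $D(Y)$. By Lemma~\ref{lemma-cube_cohomology}\eqref{item-all-cohomologies-vahish-except-for-degree-zero-to-minus-two} these vanish outside degrees $0,-1,-2$, and exactly as in the proof of Theorem~\ref{theorem-transform-is-a-sheaf-for-non-trivial-chi} the non-triviality of $\chi$ (so that $\chi^{-1}$ is never a $(3,0)$-sink, by Lemma~\ref{lemma-divisors-to-sink-source-graphs}) together with Lemma~\ref{lemma-cube_cohomology}\eqref{item-degree-minus-two-cohomology} gives $\mathcal{H}^{-2}(T^\bullet)=0$. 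So it suffices to prove $\mathcal{H}^{0}(T^\bullet) \cong \mathcal{L}^{-1}_\chi \otimes \mathcal{O}_E$: being non-zero, this also forces $\mathcal{H}^{-1}(T^\bullet)=0$ by Theorem~\ref{theorem-transform-is-a-sheaf-for-non-trivial-chi}, after which $T^\bullet$ is quasi-isomorphic to $\mathcal{L}^{-1}_\chi \otimes \mathcal{O}_E$ placed in degree $0$.

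First I would pin down the CT-subdivision for $\chi$. Since $\chi$ marks $e$, by the result recalled just before Proposition~\ref{prps-chi-in-the-socle} it lies in the socle of every $G$-cluster parametrised by a point of $E$, so $E \subseteq D^1 \cap D^2 \cap D^3$; by Corollary~\ref{cor-D1-D2-D3-via-CT-subdivision} this puts $e$ in all three $T$-areas $Ty^\bullet z^\bullet$, $Tx^\bullet z^\bullet$, $Tx^\bullet y^\bullet$. As $e$ is an interior vertex of $\Sigma$, none of these $T$-areas can be degenerated to a corner of $\Delta$; and as $\chi$ marks no edge, Corollary~\ref{cor-marked-by-chi-lies-on-wedge-or-a-tip} excludes ``wedges'' and positive-length squashed legs snapping off a $T$-area. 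By the dichotomy of Figure~\ref{figure-29} and its degenerations (Figures~\ref{figure-30} and~\ref{figure-31}) the CT-subdivision is therefore of ``meeting point'' type, as in Figure~\ref{figure-29a} or Figure~\ref{figure-30}: the three $T$-areas meet in the single vertex $P$, and any two of them meet only in $P$. Since $e$ lies in all three $T$-areas we get $e=P$; moreover a divisor component of $D^1 \cap D^2 \cap D^3$ corresponds to a vertex in all three $T$-areas, and a curve component to a pair of vertices covering all three $T$-areas (hence, by pigeonhole, one vertex in two of them, which must be $P$), so Corollary~\ref{cor-D1-D2-D3-via-CT-subdivision} gives $\supp(D^1 \cap D^2 \cap D^3)=E_P=E$. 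By Lemma~\ref{lemma-cube_cohomology}\eqref{item-degree-zero-cohomology} we already conclude $\mathcal{H}^0(T^\bullet)=\mathcal{L}^{-1}_\chi \otimes \mathcal{O}_{D^1 \cap D^2 \cap D^3}$, supported on $E$; in particular $\mathcal{H}^0(T^\bullet)\neq 0$ and $\mathcal{H}^{-1}(T^\bullet)=0$.

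The main point is then to upgrade this to the scheme-theoretic equality $D^1 \cap D^2 \cap D^3=E$, i.e.\ $\mathcal{I}_{D^1}+\mathcal{I}_{D^2}+\mathcal{I}_{D^3}=\mathcal{I}_E$. Each $D^i$ is a \emph{reduced} union of irreducible toric divisors containing $E$ (see \cite[\S4.2]{CautisLogvinenko}), so write $D^i=E+D^{\prime i}$ with $D^{\prime i}$ reduced and not containing $E$. Working locally on the smooth threefold $Y$, with $u$ a local equation of $E$ and $g_i$ one of $D^{\prime i}$, one has $\mathcal{I}_{D^1}+\mathcal{I}_{D^2}+\mathcal{I}_{D^3}=u\cdot(g_1,g_2,g_3)$ near any point of $E$, which equals $\mathcal{I}_E=(u)$ precisely when $D^{\prime 1}$, $D^{\prime 2}$ and $D^{\prime 3}$ have no common point there. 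Since the locus where they do is closed and torus-invariant, it is enough to check the torus-fixed points of $E$, namely the points $p_\tau$ of basic triangles $\tau=(e,f,g)$ through $e$. If all three $D^{\prime i}$ passed through $p_\tau$, then (the only toric divisors through $p_\tau$ other than $E$ being $E_f$ and $E_g$) for each $i$ one of $E_f,E_g$ would be a component of $D^i$, so by Corollary~\ref{cor-D1-D2-D3-via-CT-subdivision} one of $f,g$ would lie in the $i$-th $T$-area; pigeonholing over $i=1,2,3$ then places one of $f,g$ in two distinct $T$-areas, hence at their common point $P=e$, contradicting $f,g\neq e$. This gives $D^1 \cap D^2 \cap D^3=E$, hence $\mathcal{H}^0(T^\bullet)\cong\mathcal{L}^{-1}_\chi \otimes \mathcal{O}_E$ and $\Psi(\mathcal{O}_0\otimes\chi)\cong\mathcal{L}^{-1}_\chi \otimes \mathcal{O}_E$.

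I expect this last, scheme-theoretic step (excluding embedded components of $D^1 \cap D^2 \cap D^3$) to be the main obstacle. Everything there hinges on the ``meeting point'' geometry of the CT-subdivision, i.e.\ on the fact that no vertex other than $P$ lies in two of the three $T$-areas; and that is exactly what the hypothesis that $\chi$ marks no edge secures, via Corollary~\ref{cor-marked-by-chi-lies-on-wedge-or-a-tip}.
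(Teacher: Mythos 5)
Your proposal is correct and its skeleton coincides with the paper's: reduce to showing that the scheme $D^1\cap D^2\cap D^3$ equals $E$, then read off $\mathcal{H}^0$ from Lemma~\ref{lemma-cube_cohomology}\eqref{item-degree-zero-cohomology} and kill the remaining cohomology via Theorem~\ref{theorem-transform-is-a-sheaf-for-non-trivial-chi}. The containment $E\subseteq D^1\cap D^2\cap D^3$ is obtained the same way in both arguments (the socle criterion for marked vertices). Where you genuinely diverge is in the reverse containment: the paper gets $D^1\cap D^2\cap D^3\subseteq E$ in one line from Proposition~\ref{prps-chi-in-the-socle} --- a point with $\chi$ in the socle of its $G$-cluster must lie on a divisor or curve marked by $\chi$, and only $E$ is marked --- whereas you re-derive it through the CT-subdivision machinery (Corollaries~\ref{cor-D1-D2-D3-via-CT-subdivision} and~\ref{cor-marked-by-chi-lies-on-wedge-or-a-tip} plus a pigeonhole argument on which vertices can lie in two $T$-areas). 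Your route is longer and leans on the claim that only the meeting point $P$ lies in two distinct $T$-areas, which is clear in the generic picture of Figure~\ref{figure-29a} but would deserve a word in the degenerations where a $C$-area is empty; the paper's appeal to the socle proposition sidesteps this entirely. On the other hand, your final local computation $\mathcal{I}_{D^1}+\mathcal{I}_{D^2}+\mathcal{I}_{D^3}=u\cdot(g_1,g_2,g_3)$, checking at torus-fixed points that the residual divisors $D^{\prime i}$ have no common point on $E$, makes explicit the passage from the set-theoretic equality to the scheme-theoretic one required by Lemma~\ref{lemma-cube_cohomology}; the paper (following \cite[Prop.~9.3]{Craw-Ishii-02}) leaves this step implicit. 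So your write-up buys extra rigour on the scheme structure at the cost of a more roundabout and slightly more fragile support computation.
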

\begin{proof}
This was proved in \cite[Prop.\ 9.3] {Craw-Ishii-02}, but 
a direct proof is short enough to include it here. 
Recall that a point $p \in Y$ lies in $D^1 \cap D^2 \cap D^3$ 
if and only if $\chi$ is in the socle of the $G$-cluster $\mathcal{Z}_p$. 
By Prop.\ \ref{prps-chi-in-the-socle} a $G$-cluster $\mathcal{Z}_p$
has $\chi$ in the socle only when the point $p$ lies on a divisor 
or curve marked by $\chi$. Since $\chi$ marks only $E$, we must have 
$D^1 \cap D^2 \cap D^3 \subseteq E$. On the other hand, 
by Theorem \ref{theorem-sink-source-graph-to-divisor-type-correspondence}
(or, in fact, \cite[Prop.\ 9.1]{Craw-Ishii-02}), character $\chi$ marks $e$ 
if and only if $E \subseteq D^1 \cap D^2 \cap D^3$. We conclude 
that $D^1 \cap D^2 \cap D^3 = E$, so Lemma 
\ref{lemma-cube_cohomology} gives 
$$ \mathcal{H}^0(\Psi(\mathcal{O}_0 \otimes \chi)) = 
\mathcal{L}^{-1}_\chi \otimes \mathcal{O}_{E}.$$
Finally, Theorem \ref{theorem-transform-is-a-sheaf-for-non-trivial-chi} 
implies that all the other cohomology sheaves of 
$\Psi(\mathcal{O}_0 \otimes \chi)$ vanish. 
\end{proof}

\subsection{The case of $\chi$ marking a single concave chain of edges}
\label{section-case-of-a-single-concave-chain}

Let $\chi \in G^\vee$ be such that $\chi$ marks
a single concave chain of edges of $\Sigma$ contained
within the boundary of a unique $C$-area. Permuting
$x$, $y$ or $z$ if necessary, we may assume that the 
the chain is contained within the boundary of $Cz^\bullet$, 
as described in 
Prop.
\ref{prps-four-configurations-for-chi-marked-edges}\eqref{item-marks-a-single-chain} and illustrated on Fig.\ \ref{figure-23}. 

\begin{proposition}
\label{prps-chi-marks-a-single-concave-chain}
\begin{enumerate}
\item
\label{item-single-edge-chain}
If the chain consists of a single edge $P_1 P_2$, then
$$ \Psi(\mathcal{O}_0 \otimes \chi)) = \mathcal{L}^{-1}_\chi \otimes
\mathcal{O}_{C} $$
where $C$ is the toric curve $P_1 \cap P_2$ corresponding to that
edge. 
\item
\label{item-multiple-edge-chain}
If the chain consists of more than one edge, then 
$$ \Psi(\mathcal{O}_0 \otimes \chi) =  
\mathcal{L}^{-1}_{\chi}(-P_1 - P_{m+1}) \otimes \O_Z [1] $$
where $Z$ is the union of the divisors which correspond
to the internal vertices of chain $P_1 \dots P_{m+1}$.
\end{enumerate}
\end{proposition}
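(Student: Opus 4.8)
The plan is to work throughout with the description of $\Psi(\mathcal{O}_0\otimes\chi)$ as the total complex $T^\bullet$ of the skew-commutative cube $\hex(\chi^{-1})_{\tilde{\mathcal{M}}}$, and to use Theorem~\ref{theorem-transform-is-a-sheaf-for-non-trivial-chi}: for non-trivial $\chi$ exactly one of $\mathcal{H}^{0}(T^\bullet)$ and $\mathcal{H}^{-1}(T^\bullet)$ is non-zero, and $\mathcal{H}^{-2}(T^\bullet)=0$. The CT-subdivision for $\chi$ has the shape of Prop.~\ref{prps-four-configurations-for-chi-marked-edges}\eqref{item-marks-a-single-chain} and Fig.~\ref{figure-23}: the chain $P_1\dots P_{m+1}$ is the common border of $Cz^\bullet$ and $Tx^\bullet y^\bullet$, the segment $e_xP_1$ is the border of $Cz^\bullet$ and $Ty^\bullet z^\bullet$, and $P_{m+1}e_y$ the border of $Cz^\bullet$ and $Tx^\bullet z^\bullet$; in particular there is no meeting point. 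Using Corollaries~\ref{cor-D1-D2-D3-via-CT-subdivision}, \ref{cor-gcd-D_2^3-D_3^2-etc-via-CT-subdivisions} and~\ref{cor-Z_23-Z_13-Z_12-via-CT-subdivisions} I would translate the divisors $D^1,D^2,D^3$, the relevant $\gcd$'s, and the effective divisors of Lemma~\ref{lemma-cube_cohomology}\eqref{item-degree-minus-one-cohomology} into data of this subdivision, treating the degenerations (when $e_xP_1$ or $P_{m+1}e_y$ is a point, or when $Cx^\bullet$ or $Cy^\bullet$ collapses) by the same bookkeeping.

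For part~\eqref{item-single-edge-chain} the chain is the single edge $P_1P_2$ cutting out the toric curve $C=E_{P_1}\cap E_{P_2}$. A point $p\in Y$ lies in $D^1\cap D^2\cap D^3$ precisely when $\chi$ lies in the socle of the $G$-cluster $\mathcal{Z}_p$; since $\chi$ marks this edge and no vertex, Prop.~\ref{prps-chi-in-the-socle} shows that $\chi$ lies in the socle at every point of $C$ and at no point off $C$, so $D^1\cap D^2\cap D^3=C$ as sets. By Cor.~\ref{cor-D1-D2-D3-via-CT-subdivision} one has, near a general point of $C$, $D^1=E_{P_1}$, $D^2=E_{P_2}$ and $D^3=E_{P_1}\cup E_{P_2}$, while at each torus-fixed point of $C$ the remaining toric divisor through it lies in none of the $D^i$ (this uses the absence of a meeting point); a short local computation then identifies the scheme-theoretic intersection with the reduced curve $C$. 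Lemma~\ref{lemma-cube_cohomology}\eqref{item-degree-zero-cohomology} gives $\mathcal{H}^{0}(T^\bullet)=\mathcal{L}^{-1}_\chi\otimes\mathcal{O}_C$, and Theorem~\ref{theorem-transform-is-a-sheaf-for-non-trivial-chi} makes the other cohomology sheaves vanish.

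For part~\eqref{item-multiple-edge-chain} the wedge $Tx^\bullet y^\bullet$ touches $Cz^\bullet$ along a tip $P_1\dots P_{m+1}$ with interior vertices $P_2,\dots,P_m$. No $\chi$-marked edge of the chain touches all three $T$-areas (such an edge would have to contain both $P_1$ and $P_{m+1}$, forcing $m=1$), so as in the proof of Theorem~\ref{theorem-transform-is-a-sheaf-for-non-trivial-chi}, Prop.~\ref{prps-chi-in-the-socle} together with Cor.~\ref{cor-D1-D2-D3-via-CT-subdivision} gives that no basic triangle touches all three $T$-areas, hence $\mathcal{H}^{0}(T^\bullet)=0$ and $\Psi(\mathcal{O}_0\otimes\chi)=\mathcal{H}^{-1}(T^\bullet)[1]$. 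I would then evaluate the three-step filtration of $\mathcal{H}^{-1}(T^\bullet)$ from Lemma~\ref{lemma-cube_cohomology}\eqref{item-degree-minus-one-cohomology}, choosing the index $12$ for the unmodified graded piece. By Corollaries~\ref{cor-gcd-D_2^3-D_3^2-etc-via-CT-subdivisions} and~\ref{cor-Z_23-Z_13-Z_12-via-CT-subdivisions} the graded pieces labelled $13$ and $23$, together with their primed variants, have empty support, so the filtration collapses to $\mathcal{H}^{-1}(T^\bullet)=\mathcal{F}_{12}=\mathcal{O}_{Z}\otimes\mathcal{L}_{12}\big(\gcd(D^2_1,D^1_2)\big)$, where $Z$ is the scheme-theoretic intersection of $\gcd(D^2_1,D^1_2)$ with the effective part of $D^3+\lcm(D^1_3,D^2_3)-\tD^2_1-D^1$. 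Unwinding the same two corollaries, both of these divisors contain exactly $E_{P_2},\dots,E_{P_m}$ and no other toric divisors, and their scheme-theoretic intersection is the reduced union $Z=E_{P_2}\cup\dots\cup E_{P_m}$ of the internal divisors of the chain. Finally one identifies the line bundle: $\mathcal{L}_{12}$ is $\mathcal{L}^{-1}_\chi$ twisted by a vanishing divisor of the cube, and one verifies the identity $\mathcal{L}_{12}\big(\gcd(D^2_1,D^1_2)\big)\big|_Z=\mathcal{L}^{-1}_\chi(-E_{P_1}-E_{P_{m+1}})\big|_Z$, the net effect of the twists being to subtract the two end divisors of the chain.

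The hard part is this last identification in part~\eqref{item-multiple-edge-chain}: matching the abstract twisted bundle of Lemma~\ref{lemma-cube_cohomology}\eqref{item-degree-minus-one-cohomology} with $\mathcal{L}^{-1}_\chi(-E_{P_1}-E_{P_{m+1}})\otimes\mathcal{O}_Z$ amounts to tracking the monomial maps of the cube and their orders of vanishing along each $E_{P_i}$ all along the chain, i.e.\ to a computation in $\picr(Z)$ involving the self- and mutual intersections of the surfaces $E_{P_i}$; one must also confirm that the two other graded pieces of the filtration really do vanish, and run the whole argument uniformly over the degenerations of the CT-subdivision catalogued in Prop.~\ref{prps-vertex-type-to-CT-subdivision-role-correspondence}. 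By contrast the support statements, the reducedness claims, and all of part~\eqref{item-single-edge-chain} are comparatively routine.
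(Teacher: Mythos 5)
Your overall strategy coincides with the paper's: both parts are reduced to Lemma~\ref{lemma-cube_cohomology} via the CT-subdivision corollaries, your part~\eqref{item-single-edge-chain} is essentially the paper's argument, and for part~\eqref{item-multiple-edge-chain} the paper makes the same choice of filtration (with $K=\{12\}$, so that only the unmodified piece $\mathcal{F}_{12}$ survives). The difficulty is that the two steps you explicitly defer are exactly where the content of part~\eqref{item-multiple-edge-chain} lies, and one of them does not follow by the route you sketch. The vanishing of the graded pieces indexed by $13$ and $23$ is \emph{not} a consequence of Corollaries~\ref{cor-gcd-D_2^3-D_3^2-etc-via-CT-subdivisions} and~\ref{cor-Z_23-Z_13-Z_12-via-CT-subdivisions} alone: those only translate the supports into intersections of CT-regions, e.g.\ $Z_{13}$ is governed by $Cy^\bullet\cap(Cx^\bullet\cup Tx^\bullet z^\bullet)$ minus two $T$-areas, and one must still prove this is empty. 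The paper does so by a genuine geometric argument: a non-empty $Z_{13}$ would force a basic triangle inside $Cz^\bullet$ with one vertex internal to $P_1P_2$ (whence $P_1=e_x$) and another on the border with $Cx^\bullet\cup Tx^\bullet z^\bullet$; since $\chi$ does not mark a ``long side'', the segment $P_1P_2$ terminates at a line $e_zP_2$ out of $e_z$, the Craw--Reid algorithm confines the triangle to the $P_1P_2$ side of $e_zP_2$, while the border with $Cx^\bullet\cup Tx^\bullet z^\bullet$ lies on the far side --- a contradiction. Some such argument must be supplied.

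For the line-bundle identification, the method you propose (tracking orders of vanishing and self-intersections of the $E_{P_i}$ in $\picr(Z)$) is not what the paper does and would be much more laborious. The paper's key observation is that the sum of the vanishing divisors along any path through the cube from $\mathcal{L}_{123}$ to $\mathcal{L}$ is the principal divisor $(xyz)$, so
$D^3_{12}+\gcd(D^2_1,D^1_2)=(xyz)-\lcm(D^1,D^2)$, and by Cor.~\ref{cor-D1-D2-D3-via-CT-subdivision} the reduced divisor $\lcm(D^1,D^2)$ is exactly $Ty^\bullet z^\bullet\cup Tx^\bullet z^\bullet$. Restricting to $Z$ kills $(xyz)$ and every component of $Ty^\bullet z^\bullet\cup Tx^\bullet z^\bullet$ except those meeting $Z$, i.e.\ those whose vertices are adjacent to an internal vertex of the chain; from Fig.~\ref{figure-23} these are precisely $P_1$ and $P_{m+1}$. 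No intersection numbers enter. With these two steps filled in, your argument becomes the paper's proof.
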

\begin{proof}
Suppose first, that the chain consist of a single edge $P_1 P_2$. 
Consider the CT-subdivision for $\chi$. By 
Cor.\ \ref{cor-marked-by-chi-lies-on-wedge-or-a-tip} the edges
marked by $\chi$ are those in the tip of a ``wedge'' or 
in a squashed leg of $T$ snapping off a $T$-area. So either there
is a single wedge with tip $P_1P_2$
and no ``snaps'' (Fig.\ \ref{figure-29b}, \ref{figure-31a}, \ref{figure-31b}) 
or a single ``snap'' $P_1 P_2$ and no ``wedge'' 
(Fig.\ \ref{figure-31e}). In both cases 
$P_1 P_2$ touches all three $T$-areas and hence by 
Cor.\ \ref{cor-D1-D2-D3-via-CT-subdivision} we have 
$P_1 \cap P_2 \subseteq D^1 \cap D^2 \cap D^3$. Then we argue 
as in Prop.\ \ref{prps-case-of-chi-marking-a-single-vertex}: 
by Prop.\ \ref{prps-chi-in-the-socle} $P_1 \cap P_2$ is the whole of $D^1 \cap D^2 \cap D^3$,
by Lemma \ref{lemma-cube_cohomology}
$$ \mathcal{H}^0 \left(\Psi(\mathcal{O}_0 \otimes \chi)\right) = \mathcal{L}^{-1}_\chi \otimes
\mathcal{O}_{P_1 \cap P_2} $$
and by Theorem \ref{theorem-transform-is-a-sheaf-for-non-trivial-chi} all
the other cohomology sheaves of $\Psi(\mathcal{O}_0 \otimes \chi)$ 
vanish. 

Suppose now the chain consists of more than one edge. 
Taking $I = \{23\}$, $J = \{13\}$ and $K = \{ 12 \}$ in 
Lemma \ref{lemma-cube_cohomology}\eqref{item-degree-minus-one-cohomology}
it follows that there is a three-step filtration of 
$\mathcal{H}^{-1}\left(\Psi(\mathcal{O}_0 \otimes \chi)\right)$ 
with successive quotients
\begin{itemize} 
\item 
$\mathcal{F}''_{23} = \O_{Z_{23}} \otimes \L_{23}(\gcd(D_2^3,D_3^2))$ 
with $Z_{23}$ being the scheme theoretic intersection of 
$\gcd(D_2^3,D_3^2)$ and the effective part of 
$D^1 + \lcm(\tD_1^2, \tD_1^3) - \tD_2^3 - D^2$.
\item 
$\mathcal{F}_{13} = \O_{Z_{13}} \otimes \L_{13}(\gcd(D_1^3,D_3^1))$ 
with $Z_{13}$ being the scheme theoretic intersection 
of $\gcd(D_1^3,D_3^1)$ and the effective part of $D^2 +
\lcm(\tD_2^1,D_2^3) - \tD_3^1 - D^3$ 
\item 
$\mathcal{F}_{12} = \O_{Z_{12}} \otimes \L_{12}(\gcd(D_1^2,D_2^1))$ 
with $Z_{12}$ is the scheme theoretic intersection of 
$\gcd(D_1^2,D_2^1)$ and the effective part of 
$D^3 + \lcm(D_3^1,D_3^2) - \tD_1^2 - D^1$ 
\end{itemize} 
where $\tD^i_j =  D^i_j - \gcd(D^i_j,D^j_i)$. 

Corollaries \ref{cor-gcd-D_2^3-D_3^2-etc-via-CT-subdivisions}
and \ref {cor-Z_23-Z_13-Z_12-via-CT-subdivisions} imply that
$$ 
Z_{12} = \left(Cz^\bullet \cap 
\left(Cx^\bullet \cup Tx^\bullet y^\bullet \cup Cy^\bullet\right) \right)
\setminus 
\left(Ty^\bullet z^\bullet \cup Tx^\bullet z^\bullet\right). 
$$
The chain $P_1 \dots P_{m+1}$ is the boundary of $Cz^\bullet$ with 
$Cx^\bullet \cup Tx^\bullet y^\bullet \cup Cy^\bullet$, while
$e_x P_1$ and $P_{m+1}e_y$ are its boundaries with 
$Ty^\bullet z^\bullet$ and with $Tx^\bullet z^\bullet$. 
Therefore $Z_{12}$ is the union of the toric divisors 
corresponding to the internal vertices of the chain $P_1 \dots P_{m+1}$. 

On the other, we claim that $Z_{13} = \emptyset = Z_{23}$.
To verify the claim for $Z_{13}$, note that 
$$ 
Z_{13} = \left(Cy^\bullet \cap 
\left(Cx^\bullet \cup Tx^\bullet z^\bullet \right)\right) \setminus
\left(Ty^\bullet z^\bullet \cup Tx^\bullet y^\bullet\right). 
$$
Suppose $Z_{13}$ is non-empty. 
Since $Cy^\bullet$ and $Cx^\bullet \cup Tx^\bullet z^\bullet$
are disconnected by $Cz^\bullet \cup Tx^\bullet y^\bullet$, 
there has to be a basic triangle $\tau$ in $Cz^\bullet$ with 
one vertex on the border with $Cy^\bullet$ and 
another on the border with $Cx^\bullet \cup Tx^\bullet z^\bullet$. 
The border between $Cz^\bullet$ and $Cy^\bullet$ exists only 
when $P_1 = e_x$ and is $P_1 P_2$. Therefore $P_1 = e_x$
and one vertex of $\tau$ must be an internal vertex of $P_1 P_2$. 
Since $\chi$ doesn't mark a ``long side'', line $P_1 P_2$ 
terminates at an internal line $e_z P_2$ out of $e_z$, see Fig.
\ref{figure-23}. The Craw-Reid algorithm dictates then
that $\tau$ must lie on the same side of $e_z P_2$ as $P_1 P_2$.
On the other hand, the border of $Cz^\bullet$ with 
$Tx^\bullet z^\bullet$ is $P_{m+1} e_y$ and
the border of $Cz^\bullet$ with $Cx^\bullet$ 
exists only when $P_{m+1} = e_y$ and is $P_m P_{m+1}$.
Both of these lie on the other side of line $e_z P_2$ from $P_1 P_2$, 
and can't therefore contain a vertex of $\tau$. This is a contradiction,
and we conclude that $Z_{13} = \emptyset$. A similar argument for
$$
Z_{23} = \left(Cx^\bullet \cap  
Ty^\bullet z^\bullet \right) \setminus 
\left(Tx^\bullet z^\bullet \cup Tx^\bullet y^\bullet \right).
$$
shows that $Z_{23} = \emptyset$.

Thus far, we've shown that 
$$\mathcal{H}^{-1}\left(\Psi(\mathcal{O}_0 \otimes \chi)\right)
= \mathcal{L}_{12}(\gcd(D^2_1, D^1_2)) \otimes \mathcal{O}_{Z} $$
where $Z$ is the union of all the divisors 
corresponding to the internal vertices of $P_1 \dots P_{m+1}$.
In Fig.\ \ref{figure-13b}
we have $\mathcal{L} = \mathcal{L}_{123} = \mathcal{L}_{\chi}^{-1}$. 
Since $D^3_{12}$ was defined to be the vanishing divisor of the regular
map $\mathcal{L}_{123} \xrightarrow{\alpha^3_{12}} \mathcal{L}_{12}$
we have $\mathcal{L}_{12} = \mathcal{L}_{123}(D^3_{12})$. 
Therefore 
$$\mathcal{L}_{12}\left(\gcd(D^2_1, D^1_2)\right) = 
\mathcal{L}^{-1}_{\chi}\left(D^3_{12} + \gcd(D^2_1, D^1_2)\right).$$

The sum of vanishing divisors along any path from $\mathcal{L}_{123}$
to $\mathcal{L}$ is the principal divisor $(xyz)$, so 
$D^3_{12} + \gcd(D^2_1, D^1_2) = (xyz) - \lcm(D^1,D^2)$. 
By Cor.\ \ref{cor-D1-D2-D3-via-CT-subdivision}
$\lcm(D^1, D^2) = Ty^\bullet z^\bullet \cup Tx^\bullet z^\bullet$, 
so
$$\mathcal{H}^{-1}\left(\Psi(\mathcal{O}_0 \otimes \chi)\right)
= \mathcal{L}^{-1}_\chi(- Ty^\bullet z^\bullet \cup Tx^\bullet z^\bullet)
\otimes \mathcal{O}_{Z}. $$
Two toric divisors intersect if and only if the corresponding
vertices of $\Sigma$ are adjacent. So when we restrict
$\mathcal{O}_Y(- Ty^\bullet z^\bullet \cup Tx^\bullet z^\bullet)$
to $Z$ only those vertices in $Ty^\bullet z^\bullet$ and 
$Tx^\bullet z^\bullet$ which are adjacent 
to the internal vertices of $P_1 \dots P_{m+1}$ 
make a non-zero contribution. But $P_1 \dots P_{m+1}$ is the border of 
$Cz^\bullet$ and $Cx^\bullet \cup Tx^\bullet y^\bullet \cup Cy^\bullet$
and it is clear from Figure \ref{figure-23} that the only vertices
of $Ty^\bullet z^\bullet \cup Tx^\bullet z^\bullet$
which are adjacent to the internal vertices of $P_1 \dots P_{m+1}$
are $P_1$ and $P_{m+1}$. Hence 
$\mathcal{O}_Y(- Ty^\bullet z^\bullet \cup Tx^\bullet z^\bullet)$
and $\mathcal{O}_Y(-P_1 - P_{m+1})$ restrict to the same line-bundle on $Z$. 
We conclude that 
$$ \mathcal{H}^{-1}\left(\Psi(\mathcal{O}_0 \otimes \chi)\right)
= \mathcal{L}^{-1}_\chi(-P_1 - P_{m+1}) \otimes \mathcal{O}_Z. $$
Since $Z$ is non-empty,
$\mathcal{L}^{-1}_\chi(-P_1 - P_{m+1}) \otimes \mathcal{O}_Z$ 
is non-zero and by Theorem 
\ref{theorem-transform-is-a-sheaf-for-non-trivial-chi}
all the other cohomology sheaves of 
$\Psi(\mathcal{O}_0 \otimes \chi)$ vanish.
\end{proof}

\begin{example}
In the worked example in Section~\ref{section-worked-examples}, 
cf. Figure~\ref{figure-34b}, 
Prop.\ \ref{prps-chi-marks-a-single-concave-chain}\eqref{item-single-edge-chain}
applies to characters $\chi_3$, $\chi_{11}$ and $\chi_{14}$,
while Prop.\ \ref{prps-chi-marks-a-single-concave-chain}\eqref{item-multiple-edge-chain} 
applies to characters $\chi_5$, $\chi_6$, $\chi_9$, $\chi_{10}$ and
$\chi_{13}$. E.g. examining Figure~\ref{figure-36e} we see 
that 
$\Psi(\mathcal{O}_0\otimes \chi_5) = \mathcal{L}_{5}^{-1}(-D_4 -D_x)
\otimes \mathcal{O}_Z[1]$ for $Z=D_7\cup D_{10}$.
\end{example}

\subsection{The case of a ``meeting of champions''}
\label{section-case-of-a-meeting-of-champions}

Throughout this section let $\chi \in G^\vee$ be such that $\chi$ marks
a ''meeting of champions'' as depicted on Figure \ref{figure-24a}.
The configuration of edges of $\Sigma$ can 
can be viewed as three concave chains 
$e_{y}Pe_{z}$, $e_{x}Pe_{z}$ and $e_{x}Pe_{y}$. 
We show below that
$\mathcal{H}^{-1}\left(\Psi(\mathcal{O}_{0} \otimes \chi)\right)$
consists of the three answers Section 
\ref{section-case-of-a-single-concave-chain} would give
for each of these chains individually, glued together 
in a certain natural way. It is important to understand 
the geometry of the support of these three chains. 
In the next three paragraphs we establish some needed notation
and translate into concrete geometrical terms
the toric data on Figure \ref{figure-24a}. 

Denote by $Z_{x}$, $Z_{y}$ and $Z_{z}$ the chains of 
reduced divisors corresponding to the interior vertices of
$e_{y}Pe_{z}$, $e_{x}Pe_{z}$ and $e_{x}Pe_{y}$. 
The total support of $Z_{x}$, $Z_{y}$ and $Z_{z}$ consists 
of $P \simeq \mathbb{P}^2$ and the three chains of 
(possibly blownup) rational scrolls which link coordinate 
hyperplanes $E_x$, $E_y$ and $E_z$
to $P$. Each of $Z_{x}$, $Z_{y}$ and $Z_{z}$ consists of two 
of these chains of rational scrolls joined together by $P$  
(see Fig.\ \ref{figure-27}). Note that some of these chains
of rational scrolls may be empty, i.e.~$E_x$, $E_y$ or $E_z$ 
may intersect $P$ directly with no rational scrolls in between. 

Let $Q_x$, $Q_y$ and $Q_z$ denote the divisors on 
$e_xP$, $e_yP$ and $e_zP$ which are adjacent to $P$.
In other words, $Q_x$ is either the last scroll in  
the chain of rational scrolls joining $E_x$ to $P$ or $E_x$ itself
if the chain is empty. Then each of $Q_x$, $Q_y$ and $Q_z$ intersect
$P \simeq \mathbb{P}^2$ in a $\mathbb{P}^1$ passing through 
two out of the three toric fixed points of $P$. They also intersect 
each other in three $\mathbb{P}^1$s which are attached to $P$ at 
its toric fixed points. We've depicted this configuration on 
Fig.\ $\ref{figure-27a}$ in general case and on Fig.\ $\ref{figure-27b}$ 
when one of the rational scroll chains is empty. 

\begin{figure}[!htb] \centering 
\subfigure[General case] { \label{figure-27a}
\includegraphics[scale=0.11]{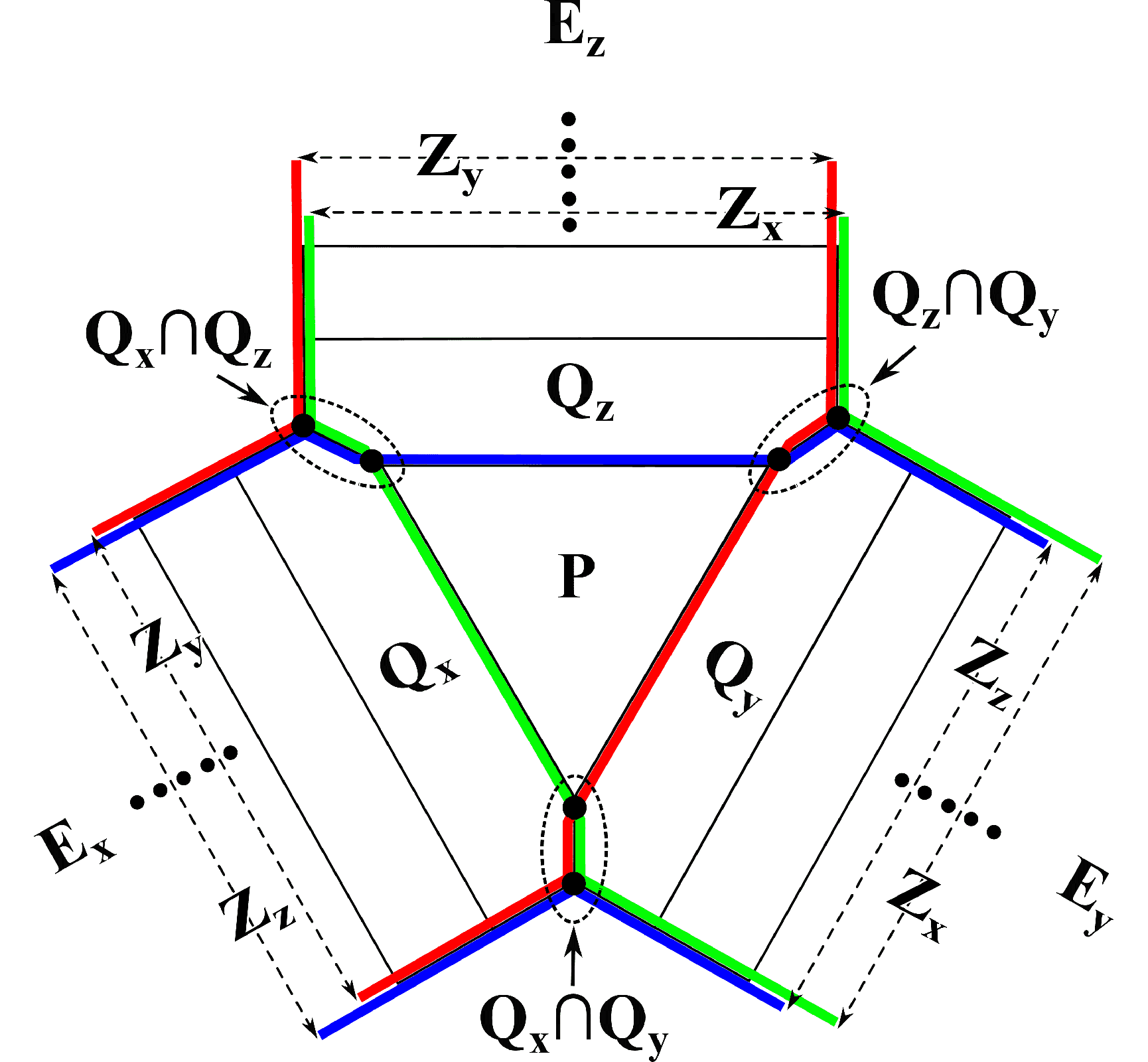}} 
\subfigure[Case $Q_z = E_z$] { \label{figure-27b}
\includegraphics[scale=0.27]{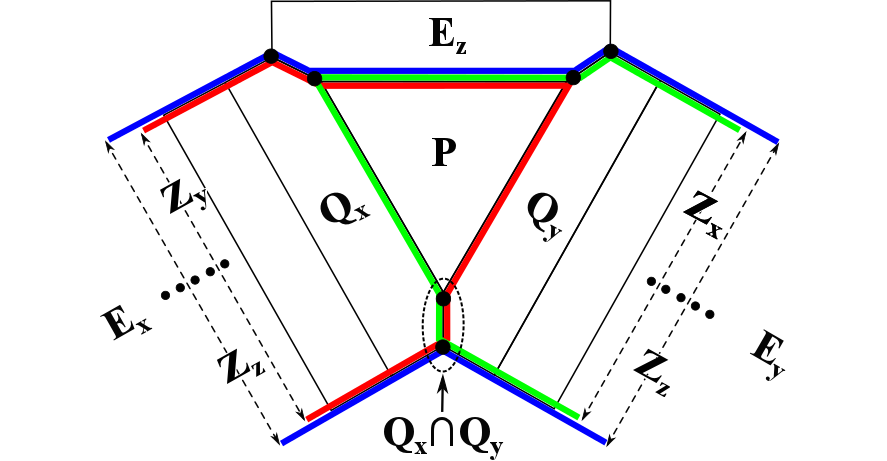}}
\caption{Divisor chains $Z_x$, $Z_y$ and $Z_z$
around divisor $P$}
\label{figure-27}
\end{figure}

Thus $S = Z_x \cup Z_y \cup Z_z$ has the following stratification:
\begin{itemize}
\item $S_2$: Points which lie on precisely two out of $Z_x$, $Z_y$ and 
$Z_z$. This is a disjoint union of the three rational scroll chains joining
$E_x$, $E_y$ and $E_z$ to $P$ minus their pairwise intersections. 
\item $S_3$: Points which lie on all three of $Z_x$, $Z_y$ and $Z_z$. 
This is a union of $P \simeq \mathbb{P}^2$ and $\mathbb{P}^1$s formed 
by pairwise intersections of those of $Q_x$, $Q_y$ and $Q_z$ which 
are rational scrolls. 
\end{itemize}

\begin{proposition}
\label{prps-chi-marks-a-meeting-of-champions}
\begin{enumerate}
\item 
\label{item-H-minus-one-for-the-meeting-of-champions}
We have
$$ \mathcal{H}^{-1}\left(\Psi(\mathcal{O}_{0} \otimes \chi)\right)
 = \mathcal{L}^{-1}_{\chi}(-E_x - E_y - E_z) \otimes \mathcal{F} $$
where $\mathcal{F}$ is the cokernel of 
\begin{align*}
\mathcal{O}_{Y}
\xrightarrow{E_{x} \oplus E_{y} \oplus E_{z}}
\mathcal{O}_{Z_{x}}(E_x) 
\oplus
\mathcal{O}_{Z_{y}}(E_y) 
\oplus
\mathcal{O}_{Z_{z}}(E_z).
\end{align*}
\item
\label{item-stratification-of-F}
The support of $\mathcal{F}$ is $S = Z_{x} \cup Z_{y} \cup Z_{z}$. Moreover:
\begin{itemize}
\item $\mathcal{F}|_{S_2}$ is a free sheaf of rank $1$. 
\item If $P$ doesn't intersect any of $E_x$, $E_y$ or $E_z$, then 
$\mathcal{F}|_{S_3}$ is a free sheaf of rank $2$. 
\item Otherwise, up to permuting $x$, $y$ and $z$:
\begin{itemize}
\item If $P$ intersects $E_x$, but not $E_y$ or $E_z$, 
then $\mathcal{F}|_{S_3} = \mathcal{O}(E_x) \oplus \mathcal{O}$. 
\item If $P$ intersects $E_x$ and $E_y$, but not $E_z$, 
then $\mathcal{F}|_{S_3} = \mathcal{O}(E_x) \oplus \mathcal{O}(E_y)$.
\item If $P$ intersects $E_x$, $E_y$ and $E_z$,
then $S_3 = P$ and $\mathcal{F}|_{P} = T_{\mathbb{P}^2}$. 
\end{itemize}
\end{itemize}
\end{enumerate}
\end{proposition}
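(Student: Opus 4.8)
The plan is to realize $\Psi(\mathcal{O}_0\otimes\chi)$ as an explicit two‑term complex, identify it with a twist of the cokernel $\mathcal{F}$ of part~\eqref{item-H-minus-one-for-the-meeting-of-champions}, and then read off part~\eqref{item-stratification-of-F} by a local computation. First I would set up the reduction. By \S\ref{section-psi-O-chi-and-skew-commutative-cubes-of-line-bundle}, $\Psi(\mathcal{O}_0\otimes\chi)\cong T^\bullet$, the total complex of $\hex(\chi^{-1})_{\tilde{\mathcal{M}}}$. In a ``meeting of champions'' the CT‑subdivision of Fig.~\ref{figure-24a} has all three $T$‑areas degenerated, in the sense of Section~\ref{section-CT-subdivisions}, to the corners $e_x,e_y,e_z$, so Corollary~\ref{cor-D1-D2-D3-via-CT-subdivision} gives $D^1=E_x$, $D^2=E_y$, $D^3=E_z$; moreover $E_x\cap E_y\cap E_z=\emptyset$ since $(e_x,e_y,e_z)$ is never a basic triangle of a nontrivial $\Sigma$. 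Together with $\chi\neq\chi_0$, Lemma~\ref{lemma-cube_cohomology} and Theorem~\ref{theorem-transform-is-a-sheaf-for-non-trivial-chi} this forces $\mathcal{H}^0(T^\bullet)=\mathcal{H}^{-2}(T^\bullet)=0$, so $\Psi(\mathcal{O}_0\otimes\chi)=\mathcal{H}^{-1}(T^\bullet)[1]$; since $\mathcal{L}=\mathcal{L}_{123}=\mathcal{L}_\chi^{-1}$ and $\mathcal{L}_i=\mathcal{L}_\chi^{-1}(-E_i)$, it remains to identify $\mathcal{L}_\chi(E_x+E_y+E_z)\otimes\mathcal{H}^{-1}(T^\bullet)$ with $\mathcal{F}$.

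For part~\eqref{item-H-minus-one-for-the-meeting-of-champions} I would compute $\mathcal{H}^{-1}(T^\bullet)$ from Lemma~\ref{lemma-cube_cohomology}\eqref{item-degree-minus-one-cohomology} with $I=\{23\}$, $J=\{13\}$, $K=\{12\}$, feeding in the descriptions of $D^1,D^2,D^3$, of the $\gcd(D^i_j,D^j_i)$, the relevant $\lcm$'s and the $\tilde D^i_j$'s supplied by Table~\eqref{table-vertex-type-to-CT-subdivision-role-correspondence} and Corollaries~\ref{cor-gcd-D_2^3-D_3^2-etc-via-CT-subdivisions} and~\ref{cor-Z_23-Z_13-Z_12-via-CT-subdivisions}. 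Running, for each of the three concave chains $e_yPe_z$, $e_xPe_z$, $e_xPe_y$, the manipulation of the proof of Prop.~\ref{prps-chi-marks-a-single-concave-chain} and using $(xyz)=\sum_e E_e$ (which restricts trivially to every chain), the three successive graded pieces of $\mathcal{H}^{-1}(T^\bullet)$ should come out as $\mathcal{L}_\chi^{-1}(-E_x-E_y-E_z)$ tensored, respectively, with $\mathcal{O}_{Z_z}(E_z)$, then $\mathcal{O}_{Z_y}(E_y)$ with its overlap with $Z_z$ removed, then $\mathcal{O}_{Z_x}(E_x)$ with its overlap with $Z_y\cup Z_z$ removed — precisely the effect of the one, resp.\ two, $\tilde D$‑substitutions in the Lemma. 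Independently, since the sections $s_{E_i}$ are non‑zero‑divisors and $S=Z_x\cup Z_y\cup Z_z$, the defining map of $\mathcal{F}$ factors through an exact sequence $0\to\mathcal{O}_S\to\mathcal{O}_{Z_x}(E_x)\oplus\mathcal{O}_{Z_y}(E_y)\oplus\mathcal{O}_{Z_z}(E_z)\to\mathcal{F}\to 0$, and filtering the target and comparing with the Čech resolution of $\mathcal{O}_S$ for the cover $\{Z_x,Z_y,Z_z\}$ equips $\mathcal{F}$ with a filtration having the same graded pieces. To upgrade this to an isomorphism I would construct a morphism from $\mathcal{L}_\chi(E_x+E_y+E_z)\otimes T^\bullet[-1]$ to the two‑term complex $[\mathcal{O}_Y\to\bigoplus_i\mathcal{O}_{Z_i}(E_i)]$ which respects both filtrations and induces the identifications above on graded pieces, hence an isomorphism on $\mathcal{H}^0$; this pins down the gluing rather than merely the list of subquotients.

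Part~\eqref{item-stratification-of-F} is then local: near a generic point of a stratum, $Z_x,Z_y,Z_z$ degenerate to the components passing through it, so $\mathcal{F}$ becomes the cokernel of a diagonal map out of $\mathcal{O}_Y$ into a direct sum of those pieces twisted by the $\mathcal{O}(E_i)$, which are locally trivial off $E_x\cup E_y\cup E_z$. On $S_2$ two of the $Z_i$ agree with the scroll through the point and the third is absent, giving $\cok(\mathcal{O}_Y\to\mathcal{O}_{Z_i}\oplus\mathcal{O}_{Z_j})\cong\mathcal{O}_{Z_i\cap Z_j}$, free of rank $1$; and since in a meeting of champions every point of $S$ lies on at least two of the chains, $\supp\mathcal{F}=S_2\cup S_3=S$. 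Along $P$ all three $Z_i$ locally agree with $P$, so $\mathcal{F}|_P=\cok\bigl(\mathcal{O}_P\xrightarrow{(s_{E_x},s_{E_y},s_{E_z})}\mathcal{O}_P(E_x)\oplus\mathcal{O}_P(E_y)\oplus\mathcal{O}_P(E_z)\bigr)$: when $P$ meets none of the $E_i$ this is $\mathcal{O}_P^{\,2}$; when it meets one or two of them one splits off the corresponding $\mathcal{O}_P(E_i)$; and when $P=\mathbb{P}^2$ meets all three the map is the Euler sequence $\mathcal{O}\to\mathcal{O}(1)^{3}$, so $\mathcal{F}|_P=T_{\mathbb{P}^2}$. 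The $\mathbb{P}^1$'s $Q_i\cap Q_j$ making up the rest of $S_3$ are handled by the analogous one‑dimensional calculation.

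The main obstacle is the last step of part~\eqref{item-H-minus-one-for-the-meeting-of-champions}: the $\gcd$/$\lcm$/$\tilde D$ bookkeeping is routine given the corollaries, but passing from ``same graded pieces'' to a genuine isomorphism — verifying that $\mathcal{H}^{-1}(T^\bullet)$ glues the three chain contributions in exactly the way $\mathcal{F}$ does — requires producing a compatible map of complexes and checking it is a quasi‑isomorphism; and each degenerate configuration of Figs.~\ref{figure-30}--\ref{figure-31}, where some $E_i$ meets $P$ so that the shapes of $Z_x,Z_y,Z_z$ and of their overlaps change, has to be checked separately, which is where the degeneration conventions of Section~\ref{section-CT-subdivisions} are needed.
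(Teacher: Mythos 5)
Your reduction (identifying $D^1=E_x$, $D^2=E_y$, $D^3=E_z$, killing $\mathcal{H}^0$ and $\mathcal{H}^{-2}$) and your treatment of part~\eqref{item-stratification-of-F} (restricting the cokernel presentation to the strata, splitting off trivial summands, and invoking the Euler sequence on $P\simeq\mathbb{P}^2$) match the paper. But for part~\eqref{item-H-minus-one-for-the-meeting-of-champions} your route has a genuine gap, and it is exactly the one you flag yourself. Computing the three successive quotients of the filtration from Lemma~\ref{lemma-cube_cohomology}\eqref{item-degree-minus-one-cohomology} and observing that $\mathcal{F}$ admits a filtration with isomorphic graded pieces determines $\mathcal{H}^{-1}(T^\bullet)$ only up to extension; two filtered sheaves with the same subquotients need not be isomorphic, and the extension data is precisely the content of the statement (it is what distinguishes $\mathcal{V}_Z$ from, say, a direct sum of the chain contributions). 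Your proposed fix --- ``construct a morphism from $\mathcal{L}_\chi(E_x+E_y+E_z)\otimes T^\bullet[-1]$ to $[\mathcal{O}_Y\to\bigoplus_i\mathcal{O}_{Z_i}(E_i)]$ respecting both filtrations'' --- is stated as a goal, not carried out, and there is no indication of where such a map comes from. The paper is explicit (in \S\ref{section-case-of-a-long-side}) that in these cases the filtration of Lemma~\ref{lemma-cube_cohomology} ``only gives us two non-zero quotients and says that $\Psi(\mathcal{O}_0\otimes\chi)$ is some extension of them,'' which is why it abandons that route here.

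What the paper does instead, and what your argument is missing, is to write the cube $\mathcal{L}_\chi\otimes\hex(\chi^{-1})_{\tilde{\mathcal{M}}}$ completely explicitly as inclusions of line bundles $\mathcal{O}_Y(-\Delta)\to\mathcal{O}_Y(-C\bullet)\to\mathcal{O}_Y(-E_i)\to\mathcal{O}_Y$ (using Prop.~\ref{prps-vertex-type-to-CT-subdivision-role-correspondence} and Fig.~\ref{figure-24a}), identify $\krn(T^{-1}\to T^0)$ with the cokernel of the Koszul-type map $\mathcal{O}_Y(-E_x-E_y-E_z)\to\bigoplus\mathcal{O}_Y(-E_j-E_k)$, and then prove --- chart by chart, using that each basic triangle lies in exactly one $C$-area --- that the image of $T^{-2}$ in this cokernel coincides with the image of the smaller subsheaf $\bigoplus\mathcal{O}_Y(-e_jPe_k)$, where $e_jPe_k=E_j+E_k+Z_i$ as Weil divisors. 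This yields the cokernel presentation of $\mathcal{F}$ directly, with no extension ambiguity and no auxiliary map of complexes to construct. If you want to salvage your plan, you would have to produce the filtered quasi-isomorphism explicitly, which in practice amounts to redoing this direct computation; as written, the proposal establishes only the associated graded of $\mathcal{H}^{-1}(T^\bullet)$, not the sheaf itself.
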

\begin{proof}
We compute the sheaf $\mathcal{L}_\chi \otimes
\mathcal{H}^{-1}\left(\Psi(\mathcal{O}_{0} \otimes \chi)\right)$. 
Prop.\ \ref{prps-vertex-type-to-CT-subdivision-role-correspondence} 
and Figure \ref{figure-24a} together tell us 
the vertex type of $\chi^{-1}$ for every toric divisor of $Y$. 
We can therefore write down explicitly the
skew-commutative cube of line bundles corresponding to
$\mathcal{L}_\chi \otimes \hex(\chi^{-1})_{\widetilde{\mathcal{M}}}$:
\begin{align}
\label{eqn-cube-of-line-bundles-for-meeting-of-champions}
\vcenter{
\xymatrix{ 
& \mathcal{O}_Y(-Cx^\bullet) \ar"2,5"^<<{\alpha^3_{2}}
\ar"3,5"_<<{\alpha^2_{3}} & & & \mathcal{O}_Y(-E_x) \ar"2,6"^{\alpha^1} & 
\\
\mathcal{O}_Y(- \Delta) \ar"1,2"^{\alpha^1_{23}} \ar"2,2"^{\alpha^2_{13}}
\ar"3,2"_{\alpha^3_{12}} & \mathcal{O}_Y(-Cy^\bullet) 
\ar"1,5"^>>>>{\alpha^3_{1}} \ar"3,5"_>>>>>{\alpha^1_{3}} & & &
\mathcal{O}_Y(-E_y) \ar"2,6"^{\alpha^2} & \mathcal{O}_Y 
\\ 
& \mathcal{O}_Y(-Cz^{\bullet}) 
\ar"1,5"^<<{\alpha^2_{1}} \ar"2,5"_<<{\alpha^1_{2}} 
& & &
\mathcal{O}_Y(-E_z) \ar"2,6"_{\alpha^3} & } 
}
\end{align}
where maps $\alpha^\bullet_\bullet$ are given by natural inclusions. 
Denote by $T^\bullet$ the total complex of this cube. 

The maps $\alpha^1$, $\alpha^2$ and $\alpha^3$ are defined by
pairwise co-prime divisors $E_x$, $E_y$ and $E_z$. Arguing 
as in \cite[Lemma $3.1 (2)$]{CautisLogvinenko} we see that 
$\krn(T^{-1} \rightarrow T^0)$ is isomorphic to the
cokernel of 
\begin{align}
\label{eqn-kernel-of-T-1-to-T0}
\mathcal{O}_{Y}(-E_{x} - E_{y} - E_{z}) 
\xrightarrow{E_{x} \oplus E_{y} \oplus E_{z}}
\mathcal{O}_{Y}(-E_{y} - E_{z}) 
\oplus
\mathcal{O}_{Y}(-E_{x} - E_{z}) 
\oplus
\mathcal{O}_{Y}(-E_{x} - E_{y}) 
\end{align}
and that $\img(T^{-2} \rightarrow T^{-1})$ is then
the image in $\krn(T^{-1} \rightarrow T^0)$ of the subsheaf 
\begin{align}
\label{eqn-image-of-T-2-to-T-1}
\mathcal{O}_{Y}(-Cx^\bullet) 
\oplus
\mathcal{O}_{Y}(-Cy^\bullet) 
\oplus
\mathcal{O}_{Y}(-Cz^\bullet)
\end{align}
of 
\begin{align}
\label{eqn-kernel-of-T-1-to-T0-without-the-relations}
\mathcal{O}_{Y}(-E_{y} - E_{z}) 
\oplus
\mathcal{O}_{Y}(-E_{x} - E_{z}) 
\oplus
\mathcal{O}_{Y}(-E_{x} - E_{y}).
\end{align}
So $\mathcal{H}^{-1}\left(T^\bullet\right)$ is the quotient
of the cokernel of \eqref{eqn-kernel-of-T-1-to-T0} by the
image of \eqref{eqn-image-of-T-2-to-T-1}. On the other hand, 
claim \eqref{item-H-minus-one-for-the-meeting-of-champions}
is equivalent to $\mathcal{H}^{-1}\left(T^\bullet\right)$ being
the quotient of the cokernel of \eqref{eqn-kernel-of-T-1-to-T0}
by the image of 
\begin{align}
\label{eqn-image-of-T-2-to-T-1-reduced}
\mathcal{O}_{Y}(-e_{y}Pe_{z}) 
\oplus
\mathcal{O}_{Y}(-e_{x}Pe_{z}) 
\oplus
\mathcal{O}_{Y}(-e_{x}Pe_{y}).
\end{align}
This is because $e_y P e_z = E_y + E_z + Z_x$ as Weil divisors, 
and similarly for $-e_{x}Pe_{z}$ and $-e_{x}Pe_{y}$. 

To establish claim \eqref{item-H-minus-one-for-the-meeting-of-champions}
it now suffices to show that the natural inclusion 
of \eqref{eqn-image-of-T-2-to-T-1} into 
\eqref{eqn-image-of-T-2-to-T-1-reduced} becomes 
an isomorphism in the cokernel of
\eqref{eqn-kernel-of-T-1-to-T0}. This is a local problem, so 
take any basic triangle $\sigma$ in $\Sigma$ and let $A_\sigma$
be the corresponding affine chart. Since 
$Cx^\bullet$, $Cy^\bullet$ and $Cz^\bullet$ subdivide $\Delta$
triangle $\sigma$ must lie in precisely one of the three. Suppose, 
without loss of generality, it lies in $Cx^\bullet$. But then $E_x$
and $A_\sigma$ do not intersect, so \eqref{eqn-kernel-of-T-1-to-T0} 
restricts to $A_\sigma$ as
$$ \mathcal{O}(- E_{y} - E_{z}) 
\xrightarrow{\id \oplus E_{y} \oplus E_{z}}
\mathcal{O}(-E_{y} - E_{z}) 
\oplus
\mathcal{O}(- E_{z}) 
\oplus
\mathcal{O}(- E_{y}) 
$$ 
and its cokernel can therefore be identified with 
$\mathcal{O}(- E_{z}) \oplus \mathcal{O}(- E_{y})$. 
Under this identification the images of \eqref{eqn-image-of-T-2-to-T-1}
and \eqref{eqn-image-of-T-2-to-T-1-reduced} become
subsheaves of $\mathcal{O}(- E_{z}) \oplus \mathcal{O}(- E_{y})$
generated by
\begin{align}
\label{eqn-image-of-T-2-to-T-1-under-identification}
\img\left(\mathcal{O}(-Cx^\bullet) \hookrightarrow 
\mathcal{O}(- E_{z}) \oplus \mathcal{O}(- E_{y})\right), &
\quad
\mathcal{O}(-Cy^\bullet) 
\oplus
\mathcal{O}(-Cz^\bullet)\\
\label{eqn-image-of-T-2-to-T-1-reduced-under-identification}
\img(\mathcal{O}(-e_{y}Pe_{z})
\hookrightarrow 
\mathcal{O}(- E_{z}) \oplus \mathcal{O}(- E_{y})), &
\quad
\mathcal{O}(-e_{x}Pe_{z}) 
\oplus
\mathcal{O}(-e_{x}Pe_{y})
\end{align}
respectively.
On the other hand, since $\sigma \in Cx^\bullet$ we have on $A_\sigma$
\begin{align*}
\mathcal{O}(- Cy^\bullet) = 
\mathcal{O}(- e_xPe_z) = 
\mathcal{O}(- e_zP)  
\\
\mathcal{O}(- Cz^\bullet) = 
\mathcal{O}(- e_xPe_y) = 
\mathcal{O}(- e_yP)  
\end{align*}
and therefore 
$$ \mathcal{O}(-Cy^\bullet) 
\oplus
\mathcal{O}(-Cz^\bullet) =
\mathcal{O}(-e_{x}Pe_{z}) 
\oplus
\mathcal{O}(-e_{x}Pe_{y}) =
\mathcal{O}(- E_{z}) \oplus \mathcal{O}(- E_{y}).
$$
Lines $e_yP$ and $e_zP$ are both contained in 
$e_y P e_z$ and therefore in $Cx^\bullet$. Hence 
images of $\mathcal{O}(-Cx^\bullet)$ and $\mathcal{O}(-e_{y}Pe_{z})$
in $\mathcal{O}(- E_{z}) \oplus \mathcal{O}(- E_{y})$ are both 
contained in its subsheaf $\mathcal{O}(- e_zP) \oplus
\mathcal{O}(-e_yP)$. They are therefore redundant in  
\eqref{eqn-image-of-T-2-to-T-1-under-identification} and
\eqref{eqn-image-of-T-2-to-T-1-reduced-under-identification}.
Thus after identifying 
the cokernel of \eqref{eqn-kernel-of-T-1-to-T0} with 
$\mathcal{O}(- E_{z}) \oplus \mathcal{O}(- E_{y})$ 
the image of the natural inclusion of \eqref{eqn-image-of-T-2-to-T-1} into 
\eqref{eqn-image-of-T-2-to-T-1-reduced} becomes
$$ \mathcal{O}(- e_zP) \oplus
\mathcal{O}(- e_yP) \xrightarrow{\id} \mathcal{O}(- e_zP) \oplus
\mathcal{O}(- e_yP). $$ 
This establishes claim
\eqref{item-H-minus-one-for-the-meeting-of-champions}. 

For claim \eqref{item-stratification-of-F} first note that 
$S_2$ is a union of three disjoint pieces $S \setminus Z_x$, 
$S \setminus Z_y$ and $S \setminus Z_z$. 
Piece $S \setminus Z_z$ is the rational scroll chain joining
$E_z$ to $P$, so it is contained within $Z_x$ and $Z_y$
and it is disjoint from $E_x$ and $E_y$.  
Therefore $\mathcal{F}_{S \setminus Z_z}$ is the cokernel of 
$$ \mathcal{O}
\xrightarrow{\id \oplus \id \oplus 0}
\mathcal{O}
\oplus
\mathcal{O} 
\oplus
0, $$
which is isomorphic to $\mathcal{O}$, as required.
$F|_{S \setminus Z_x}$ and $\mathcal{F}|_{S \setminus
Z_y}$ are computed similarly. 

Next we note that $S_3$ intersects $E_x$, $E_y$ and $E_z$ if 
and only if $P$ does. To compute 
the restriction of $\mathcal{F}$ to $S_3$, suppose
first that $P$ doesn't intersect one of $E_x$, $E_y$ or $E_z$. 
Let it be $E_z$, then $\mathcal{F}|_{S_3}$ is the cokernel of 
$$ \mathcal{O} \xrightarrow{E_x \oplus E_y \oplus \id}
\mathcal{O}(E_x) \oplus \mathcal{O}(E_y) \oplus \mathcal{O}, $$
which is isomorphic to $\mathcal{O}(E_x) \oplus \mathcal{O}(E_y)$,
as required. 

Finally, suppose $S_3$ intersects $E_x$, $E_y$ and $E_z$.  
This means that all the rational scroll chains are empty and 
$S_3$ is just $P \simeq \mathbb{P}^2$. 
We then have a short exact sequence
$$ 
0 \longrightarrow \mathcal{O}
\longrightarrow \mathcal{O}(1)^{\oplus 3}
\longrightarrow \mathcal{F} \longrightarrow 0. $$
Dualizing Euler exact sequence 
$$ 
0 \longrightarrow \Omega_{\mathbb{P}^2}
\longrightarrow \mathcal{O}(-1)^{\oplus 3}
\longrightarrow \mathcal{O} 
\longrightarrow 0
$$
we see that $\mathcal{F}$ is isomorphic to
$\Omega^*_{\mathbb{P}^2}$, i.e. to $T_{\mathbb{P}^2}$. 
\end{proof}

\subsection{The case of a ``long side''} 
\label{section-case-of-a-long-side}

Suppose that in Reid's recipe $\chi$ marks a single straight
chain of edges running from one of the vertices of $\Delta$
to the opposite side. We may assume, without loss of generality,
that it is as depicted on Figure \ref{figure-24b}. 

The expression for $\Psi(\mathcal{O}_0 \otimes \chi)$ we obtain 
below is the same as in Section 
\ref{section-case-of-a-single-concave-chain}, but the proof is 
quite different. In the setup of Section 
\ref{section-case-of-a-single-concave-chain} it was always to choose
the filtration in Lemma \ref{lemma-cube_cohomology} so that all 
the quotients vanish but one. Lemma \ref{lemma-cube_cohomology}
then implied that the remaining quotient is isomorphic to  
$\Psi(\mathcal{O}_0 \otimes \chi)$. Here, for any choice of filtration 
Lemma \ref{lemma-cube_cohomology} only gives us two non-zero quotients 
and says that $\Psi(\mathcal{O}_0 \otimes \chi)$ is some extension of them.
We therefore have to analyze the skew-commutative cube of line bundles 
$\mathcal{L}_\chi \otimes \hex(\chi^{-1})_{\widetilde{\mathcal{M}}}$
directly, as in Section \ref{section-case-of-a-meeting-of-champions}. 

\begin{proposition}
\label{prps-chi-marks-a-long-side}
\begin{enumerate}
\item
\label{item-longside-single-edge}
If the chain is a single edge $e_x P$, i.e.
$G = \frac{1}{r}(0,1,r-1)$, then
$$ \Psi(\mathcal{O}_0 \otimes \chi)) = \mathcal{L}^{-1}_\chi \otimes
\mathcal{O}_{C} $$
where $C$ is the toric curve $E_x \cap P$ corresponding to that edge. 
\item
\label{item-longside-multiple-edge}
If the chain consists of more than one edge, then 
$$ \Psi(\mathcal{O}_0 \otimes \chi) =  
\mathcal{L}^{-1}_{\chi}(-E_x - P) \otimes \O_Z [1] $$
where $Z$ is the union of the divisors which correspond
to the internal vertices of $e_x P$.
\end{enumerate}
\end{proposition}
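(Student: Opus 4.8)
The plan is to imitate Section~\ref{section-case-of-a-meeting-of-champions}: describe the CT-subdivision in the ``long side'' situation, write down the skew-commutative cube $\mathcal{L}_\chi \otimes \hex(\chi^{-1})_{\tilde{\mathcal{M}}}$ explicitly, and compute its total complex $T^\bullet$. After permuting $x,y,z$ we may assume the long side runs from $e_x$ to a vertex $P$ on the opposite side $e_y e_z$, so that it is carved out throughout by a single ratio $z^c : y^b$. By \cite[Lemma~5.3]{Craw-AnexplicitconstructionoftheMcKaycorrespondenceforAHilbC3}, $Cx^\bullet = \emptyset$ and $Cy^\bullet$, $Cz^\bullet$ are the two convex halves into which the line $e_xP$ divides $\Delta$, meeting exactly along the chain; since $\chi$ \emph{does} mark this line, our degeneration conventions force $Ty^\bullet z^\bullet = \{e_x\}$, while $Tx^\bullet z^\bullet$ and $Tx^\bullet y^\bullet$ degenerate onto the boundary segments $e_yP$ and $e_zP$. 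Feeding this into Proposition~\ref{prps-vertex-type-to-CT-subdivision-role-correspondence}, Figure~\ref{figure-13} and Corollary~\ref{cor-D1-D2-D3-via-CT-subdivision} identifies the vanishing divisors of the cube: $D^1 = E_x$, $D^2 = F_y := \sum_{e \in e_yP} E_e$ and $D^3 = F_z := \sum_{e \in e_zP} E_e$. In particular $D^1$ is coprime to $D^2$ and $D^3$, while $\gcd(D^2,D^3) = E_P$, the two chains of boundary divisors sharing only $E_P$ because no edge of $\Sigma$ can cross the segment $e_xP$. Using that the vanishing divisors along any directed path from $\mathcal{L}_{123}$ to $\mathcal{L}$ sum to $(xyz)$, this determines every line bundle in the cube.

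For part~\eqref{item-longside-single-edge}, a single edge $e_xP$ means $e_x$ and $P$ are adjacent, so $C = E_x \cap E_P$ is a curve; combined with $D^2 \cap D^3 = E_P$ (scheme-theoretically, the non-$E_P$ parts of $D^2$ and $D^3$ being disjoint from one another) this gives $D^1 \cap D^2 \cap D^3 = E_x \cap E_P = C$. Then Lemma~\ref{lemma-cube_cohomology}\eqref{item-degree-zero-cohomology} gives $\mathcal{H}^0(\Psi(\mathcal{O}_0 \otimes \chi)) = \mathcal{L}^{-1}_\chi \otimes \mathcal{O}_C$, and Theorem~\ref{theorem-transform-is-a-sheaf-for-non-trivial-chi}, together with the vanishing of $\mathcal{H}^{-2}$ for non-trivial $\chi$ (as in its proof), kills the rest --- the same short argument as in Proposition~\ref{prps-case-of-chi-marking-a-single-vertex}. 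That the chain is a single edge only when $G = \frac{1}{r}(0,1,r-1)$ is then a consequence, not something we need.

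For part~\eqref{item-longside-multiple-edge}, $e_x$ and $P$ are no longer adjacent, so $D^1 \cap D^2 \cap D^3 = E_x \cap E_P = \emptyset$ and $\mathcal{H}^0 = 0$; again $\mathcal{H}^{-2} = 0$, so $\Psi(\mathcal{O}_0 \otimes \chi) = \mathcal{H}^{-1}(\Psi(\mathcal{O}_0 \otimes \chi))[1]$ and it remains to compute $\mathcal{H}^{-1}$. Here no choice of filtration in Lemma~\ref{lemma-cube_cohomology}\eqref{item-degree-minus-one-cohomology} leaves a single nonzero quotient, so we analyze the cube directly, in the style of Section~\ref{section-case-of-a-meeting-of-champions}. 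Since $\alpha^1,\alpha^2,\alpha^3$ are cut out by $E_x$, $F_y$, $F_z$ with $E_x$ coprime to $F_y$ and $F_z$, one identifies $\krn(T^{-1} \to T^0)$ with a cokernel of line bundles and $\img(T^{-2} \to T^{-1})$ with the image of an explicit subsheaf of it, and then works locally on each affine chart $A_\sigma$: because $Cx^\bullet = \emptyset$, every basic triangle $\sigma$ lies in $Cy^\bullet$ or $Cz^\bullet$, and on such a chart one shows that a natural inclusion of subsheaves becomes an isomorphism in the relevant cokernel, exactly as in the local computation of Section~\ref{section-case-of-a-meeting-of-champions}. The emptiness of $Cx^\bullet$ is precisely what makes the resulting cokernel a \emph{single} line bundle on $Z$ rather than the rank-$2$ sheaf appearing in the ``meeting of champions'' case. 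This yields $\mathcal{H}^{-1}(\Psi(\mathcal{O}_0 \otimes \chi)) = \mathcal{L}^{-1}_\chi \otimes \mathcal{O}_Y(-\lcm(D^1,D^2)) \otimes \mathcal{O}_Z = \mathcal{L}^{-1}_\chi(-E_x - F_y) \otimes \mathcal{O}_Z$; and since, among the toric divisors occurring in $D^1, D^2, D^3$, only $E_x$ and $E_P$ are adjacent to an interior vertex of the chain $e_xP$ (read off from Figures~\ref{figure-23} and \ref{figure-24b}), $\mathcal{O}_Y(-E_x - F_y)$ and $\mathcal{O}_Y(-E_x - E_P)$ restrict identically to $Z$, giving $\Psi(\mathcal{O}_0 \otimes \chi) = \mathcal{L}^{-1}_\chi(-E_x - P) \otimes \mathcal{O}_Z[1]$.

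The main obstacle is this last step --- extracting $\mathcal{H}^{-1}$ from the cube once two filtration quotients survive. One must verify, chart by chart, that the two contributions glue into a \emph{single} line bundle on the divisor chain $Z$, the way $\mathcal{O}_{E_1 \cup E_2}$ is an extension of $\mathcal{O}_{E_2}$ by $\mathcal{O}_{E_1}(-E_1 \cap E_2)$ rather than a rank-$2$ sheaf; what makes this possible is the straightness of the long side, i.e. the absence of a fork in the chain. The remaining ingredients --- the CT-subdivision bookkeeping, the single-edge case, and the vanishing of $\mathcal{H}^0$ and $\mathcal{H}^{-2}$ --- are routine given the earlier results.
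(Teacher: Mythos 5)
Your proposal is correct and follows essentially the same route as the paper: part \eqref{item-longside-single-edge} via $D^1\cap D^2\cap D^3$, Lemma~\ref{lemma-cube_cohomology}\eqref{item-degree-zero-cohomology} and Theorem~\ref{theorem-transform-is-a-sheaf-for-non-trivial-chi}, and part \eqref{item-longside-multiple-edge} by writing out the skew-commutative cube from Proposition~\ref{prps-vertex-type-to-CT-subdivision-role-correspondence} and Figure~\ref{figure-24b} and extracting $\mathcal{H}^{-1}$ from a direct cokernel computation, ending with the same restriction argument identifying $-E_x - F_y$ with $-E_x-P$ on $Z$. The one inaccuracy is in your description of the final local step: what actually gets checked on the two charts covering $Cy^\bullet$ and $Cz^\bullet$ is that the augmented sequence $\mathcal{O}_Y(-E_x - e_y e_z) \to \mathcal{O}_{Z_y}(\cdots)\oplus\mathcal{O}_{Z_z}(\cdots) \to \mathcal{O}_{Z_y\cap Z_z}(-E_x-P)$ is exact at its last two terms, so the cokernel is a line bundle on the \emph{intersection} $Z = Z_y\cap Z_z$ (a Mayer--Vietoris-type quotient), rather than the subsheaf-replacement argument of the meeting-of-champions case or the $\mathcal{O}_{E_1\cup E_2}$-extension picture you invoke --- a presentational slip rather than a gap in the strategy.
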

\begin{proof}
Claim \eqref{item-longside-single-edge} is proved
as in Prop.\ \ref{prps-chi-marks-a-single-concave-chain}. 
For claim \eqref{item-longside-multiple-edge}
we proceed as in
Section \ref{section-case-of-a-meeting-of-champions}
and compute $\mathcal{L}_\chi \otimes 
\mathcal{H}^{-1}\left(\Psi(\mathcal{O}_{0} \otimes \chi)\right)$.
Using Prop.\ \ref{prps-vertex-type-to-CT-subdivision-role-correspondence} 
and Figure \ref{figure-24b} we write down explicitly the
skew-commutative cube of line bundles corresponding to
$\mathcal{L}_\chi \otimes \hex(\chi^{-1})_{\widetilde{\mathcal{M}}}$:
\begin{align}
\label{eqn-cube-of-line-bundles-for-a-long-side}
\xymatrix{ 
& \mathcal{O}_Y(-e_z e_y) 
\ar"2,5"^<<{\alpha^3_{2}}
\ar"3,5"_<<{\alpha^2_{3}} & & & \mathcal{O}_Y(-E_x) \ar"2,6"^{\alpha^1} & 
\\
\mathcal{O}_Y(- \Delta) \ar"1,2"^{\alpha^1_{23}} \ar"2,2"^{\alpha^2_{13}}
\ar"3,2"_{\alpha^3_{12}} & \mathcal{O}_Y(-Cy^\bullet) 
\ar"1,5"^>>>>{\alpha^3_{1}} \ar"3,5"_>>>>>{\alpha^1_{3}} & & &
\mathcal{O}_Y(-Tx^\bullet z^\bullet) \ar"2,6"^>>>>>>{\alpha^2} & \mathcal{O}_Y 
\\ 
& \mathcal{O}_Y(-Cz^{\bullet}) 
\ar"1,5"^<<{\alpha^2_{1}} \ar"2,5"_<<{\alpha^1_{2}} 
& & &
\mathcal{O}_Y(-Tx^\bullet y^\bullet) \ar"2,6"_{\alpha^3} & } 
\end{align}
where maps $\alpha^\bullet_\bullet$ are given by natural inclusions. 

Denote by $T^\bullet$ the total complex of this cube. 
Arguing as in the proof of \cite[Lemma $3.1$]{CautisLogvinenko} we see 
that $\krn(T^{-1} \rightarrow T^0)$ is isomorphic to the
cokernel of 
\begin{align*}
%\label{eqn-longside-kernel-of-T-1-to-T0}
\mathcal{O}_{Y}(-E_{x} - e_y e_z) 
\xrightarrow{E_{x} \oplus (Tx^\bullet z^\bullet \setminus P) \oplus (Tx^\bullet y^\bullet \setminus P)}
\mathcal{O}_{Y}(-e_y e_z) \oplus \mathcal{O}_{Y}(-E_{x} - Tx^\bullet y^\bullet) \oplus \mathcal{O}_{Y}(-E_{x} - Tx^\bullet z^\bullet) 
\end{align*}
and that $\img(T^{-2} \rightarrow T^{-1})$ is then
the image in $\krn(T^{-1} \rightarrow T^0)$ of the subsheaf 
\begin{align*}
%\label{eqn-longside-image-of-T-2-to-T-1}
\mathcal{O}_{Y}(-e_y e_z) 
\oplus
\mathcal{O}_{Y}(-Cy^\bullet) 
\oplus
\mathcal{O}_{Y}(-Cz^\bullet).
\end{align*}
Let $Z_y$ and $Z_z$ be the reduced divisors 
of $Cy^\bullet \setminus (e_x \cup Tx^\bullet y^\bullet)$
and $Cz^\bullet \setminus (e_x \cup Tx^\bullet z^\bullet)$, 
then $\mathcal{H}^{-1}(T^\bullet)$ is the cokernel of 
\begin{align}
\label{longside-degree-minus-cohomology-cokernel-exression}
\mathcal{O}_{Y}(-E_{x} - e_y e_z) 
\xrightarrow{(Tx^\bullet z^\bullet \setminus P) \oplus (Tx^\bullet y^\bullet \setminus P)} \mathcal{O}_{Z_y}(-E_{x} - Tx^\bullet y^\bullet) 
\oplus
\mathcal{O}_{Z_z}(-E_{x} - Tx^\bullet z^\bullet). 
\end{align}
Observe that $Z = Z_x \cap Z_y$, so we have the natural map
\begin{align}
\label{longside-degree-minus-cohomology-natural-map}
\mathcal{O}_{Z_y}(-E_{x} - Tx^\bullet y^\bullet) 
\oplus
\mathcal{O}_{Z_z}(-E_{x} - Tx^\bullet z^\bullet)
\xrightarrow{
(Tx^\bullet y^\bullet \setminus P) \oplus (Tx^\bullet z^\bullet \setminus P)}
\mathcal{O}_Z(-E_x - P). 
\end{align}
It remains to show that the composition of 
\eqref{longside-degree-minus-cohomology-cokernel-exression}
and 
\eqref{longside-degree-minus-cohomology-natural-map}
is exact in last two terms. We check it on an open cover: 
on the open piece of $Y$ which corresponds to triangle $e_x e_y P$ 
it is
$$
\mathcal{O}_{Y}(-E_{x} - Tx^\bullet y^\bullet) 
\xrightarrow{1 \oplus (Tx^\bullet y^\bullet \setminus P)}
\mathcal{O}_{Z_y}(-E_{x} - Tx^\bullet y^\bullet) 
\oplus
\mathcal{O}_{Z_y \cap Z_z}(-E_{x} - P)
\xrightarrow{(Tx^\bullet y^\bullet \setminus P) \oplus 1}
\mathcal{O}_{Z_y \cap Z_z}(-E_x - P)
$$
and on the open piece of $Y$ which corresponds to triangle $e_x e_z P$ 
it is
$$
\mathcal{O}_{Y}(-E_{x} - Tx^\bullet z^\bullet) 
\xrightarrow{(Tx^\bullet z^\bullet \setminus P) \oplus 1}
\mathcal{O}_{Z_y \cap Z_z}(-E_{x} - P) 
\oplus
\mathcal{O}_{Z_z}(-E_{x} - Tx^\bullet y^\bullet)
\xrightarrow{1 \oplus (Tx^\bullet z^\bullet \setminus P)}
\mathcal{O}_{Z_y \cap Z_z}(-E_x - P). 
$$
Both sequences are manifestly exact in their last two terms. 
\end{proof}

\begin{example}
In the worked example in Section~\ref{section-worked-examples}, 
cf. Figure~\ref{figure-34b}, 
Prop.\ \ref{prps-chi-marks-a-single-concave-chain}\eqref{item-single-edge-chain}
applies to the character $\chi_{12}$. E.g. examining 
Figure~\ref{figure-36m} we see 
that $\Psi(\mathcal{O}_0\otimes \chi_{12}) = \mathcal{L}_{12}^{-1}(-D_y -D_9)
\otimes \mathcal{O}_Z[1]$ for $Z=D_5\cup D_7$.
\end{example}

\subsection{The case of $\chi = \chi_0$} 
\label{section-case-of-chi=chi_0}

Suppose that $\chi$ marks nothing in Reid's recipe, i.e. $\chi = \chi_0$.
Denote by $F_{23}$, $F_{13}$ and $F_{12}$ 
the chains of divisors corresponding to internal vertices of 
sides $e_y e_z$, $e_x e_z$ and $e_x e_y$, respectively.  
Recall that $\zerofibre$ denotes the fibre of $Y$ over 
$0 \in \mathbb{C}^3/G$, cf. \S\ref{section-ghilb-and-toric}
and $\zerofibre_1$ and $\zerofibre_2$ denote the unions of
$1$- and $2$-dimensional irreducible components of $\zerofibre$.  
For any separated scheme $S$ of finite type over $\mathbb{C}$
we denote by $\boldomega_S$ the dualizing complex of $S$ obtained
as the twisted inverse image $f^!(\mathbb{C})$ over
the structure morphism $S \xrightarrow{f} \spec \mathbb{C}$.
If $S$ is Cohen-Macaulay, then $\boldomega_S = \omega_S[\dim S]$
where $\omega_S$ is a sheaf on $S$. If $S$ is proper, then 
$\omega_S$ is the dualizing sheaf in the sense of 
\cite[\S III.7]{Harts77}. If $S$ is regular, 
then $\omega_S$ is the canonical bundle
\cite[Theorem 3]{Verdier-BaseChangeForTwistedInverseImageOfCoherentSheaves}. 

\begin{proposition}
Let $\chi$ be the trivial character $\chi_0$. Then
$\Psi(\mathcal{O}_0 \otimes \chi) = \boldomega_{\zerofibre}$ and
moreover:
\begin{enumerate}
\item 
\label{item-chitriv-degree-minus-2-cohomology}
$\mathcal{H}^{-2}\left(\Psi(\mathcal{O}_0 \otimes \chi)\right)
= \omega_{\zerofibre_2} = 
\mathcal{O}_Y(\zerofibre_2) \otimes \mathcal{O}_{\zerofibre_2}$. 
\item \label{item-chitriv-degree-minus-1-cohomology}
$\mathcal{H}^{-1}\left(\Psi(\mathcal{O}_0 \otimes \chi)\right)
= \omega_{\zerofibre_1}(\zerofibre_2)$.
\item \label{item-chitriv-other-degrees-cohomology}
$\mathcal{H}^{i}\left(\Psi(\mathcal{O}_0 \otimes \chi)\right)
= 0$ for $i \neq -1,-2$. 
\end{enumerate}
\end{proposition}
\begin{proof}
We use the following trick: instead of writing down 
the skew-commutative cube of line bundles corresponding to
$\hex(\chi_0)_{\widetilde{\mathcal{M}}}$, we write down its 
derived dual. The total complex of this dual cube
is the derived dual of the total complex of
the original cube, i.e. of 
$\Psi(\mathcal{O}_0 \otimes \chi)$. 

Theorem \ref{theorem-sink-source-graph-to-divisor-type-correspondence}
tells us the vertex type of $\chi_0$ for every toric divisor on $Y$,
using this we write down the cube corresponding
to $\hex(\chi_0)_{\widetilde{\mathcal{M}}}$ as:
\begin{align}
\label{eqn-cube-of-line-bundles-for-trivial-character}
\vcenter{
\xymatrix{ 
& \mathcal{O}_Y(-E_y - F_{23} - E_z) \ar"2,5"
\ar"3,5" & & & \mathcal{O}_Y(-E_x) \ar"2,6" & 
\\
\mathcal{O}_Y(- \Delta) \ar"1,2"
\ar"2,2" \ar"3,2" & \mathcal{O}_Y(-E_x - F_{13} - E_z) 
\ar"1,5" \ar"3,5" & & &
\mathcal{O}_Y(-E_y) \ar"2,6" & \underline{\mathcal{O}_Y}
\\ 
& \mathcal{O}_Y(-E_x - F_{12} - E_y) 
\ar"1,5" \ar"2,5" 
& & &
\mathcal{O}_Y(-E_z) \ar"2,6" & 
} },
\end{align}
where we underline the degree $0$ term. Its derived dual is:
\begin{align}
\label{eqn-dual-cube-of-line-bundles-for-trivial-character}
\vcenter{
\xymatrix{ 
& 
\mathcal{O}_Y(E_x) \ar"2,5"
\ar"3,5" 
& & & 
\mathcal{O}_Y(E_y + F_{23} + E_z) \ar"2,6" & 
\\
\underline{\mathcal{O}_Y} \ar"1,2"
\ar"2,2" \ar"3,2" &
\mathcal{O}_Y(E_y) 
\ar"1,5" \ar"3,5" 
& & &
\mathcal{O}_Y(E_x + F_{13} + E_z) \ar"2,6" & \mathcal{O}_Y(\Delta)
\\ 
& \mathcal{O}_Y(E_z) 
\ar"1,5" \ar"2,5" 
& & &
\mathcal{O}_Y(E_x + F_{12} + E_y) \ar"2,6" & 
} }. 
\end{align}
Denote by $T^\bullet$ the total complex of the dual cube
and denote by $\iota_{\zerofibre}$ the inclusion map 
$\zerofibre \hookrightarrow Y$.  
Lemma \ref{lemma-cube_cohomology} implies immediately that
$\mathcal{H}^0(T^\bullet) = \iota_{\zerofibre *} 
\mathcal{O}_{\zerofibre}$ and all the
other cohomologies of $T^\bullet$ vanish. We therefore 
have 
$$ \Psi(\mathcal{O}_0 \times \chi_0) \simeq (\iota_{\zerofibre *}
\mathcal{O}_{\zerofibre})^\vee[3].$$

The map $\iota_{\zerofibre}$ is proper, so $\iota^!_{\zerofibre}$ is 
right adjoint to $\iota_{\zerofibre *}$, and by
sheafified Grothendieck duality:
$$ 
(\iota_{\zerofibre *}
\mathcal{O}_{\zerofibre})^\vee = 
\rder \shhomm_Y \left(\iota_{\zerofibre *} \mathcal{O}_{\zerofibre}, 
\mathcal{O}_Y\right) 
\simeq 
\iota_{\zerofibre *} 
\rder \shhomm_{\zerofibre} \left( \mathcal{O}_{\zerofibre}, 
\iota_{\zerofibre}^! \mathcal{O}_Y \right) 
\simeq 
\iota_{\zerofibre *} \iota_{\zerofibre}^! \mathcal{O}_Y. $$
Let $f_{Y}$ and $f_{\zerofibre}$ denote the structural morphisms
to $\spec \mathbb{C}$. As $f_{\zerofibre} = f_Y \circ
\iota_{\zerofibre}$, we have
$$ 
\iota^!_{\zerofibre} \mathcal{O}_Y[3]
\simeq 
\iota^!_{\zerofibre} \boldomega_Y 
=
\iota^!_{\zerofibre} f^!_Y(\mathbb{C})
\simeq 
f^!_{\zerofibre}(\mathbb{C})
= \boldomega_{\zerofibre}.
$$ 
Thus we have $\Psi(\mathcal{O}_0 \times \chi_0) =
\iota_{\zerofibre *} \boldomega_{\zerofibre}$ or, 
returning to our convention of omitting the pushforward functor
where the source scheme is obvious, just 
$\Psi(\mathcal{O}_0 \times \chi_0) = \boldomega_{\zerofibre}$. 
This settles the first claim.

There is a short exact sequence of sheaves on $Y$
\begin{align}
\label{eqn-zf1-zf2-zf-ses}
\mathcal{O}_{\zerofibre_1}(- \zerofibre_2) \rightarrow 
\mathcal{O}_{\zerofibre} \rightarrow \mathcal{O}_{\zerofibre_2} 
\end{align}
which is an instance of the standard short exact sequence 
$$ \mathcal{O}_{Z_1} \otimes \mathcal{I}_{Z_2}
\rightarrow \mathcal{O}_{Z} \rightarrow \mathcal{O}_{Z_2} $$
which exists for any reduced scheme $Z$ which is 
a union of schemes $Z_1$ and $Z_2$. 

Hence \eqref{eqn-zf1-zf2-zf-ses} is an exact triangle in $D(Y)$. 
Taking the derived dual we obtain the triangle
$$ \boldomega_{\zerofibre_2} \rightarrow \boldomega_{\zerofibre}
\rightarrow \boldomega_{\zerofibre_1}(\zerofibre_2). $$
Whence the claims \eqref{item-chitriv-degree-minus-2-cohomology}-\eqref{item-chitriv-other-degrees-cohomology}
follow, since $\zerofibre_1$ and $\zerofibre_2$ are Cohen-Macaulay and 
their dualizing complexes are just the shifts of their dualizing sheaves. 
\end{proof}
 
\section{Worked example}
\label{section-worked-examples}

In this section we illustrate our results by explicit computations 
for $G = \frac{1}{15}(1,5,9)$. 

\subsection{The group $G = \frac{1}{15}(1,5,9)$, the toric variety 
$G$-Hilb$(\mathbb{C}^3)$ and classical Reid's recipe} 
\label{section-example-the-group-and-the-resolution}
 
Let $G$ be the group $\frac{1}{15}(1,5,9)$. It is the image 
in $\gsl_3(\mathbb{C})$ of group $\mu_{15}$ of $15$th roots of unity 
under the embedding
$\xi \mapsto 
\left( \begin{smallmatrix}
\xi^1 & & \\
& \xi^5 & \\
& & \xi^9
\end{smallmatrix} \right).$ 
We denote by $\chi_{i}$ the character of $G$ induced by $\xi \mapsto
\xi^i$, then, as explained in Section
\ref{section-prelims-action-of-G-on-C3},
 $\kappa(x_1) = \chi_{14}$, $\kappa(x_2) = \chi_{10}$ and
$\kappa(x_3) = \chi_6$.

Let $Y = G$-$\hilb(\mathbb{C}^3)$. We next describe $Y$ as 
a toric variety following Section \ref{section-ghilb-and-toric}.  
First we compute the toric fan of $Y$
as described in 
\cite[Section 2]{Craw-AnexplicitconstructionoftheMcKaycorrespondenceforAHilbC3}.
On Fig.\ \ref{figure-34a} we depict the triangulation 
$\Sigma$ of the junior simplex $\Delta$ defined by this fan, together
with the monomial ratios which carve out each edge of $\Sigma$. 

\begin{figure}[!htb] \centering 
\subfigure[Monomial ratios for the edges of $\Sigma$]{\label{figure-34a}
\includegraphics[scale=0.12]{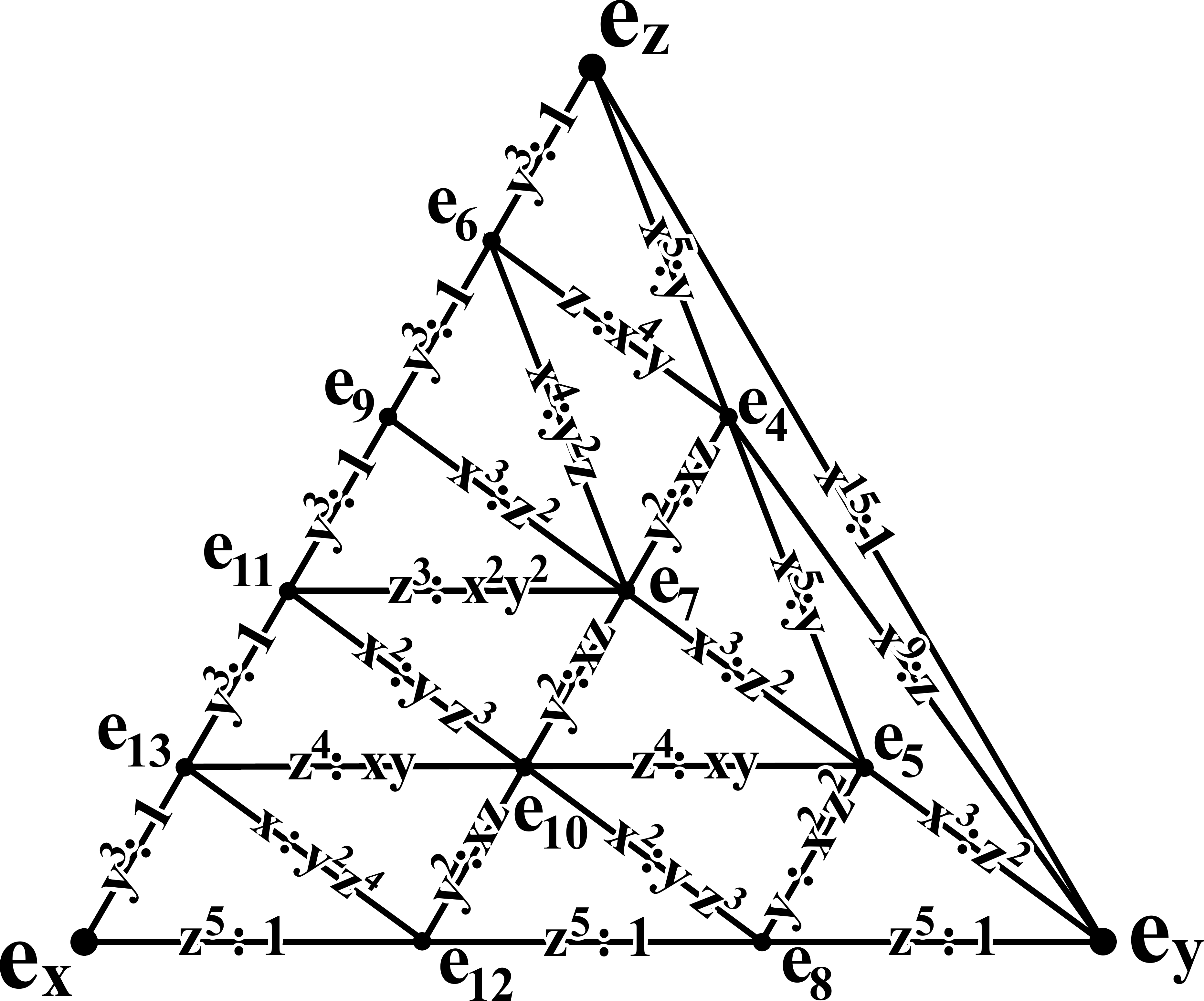}} 
\subfigure[Classical Reid's recipe]{\label{figure-34b}
\includegraphics[scale=0.12]{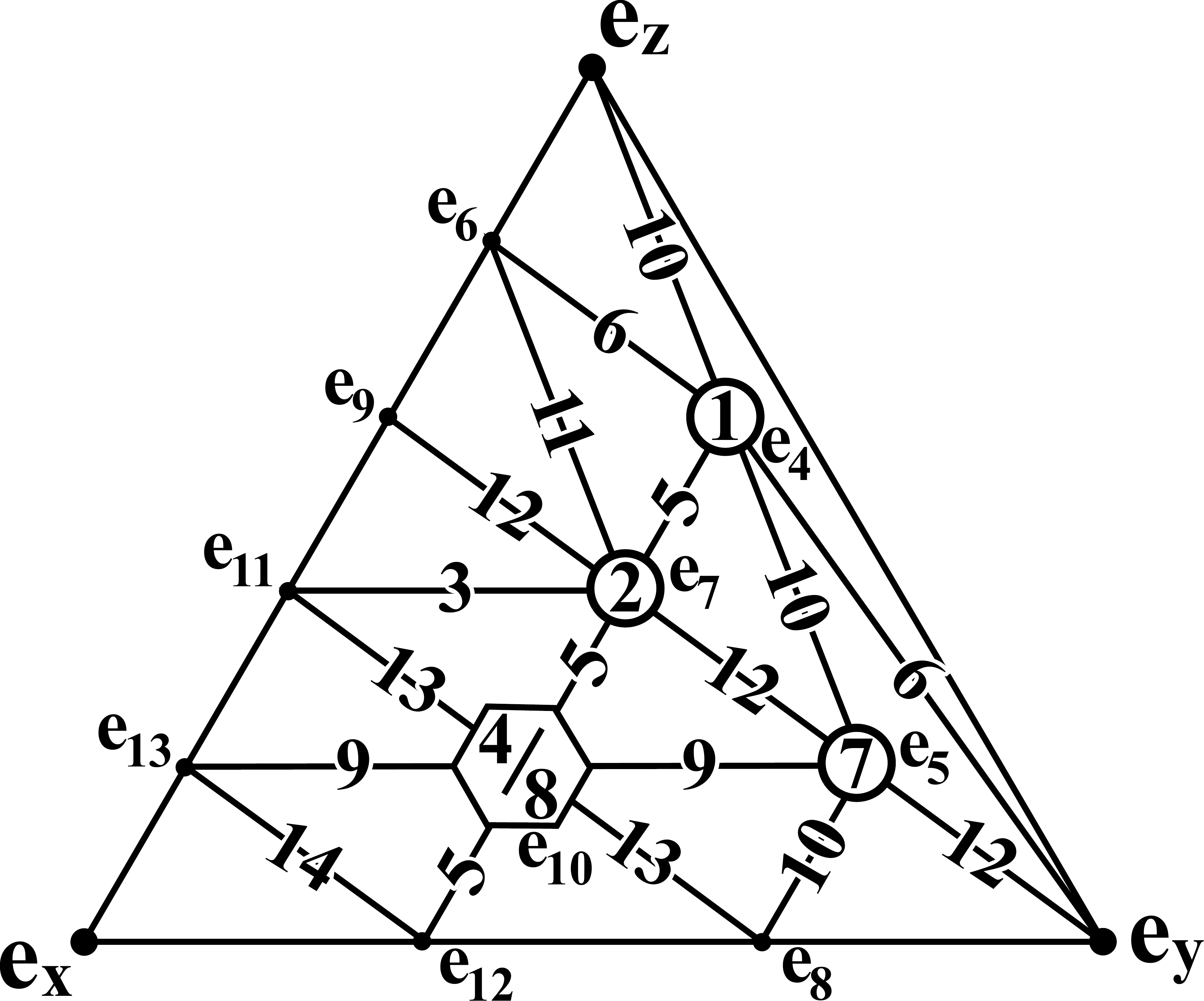}}
\caption{ $G$-$\hilb(\mathbb{C}^3)$ for $\frac{1}{15}(1,5,9)$ }
\label{figure-34}
\end{figure}

The generators $e_i$ of the one-dimensional cones of the fan have 
the following coordinates:
\begin{align} 
\label{eqn-e_i-15-1-5-9}
\begin{tabular}{l l l}
$e_1 = (1,0,0)$  &  $e_2 = (0,1,0)$  &  $e_3 = (0,0,1)$ \\
$e_4 = \frac{1}{15}(1,5,9)$  &  $e_5 = \frac{1}{15}(2,10,3)$ 
& $e_6 = \frac{1}{15}(3,0,12)$ \\
$e_7 = \frac{1}{15}(4,5,6)$  &  $e_8 = \frac{1}{15}(5,10,0)$ 
& $e_9 = \frac{1}{15}(6,0,9)$ \\
$e_{10} = \frac{1}{15}(7,5,3)$ & $ e_{11} = \frac{1}{15}(9,0,6)$
& $e_{12} = \frac{1}{15}(10,5,0)$ \\ 
$e_{13} = \frac{1}{15}(12,0,3)$ & & \\ 
\end{tabular} 
\end{align} 

The corresponding toric divisors are:
\begin{enumerate}
\item \em Strict transforms of coordinate hyperplanes of
$\mathbb{C}^3/G$: \rm 
\begin{itemize}
\item  $E_x$, $E_y$, $E_z$. 
\end{itemize}
\item \em Non-compact exceptional divisors: \rm 
\begin{itemize}
\item $E_9$, isomorphic to $\mathbb{P}^1 \times \mathbb{A}^1$. 
\item $E_6$, $E_8$, $E_{11}$, $E_{12}$ and $E_{13}$, each isomorphic 
to $\mathbb{P}^1 \times \mathbb{A}^1$ blown up in a point. 
\end{itemize}
\item \em Compact exceptional divisors: \rm 
\begin{itemize}
\item $E_4$ and $E_5$, each isomorphic to a rational scroll 
blown up in a point. 
\item $E_7$, isomorphic to a rational scroll blown up in two points. 
\item $E_8$, isomorphic to the Del Pezzo surface $dP_6$.  
\end{itemize}
\end{enumerate}

The fiber $\zerofibre$ of $Y$ over $0 \in \mathbb{C}^3/G$ is
a reducible variety which breaks up into
\begin{itemize}
\item \em Two-dimensional stratum $\zerofibre_2$: \rm
the compact exceptional divisors $E_4,E_5,E_7$ and $E_{10}$.
\item \em One-dimensional stratum $\zerofibre_1$: \rm
a single curve $E_{12} \cap E_{13} \simeq \mathbb{P}^1$. 
\end{itemize}

Finally, we compute classical Reid's recipe for $G = \frac{1}{15}(1,5,9)$,
as described in \S \ref{subsection-reids-recipe}, and list the result
on Fig.\ \ref{figure-34b}. 

\subsection{CT-subdivisions}

We now begin to compute derived Reid's recipe for
$G = \frac{1}{15}(1,5,9)$. 
The standard way to do this is via 
explicit computations with $G$-Weil divisors, cf. 
\cite[\S6]{CautisLogvinenko}. But Prop.\ 
\ref{prps-vertex-type-to-CT-subdivision-role-correspondence} 
allows for a new way to do this, which we illustrate below. 

The first step is to compute the CT-subdivisions of $\Delta$ for 
all the non-trivial characters of $G$. These are
defined in \S\ref{section-CT-subdivisions} in terms of the monomials
which represent $\chi$ in the $G$-graphs of the basic triangles of
$\Sigma$. These can be computed as in 
\cite[\S5]{Craw-AnexplicitconstructionoftheMcKaycorrespondenceforAHilbC3}. 

On Fig.\ \ref{figure-36}-\ref{figure-38} 
we display the resulting CT-subdivisions together with  
the above-mentioned monomials. By doing so we are explicitly writing down 
tautological sheaves $\mathcal{L}_\chi$: each $\mathcal{L}_\chi$
is the subsheaf of constant sheaf $K(\mathbb{C}^3)$ 
which is generated on each toric affine piece of $Y$ by the
monomial which represents $\chi$ in the graph of 
the corresponding basic triangle of $\Sigma$.  

\begin{figure}[!htb] \centering 
\subfigure[$\chi_1$] { \label{figure-36a}
\includegraphics[scale=0.11]{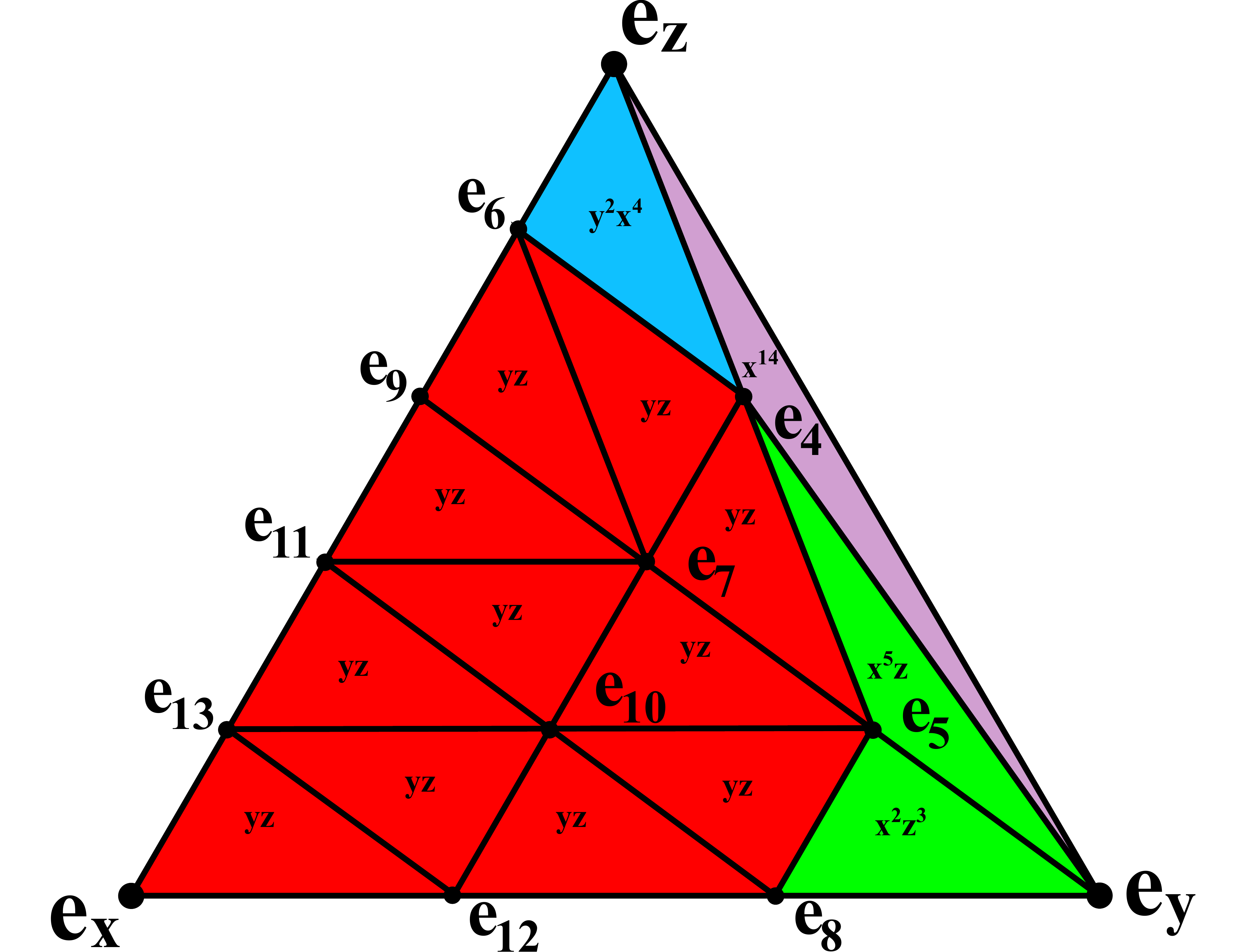}} 
\subfigure[$\chi_2$] { \label{figure-36b}
\includegraphics[scale=0.11]{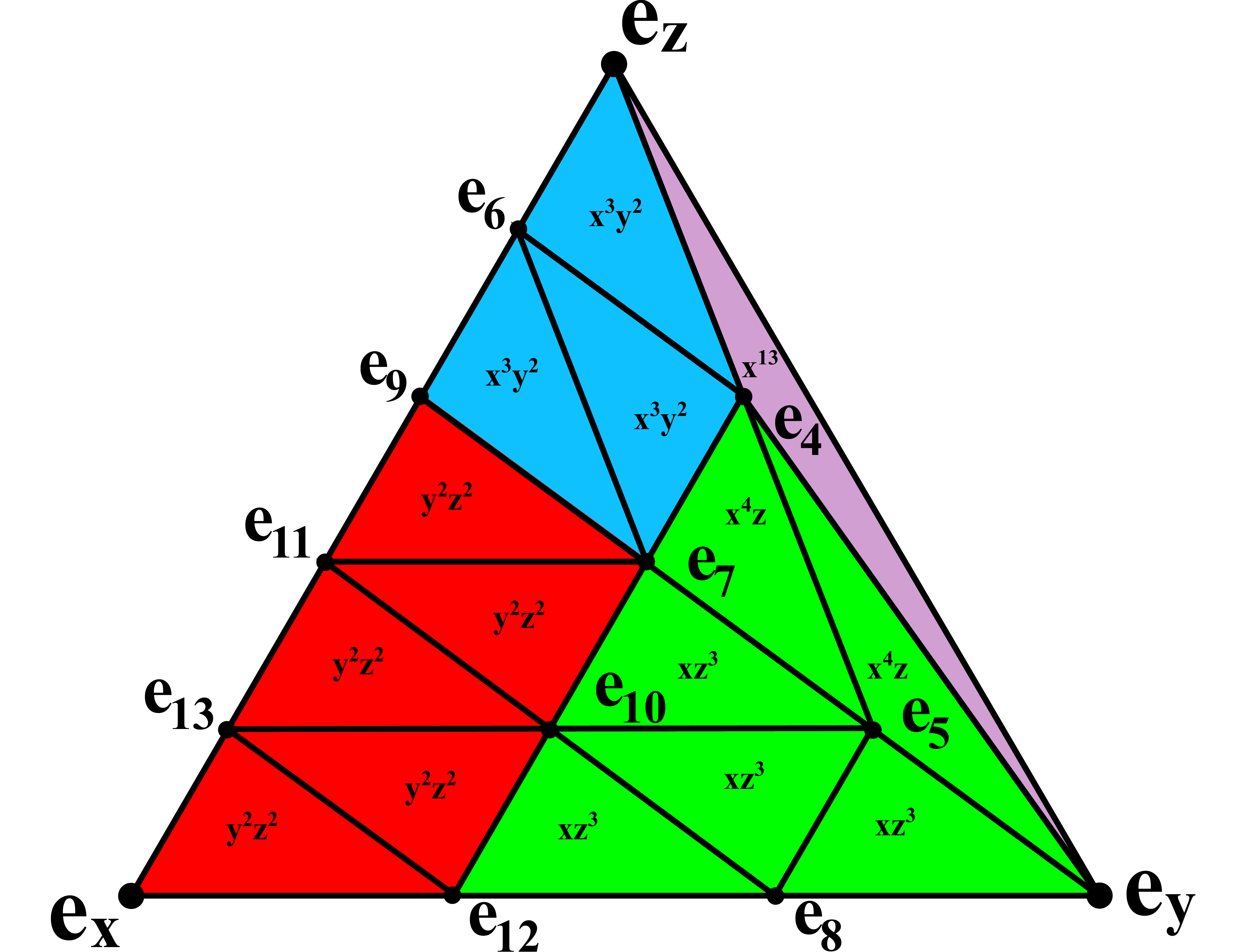}} 
\subfigure[$\chi_3$] { \label{figure-36c}
\includegraphics[scale=0.11]{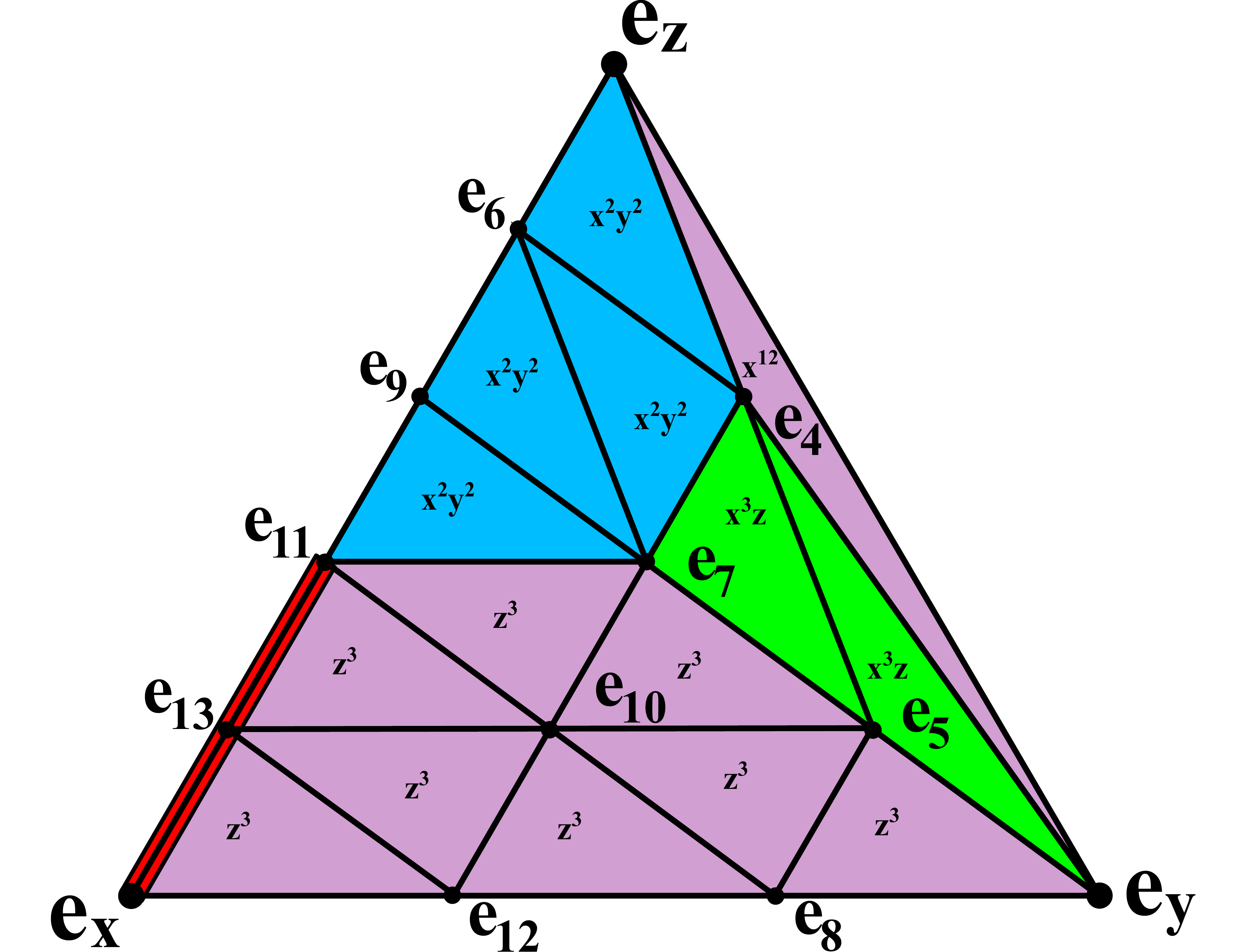}}
\subfigure[$\chi_4$] { \label{figure-36d}
\includegraphics[scale=0.11]{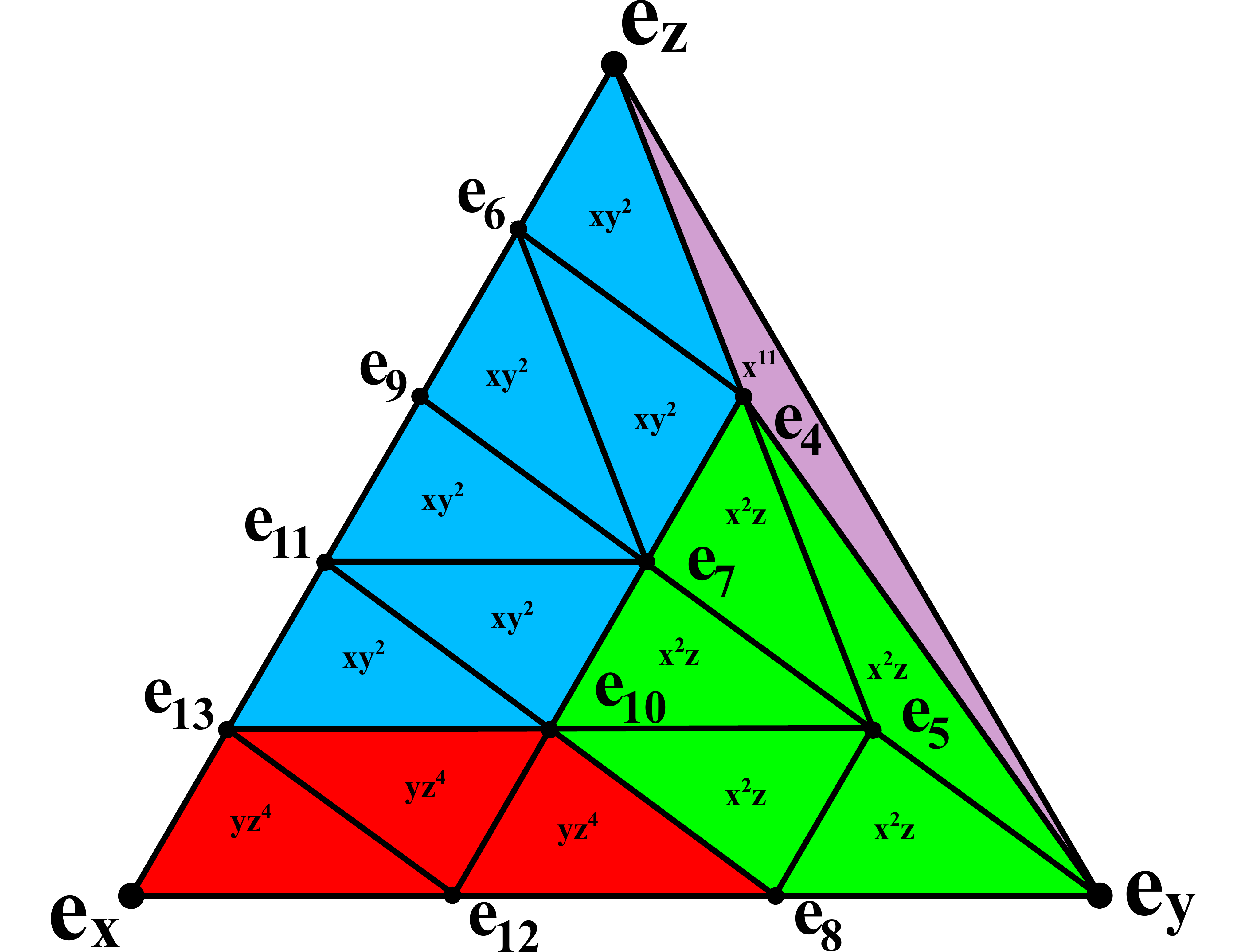}}
\subfigure[$\chi_5$] { \label{figure-36e}
\includegraphics[scale=0.11]{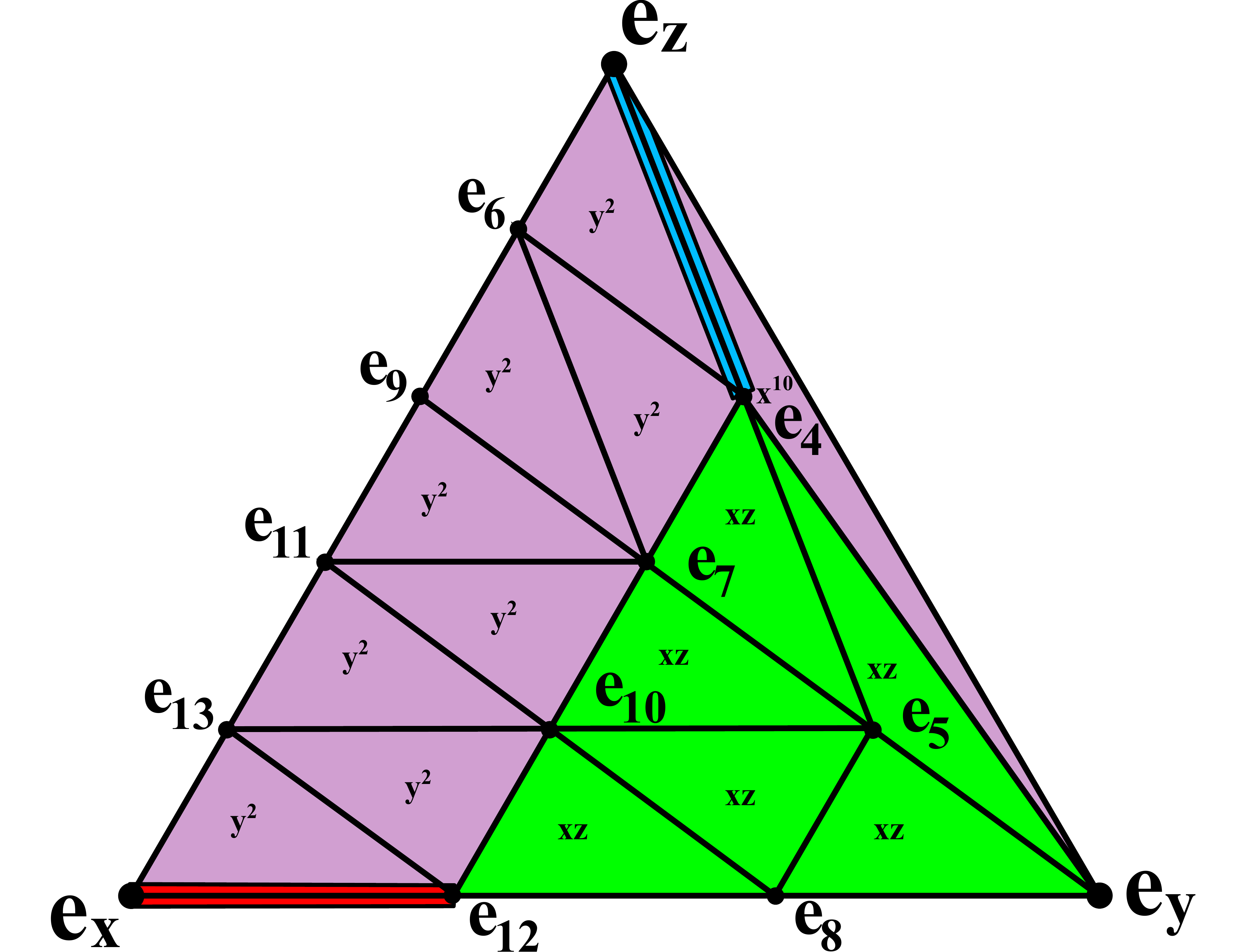}}
\subfigure[$\chi_6$] { \label{figure-36f}
\includegraphics[scale=0.11]{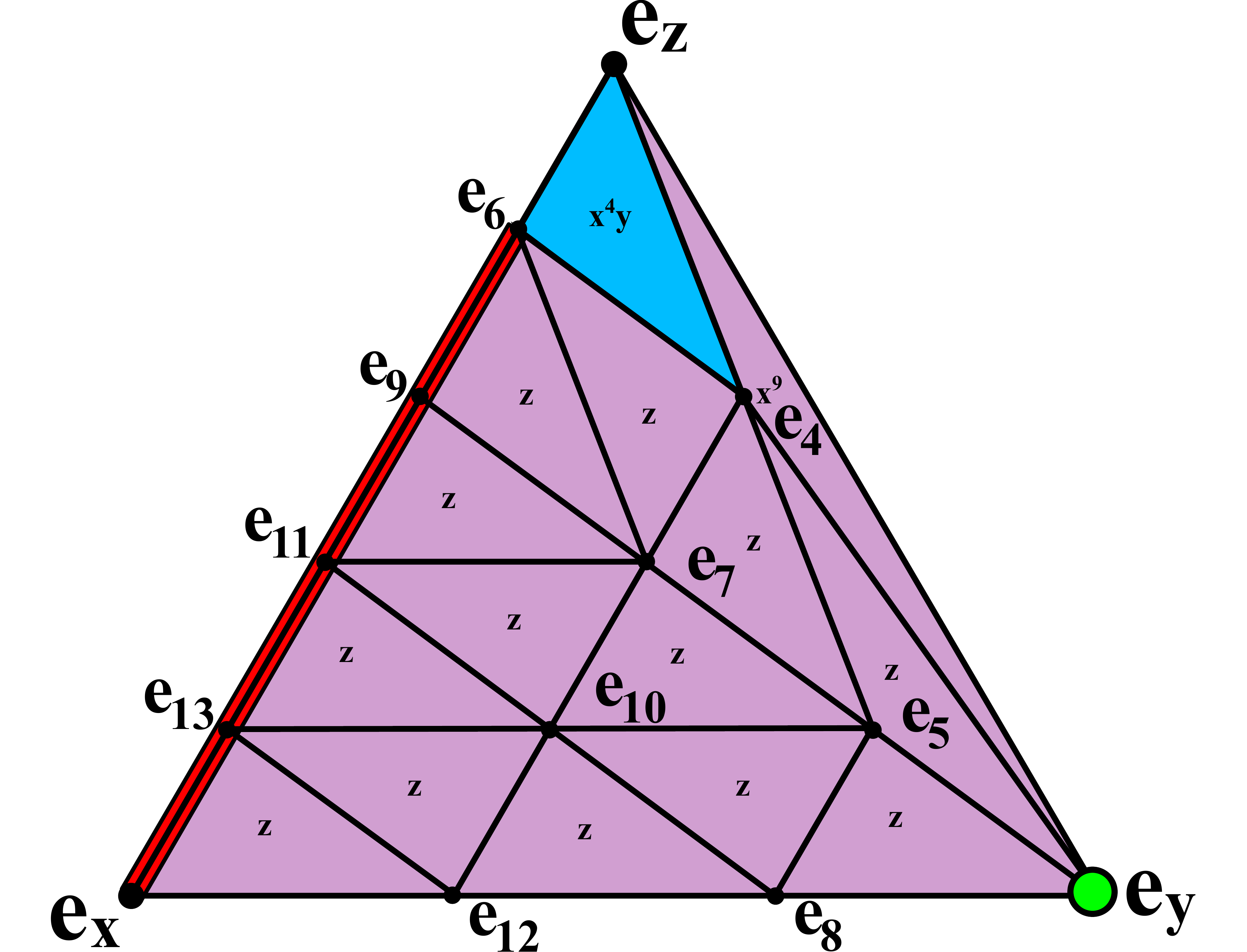}}
\caption{\label{figure-36} CT-subdivisions and monomial generators of $\mathcal{L}_\chi$
for $\chi_1$ - $\chi_6$. }
\end{figure}
\newpage~\newpage
\begin{figure}[!htb] \centering 
\subfigure[$\chi_7$] { \label{figure-36g}
\includegraphics[scale=0.11]{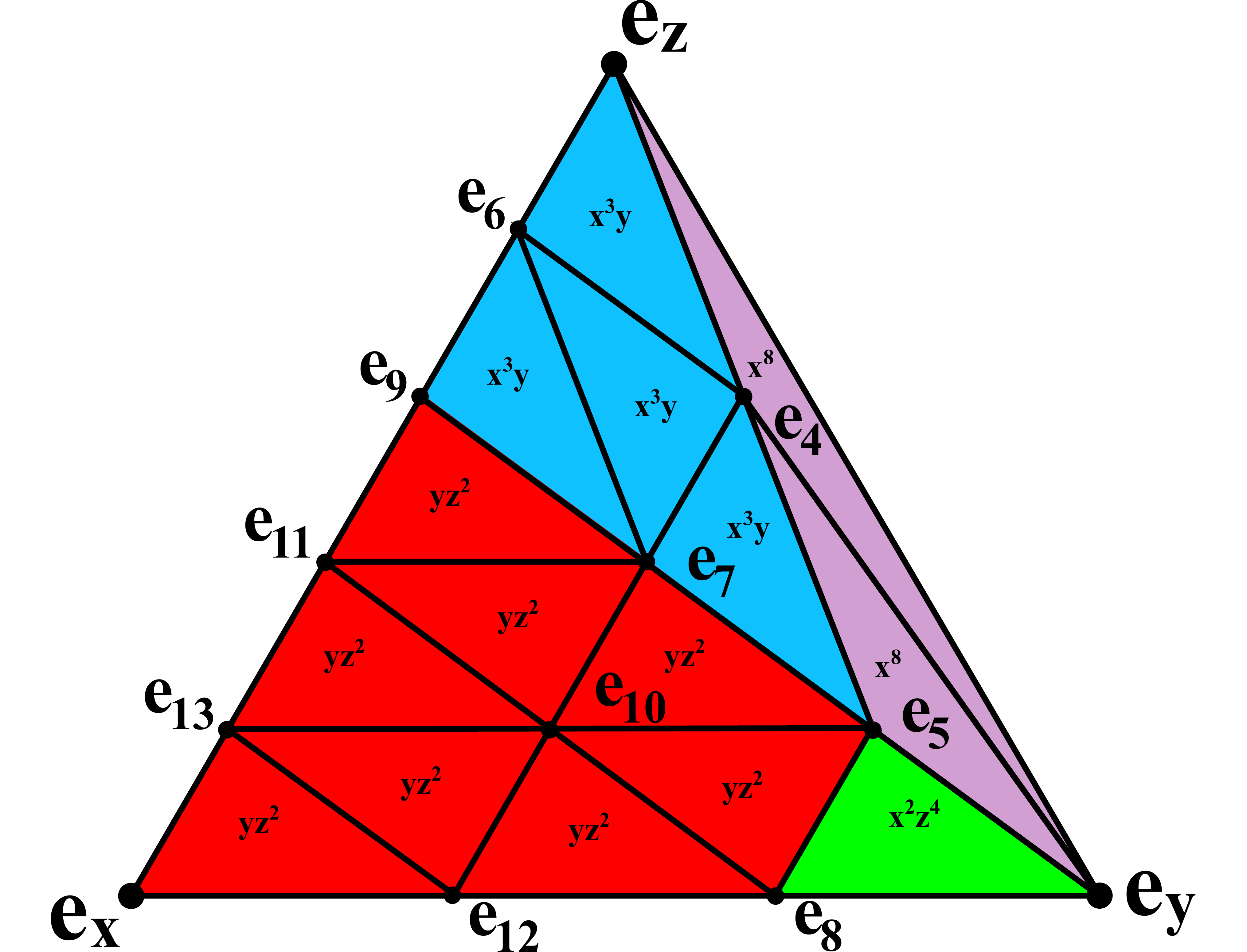}}
\subfigure[$\chi_8$] { \label{figure-36h}
\includegraphics[scale=0.11]{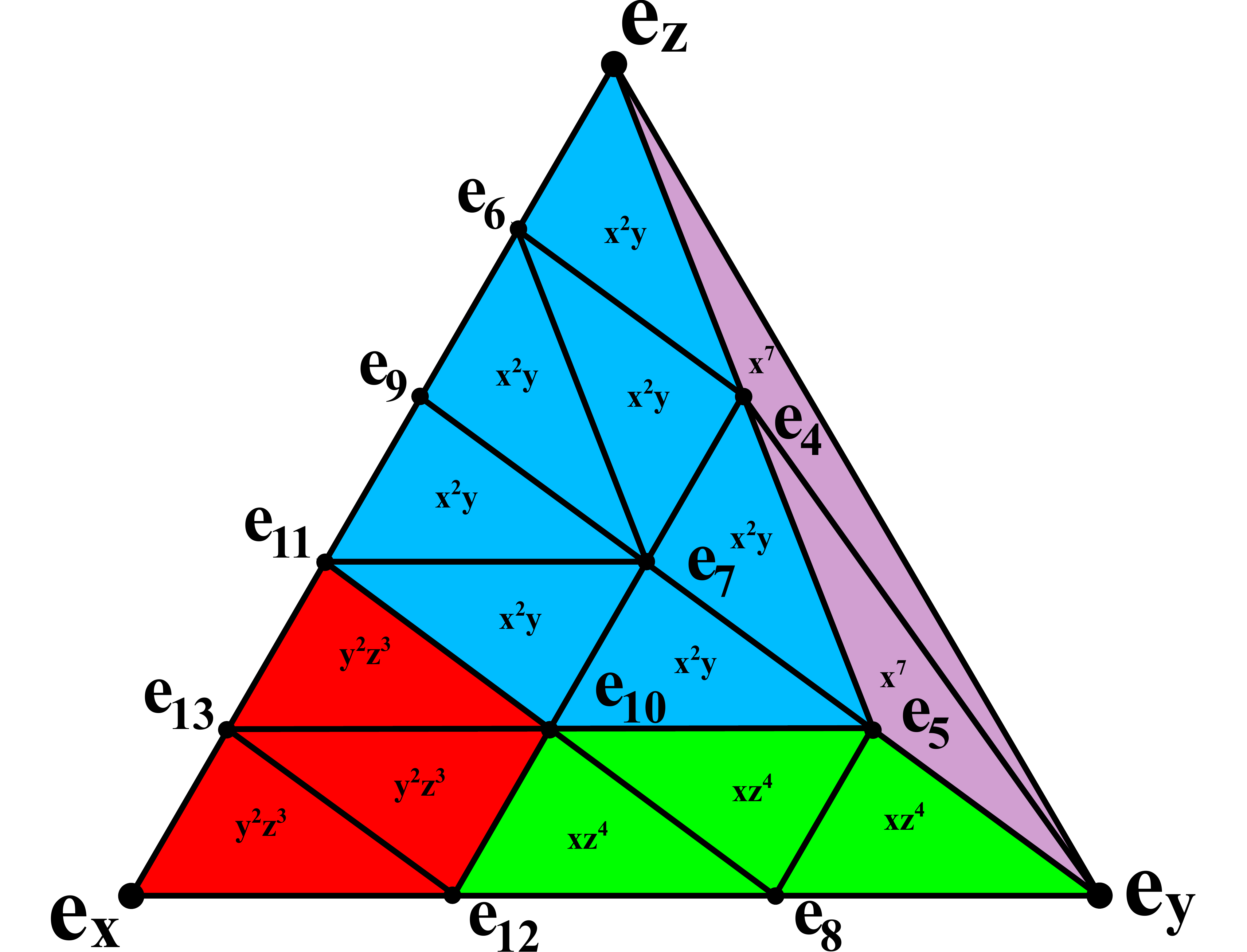}}
\subfigure[$\chi_9$] { \label{figure-36i}
\includegraphics[scale=0.11]{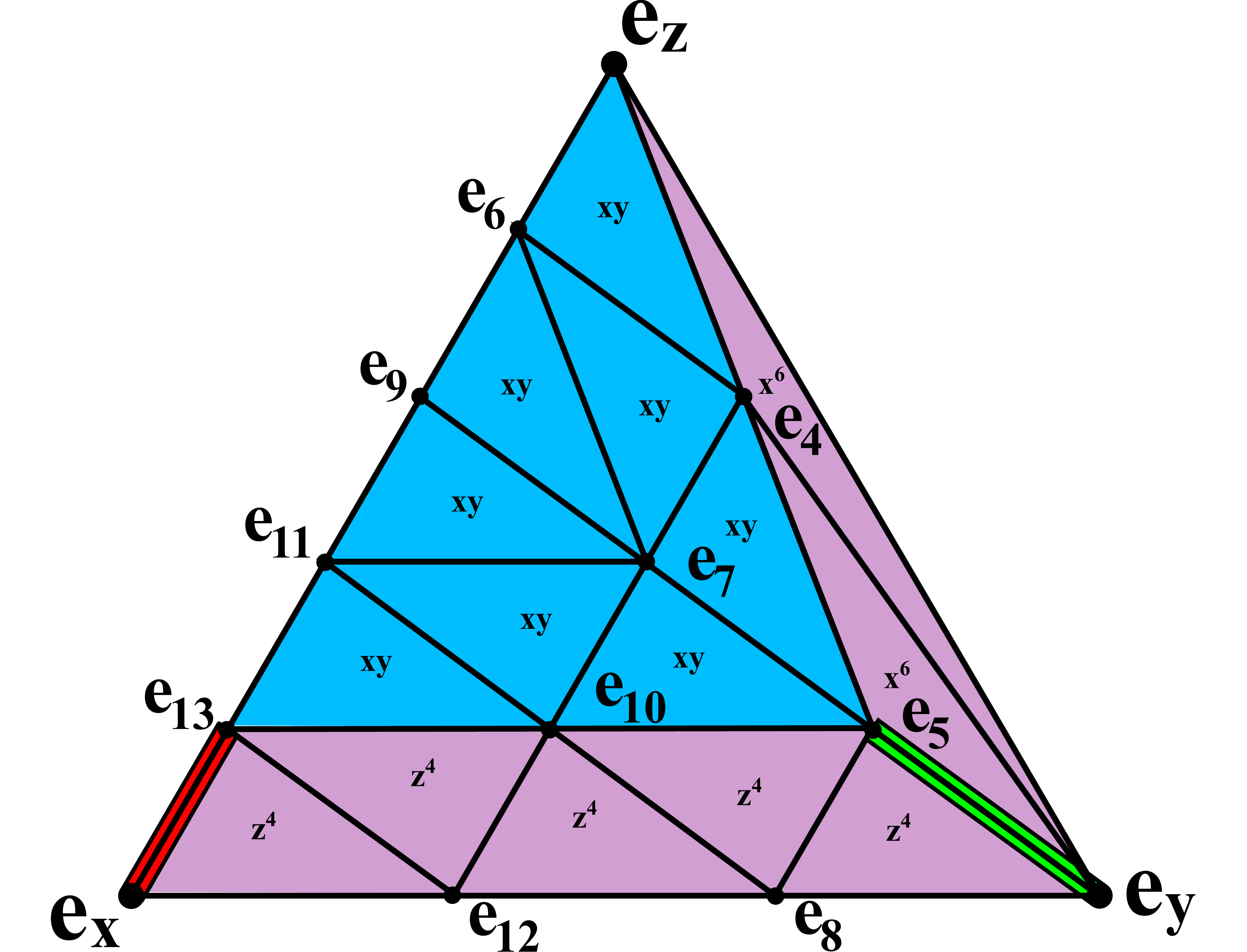}}
\subfigure[$\chi_{10}$] { \label{figure-36k}
\includegraphics[scale=0.11]{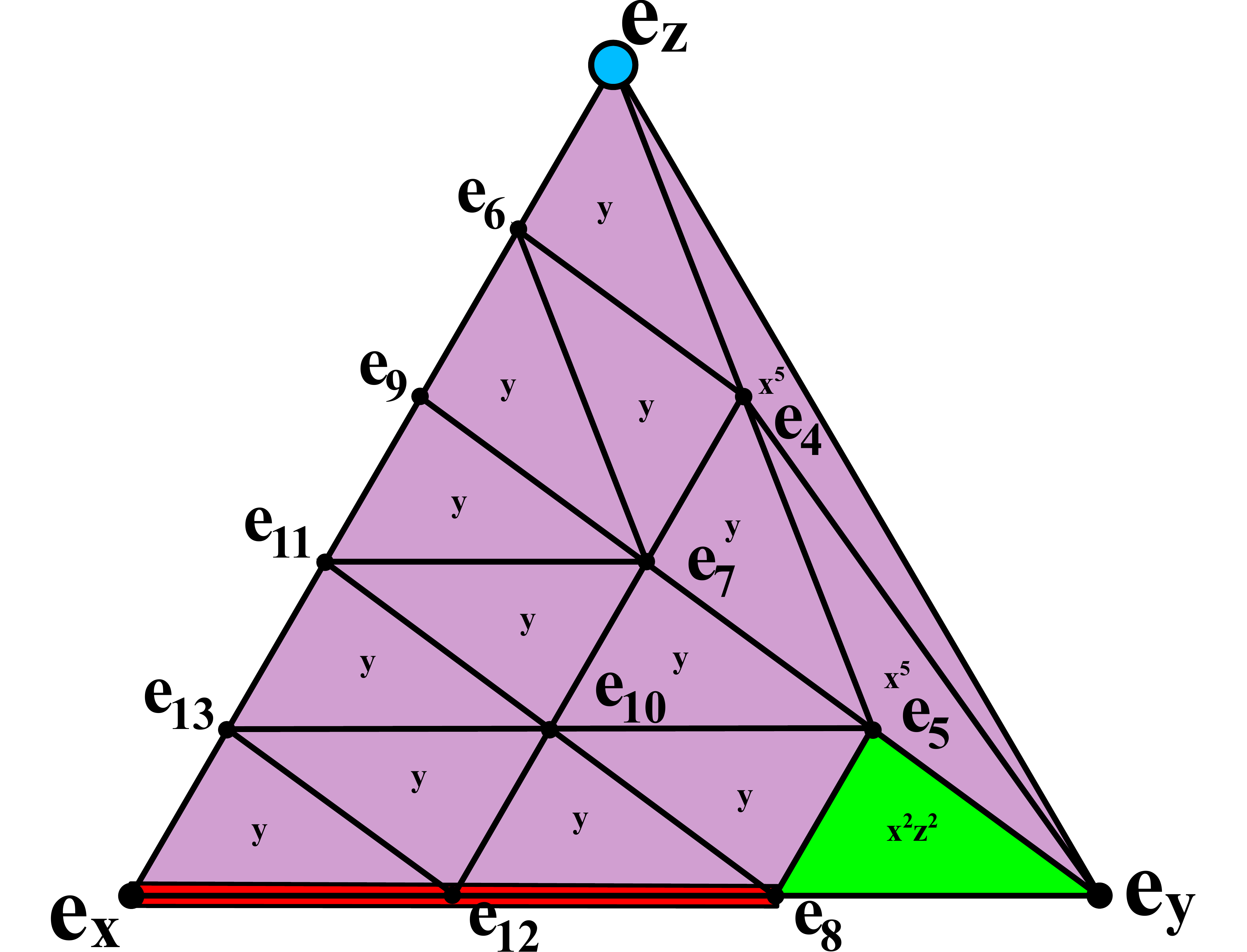}}
\subfigure[$\chi_{11}$] { \label{figure-36l}
\includegraphics[scale=0.11]{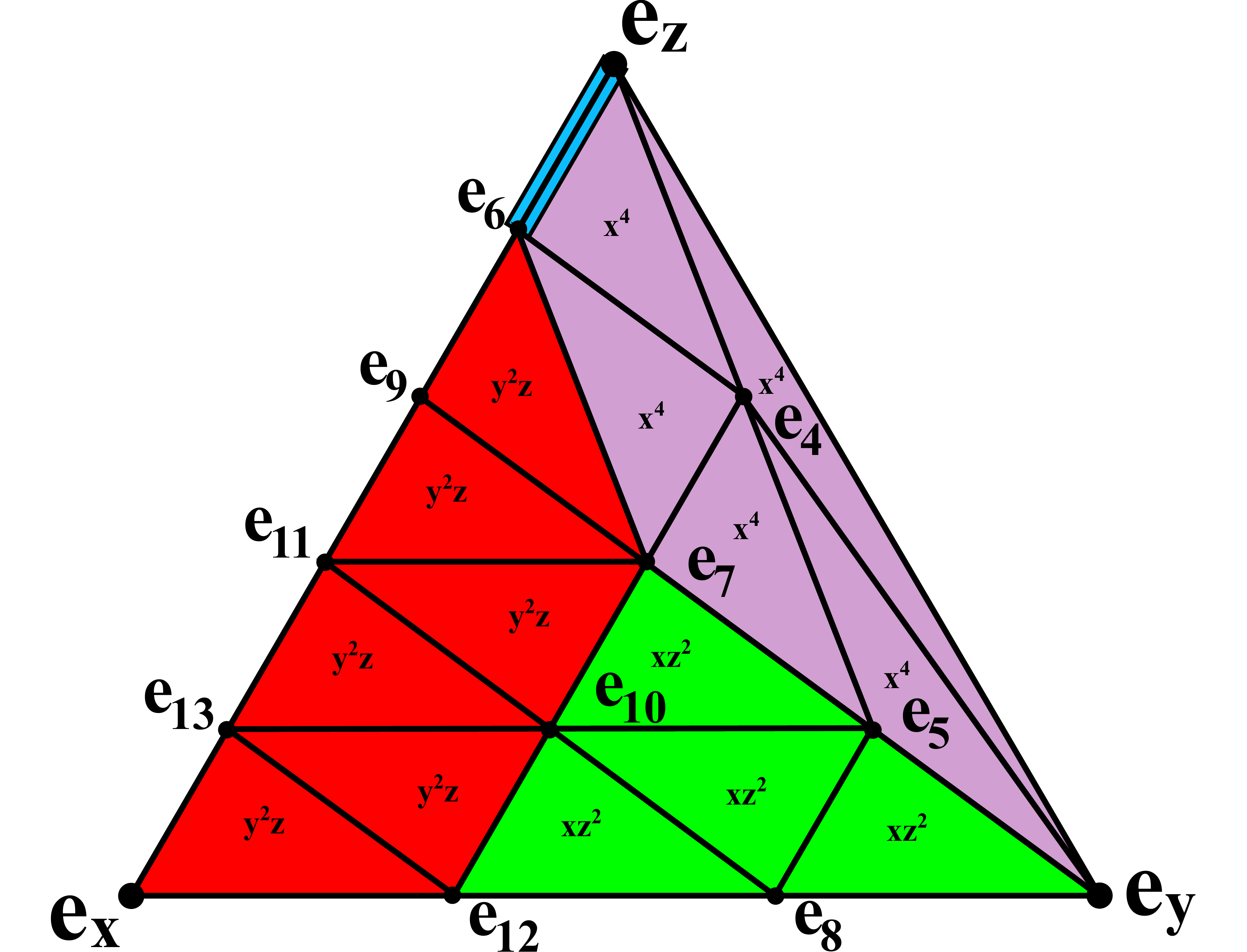}}
\subfigure[$\chi_{12}$] { \label{figure-36m}
\includegraphics[scale=0.11]{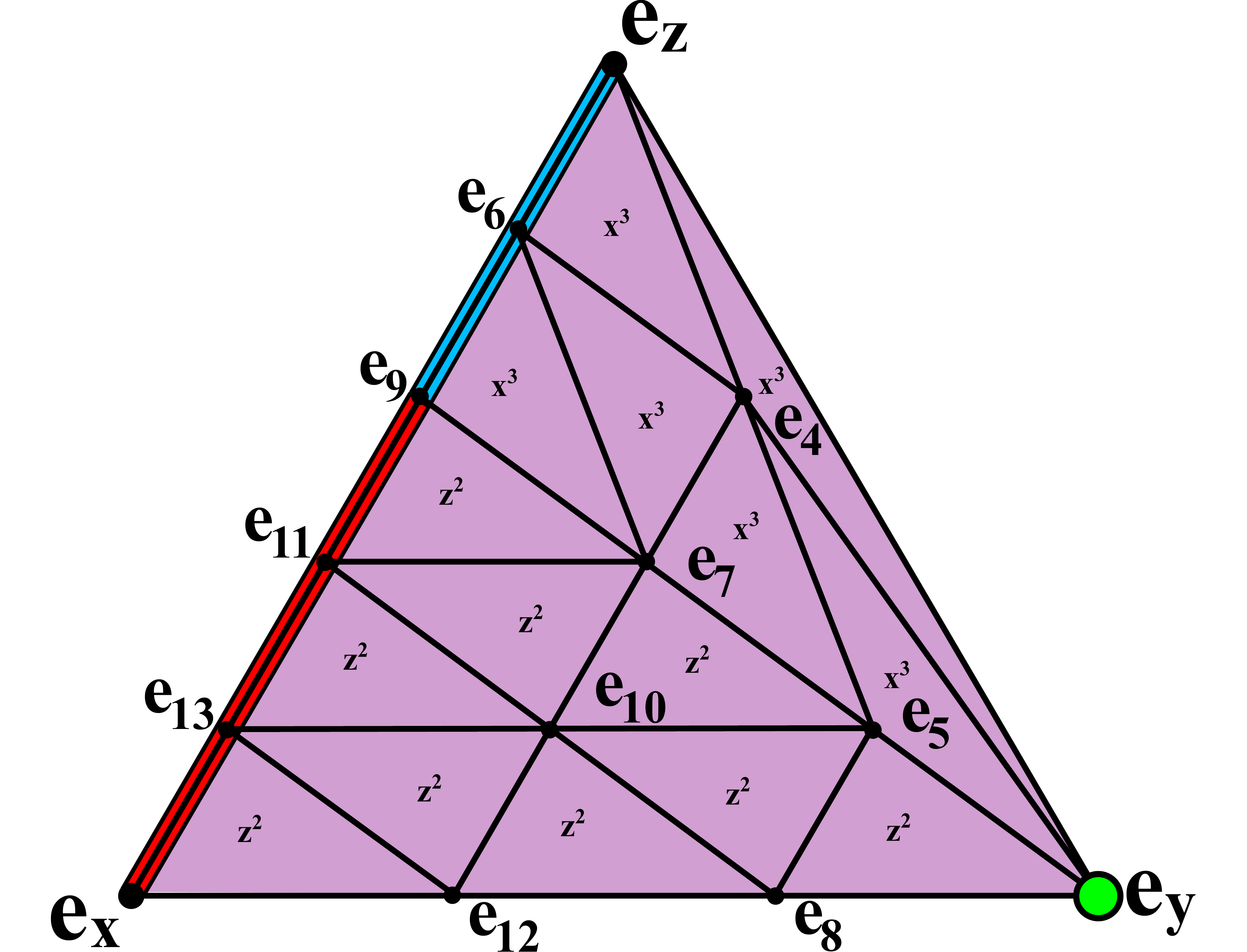}}
\caption{\label{figure-37} CT-subdivisions and monomial generators of $\mathcal{L}_\chi$
for $\chi_7$ - $\chi_{12}$.}
\end{figure}
\newpage 
\begin{figure}[!htb] \centering 
\subfigure[$\chi_{13}$] { \label{figure-36n}
\includegraphics[scale=0.11]{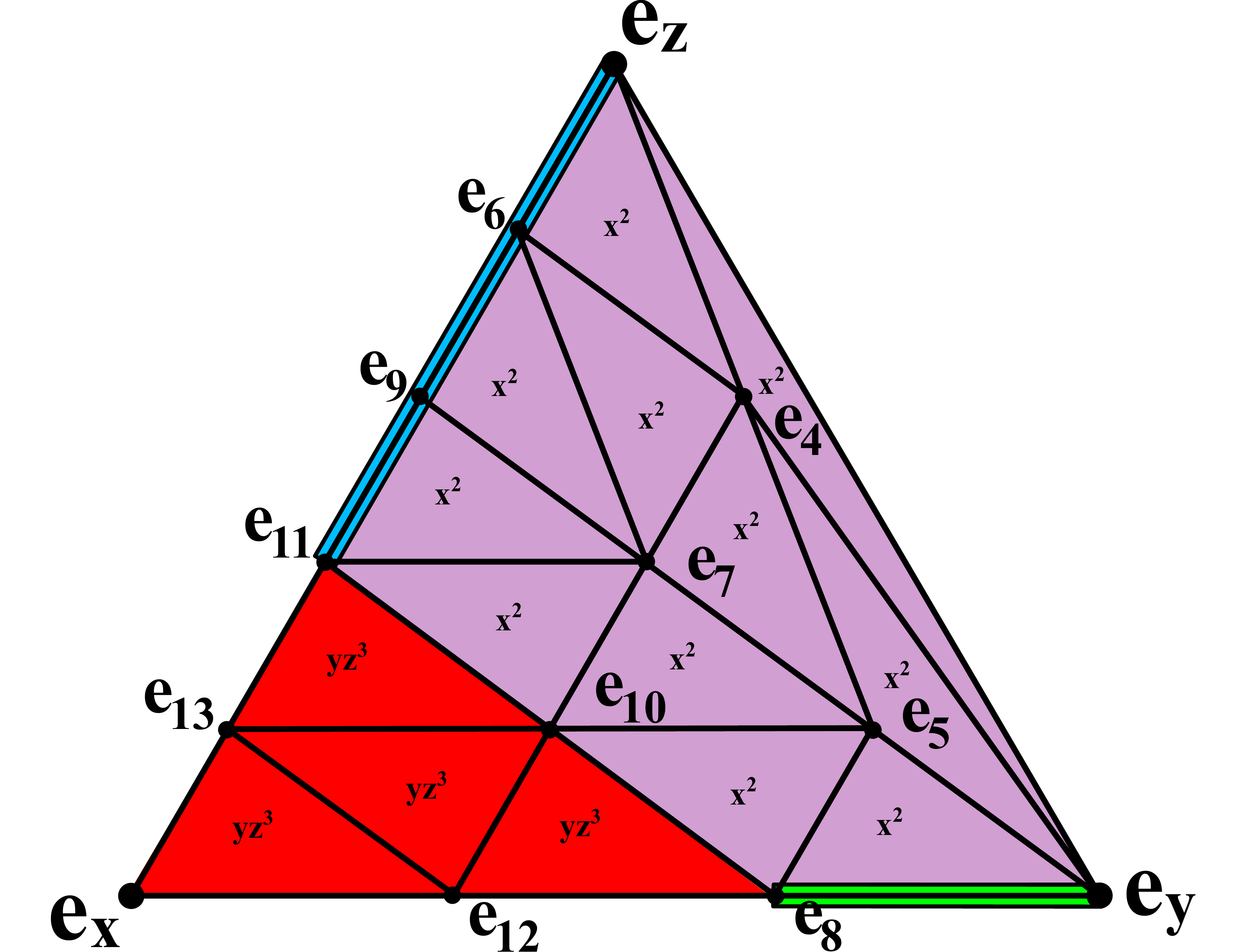}}
\subfigure[$\chi_{14}$] { \label{figure-36o}
\includegraphics[scale=0.11]{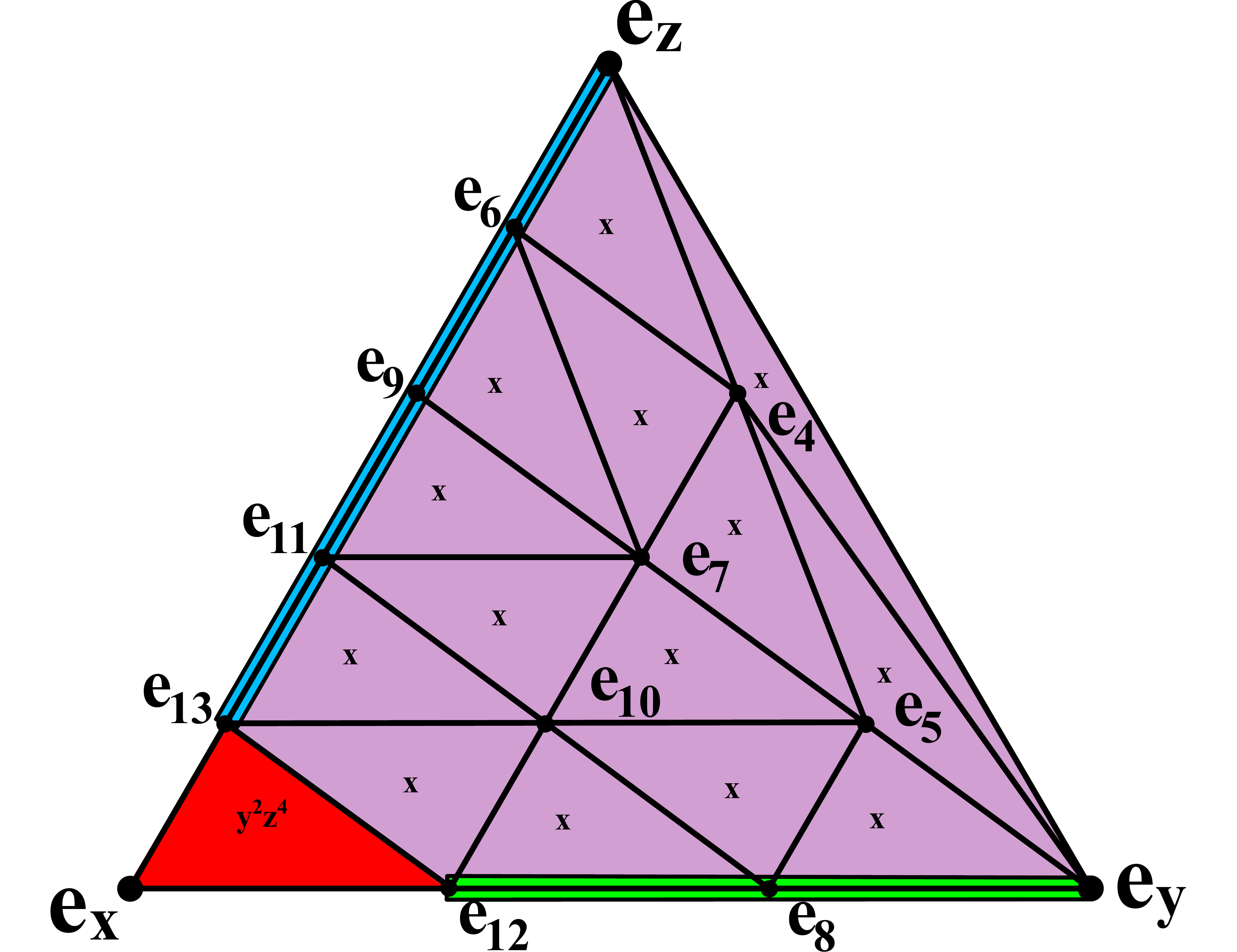}}
\caption{\label{figure-38} CT-subdivisions and monomial generators of $\mathcal{L}_\chi$
for $\chi_{13}$ - $\chi_{14}$.}
\end{figure}

\subsection{Skew-commutative cubes corresponding to $\Psi(\mathcal{O}_0 \otimes \chi)$}

Now for each $\chi \in G^\vee$ we draw the skew-commutative cube of 
line bundles corresponding to $\hex(\chi^{-1})_{\widetilde{\mathcal{M}}}$ 
as per Fig.\ \ref{figure-13}. Next, for each toric divisor $E$ on $Y$ we 
use Prop.\ \ref{prps-vertex-type-to-CT-subdivision-role-correspondence}
to determine the arrows of the cube whose corresponding
maps in $\hex(\chi^{-1})_{\widetilde{\mathcal{M}}}$ vanish along $E$. 
We then mark each arrow of the cube by its vanishing divisor in 
a following shorthand: 
$E_{456101213} = E_4 + E_5 + E_6 + E_{10} + E_{12} + E_{13}$ etc. 
 
We thus obtain:

\begin{large}
\begin{align*} 
\Psi(\mathcal{O}_0 \otimes \chi_{0}):\;
\xymatrix{
& &
\mathcal{L}^{-1}_{\chi_{1}} 
\ar"2,6"^<<{E_z} 
\ar"3,6"_<<<<<{E_y}
& & & 
\mathcal{L}^{-1}_{\chi_{14}} 
\ar"2,8"^{E_x} 
& & 
\\
\mathcal{L}^{-1}_{\chi_{0}} 
\ar"1,3"^<<<<<<<<<<<{E_{x45678910111213}\quad\;} 
\ar"2,3"^{E_{y45781012}}
\ar"3,3"_{E_{z45679101113}\;} 
& & 
\mathcal{L}^{-1}_{\chi_{5}} 
\ar"1,6"^>>>>>>>{E_{z691113}} 
\ar"3,6"_>>>>>>>{E_{x691113}}
& & & 
\mathcal{L}^{-1}_{\chi_{10}} 
\ar"2,8"^{E_y} 
& & 
\mathcal{L}^{-1}_{\chi_{0}} 
\\
& & 
\mathcal{L}^{-1}_{\chi_{9}} 
\ar"1,6"^<<<{E_{y812}} 
\ar"2,6"_<<{E_{x812}} 
& & & 
\mathcal{L}^{-1}_{\chi_{6}} 
\ar"2,8"_{E_z} 
& &
}
\end{align*}
\end{large}
\begin{large}
\begin{align*} 
\Psi(\mathcal{O}_0 \otimes \chi_{1}):\;
\xymatrix{
& &
\mathcal{L}^{-1}_{\chi_{2}} 
\ar"2,6"^<<{E_{z6}} 
\ar"3,6"_<<<<<{E_{y58}}
& & & 
\mathcal{L}^{-1}_{\chi_{0}} 
\ar"2,8"^{\quad E_{x45678910111213}} 
& & 
\\
\mathcal{L}^{-1}_{\chi_{1}} 
\ar"1,3"^{E_{x7910111213}\quad} 
\ar"2,3"^{E_{y}}
\ar"3,3"_{E_{z}} 
& & 
\mathcal{L}^{-1}_{\chi_{6}} 
\ar"1,6"^>>>>>>>{E_{z}} 
\ar"3,6"_>>>>>>>{E_{x578910111213}\;}
& & & 
\mathcal{L}^{-1}_{\chi_{11}} 
\ar"2,8"^{E_{y458}} 
& & 
\mathcal{L}^{-1}_{\chi_{1}} 
\\
& & 
\mathcal{L}^{-1}_{\chi_{10}} 
\ar"1,6"^<<<{E_{y}} 
\ar"2,6"_<<{\quad\quad E_{x67910111213}} 
& & & 
\mathcal{L}^{-1}_{\chi_{7}} 
\ar"2,8"_{E_{z46}} 
& &
}
\end{align*}
\end{large}
\begin{large}
\begin{align*} 
\Psi(\mathcal{O}_0 \otimes \chi_{2}):\;
\xymatrix{
& &
\mathcal{L}^{-1}_{\chi_{3}} 
\ar"2,6"^<<{E_{z69}} 
\ar"3,6"_<<<<<{E_{y581012}}
& & & 
\mathcal{L}^{-1}_{\chi_{1}} 
\ar"2,8"^{\quad E_{x7910111213}} 
& & 
\\
\mathcal{L}^{-1}_{\chi_{2}} 
\ar"1,3"^{E_{x1113}\quad} 
\ar"2,3"^{E_{y58}}
\ar"3,3"_{E_{z6}} 
& & 
\mathcal{L}^{-1}_{\chi_{7}} 
\ar"1,6"^>>>>>>>{E_{z46}} 
\ar"3,6"_>>>>>>>{E_{x10111213}}
& & & 
\mathcal{L}^{-1}_{\chi_{12}} 
\ar"2,8"^{E_{y45781012}} 
& & 
\mathcal{L}^{-1}_{\chi_{2}} 
\\
& & 
\mathcal{L}^{-1}_{\chi_{11}} 
\ar"1,6"^<<<{E_{y458}} 
\ar"2,6"_<<{\quad\quad E_{x91113}} 
& & & 
\mathcal{L}^{-1}_{\chi_{8}} 
\ar"2,8"_{E_{z4679}} 
& &
}
\end{align*}
\end{large}
\begin{large}
\begin{align*} 
\Psi(\mathcal{O}_0 \otimes \chi_{3}):\;
\xymatrix{
& &
\mathcal{L}^{-1}_{\chi_{4}} 
\ar"2,6"^<<{E_{z6911}} 
\ar"3,6"_<<<<<{E_{y5}}
& & & 
\mathcal{L}^{-1}_{\chi_{2}} 
\ar"2,8"^{\quad E_{x1113}} 
& & 
\\
\mathcal{L}^{-1}_{\chi_{3}} 
\ar"1,3"^{E_{x8101213}\;} 
\ar"2,3"^{E_{y581012}}
\ar"3,3"_{E_{z69}} 
& & 
\mathcal{L}^{-1}_{\chi_{8}} 
\ar"1,6"^>>>>>>>{E_{z4679}} 
\ar"3,6"_>>>>>>>{E_{x13}}
& & & 
\mathcal{L}^{-1}_{\chi_{13}} 
\ar"2,8"^{E_{y457}} 
& & 
\mathcal{L}^{-1}_{\chi_{3}} 
\\
& & 
\mathcal{L}^{-1}_{\chi_{12}} 
\ar"1,6"^<<<{E_{y45781012}} 
\ar"2,6"_<<{\quad\;E_{x810111213}} 
& & & 
\mathcal{L}^{-1}_{\chi_{9}} 
\ar"2,8"_{E_{z467911}} 
& &
}
\end{align*}
\end{large}
\begin{large}
\begin{align*} 
\Psi(\mathcal{O}_0 \otimes \chi_{4}):\;
\xymatrix{
& &
\mathcal{L}^{-1}_{\chi_{5}} 
\ar"2,6"^<<{\quad E_{z691113}} 
\ar"3,6"_<<<<<{E_{y58}}
& & & 
\mathcal{L}^{-1}_{\chi_{3}} 
\ar"2,8"^{\quad E_{x8101213}} 
& & 
\\
\mathcal{L}^{-1}_{\chi_{4}} 
\ar"1,3"^{E_{x12}\;} 
\ar"2,3"^{E_{y5}}
\ar"3,3"_{E_{z6911}} 
& & 
\mathcal{L}^{-1}_{\chi_{9}} 
\ar"1,6"^>>>>>>>{E_{z467911}} 
\ar"3,6"_>>>>>>>{E_{x1213}}
& & & 
\mathcal{L}^{-1}_{\chi_{14}} 
\ar"2,8"^{E_{y457810}} 
& & 
\mathcal{L}^{-1}_{\chi_{4}} 
\\
& & 
\mathcal{L}^{-1}_{\chi_{13}} 
\ar"1,6"^<<<{E_{y457}} 
\ar"2,6"_<<{\quad\;E_{x1213}} 
& & & 
\mathcal{L}^{-1}_{\chi_{10}} 
\ar"2,8"_{\quad E_{z4679101113}} 
& &
}
\end{align*}
\end{large}
\begin{large}
\begin{align*} 
\Psi(\mathcal{O}_0 \otimes \chi_{5}):\;
\xymatrix{
& &
\mathcal{L}^{-1}_{\chi_{6}} 
\ar"2,6"^<<{E_{z}} 
\ar"3,6"_<<<<<{E_{y5781012}}
& & & 
\mathcal{L}^{-1}_{\chi_{4}} 
\ar"2,8"^{\quad E_{x12}} 
& & 
\\
\mathcal{L}^{-1}_{\chi_{5}} 
\ar"1,3"^{E_{x691113}\;} 
\ar"2,3"^{E_{y58}}
\ar"3,3"_{E_{z691113}} 
& & 
\mathcal{L}^{-1}_{\chi_{10}} 
\ar"1,6"^>>>>>>>{E_{z4679101113}\quad} 
\ar"3,6"_>>>>>>>{E_{x67910111213}\;}
& & & 
\mathcal{L}^{-1}_{\chi_{0}} 
\ar"2,8"^{E_{y45781012}} 
& & 
\mathcal{L}^{-1}_{\chi_{5}} 
\\
& & 
\mathcal{L}^{-1}_{\chi_{14}} 
\ar"1,6"^<<<{E_{y457810}} 
\ar"2,6"_<<{\quad E_{x}} 
& & & 
\mathcal{L}^{-1}_{\chi_{11}} 
\ar"2,8"_{E_{z4}} 
& &
}
\end{align*}
\end{large}
\begin{large}
\begin{align*} 
\Psi(\mathcal{O}_0 \otimes \chi_{6}):\;
\xymatrix{
& &
\mathcal{L}^{-1}_{\chi_{7}} 
\ar"2,6"^<<{E_{z46}} 
\ar"3,6"_<<<<<{E_{y}}
& & & 
\mathcal{L}^{-1}_{\chi_{5}} 
\ar"2,8"^{\quad E_{x691113}} 
& & 
\\
\mathcal{L}^{-1}_{\chi_{6}} 
\ar"1,3"^{E_{x578910111213}\quad} 
\ar"2,3"^{E_{y5781012}}
\ar"3,3"_{E_{z}} 
& & 
\mathcal{L}^{-1}_{\chi_{11}} 
\ar"1,6"^>>>>>>>{E_{z4}} 
\ar"3,6"_>>>>>>>{E_{x91113}}
& & & 
\mathcal{L}^{-1}_{\chi_{1}} 
\ar"2,8"^{E_{y}} 
& & 
\mathcal{L}^{-1}_{\chi_{6}} 
\\
& & 
\mathcal{L}^{-1}_{\chi_{0}} 
\ar"1,6"^<<<{E_{y45781012}} 
\ar"2,6"_<<{\quad\quad E_{x45678910111213}} 
& & & 
\mathcal{L}^{-1}_{\chi_{12}} 
\ar"2,8"_{E_{z46}} 
& &
}
\end{align*}
\end{large}
\begin{large}
\begin{align*} 
\Psi(\mathcal{O}_0 \otimes \chi_{7}):\;
\xymatrix{
& &
\mathcal{L}^{-1}_{\chi_{8}} 
\ar"2,6"^<<{\;E_{z4679}} 
\ar"3,6"_<<<<<{E_{y8}}
& & & 
\mathcal{L}^{-1}_{\chi_{6}} 
\ar"2,8"^{\quad E_{x578910111213}} 
& & 
\\
\mathcal{L}^{-1}_{\chi_{7}} 
\ar"1,3"^{E_{x10111213}\quad} 
\ar"2,3"^{E_{y}}
\ar"3,3"_{E_{z46}} 
& & 
\mathcal{L}^{-1}_{\chi_{12}} 
\ar"1,6"^>>>>>>>{E_{z46}} 
\ar"3,6"_>>>>>>>{E_{x810111213}}
& & & 
\mathcal{L}^{-1}_{\chi_{2}} 
\ar"2,8"^{E_{y58}} 
& & 
\mathcal{L}^{-1}_{\chi_{7}} 
\\
& & 
\mathcal{L}^{-1}_{\chi_{1}} 
\ar"1,6"^<<<{E_{y}} 
\ar"2,6"_<<{\quad\quad E_{x7910111213}} 
& & & 
\mathcal{L}^{-1}_{\chi_{13}} 
\ar"2,8"_{E_{z45679}} 
& &
}
\end{align*}
\end{large}
\begin{large}
\begin{align*} 
\Psi(\mathcal{O}_0 \otimes \chi_{8}):\;
\xymatrix{
& &
\mathcal{L}^{-1}_{\chi_{9}} 
\ar"2,6"^<<{\quad E_{z467911}} 
\ar"3,6"_<<<<<{E_{y812}}
& & & 
\mathcal{L}^{-1}_{\chi_{7}} 
\ar"2,8"^{\quad E_{x10111213}} 
& & 
\\
\mathcal{L}^{-1}_{\chi_{8}} 
\ar"1,3"^{E_{x13}\quad} 
\ar"2,3"^{E_{y8}}
\ar"3,3"_{E_{z4679}} 
& & 
\mathcal{L}^{-1}_{\chi_{13}} 
\ar"1,6"^>>>>>>>{E_{z45679}} 
\ar"3,6"_>>>>>>>{E_{x1213}}
& & & 
\mathcal{L}^{-1}_{\chi_{3}} 
\ar"2,8"^{E_{y581012}} 
& & 
\mathcal{L}^{-1}_{\chi_{8}} 
\\
& & 
\mathcal{L}^{-1}_{\chi_{2}} 
\ar"1,6"^<<<{E_{y58}} 
\ar"2,6"_<<{\quad\quad E_{x1113}} 
& & & 
\mathcal{L}^{-1}_{\chi_{14}} 
\ar"2,8"_{\quad E_{z456791011}} 
& &
}
\end{align*}
\end{large}
\begin{large}
\begin{align*} 
\Psi(\mathcal{O}_0 \otimes \chi_{9}):\;
\xymatrix{
& &
\mathcal{L}^{-1}_{\chi_{10}} 
\ar"2,6"^<<{\quad E_{z4679101113}} 
\ar"3,6"_<<<<<{E_{y}}
& & & 
\mathcal{L}^{-1}_{\chi_{8}} 
\ar"2,8"^{\quad E_{x13}} 
& & 
\\
\mathcal{L}^{-1}_{\chi_{9}} 
\ar"1,3"^{E_{x812}\quad} 
\ar"2,3"^{E_{y812}}
\ar"3,3"_{E_{z467911}} 
& & 
\mathcal{L}^{-1}_{\chi_{14}} 
\ar"1,6"^>>>>>>>>{E_{z456791011}\quad} 
\ar"3,6"_>>>>>>>{E_{x}}
& & & 
\mathcal{L}^{-1}_{\chi_{4}} 
\ar"2,8"^{E_{y5}} 
& & 
\mathcal{L}^{-1}_{\chi_{9}} 
\\
& & 
\mathcal{L}^{-1}_{\chi_{3}} 
\ar"1,6"^<<<{E_{y581012}} 
\ar"2,6"_<<{\quad\quad E_{x8101213}} 
& & & 
\mathcal{L}^{-1}_{\chi_{0}} 
\ar"2,8"_{\quad E_{z45679101113}} 
& &
}
\end{align*}
\end{large}
\begin{large}
\begin{align*} 
\Psi(\mathcal{O}_0 \otimes \chi_{10}):\;
\xymatrix{
& &
\mathcal{L}^{-1}_{\chi_{11}} 
\ar"2,6"^<<{\quad E_{z4}} 
\ar"3,6"_<<<<<{E_{y458}}
& & & 
\mathcal{L}^{-1}_{\chi_{9}} 
\ar"2,8"^{\quad E_{x812}} 
& & 
\\
\mathcal{L}^{-1}_{\chi_{10}} 
\ar"1,3"^{E_{x67910111213}\quad} 
\ar"2,3"^{E_{y}}
\ar"3,3"_{E_{z4679101113}} 
& & 
\mathcal{L}^{-1}_{\chi_{0}} 
\ar"1,6"^>>>>>>>{E_{z45679101113}\quad} 
\ar"3,6"_>>>>>>>>>{E_{x45678910111213}\;\;}
& & & 
\mathcal{L}^{-1}_{\chi_{5}} 
\ar"2,8"^{E_{y58}} 
& & 
\mathcal{L}^{-1}_{\chi_{10}} 
\\
& & 
\mathcal{L}^{-1}_{\chi_{4}} 
\ar"1,6"^<<<{E_{y5}} 
\ar"2,6"_<<{\; E_{x12}} 
& & & 
\mathcal{L}^{-1}_{\chi_{1}} 
\ar"2,8"_{\quad E_{z}} 
& &
}
\end{align*}
\end{large}
\begin{large}
\begin{align*} 
\Psi(\mathcal{O}_0 \otimes \chi_{11}):\;
\xymatrix{
& &
\mathcal{L}^{-1}_{\chi_{12}} 
\ar"2,6"^<<{\quad E_{z46}} 
\ar"3,6"_<<<<<{E_{y45781012}}
& & & 
\mathcal{L}^{-1}_{\chi_{10}} 
\ar"2,8"^{\quad E_{x67910111213}} 
& & 
\\
\mathcal{L}^{-1}_{\chi_{11}} 
\ar"1,3"^{E_{x91113}\quad} 
\ar"2,3"^{E_{y458}}
\ar"3,3"_{E_{z4}} 
& & 
\mathcal{L}^{-1}_{\chi_{1}} 
\ar"1,6"^>>>>>{E_{z}} 
\ar"3,6"_>>>>>>>{E_{x7910111213}}
& & & 
\mathcal{L}^{-1}_{\chi_{6}} 
\ar"2,8"^{E_{y5781012}} 
& & 
\mathcal{L}^{-1}_{\chi_{11}} 
\\
& & 
\mathcal{L}^{-1}_{\chi_{5}} 
\ar"1,6"^<<<{E_{y58}} 
\ar"2,6"_<<{\quad E_{x691113}} 
& & & 
\mathcal{L}^{-1}_{\chi_{2}} 
\ar"2,8"_{\quad E_{z6}} 
& &
}
\end{align*}
\end{large}
\begin{large}
\begin{align*} 
\Psi(\mathcal{O}_0 \otimes \chi_{12}):\;
\xymatrix{
& &
\mathcal{L}^{-1}_{\chi_{13}} 
\ar"2,6"^<<{\quad E_{z45679}} 
\ar"3,6"_<<<<<{E_{y457}}
& & & 
\mathcal{L}^{-1}_{\chi_{11}} 
\ar"2,8"^{\quad E_{x91113}} 
& & 
\\
\mathcal{L}^{-1}_{\chi_{12}} 
\ar"1,3"^{E_{x810111213}\quad} 
\ar"2,3"^{E_{y45781012}}
\ar"3,3"_{E_{z46}} 
& & 
\mathcal{L}^{-1}_{\chi_{2}} 
\ar"1,6"^>>>>>{E_{z6}} 
\ar"3,6"_>>>>>>>{E_{x1113}}
& & & 
\mathcal{L}^{-1}_{\chi_{7}} 
\ar"2,8"^{E_{y}} 
& & 
\mathcal{L}^{-1}_{\chi_{12}} 
\\
& & 
\mathcal{L}^{-1}_{\chi_{6}} 
\ar"1,6"^<<<{E_{y5781012}} 
\ar"2,6"_<<{\quad \quad E_{x578910111213}} 
& & & 
\mathcal{L}^{-1}_{\chi_{3}} 
\ar"2,8"_{\quad E_{z69}} 
& &
}
\end{align*}
\end{large}
\begin{large}
\begin{align*} 
\Psi(\mathcal{O}_0 \otimes \chi_{13}):\;
\xymatrix{
& &
\mathcal{L}^{-1}_{\chi_{14}} 
\ar"2,6"^<<<<{\quad E_{z456791011}} 
\ar"3,6"_<<<<<{E_{y457810}}
& & & 
\mathcal{L}^{-1}_{\chi_{12}} 
\ar"2,8"^{\quad E_{x810111213}} 
& & 
\\
\mathcal{L}^{-1}_{\chi_{13}} 
\ar"1,3"^{E_{x1213}\;} 
\ar"2,3"^{E_{y457}}
\ar"3,3"_{E_{z45679}} 
& & 
\mathcal{L}^{-1}_{\chi_{3}} 
\ar"1,6"^>>>>>{E_{z69}} 
\ar"3,6"_>>>>>>>{E_{x8101213}}
& & & 
\mathcal{L}^{-1}_{\chi_{8}} 
\ar"2,8"^{E_{y8}} 
& & 
\mathcal{L}^{-1}_{\chi_{13}} 
\\
& & 
\mathcal{L}^{-1}_{\chi_{7}} 
\ar"1,6"^<<<{E_{y}} 
\ar"2,6"_<<<<{\quad E_{x10111213}} 
& & & 
\mathcal{L}^{-1}_{\chi_{4}} 
\ar"2,8"_{\quad E_{z6911}} 
& &
}
\end{align*}
\end{large}
\begin{large}
\begin{align*} 
\Psi(\mathcal{O}_0 \otimes \chi_{14}):\;
\xymatrix{
& &
\mathcal{L}^{-1}_{\chi_{0}} 
\ar"2,6"^<<<<{\quad E_{z45679101113}} 
\ar"3,6"_<<<<<{E_{y45781012}}
& & & 
\mathcal{L}^{-1}_{\chi_{13}} 
\ar"2,8"^{\quad E_{x1213}} 
& & 
\\
\mathcal{L}^{-1}_{\chi_{14}} 
\ar"1,3"^{E_{x}} 
\ar"2,3"^{E_{y457810}}
\ar"3,3"_{E_{z456791011}} 
& & 
\mathcal{L}^{-1}_{\chi_{4}} 
\ar"1,6"^>>>>>{E_{z6911}} 
\ar"3,6"_>>>>>>>{E_{x12}}
& & & 
\mathcal{L}^{-1}_{\chi_{9}} 
\ar"2,8"^{E_{y812}} 
& & 
\mathcal{L}^{-1}_{\chi_{14}} 
\\
& & 
\mathcal{L}^{-1}_{\chi_{8}} 
\ar"1,6"^<<<{E_{y8}} 
\ar"2,6"_<<{\; E_{x13}} 
& & & 
\mathcal{L}^{-1}_{\chi_{5}} 
\ar"2,8"_{\quad E_{z691113}} 
& &
}
\end{align*}
\end{large}

\subsection{Derived Reid's recipe}

 We now analyze the skew-commutative cubes computed in the previous 
section and, using Lemma \ref{lemma-cube_cohomology} or a more
involved analysis where necessary, compute the cohomology sheaves 
of their total complexes. This completes the derived Reid's recipe 
computation and we list the results in  Fig.\ \ref{figure-33}. 

\begin{figure}[h] 
\begin{footnotesize}
\begin{align*}
\setlength{\extrarowheight}{0.2cm} 
\begin{array}{|c|c|c|c|}
\hline
\chi \setminus \mathcal{H}^\bullet\left(\Psi(\mathcal{O}_0 \otimes \chi)\right) & 
\mathcal{H}^{-2} & \mathcal{H}^{-1} & \mathcal{H}^{0}
\\
\hline
\chi_0  & 
\mathcal{O}_{E_{45710}}(E_{45710})  &  
\mathcal{O}_{C_{12\cap13}}(-1)
&  0  
\\
\chi_1 & 
0 & 0 &
\mathcal{L}^{-1}_{\chi_{1}} \otimes \mathcal{O}_{E_4}
\\
\chi_2 &
0 & 0 &
\mathcal{L}^{-1}_{\chi_{2}} \otimes \mathcal{O}_{E_7}
\\
\chi_3 &
0 &  0 &
\mathcal{L}^{-1}_{\chi_{3}} \otimes \mathcal{O}_{C_{7 \cap 11}}
\\
\chi_4 &
0 &  0 &
\mathcal{L}^{-1}_{\chi_{4}} \otimes \mathcal{O}_{E_{10}}
\\
\chi_5 &
0 &
\mathcal{L}^{-1}_{\chi_{5}}( - E_{412}) \otimes
\mathcal{O}_{E_{710}} &
0
\\
\chi_6 &
0 &
\mathcal{L}^{-1}_{\chi_{6}}( - E_{y6}) \otimes
\mathcal{O}_{E_{4}} 
& 0
\\
\chi_7 &
0 & 0 &
\mathcal{L}^{-1}_{\chi_{7}} \otimes \mathcal{O}_{E_{5}}
\\
\chi_8 &
0 & 0 &
\mathcal{L}^{-1}_{\chi_{8}} \otimes \mathcal{O}_{E_{10}}
\\
\chi_9 &
0 & 
\mathcal{L}^{-1}_{\chi_{9}}( - E_{513}) \otimes \mathcal{O}_{E_{10}}
& 0
\\
\chi_{10} &
0 & 
\mathcal{L}^{-1}_{\chi_{10}}( - E_{z8}) \otimes \mathcal{O}_{E_{45}}
&
0 
\\
\chi_{11} &
0 & 
0 &
\mathcal{L}^{-1}_{\chi_{11}} \otimes \mathcal{O}_{C_{6 \cap 7}}
\\
\chi_{12} &
0 & 
\mathcal{L}^{-1}_{\chi_{12}}(- E_{y9}) \otimes \mathcal{O}_{E_{57}}
&
0
\\
\chi_{13} &
0 & 
\mathcal{L}^{-1}_{\chi_{13}} (- E_{811}) \otimes \mathcal{O}_{E_{10}}
&
0
\\
\chi_{14} &
0 & 
0 &
\mathcal{L}^{-1}_{\chi_{14}} \otimes \mathcal{O}_{C_{12 \cap 13}}
\\
\hline 
\end{array}
\end{align*}
\end{footnotesize}
\caption{\label{figure-33} 
Derived Reid's recipe for $G = \frac{1}{15}(1,5,9)$}
\end{figure}

\subsection{Sink-source graphs}

Prop.\ \ref{prps-vertex-type-to-CT-subdivision-role-correspondence}
determines the vertex type of each $\chi \in G^\vee$ in the
sink-source graph of each toric divisor of $Y$. We therefore 
use the $CT$-subdivision data displayed 
in Fig.\ \ref{figure-36} - \ref{figure-38}  
to compute the sink-source graphs of the exceptional divisors of $Y$. 
We list the results on Fig.\ \ref{figure-35}. Though not necessary 
to compute derived Reid's recipe, it provides both 
an illustraton of the correspondence of Theorem 
\ref{theorem-sink-source-graph-to-divisor-type-correspondence}
and a visual aid for the proof of Prop.
\ref{prps-vertex-type-to-CT-subdivision-role-correspondence}. 

\begin{figure}[!htb] \centering 
\subfigure[Divisor $E_4$] { \label{figure-35a}
\includegraphics[scale=0.60]{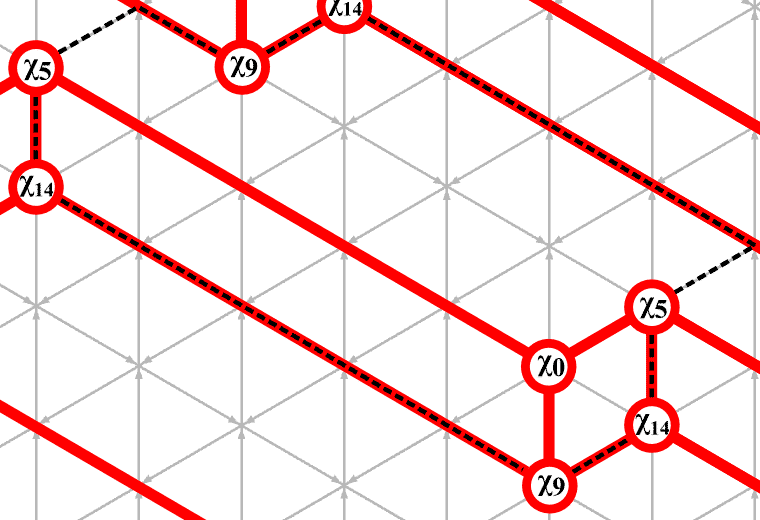}} 
\subfigure[Divisor $E_5$] { \label{figure-35b}
\includegraphics[scale=0.60]{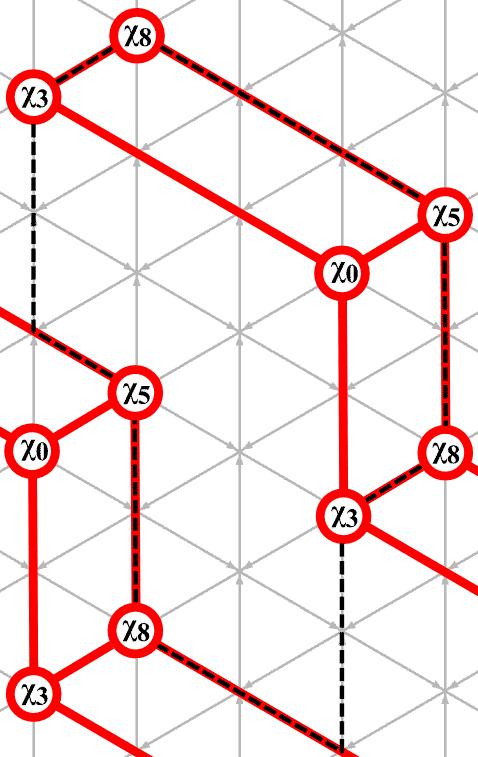}}
\subfigure[Divisor $E_6$] { \label{figure-35c}
\includegraphics[scale=0.55]{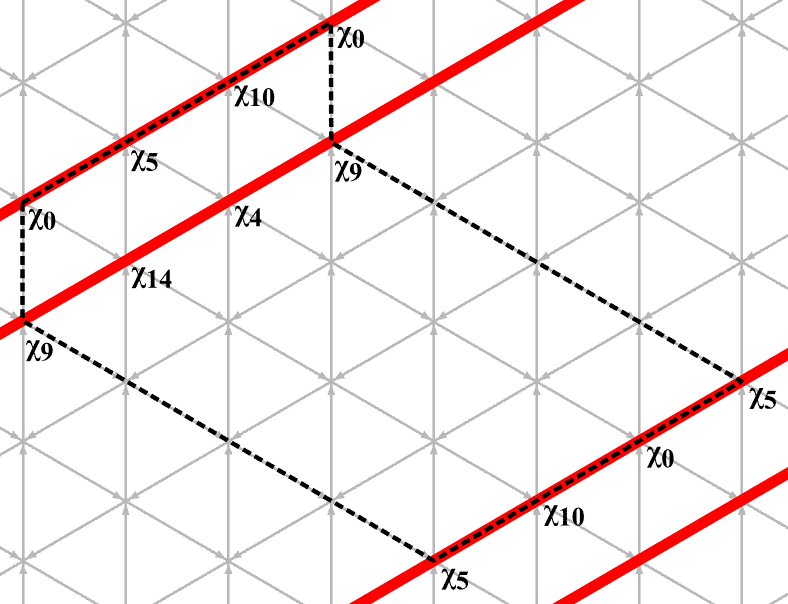}}
\subfigure[Divisor $E_7$] { \label{figure-35d}
\includegraphics[scale=0.60]{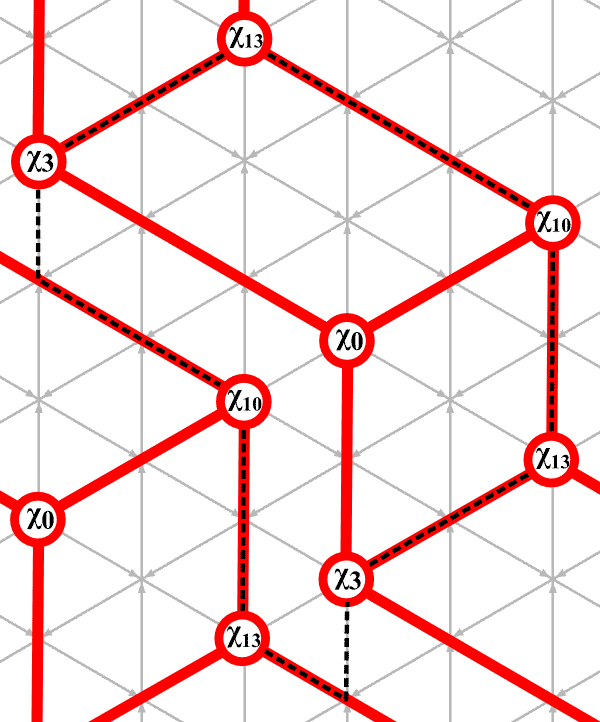}}
\subfigure[Divisor $E_8$] { \label{figure-35e}
\includegraphics[scale=0.60]{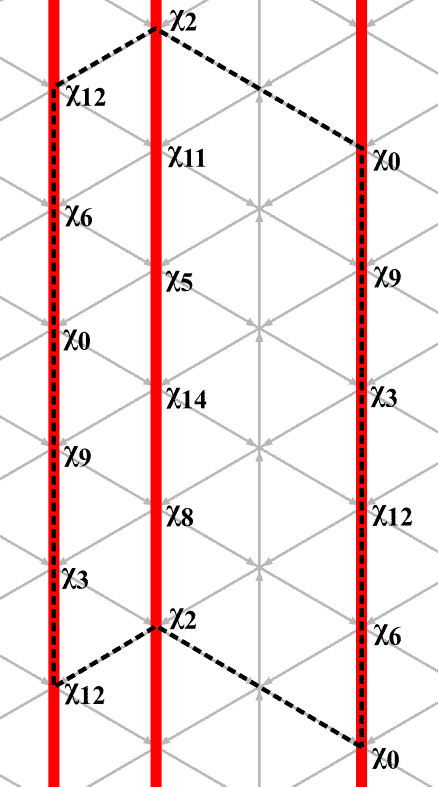}}
\subfigure[Divisor $E_9$] { \label{figure-35f}
\includegraphics[scale=0.60]{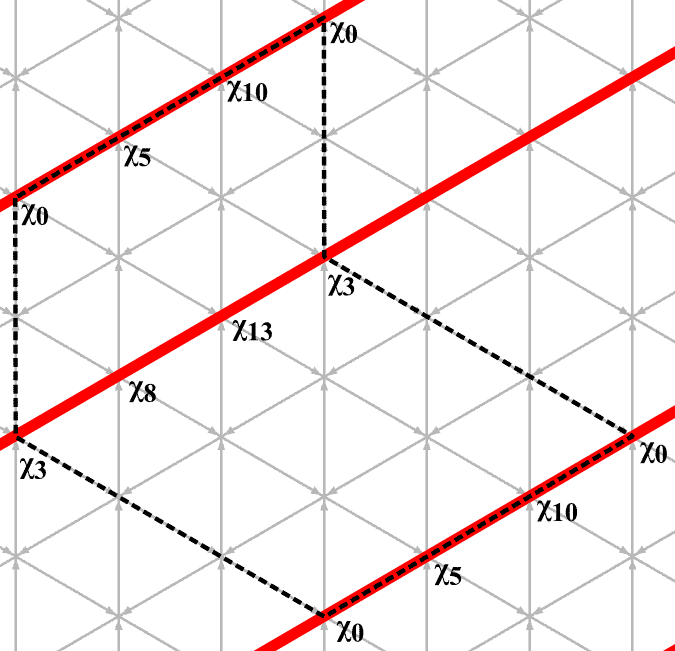}}
\subfigure[Divisor $E_{10}$] { \label{figure-35g}
\includegraphics[scale=0.60]{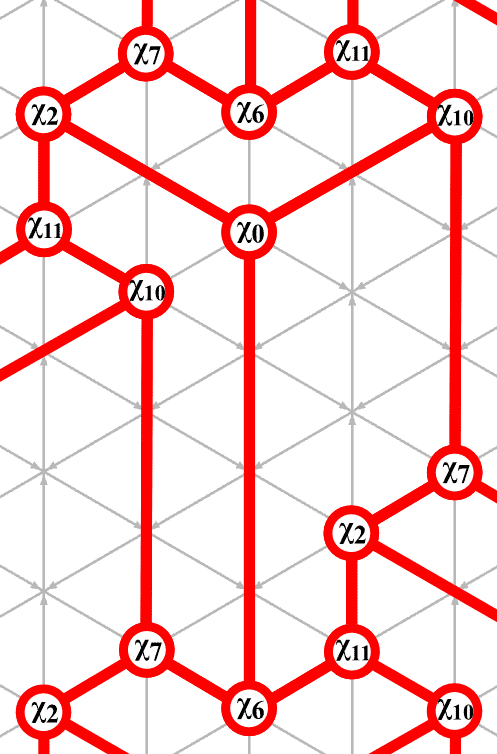}}
\subfigure[Divisor $E_{11}$] { \label{figure-35h}
\includegraphics[scale=0.60]{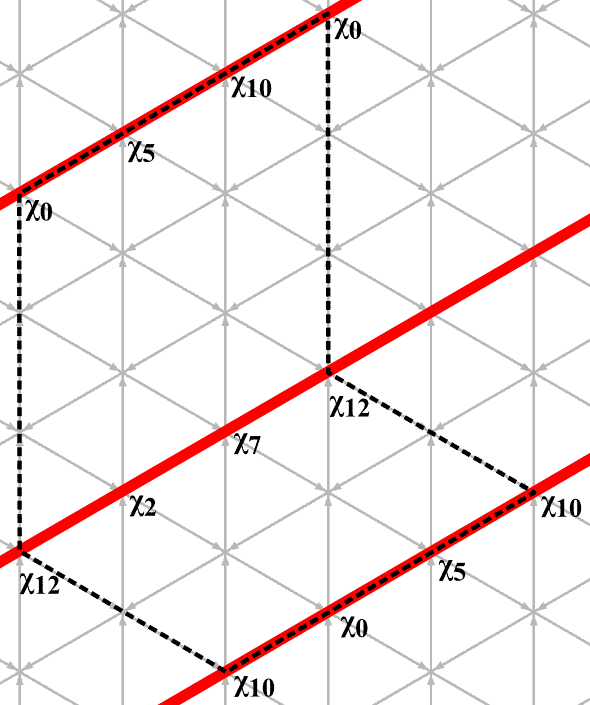}}
\subfigure[Divisor $E_{12}$] { \label{figure-35i}
\includegraphics[scale=0.60]{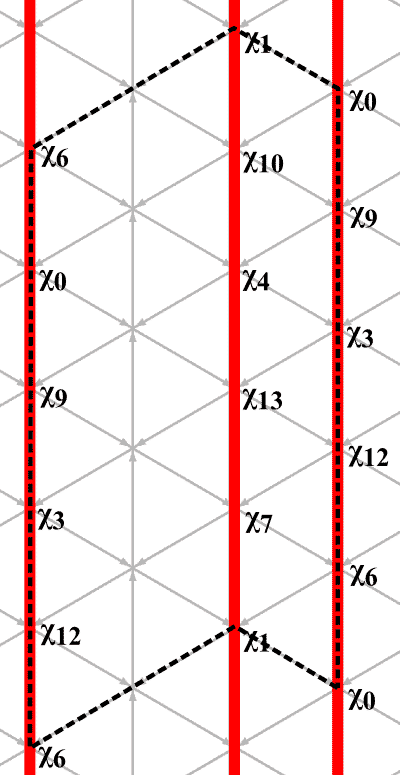}}
\subfigure[Divisor $E_{12}$] { \label{figure-35k} 
\includegraphics[scale=0.60]{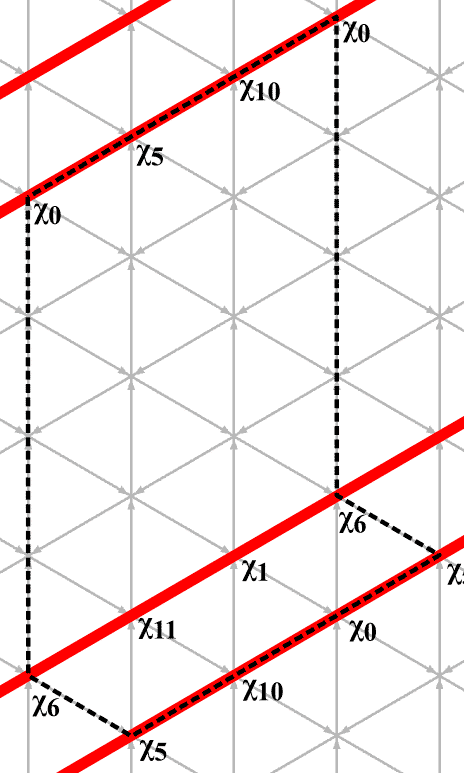}}
\caption{Sink-source graphs for $G = \frac{1}{15}(1,5,9)$}
\label{figure-35}
\end{figure}

\newpage
\bibliographystyle{amsalpha}
\bibliography{CautisCrawLogvinenko}

\end{document}